\documentclass{article}

\usepackage{xurl}
\usepackage[english]{babel}
\usepackage[T1]{fontenc}
\usepackage{
    amsmath,
    amssymb,
    amsthm,
    mathtools,
    mathrsfs,
    hyperref,
    fullpage,
    listings,
    xcolor,
    enumitem,
    stmaryrd,
    multicol,
    array,
    mdframed,
    multirow,
    multicol,
    caption,
    subcaption,
    orcidlink
}
\usetikzlibrary{decorations.markings}
\usepackage[table]{xcolor}
\newcommand{\legendbox}[1]{
  \raisebox{0.2ex}{
    \fcolorbox{black}{#1}{\rule{0pt}{0.3em}\rule{0.8em}{0pt}}
  }
}

\theoremstyle{definition}
\newtheorem{Def}{Definition}
\theoremstyle{plain}
\newtheorem{Thm}[Def]{Theorem}
\newtheorem{Prop}[Def]{Proposition}
\newtheorem{Corol}[Def]{Corollary}
\newtheorem{Lemma}[Def]{Lemma}

\newtheorem{Rq}[Def]{Remark}

\newcommand*\samethanks[1][\value{footnote}]{\footnotemark[#1]}
\author{Eimear Byrne\thanks{School of Mathematics and Statistics, University College Dublin, \texttt{ebyrne@ucd.ie} \orcidlink{0000-0002-1857-0365} and \texttt{lucien.francois@ucd.ie} \orcidlink{0009-0006-4810-6068}}, Lucien François\samethanks[1]}

\title{The multilinear forms Cayley graph and the eigenvalue method for tensor codes.}
\date{}

\newcommand{\C}{\mathbb{C}}
\newcommand{\CCC}{\mathcal{C}}%
\newcommand{\DDD}{\mathcal{D}}%
\newcommand{\E}{\mathbb{E}}
\newcommand{\F}{\mathbb{F}}
\newcommand{\FFF}{\mathcal{F}}%
\newcommand{\III}{\mathcal{I}}%
\newcommand{\N}{\mathbb{N}}
\newcommand{\Nnn}{\mathfrak{n}}%
\newcommand{\R}{\mathbb{R}}
\newcommand{\RR}{\mathscr{R}}
\renewcommand{\S}{\mathbb{S}}
\newcommand{\VVV}{\mathcal{V}}%
\newcommand{\Z}{\mathbb{Z}}

\DeclareMathOperator{\Bil}{Bil}
\DeclareMathOperator{\Cay}{Cay}

\DeclareMathOperator{\codim}{codim}

\DeclareMathOperator{\col}{col}
\DeclareMathOperator{\colsp}{colsp}

\DeclareMathOperator{\diag}{diag}
\DeclareMathOperator{\Endo}{End}

\DeclareMathOperator{\GL}{GL}

\DeclareMathOperator{\Hom}{Hom}

\DeclareMathOperator{\im}{im}

\DeclareMathOperator{\krank}{k-rank}

\DeclareMathOperator{\modemult}{m}

\DeclareMathOperator{\rank}{rank}

\DeclareMathOperator{\row}{row}
\DeclareMathOperator{\rowsp}{rowsp}
\DeclareMathOperator{\Span}{Span}

\DeclareMathOperator{\Stab}{Stab}
\DeclareMathOperator{\slicesp}{ss}

\DeclareMathOperator{\Tr}{Tr}

\newcommand{\gbc}[3]{\begin{bmatrix}#2\\#1\end{bmatrix}_{\!#3}}
\newcommand{\sgbc}[3]{\left[\begin{smallmatrix}#2\\#1\end{smallmatrix}\right]_{#3}}

\begin{document}

\maketitle
\begin{abstract}
    The connections between graph theory, and more generally association schemes, and coding theory were established by Delsarte for the Hamming metric and rank-metric codes. The ambient metric space of Hamming-metric codes and rank-metric codes can be seen as Cayley graphs generated by words of weight one. The metrics considered then coincide with the geodesic distances of these distance-regular graphs. We focus on a generalisation of this framework to the space of tensors over a finite field, endowed with the tensor rank as a metric. This space corresponds to the Cayley graph generated by rank-one tensors, which is not distance-regular for tensors of order at least $3$. We show that the spectrum of this graph has a recursive expression and depends on the possible intersections between tensor subspaces of large enough dimension and the Segre variety. The spectrum of this graph for $3$-order tensors can be expressed with the rank distribution of the rank-metric codes generated by these tensors. In particular, we obtain the complete spectrum of the graph for $2\times 3 \times 3$ tensors over any finite field. We apply this result to derive bounds on the dimension of tensor codes in the tensor rank metric using the eigenvalue method, and in particular the ratio-type bound.

\end{abstract}

\textbf{Keywords.} Tensors, multilinear forms, tensor codes, Cayley graphs, eigenvalue method.

\textbf{MSC codes.} 11T71, 15A69, 94C15.

\setcounter{tocdepth}{2}

\section{Introduction}

The application of graph theory and algebraic combinatorics to coding theory has been studied extensively throughout the past five decades. Delsarte laid the foundation of the theory of association schemes in coding theory with the introduction of the Hamming scheme and the bilinear forms scheme \cite{Delsarte1973AlgebraicAproachAssociationSchemes,DELSARTE1978226,DelsarteLevenshtein1998AssociationSchemesSurvey}. 
The fact that the scheme induced by a given metric is an association scheme relies on the distance-regularity of the distance graph of the ambient space. In both the Hamming and rank-metric case, the invariants of the corresponding association schemes yield MacWilliams identities. This was applied to produce a linear programming bound on the size of a code for a given minimum distance in the Hamming metric case. 

The eigenvalue method provides another approach to obtain similar results without the distance-regularity assumption, and has been applied to codes for the sum-rank metric, the Lee metric, and to alternating rank-metric codes; see \cite{AbiadPetersRavagnani2025EigenvalueMethod} and the references therein. In this approach, elements of the ambient space $V$ of a code over a finite field can be seen as the vertices of a graph for which the geodesic distance coincides with the distance endowed on $V$. This point of view allows one to obtain upper bounds on the size of a code of a given minimum distance using the known bounds on the independence numbers of the graph such as the inertia-type and ratio-type bounds \cite{AbiadCoutinhoFiol2019kindependancenumber}. There also exist linear programming and mixed-integer linear programming bounds on the independence numbers using the spectrum of the graph with the aforementioned bounds \cite{AbiadAlfranoRavagnani2025EVboundsAlternatingRMC,AbiadNeriReijnders2025EVboundsLeeCodes,AbiadPetersRavagnani2025EigenvalueMethod}. 

The (tensor) rank of an $m$-order tensor is a generalisation of the rank of a matrix and measures the algebraic complexity of the multilinear map associated to the tensor \cite{Burgisser1997ch14}. The rank of a tensor $x$ is the least number of simple tensors whose sum is equal to $x$. This determines a distance function on the space of all $m$-order tensors whereby the (tensor) rank distance between a pair of tensors is the rank of their difference. Tensor codes were introduced in \cite{RothMaximumArrayCodes} as generalisation of rank-metric codes, motivated in part by applications to criss-cross error-correction. They are subspaces or subsets of the vector-space of $m$-order tensors that has been endowed with the tensor rank as a metric; see \cite{ByrneCotardo2023TensorCodesInvariants,byrne2026decodingalgorithmstensorcodes,ROTHTensorCodesForRankMetric}. 
Such codes generalise both Hamming-metric codes, rank-metric codes and sum-rank metric codes. Indeed, there exist subspaces of the space of tensors with the tensor rank as a metric that are isometric to any given Hamming metric codes or rank-metric ambient space. Additionally, 
a (matrix) rank-metric code can be seen as a slice space, or contraction space, of a 3-tensor called its generator tensor \cite{ByrneNeriRavagnaniSheekeyTensorRepresentation2019}. This definition can be generalised to $m$-order tensor codes for any $m\geq 3$, as such a code can be seen as the slice space of an $(m+1)$-order tensor. 

Unlike rank-metric codes, the `main coding theory problem' is unsolved for tensor codes of higher order. That is, the largest cardinality of an $m$-order tensor code for a fixed minimum rank distance is not known in general. As described in \cite{AbiadPetersRavagnani2025EigenvalueMethod}, we construct a graph $\Gamma$ whose vertices are joined by an edge if the distance between them is $1$. 
In our setting, $\Gamma$ is the \emph{multilinear forms graph}. The vertices of $\Gamma$ are the distinct elements of $\F_q^{n_1\times \dots \times n_m}$ (the space of $m$-order tensors of size $n_1 \times \cdots \times n_m$ over the finite field $\F_q$), any two of which form an edge of $\Gamma$ if the rank distance between them is 1. In other words, $\Gamma$ is the Cayley graph generated by elements of rank one in $\F_q^{n_1\times \dots \times n_m}$ (i.e., the Segre variety of $\F_q^{n_1\times \dots \times n_m}$).
For $m = 2$, this corresponds to the distance-regular graph of the bilinear forms association scheme studied in \cite{DELSARTE1978226}. As we show later, for $m \geq 3$, this Cayley graph is not distance-regular and hence Delsarte's method does not directly apply. However, $\Gamma$ has a number of properties that make it amenable to the eigenvalue method described in \cite{AbiadPetersRavagnani2025EigenvalueMethod}, such as the fact the geodesic distance in $\Gamma$ coincides with the tensor rank distance.

In this work, we study the properties of the aforementioned Cayley graph $\Gamma$, with a particular focus on its eigenvalues.
Note that in \cite{Lovasz75SpectraOfGraphsWithTransitiveGroups} it is shown that each eigenvalue of a Cayley graph can be expressed as a character sum over the generating set. We exploit this fact to obtain expressions for the eigenvalues of the multilinear forms graph. We first provide an iterative formula for the computation of the eigenvalues of the graph as a function of the eigenvalues of the graph for lower order tensors; see Theorem~\ref{thm:eigenvaluesmultilinearformsgraphrecursive}. We then provide a dual expression of the same eigenvalues associated to an $m$-order tensor as an affine function of the number of rank-one tensors in the dual of one of its slice spaces; see Theorem~\ref{thm:eigenvaluesmultilineargraphdual}. This in turn gives a one-to-one correspondence between the eigenvalues of the multilinear forms graph on $\F_{q}^{n_1\times \dots \times n_m}$ and the cardinalities of the intersections between subspaces of large enough dimension in $\F_q^{n_1\times \dots \times n_{j-1}\times n_{j+1}\times \dots \times n_m}$ and the Segre variety of that space.

With these expressions, computing the exact eigenvalues and multiplicities of a multilinear forms graph with a given set of parameters remains a hard problem, as it requires at least to know all possible intersection cardinalities between the Segre variety and subspaces of $(m-1)$-order tensors, as well as the precise number of subspaces in each configuration. Using the known classification of $2\times 3 \times 3$ tensors up to equivalence \cite{LavrauwSheekey2015CanonicalFormsTensors} as well as the known classification of subspaces of $\F_q^{2\times 3}$ up to equivalence \cite{LavrauwSheekey2017ClassificationSubspacesFqr}, we provide the spectrum of the trilinear forms graph for $2\times 3 \times 3$ tensors over any finite field $\F_q$. The spectrum can be used to compute ratio-type bounds \cite{AbiadCoutinhoFiolNoguieraZeijlemaker2022Optimizationindependancenumber} on the cardinality of such tensor codes with a given minimum distance 

The outline of this paper is as follows. In Section \ref{sec:prerequisites}, we introduce the necessary background, definitions, and properties of graphs and tensors required for the rest of the paper. We describe tensors of order $m$. Additionally, we introduce a number of tools from graph theory, character theory, and coding theory that are relevant for the study of the graph eigenvalues and the application to coding theory. In Section \ref{sec:multilinearfromsgraph}, we introduce the notion of the multilinear forms graph as the Cayley graph of tensors over a finite field generated by the set of simple tensors. We show that the graph is not distance-regular for $m\geq 3$. We obtain a recursive expression for the eigenvalue associated to an $m$-order tensor as a function of the eigenvalues associated to the elements of any one of its slice spaces. We use this to give a direct expression of the eigenvalues of the trilinear forms graph. We show that the eigenvalue associated to an order $3$ tensor is determined by the parameters of the ambient space and an evaluation of the weight enumerator of any one of its slice spaces. Using the recursion developed earlier, for any integer $m\geq2$, we provide an expression of the eigenvalue of an $m$-order tensor as an affine function of the number of rank-one elements in any of its slice spaces. We conclude by describing the spectrum of the Cayley graph generated by the Segre variety of $\F_q^{n_1\times \dots \times n_m}$. In Section \ref{sec:eigenvalues233}, we provide the expression of the spectrum of the trilinear forms graph of $2\times 3 \times 3$ tensors using the known classification of tensors and subspaces up to equivalence. We then deduce bounds on the cardinality of tensor codes of this size by utilising the computation of the spectrum of the trilinear forms graph and the ratio-type bounds on the $2$- and $3$-independence numbers. Finally, in Section \ref{sec:bounds}, we conclude the paper by computing the ratio-type bounds for multilinear forms graph with low parameters using mixed-integer linear programming.

A complementary GitHub repository (\url{https://github.com/lucienfrancois/EigenvaluesMultilinearFormsGraph}) provides MAGMA and SageMath code, respectively, to compute the spectrum of multilinear forms graphs for small enough parameters and deduce ratio-type bounds on tensor codes with the same parameters. The latter is largely based on the code given in \cite{Peters2024BoundingCardinality}.

\section{Preliminaries}
\label{sec:prerequisites}
\subsection{Notation}
For the rest of the paper, unless stated otherwise, we adopt the following notation.
We denote by $\N$ the set of positive integers and by $\N_0$ the set of non-negative integers. If $a$ and $b$ are integers, we denote by $\llbracket a,b \rrbracket = [a,b] \cap \Z$.
We fix $p$ to be a prime number and let $q = p^\delta$ for some positive integer $\delta$. We denote by $\F_p$ the finite field with $p$ elements and by $\F_q$ its extension of degree $\delta$. For each integer $n \in \N$, we denote by $\{e_1^{(n)},\dots,e_n^{(n)}\}$ the canonical basis of the vector space $\F_{q}^n$. Denote by $\Tr : \F_q \to \F_p$ the field trace map of this field extension, that is, the $\F_p$-linear map given by $\Tr(y) = y + y^p + \cdots + y^{p^{\delta-1}}$ for each $y \in \F_q$. For $n$ and $k$ two non-negative integers, we denote by $\sgbc{k}{n}{q}$ the $q$-binomial coefficient, that is, the number of $k$-dimensional subspaces of $\F_q^n$, which is given by 
\[
    \gbc{k}{n}{q} = \left\{\begin{array}{ll}
         \displaystyle \prod_{s = 1}^k \frac{q^{n-s+1}-1}{q^s-1}&\text{if }n\geq k\geq 0,  \\[0.5cm]
         0& \text{otherwise.}
    \end{array} \right.
\] 
We let $m$ denote a positive integer and let $\mathbf n = (n_1,\dots,n_m)$ be a tuple of positive integers of length $m$. We will use the following notation for the subsequences of $\mathbf n$. For each integer $j \in \llbracket 1,m\rrbracket$, we denote by $\mathbf n(j) = (n_1,\dots,n_{j-1},n_{j+1},\dots,n_m)$ the tuple obtained by removing the $j$-th entry of $\mathbf{n}$. Likewise, for each integer $j,\ell \in \llbracket 1,m\rrbracket$ such that $j < \ell$, we denote the tuple obtained after removing both $j$-th and $\ell$-th entries of $\mathbf n$ by $\mathbf n(j,\ell) = \mathbf n(\ell,j)= (n_1,\dots,n_{j-1},n_{j+1},\dots,\dots,n_{\ell-1},n_{\ell+1},\dots,n_m)$. 
Finally, we let $\R_k[X]$ denote the ring of univariate real polynomials of degree at most $k$.

\subsection{Tensors and tensor rank}

In this section, we will define the notions associated to tensors that will be of use in the following sections of the paper. For further details, we refer the reader to \cite{Burgisser1997ch14,ByrneNeriRavagnaniSheekeyTensorRepresentation2019,KRUSKAL197795,Lang2002TensorProduct}.

For each sequence of positive integers $\mathbf{n} = (n_1,\dots,n_m)$, we denote by $\F_q^{\mathbf{n}}$ the $n_1n_2\cdots n_m$-dimensional vector-space of $n_1 \times \cdots \times n_m$ tensors over $\F_q$ given by
\[
    \F_q^{\mathbf{n}} = \F_q^{n_1 \times \cdots \times n_m} = \F_q^{n_1} \otimes \cdots \otimes \F_q^{n_m}.
\]
If $m = 2$, we will identify the space of tensors $\F_{q}^{n_1 \times n_2}$ with the space of matrices of size $n_1 \times n_2$ over $\F_q$ with the relation $u \otimes v = u^\top v$, for each $u \in \F_q^{n_1}$ and $v \in \F_q^{n_2}$. We denote by $\S_{\mathbf n}$ the set of simple tensors in $\F_q^{\mathbf n}$, which is the set
\[
    \S_{\mathbf n} = \S_{n_1 \times \cdots \times n_m} = \left\{u_1 \otimes \cdots \otimes u_m \ : \ (u_1,\dots,u_m) \in (\F_q^{n_1} \backslash\{0\}) \times \cdots \times (\F_q^{n_m}\backslash\{0\})\right\} \subseteq \F_{q}^{\mathbf n}.
\]
The set of perfect tensors of $\F_{q}^{\mathbf n}$ has cardinality
\[
   \displaystyle \left|\S_{\mathbf n}\right| = \frac{\prod_{j=1}^m (q^{n_j}-1)}{(q-1)^{m-1}}.
\]
Let us denote by $\GL_{\mathbf{n}}(\F_q)$ the group
$
    \GL_{\mathbf{n}}(\F_q) = \GL_{n_1 \times \cdots \times n_m }(\F_q)= \GL_{n_1}(\F_q) \times \cdots \times \GL_{n_m}(\F_q).
$
For each $x \in \F_q^{\mathbf{n}}$, we denote by $(x_i)_{ i \in \prod_{j=1}^m \llbracket 1,n_j\rrbracket}$ the unique family of scalars such that
\[
    x = \sum_{i \in \prod_{j=1}^m \llbracket 1,n_j\rrbracket} x_{ i} e_{i_1}^{(n_1)} \otimes \cdots \otimes e_{i_m}^{(n_m)}.
\]

\begin{Def}
    For each tuple of positive integers $\mathbf n = (n_1,\dots,n_m)$, we define the dot-product on the space $\F_q^{\mathbf n}$ to be the bilinear map
    \[
        \langle \cdot | \cdot \rangle : \F_q^{\mathbf n} \times \F_q^{\mathbf n} \to \F_q, (x,y) \mapsto \langle x | y \rangle = \sum_{i \in \prod_{j=1}^m\llbracket 1,n_j\rrbracket} x_{i}y_i.
    \]
    Then, if $\mathcal C$ is a subspace of $\F_q^{\mathbf n}$, we denote by $\mathcal C^\perp$ the dual-space of $\mathcal C$ with respect to the bilinear map $\langle \cdot | \cdot \rangle$, which is the space $\CCC^\perp = \{ y \in \F_q^{\mathbf n} \ : \ \forall x \in \CCC, \langle x | y \rangle = 0\}$.
\end{Def}

\begin{Def}
    Let $x \in \F_q^{\mathbf n}$ be an $m$-order tensor. The least integer $\tau \in \N$ such that for each $j \in \llbracket 1,m\rrbracket$ there exists a sequence $(v_{i,j})_{i \in \llbracket 1,\tau\rrbracket} \in (\F_q^{n_j})^\tau$ such that
    \begin{equation}
        \label{eq:minimaltensorrankform}
        x = \sum_{i = 1}^\tau v_{i,1}\otimes \cdots \otimes v_{i,m}
    \end{equation}
    is called the {tensor rank} (or rank) of $x$ and is denoted by $\rank(x)$. If $\tau = \rank(x)$, then the sum in equation (\ref{eq:minimaltensorrankform}) is called a {minimal rank form} of $v$.
\end{Def}

Similarly, the rank of a matrix $M$ is the least amount $t$ of matrices of rank one $A_1,\dots,A_t$ such that $M = A_1 + \cdots + A_t$. In that sense, the notion of tensor rank is a generalisation of the rank of a matrix. In the matrix case, as well as the tensor case, an element may have several minimal rank forms. Contrary to the rank of a matrix, the maximum possible value of the tensor rank of an element in $\F_q^{\mathbf n}$ is not necessarily known. We recall the following upper bounds presented in \cite{HOWELL19789} and generalised in \cite{RothMaximumArrayCodes}.

\begin{Prop}[{\cite{HOWELL19789}}] \label{prop:howellbound}
    Let $x \in \F_q^{\mathbf n}$ be a tensor. Then 
    \[
        \rank(x) \leq \frac{\prod_{j = 1}^m n_j}{\max_{j \in \llbracket 1,m\rrbracket} n_j}.
    \]
    Moreover, if there exist integers $\ell,n \in \N$ such that $\ell \leq n$ and $\mathbf n \in\{ (\ell,n,n),(n,\ell,n),(n,n,\ell)\}$, then we have
    \[
        \rank(x) \leq n\ell - \left\lfloor \frac{\ell^2}{4} \right\rfloor.
    \]
\end{Prop}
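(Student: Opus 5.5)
The first bound comes from slicing $x$ along the largest mode and bounding each slice by a sum of simple tensors. Since the tensor rank is invariant under permuting the tensor factors, we may assume $n_m = \max_j n_j$. Write
\[
    x = \sum_{i \in \prod_{j=1}^{m-1}\llbracket 1,n_j\rrbracket} e_{i_1}^{(n_1)}\otimes\cdots\otimes e_{i_{m-1}}^{(n_{m-1})}\otimes w_i,
    \qquad w_i = \sum_{k=1}^{n_m} x_{(i,k)}\, e_k^{(n_m)} .
\]
Each summand is a simple tensor or zero, and there are $\prod_{j<m} n_j = \prod_j n_j/\max_j n_j$ of them. Hence $\rank(x)\le \prod_j n_j/\max_j n_j$.

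For the second bound, permuting factors again, we may assume $\mathbf n=(\ell,n,n)$. Then $x=\sum_{a=1}^{\ell} e_a^{(\ell)}\otimes M_a$ with $M_a\in\F_q^{n\times n}$, and we must write the slice space $\langle M_1,\dots,M_\ell\rangle$ inside the span of few rank-one matrices. More precisely, $\rank(x)\le t$ as soon as there are rank-one matrices $A_1,\dots,A_t$ whose span contains every $M_a$. The plan is to reduce to a smaller case and induct on $\ell$.

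\emph{Step 1: extract a pencil and reduce its size.} Choose $u\in\F_q^n$ and consider the vectors $M_1u,\dots,M_\ell u$. Acting by $\GL_n\times\GL_n$ (which preserves rank), we aim to put the slices in a form where a block of about $\ell/2$ rows and $\ell/2$ columns can be peeled off. Concretely, the rows used by the first $\lceil \ell/2\rceil$ slices are covered by rank-one terms costing $n$ each. The remaining $\lfloor \ell/2\rfloor$ slices only need $n-\lceil\ell/2\rceil$ additional terms each, because their entries in the already-covered directions are absorbed by suitable combinations. The count is then
\[
    \lceil \ell/2\rceil\, n+\lfloor \ell/2\rfloor\,(n-\lceil \ell/2\rceil)=n\ell-\lfloor \ell^2/4\rfloor .
\]

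\emph{Step 2: an inductive alternative.} Show $\rank \le n + \operatorname{rank}(\text{an } (\ell-1)\times(n-1)\times(n-1)\text{ or }(\ell-1)\times n\times(n-1)\text{ tensor})$. Do this by using one slice to clear one row and one column of all the others via $\GL_n$ operations, after paying $n$ rank-one terms for that slice. Then check that the recursion $f(\ell,n)\le n+f(\ell-1,n')$, with alternating shrinkage, sums to $n\ell-\lfloor\ell^2/4\rfloor$.

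The main obstacle is making the clearing step work over an arbitrary finite field $\F_q$ without eigenvalue assumptions. I would first try the row/column elimination using a vector $u$ with $M_1u\neq 0$, and handle the degenerate case where all slices share a common kernel by reducing $n$ directly. If the elimination becomes unmanageable, I would fall back on the cited Howell argument.
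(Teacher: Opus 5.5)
Your proof of the first bound is correct and complete. The paper itself gives no proof of this proposition; it quotes it from Howell, so the only question is whether your second part stands on its own. It does not.

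Step~1 only checks the arithmetic $\lceil \ell/2\rceil n+\lfloor \ell/2\rfloor (n-\lceil \ell/2\rceil)=n\ell-\lfloor \ell^2/4\rfloor$. The claim that the later slices cost only $n-\lceil \ell/2\rceil$ each is exactly what needs proof.

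In Step~2 the mechanism you describe cannot work, for three reasons.
\begin{itemize}
\item A change of basis $(P,Q)\in\GL_n(\F_q)\times\GL_n(\F_q)$ preserves the rank of every slice, so an invertible $M_a$ never acquires a zero row.
\item The $n$ terms $e_1\otimes u_k\otimes w_k$ that pay for $M_1$ do not touch $M_2,\dots,M_\ell$ at all. You also give no reason why a row \emph{and} a column could be cleared for $n$ terms.
\item The recursion $f(\ell,n)\le n+f(\ell-1,n')$ is not well defined once the rectangular format $(\ell-1)\times n\times(n-1)$ appears, because the next step then costs $n-1$, not $n$.
\end{itemize}
Deferring to Howell for the hard step leaves the second bound unproved.

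The missing idea is to let the first-mode factors of the paying terms vary. Assume $M_1\neq 0$ (reorder the slices, or note $x=0$). Pick $\psi\in(\F_q^n)^*$ with $\beta:=(\psi\otimes\mathrm{id})(M_1)\neq 0$. Choose a basis $w_1,\dots,w_n$ of $\F_q^n$ with $\beta=w_1+\dots+w_n$, and write $M_1=\sum_{k}u_k\otimes w_k$; then $\psi(u_k)=1$ for every $k$.

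For vectors $\mu_k=\sum_{a\geq 2}\mu_{k,a}e_a$, subtract the $n$ simple tensors $(e_1+\mu_k)\otimes u_k\otimes w_k$ from $x$. The remaining slices are $M_a'=M_a-\sum_k\mu_{k,a}u_k\otimes w_k$ for $a\geq 2$. Their contraction is $(\psi\otimes\mathrm{id})(M_a')=(\psi\otimes\mathrm{id})(M_a)-\sum_k\mu_{k,a}w_k$, which vanishes for a unique choice of the $\mu_{k,a}$ because the $w_k$ form a basis. Hence $\rank(x)\le n+\rank(x')$ with $x'$ of format $(\ell-1)\times(n-1)\times n$. This works over every field, with no eigenvalue or field-size condition.

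In general, the same construction reduces an $\ell\times a\times b$ tensor to format $(\ell-1)\times(a-1)\times b$ at cost $b$, and symmetrically with modes $2$ and $3$ exchanged. Alternating which mode shrinks gives the costs $n,n-1,n-1,n-2,n-2,\dots$, and $\sum_{k=1}^{\ell}(n-\lfloor k/2\rfloor)=n\ell-\lfloor \ell^2/4\rfloor$. This is the one-row (not row-and-column) version of your Step~2, and it is exactly what the bound requires.
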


The Kruskal rank of a sequence of elements in a vector space is an integer measuring the degree of linear dependency between its elements in a finer way than its rank. In \cite{KRUSKAL197795}, Kruskal introduced this notion to describe a property sufficient to ensure that a tensor has a single minimal rank form up to permutation of the terms of the sum. As we will see in Section \ref{sec:multilinearfromsgraph}, the number of such decompositions of a tensor will be involved in the distance-regularity assessment of the graphs that will be considered, and thus the uniqueness criterion (Theorem~\ref{thm:kruskaluniquenesstheorem}) introduced by the author will be of use.

\begin{Def}[Kruskal's rank] 
    Let $V$ be a vector space and let $(v_1,\dots,v_s)$ be a sequence of $V$. We define the {Kruskal rank} or the k-rank of $(v_1,\dots,v_s)$, denoted by $\krank(v_1,\dots,v_s)$, as the integer
    \[
        \krank(v_1,\dots,v_s) = \max \{ r \in \llbracket 1,s \rrbracket \ : \ \forall J \subseteq \llbracket 1,s\rrbracket, |J| = r, (v_j)_{j \in J} \text{ is linearly independent}\}.
    \]
\end{Def}

We can now state Kruskal's uniqueness theorem for tensors over a finite field, which is also valid for any field.
\begin{Thm}[{Kruskal's uniqueness theorem}, \cite{KRUSKAL197795,RHODES20101818ConciseProofKruskalThmDecomp}] \label{thm:kruskaluniquenesstheorem}
    Assume $m\geq 3$. Let $t \geq 2$ be an integer. For each $j \in \llbracket 1,m\rrbracket$, let $(v_{i,j})_{i \in \llbracket 1,t\rrbracket}$ be a sequence of $\F_{q}^{n_j}$. Assume that
        \[
        2t \leq 1+\sum_{j= 1}^m (\krank(v_{1,j},\dots,v_{t,j}) -1).
    \]
    Then the tensor
    $
        v = \sum_{i = 1}^t v_{i,1} \otimes \cdots \otimes v_{i,m}
   $ 
    has a unique minimal rank form, up to reordering the terms of the sum.
\end{Thm}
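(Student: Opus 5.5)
The plan is to first reduce the case $m\geq 3$ to the case $m=3$ by grouping modes, and then prove the third-order statement with Kruskal's permutation lemma, following Rhodes' concise argument \cite{RHODES20101818ConciseProofKruskalThmDecomp}. Write $k_j=\krank(v_{1,j},\dots,v_{t,j})$. Proving ``unique minimal rank form'' means two things. Suppose $v=\sum_{i=1}^s w_{i,1}\otimes\cdots\otimes w_{i,m}$ with $s\leq t$. We must show that $s=t$, so that $\rank(v)=t$. We must also show that, after reordering, $w_{i,1}\otimes\cdots\otimes w_{i,m}=v_{i,1}\otimes\cdots\otimes v_{i,m}$ for every $i$.

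\textbf{Reduction to $m=3$.} For vectors $a_i\in\F_q^{n}$ and $b_i\in\F_q^{n'}$, consider the columnwise tensors $a_i\otimes b_i$ (the Khatri--Rao product). The first step is the inequality
\[
\krank(a_1\otimes b_1,\dots,a_t\otimes b_t)\geq\min\bigl(t,\krank(a_i)_i+\krank(b_i)_i-1\bigr).
\]
I would prove it by viewing a dependency $\sum_{i\in J}\lambda_i a_i\otimes b_i=0$ as the matrix identity $A_J\diag(\lambda)B_J^\top=0$. Then Sylvester's rank inequality gives $\min(|J|,k_a)+\min(|J|,k_b)-|J|\leq 0$ on the support of $\lambda$, which forces the support to exceed $k_a+k_b-1$. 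Next I would partition $\llbracket 1,m\rrbracket$ into three nonempty groups $G_1,G_2,G_3$ and flatten $v$ into a third-order tensor in $\F_q^{N_1}\otimes\F_q^{N_2}\otimes\F_q^{N_3}$, where $N_\ell=\prod_{j\in G_\ell}n_j$. Iterating the inequality, group $\ell$ has k-rank $K_\ell\geq\min\bigl(t,1+\sum_{j\in G_\ell}(k_j-1)\bigr)$. Hence $\sum_\ell(K_\ell-1)\geq 2t-2$, either directly or because some $K_\ell=t$. In the latter case I would choose the grouping so that the other two groups still carry enough k-rank, for instance by putting greedily as few modes as possible into the saturated group. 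This bookkeeping is a routine case check.

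Once the third-order statement holds, any alternative decomposition of $v$ with $s\leq t$ simple $m$-tensors flattens to a third-order decomposition. It therefore coincides termwise with the flattened original decomposition. Finally, each flattened factor is itself a simple tensor over its group, and a nonzero simple tensor determines its factors up to scalars. This recovers the $m$-order terms.

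\textbf{Case $m=3$.} Write $A=(v_{i,1})_i$, $B=(v_{i,2})_i$, $C=(v_{i,3})_i$ as matrices with $t$ columns, and let $\bar A,\bar B,\bar C$ (with $s\leq t$ columns) be those of a second decomposition. Note that $k_A+k_B+k_C\geq 2t+1$ forces every k-rank to be at least $1$, so all columns are nonzero. The key tool is Kruskal's permutation lemma. Suppose $\bar A$ has no zero column and, for every $x$ with $w(x^\top\bar A)\leq s-\rank(\bar A)+1$, we have $w(x^\top A)\leq w(x^\top\bar A)$, where $w$ counts nonzero entries. Then $A=\bar A\Pi\Lambda$ for a permutation matrix $\Pi$ and an invertible diagonal matrix $\Lambda$; in particular $s=t$. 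I would prove this lemma by induction on the dimension of the subspaces of $x$'s that vanish on many columns of $\bar A$, as in Rhodes.

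To verify the hypothesis, contract $v$ along the first mode by $x$. This gives
\[
B\diag(x^\top A)C^\top=\bar B\diag(x^\top\bar A)\bar C^\top .
\]
The rank of the left-hand side is at least $\min(w,k_B)+\min(w,k_C)-w$, where $w=w(x^\top A)$. The rank of the right-hand side is at most $w(x^\top\bar A)$. Combining these with $k_A+k_B+k_C\geq 2t+1$ yields the needed inequality. One first shows that $\bar A$, $\bar B$ and $\bar C$ have full enough rank and no zero columns, using the same contraction with generic $x$. Applying the lemma in each of the three modes gives the permutation and scaling relations. A final argument then shows that the three permutations agree and that the scalars multiply to $1$. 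For this I would use the flattening $A\diag(\cdot)\ (B\odot C)^\top$ together with the full column rank of the Khatri--Rao product $B\odot C$; that full rank follows from the inequality above, since $k_B+k_C-1\geq t$.

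I expect the main obstacle to be the permutation lemma itself, together with the rank inequalities needed to feed it. This is where Kruskal's condition is really consumed. Over a finite field one must check that no argument uses genericity beyond ``there exists $x$ in a complement of a union of proper subspaces''. Because $|\F_q|$ may be small, I would instead use a rank/dimension formulation throughout, choosing $x$ in the annihilator of specified columns rather than generically.
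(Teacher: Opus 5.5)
\textbf{Summary.} The paper gives no proof of this statement. It only cites Kruskal and Rhodes, and both treat third-order tensors. Your route is the standard one: group the modes into three using the Khatri--Rao bound, then run the Kruskal/Rhodes permutation-lemma argument for $m=3$. This is how Sidiropoulos and Bro extended Kruskal's theorem to $m$-way arrays, and it supplies the passage from $m=3$ to general $m$ that the cited sources do not cover. As written, however, the proposal has an off-by-one error in translating the hypothesis, and it sits exactly where Kruskal's condition is consumed.

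\textbf{The off-by-one error.} For $m=3$ the hypothesis $2t\le 1+\sum_j(k_j-1)$ reads $k_A+k_B+k_C\ge 2t+2$, not $2t+1$. Likewise, your grouping must deliver $\sum_\ell(K_\ell-1)\ge 2t-1$, not $2t-2$.
\begin{itemize}
\item Under $2t+1$, your claim that the two rank bounds ``yield the needed inequality'' is false. From $r_{\bar A}\ge k_A$ you only get $w(x^\top\bar A)\le t-k_A+1\le k_B+k_C-t$. If $w(x^\top A)=t$, the lower bound $k_B+k_C-t$ on $\rank(B\diag(x^\top A)C^\top)$ does not exceed this, so no contradiction arises.
\item No argument can succeed at $2t+1$. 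The tensor $e_1\otimes e_1\otimes e_1+e_2\otimes e_2\otimes e_1$ has k-ranks $2,2,1$, so $k_A+k_B+k_C=5=2t+1$ with $t=2$. Yet it also equals $(e_1+e_2)\otimes e_1\otimes e_1+e_2\otimes(e_2-e_1)\otimes e_1$; this is the tensor $x_2$ in the proof of Proposition~\ref{prop:distanceregularityofsegregraph}.
\item With the correct threshold you get $w(x^\top\bar A)\le k_B+k_C-t-1$. This is strictly smaller than both $\min(k_B,k_C)$ and $k_B+k_C-t$. So each of the three cases of $\min(w,k_B)+\min(w,k_C)-w$ contradicts $w(x^\top A)>w(x^\top\bar A)$, and your plan goes through.
\end{itemize}

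\textbf{Smaller points to tighten.}
\begin{itemize}
\item The bound $r_{\bar A}\ge k_A$ should not come from a generic contraction, which is unavailable over small $\F_q$. Use instead the unfolding $\bar A(\bar B\odot\bar C)^\top=A(B\odot C)^\top$ together with the full column rank of $B\odot C$; this gives $r_{\bar A}\ge r_A\ge k_A$ directly. Zero columns of $\bar A$ are handled by discarding zero terms, which only lowers $s$.
\item Your permutation lemma with $s<t$ columns also needs $A$ to have no zero column. Otherwise $A=[a\ 0]$, $\bar A=[a]$ satisfies your hypothesis and the conclusion fails. You do have this condition. With it, the hyperplane induction shows that every proper subspace spanned by columns of $\bar A$ contains at least $t-s$ more columns of $A$ than of $\bar A$. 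Summing over the at least two lines spanned by columns of $\bar A$ then forces $s=t$.
\item For the grouping, repeatedly merge the two modes of smallest k-rank. Sort $c_j=k_j-1$ in nondecreasing order; the merged mode has $K-1\ge\min(t-1,c_1+c_2)$. If $c_1+c_2\ge t$, then $c_3+c_4\ge 2c_2\ge t$, so the total stays at least $2t-1$.
\item For matching the permutations, reindex so that $\bar a_i\propto a_i$, and let $\tau$ be the permutation with $\bar b_i\propto b_{\tau(i)}$. Any $x$ annihilating $k_A-1$ columns of $A$ has support $S$ with $|S|<\min(k_B,k_C)$. Then $\mathrm{span}(b_S)=\mathrm{span}(b_{\tau(S)})$, which forces $\tau(S)=S$. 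Intersecting such supports isolates each index. The scalars are then fixed by your unfolding through $B\odot C$.
\end{itemize}
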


We now consider the action of $\GL_{\mathbf{n}}(\F_q)$ on the additive group $\F_q^{\mathbf{n}}$. We will use in the rest of this paper the generalisation of this action, that is the monoid action of $\prod_{j = 1}^m \F_q^{n_j\times n_j}$ on $\F_q^{\mathbf n}$. Such actions on a given tensor correspond to changes of basis of its slice spaces; see Definition \ref{def:modeactionslicespace}. If we identify an $m$-order tensor with an $m$-linear form, then such actions correspond also to a change of basis of the domain. For further reading; see \cite{Burgisser1997ch14}.

\begin{Rq}
    In the following, for convenience, we will identify the tensor space 
    $\F_q^{\mathbf n}$ with the corresponding tensor-space of lower order when one or more of the coefficients of $\mathbf n$ is one, namely we will use the identification given by the isomorphism
    \[
        \phi : \F_q^{n_1\times \dots \times n_{j-1}\times 1\times n_{j+1}\times \dots \times n_m} \to \F_q^{n_1\times \dots \times n_{j-1}\times n_{j+1}\times \dots \times n_m}
    \]
    defined for each $(u_1,\dots,u_{j-1},u_{j+1},\dots,u_m) \in \prod_{i \in \llbracket 1,m\rrbracket, i \neq j} \F_q^{n_i}$ and for each $u \in \F_q$ by
    \[
        \phi\left( \left(\bigotimes_{i = 1}^{j-1} u_i\right) \otimes u \otimes \left( \bigotimes_{i = j+1}^m u_i\right) \right) = u \bigotimes_{i \in \llbracket 1,m\rrbracket \backslash \{j\}} u_j.
    \]
    Likewise, we identify $\F_q^{\mathbf n}$ with $\F_q$ if $\mathbf{n} = (1,\dots,1)$.
\end{Rq}
\begin{Def}
    Let $\mathbf n = (n_1,\dots,n_m)$ and $\mathbf n' = (n_1',\dots,n_m')$ be tuples of positive integers of same length. The map
    $
      :  \F_q^{n_1'\times n_1} \times \cdots \times \F_q^{n_m'\times n_m} \times \F_q^{\mathbf n} \to \F_q^{\mathbf n'}
    $
   is defined by
    \[
        (M_1,\dots,M_m,u_1\otimes \cdots \otimes u_m ) \mapsto (M_1,\dots,M_m) \cdot (u_1\otimes \cdots \otimes u_m) := (u_1M_1^\top) \otimes \cdots \otimes (u_mM_m^\top).
    \]
    for each $(M_1,\dots,M_m) \in \prod_{i = 1}^m \F_q^{n_i'\times n_i}$ and each $ u_1\otimes \cdots \otimes u_m \in \S_{\mathbf n}$.   
\end{Def}
\begin{Rq}\label{rk:LinearityOfActionOnTensors}
    The map above is an $(m+1)$-linear map. Moreover, for each $\mathbf n$, $\mathbf n'$, and $\mathbf n''$, tuples of positive integers of length $m$, for each $(M_1,\dots,M_m) \in \prod_{i = 1}^m \F_q^{n_i'\times n_i}$, for each $(M_1',\dots,M_m') \in \prod_{i = 1}^m \F_q^{n_i''\times n_i'}$ and for each $x \in \F_q^{\mathbf n}$, we have
    \[
        (M_1',\dots,M_m') \cdot ((M_1,\dots,M_m) \cdot x) = (M_1'M_1,\dots,M_m'M_m) \cdot x.
    \]
\end{Rq}

\begin{Def}\label{def:modeactionslicespace}
    Let $\mathbf{n} = (n_1,\dots,n_m)$ be a sequence of positive integers. Let $j \in \llbracket 1,m \rrbracket$. The linear map $m_j : \F_q^{n_j} \times \F_q^{\mathbf{n}} \to \F_q^{n_1\times \dots \times n_{j-1}\times n_{j+1}\times \dots \times n_m}$ defined for each $u \in\F_q^{n_j}$ and each $x\in \F_q^{\mathbf{n}}$ by
    \[
        \modemult_j(u,x) = (I_{n_1},\dots,I_{n_{j-1}},u,I_{n_{j+1}},\dots,I_{n_m})\cdot x
    \]
    is called the $j$-th contraction map and we define the $j$-th slice space of $x$ by $\slicesp_j(x) = \im \modemult_j(\cdot,x)$. We then say that $x$ is a generator tensor of $\slicesp_j(x)$.
\end{Def}

We state the following results, which are straightforward to verify.

\begin{Prop}\label{prop:propertiestensorproddotprod}
    Let $\mathbf{n} = (n_1,\dots,n_m)$ and $\mathbf n' = (n_1',\dots,n_m')$ be two sequences of positive integers.
    \begin{enumerate}
        \item For each $u_1 \otimes \cdots \otimes u_m \in \S_{\mathbf n}$ and each $x \in \F_q^{\mathbf n}$ we have 
        $
            \langle u_1 \otimes \cdots \otimes u_m | x \rangle = (u_1,\dots,u_m) \cdot x.
        $
        \item For each $x \in \F_q^{\mathbf n}$, $y \in \F_q^{\mathbf n'}$, and $M\in \prod_{i=1}^m \F_q^{n_i'\times n_i}$, we have
        $
            \langle (M_1,\dots,M_m) \cdot x | y \rangle = \langle x | (M_1^\top,\dots,M_m^\top) \cdot y \rangle.
        $
        \item The group $\GL_{\mathbf{n}}(\F_q)$ acts on $\F_q^{\mathbf{n}}$ with the map $\GL_{\mathbf{n}}(\F_q) \to \Endo_{\F_q}(\F_q^{\mathbf{n}}), P \mapsto (x \in \F_q^{\mathbf n} \mapsto P \cdot x)$.
        \item The map
        $
            \F_q^{\mathbf{n}} \to \Hom_{\F_p}(\F_q^{\mathbf{n}},\F_p), x \mapsto (y \in \F_q^{\mathbf{n}} \mapsto \Tr(\langle x | y \rangle))
        $ for all $x \in \F_q^{\mathbf{n}}$
        is an isomorphism of $\F_p$-vector spaces.

    \end{enumerate}
\end{Prop}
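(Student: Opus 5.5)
All four items are verified on simple tensors and then extended by multilinearity. The simple tensors $e_{i_1}^{(n_1)}\otimes\cdots\otimes e_{i_m}^{(n_m)}$ span $\F_q^{\mathbf n}$, and every map involved is linear in the tensor argument: the action is $(m+1)$-linear by Remark~\ref{rk:LinearityOfActionOnTensors}, and the dot-product is bilinear.

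For item 1, take $x = e_{i_1}\otimes\cdots\otimes e_{i_m}$. Then $\langle u_1\otimes\cdots\otimes u_m | x\rangle = \prod_j (u_j)_{i_j}$. On the other side, $(u_1,\dots,u_m)\cdot x = (e_{i_1}u_1^\top)\otimes\cdots\otimes(e_{i_m}u_m^\top)$. Here each $u_j$ is viewed as a $1\times n_j$ matrix, so $e_{i_j}u_j^\top$ is the scalar $(u_j)_{i_j}$. Under the identification of the Remark with $\F_q^{1\times\cdots\times 1}\cong\F_q$, this tensor is the product of these scalars. Linearity in $x$ finishes the item.

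For item 2, again take $x=\bigotimes_j e_{i_j}^{(n_j)}$ and $y=\bigotimes_j e_{k_j}^{(n_j')}$. The dot product of two simple tensors factors as $\langle \bigotimes a_j|\bigotimes b_j\rangle=\prod_j a_j b_j^\top$, which is immediate from the coordinate formula. So the left side is $\prod_j e_{i_j}M_j^\top e_{k_j}^\top=\prod_j (M_j)_{k_j i_j}$. The right side is $\prod_j e_{i_j}(e_{k_j}M_j)^\top=\prod_j (M_j)_{k_j i_j}$ as well. Bilinearity in $(x,y)$ concludes.

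Item 3 follows from Remark~\ref{rk:LinearityOfActionOnTensors}. Each $P$ acts linearly, $(I,\dots,I)$ acts as the identity, and composition respects products. Hence $P\mapsto(x\mapsto P\cdot x)$ is a group homomorphism into the invertible endomorphisms, with inverse given by $P^{-1}$.

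For item 4, the map is $\F_p$-linear, since $\langle\cdot|\cdot\rangle$ is bilinear and $\Tr$ is $\F_p$-linear. Both sides have $\F_p$-dimension $\delta\prod_j n_j$, so it suffices to prove injectivity. Suppose $x\neq0$, say $x_i\neq0$. Put $y=c\,e_{i_1}\otimes\cdots\otimes e_{i_m}$, so that $\langle x|y\rangle=cx_i$, which ranges over all of $\F_q$ as $c$ varies. The trace is surjective onto $\F_p$ (it is a nonzero polynomial map of degree $p^{\delta-1}<q$), so some $c$ gives $\Tr(cx_i)\neq0$. This last step is the only non-formal input, and it is standard.
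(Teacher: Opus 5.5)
Your proof is correct. The paper gives no proof of this proposition, only the remark that it is straightforward to verify, and your argument supplies exactly that routine verification. You check items 1 and 2 on basis tensors and extend by (multi)linearity, you derive item 3 from the composition rule in Remark~\ref{rk:LinearityOfActionOnTensors}, and you obtain item 4 from a dimension count together with the fact that $\Tr$ is not identically zero on $\F_q$.
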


As stated before, the induced action of $\GL_{n_j}(\F_q)$ on the space of tensors $\F_q^{\mathbf n}$ corresponds to a change of basis of the $j$-th slice space of a tensor. That is, if $x \in \F_q^{\mathbf n}$ and $P_j \in \GL_{n_j}(\F_q)$, then the sequence of tensors $(\modemult_j(e_i,(I_{n_1},\dots,I_{n_{j-1}},P_j,I_{n_{j+1}},\dots,I_{n_m})\cdot x))_{i \in \llbracket 1,n_j \rrbracket }$ is a linear combination of $(\modemult_j(e_i,x))_{i \in [n_j]}$ whose coefficients are determined by $P_j$. We also obtain that for each $x,y \in\F_q^{\mathbf n}$, we have $\slicesp_j(x) = \slicesp_j(y)$ if and only if there exists $P_j \in \GL_{n_j}(\F_q)$ such that $y = (I_{n_1},\dots,I_{n_{j-1}},P_j,I_{n_{j+1}},\dots,I_{n_m})\cdot x$.

It has been noted in \cite{Burgisser1997ch14} that the slice space of a tensor characterises its tensor rank. More precisely, the rank of a tensor $x$ is the least number of elements of rank one whose linear span contains a slice space of $x$. A useful consequence of this fact can be stated as follows.

\begin{Prop}[{\cite[14.45]{Burgisser1997ch14}}]\label{prop:slicespdimlowerboundstrank}
        Let $x \in \F_q^{\mathbf{n}}$ be a tensor and let $j \in \llbracket 1,m\rrbracket$. Then $\dim_{\F_q}\slicesp_j(x) \leq \rank(x)$.
\end{Prop}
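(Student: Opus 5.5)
Fix $j \in \llbracket 1,m\rrbracket$ and set $\tau = \rank(x)$. Write a minimal rank form
\[
    x = \sum_{i=1}^\tau v_{i,1}\otimes \cdots \otimes v_{i,m},
\]
as in equation (\ref{eq:minimaltensorrankform}). The plan is to show that $\slicesp_j(x)$ is contained in the span of $\tau$ explicit tensors of $\F_q^{\mathbf n(j)}$. The dimension bound then follows at once, since a space spanned by $\tau$ vectors has dimension at most $\tau$.

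The first step is to compute the contraction map on simple tensors. By the definition of the action and of $\modemult_j$, for $u \in \F_q^{n_j}$ we have
\[
    \modemult_j(u, v_1\otimes\cdots\otimes v_m) = v_1 \otimes \cdots \otimes v_{j-1}\otimes (v_j u^\top) \otimes v_{j+1}\otimes\cdots\otimes v_m.
\]
Here $v_ju^\top$ is a scalar, so under the identification of the Remark this equals
\[
    \langle u | v_j\rangle \, \bigotimes_{k\neq j} v_k .
\]
By Remark~\ref{rk:LinearityOfActionOnTensors}, the map $\modemult_j(u,\cdot)$ is linear in the tensor argument. Hence for every $u$,
\[
    \modemult_j(u,x) = \sum_{i=1}^\tau \langle u|v_{i,j}\rangle \, w_i, \qquad w_i := \bigotimes_{k \neq j} v_{i,k}\in \F_q^{\mathbf n(j)}.
\]

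The second step is to conclude. The formula above gives $\slicesp_j(x) = \im \modemult_j(\cdot,x) \subseteq \Span\{w_1,\dots,w_\tau\}$, so
\[
    \dim_{\F_q}\slicesp_j(x)\le \tau = \rank(x).
\]
The degenerate case $x=0$, where $\tau=0$ and the slice space is $\{0\}$, is immediate.

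There is no real obstacle here. The only point needing care is keeping track of the identification of $\F_q^{n_1\times\cdots\times 1\times\cdots\times n_m}$ with $\F_q^{\mathbf n(j)}$ when contracting the $j$-th factor by a $1\times n_j$ matrix.
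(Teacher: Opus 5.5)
Your proof is correct and is essentially the paper's argument. The paper gives no separate proof: it invokes B\"urgisser's characterisation that $\rank(x)$ is the least number of rank-one tensors whose span contains a slice space of $x$. Your inclusion $\slicesp_j(x)\subseteq\Span\{w_1,\dots,w_\tau\}$, obtained by contracting a minimal rank form term by term, is exactly the easy half of that characterisation, and it is all the inequality needs.
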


\subsection{The degree one complex characters of the group of tensors}

In this paper, we will use complex (additive) characters of degree one of vector spaces over finite fields as groups, that is, group morphisms $(V,+) \to (\C^\times ,\times)$ with $V$ such a vector space. Let us recall the known results on irreducible characters of a vector-space over a finite field. We refer the reader to \cite[Chapter 5]{LidlNiederreiter1996FiniteFields} for more details. The additive group $\F_q$ has a cyclic character group generated by 
\[
    \F_p \to \C^*, i \mapsto \exp\left(\frac{2i\pi}{p}\right).
\]
The irreducible characters of the group $\F_q$, as a direct sum of groups isomorphic to $\F_p$, can be expressed with this fundamental character, that is any irreducible character of $\F_q$ can be expressed as $x\mapsto \chi(xy)$ for a certain $y\in \F_q$, where
\begin{equation}
    \chi : \F_q \to \C^*, x \mapsto \exp\left(\frac{2\Tr(x)\pi}{p}\right),
    \label{eq:fundamentalcharacter}
\end{equation}
see \cite[Theorem 5.7]{LidlNiederreiter1996FiniteFields}. Likewise, for each positive integer $n$, the irreducible characters of the group of vectors $\F_q^n$ are given for each $x \in \F_q^n$ by $\F_q \to \C^\times, y \mapsto \chi(x\cdot y)$, where $x \cdot y$ is the canonical dot-product. In particular, the characters of the group of $m$-order tensors $\F_q^{\mathbf n}$, isomorphic to $\F_q^{n_1\cdots n_m}$, can then be stated using the isomorphism described in Proposition~\ref{prop:propertiestensorproddotprod}. Namely, the irreducible characters of $\F_q^{\mathbf{n}}$ are 
\begin{equation}
    \chi(\langle x | \cdot \rangle ) : \F_q^{\mathbf{n}} \to \C^\times, y \mapsto \chi(\langle x | y \rangle) = \exp\left(\frac{2\Tr(\langle x | y \rangle)\pi}{p} \right),
    \label{eq:irreduciblecharactersofFqtensors}
\end{equation}
where $x$ ranges over $\F_q^{\mathbf{n}}$.

As seen in \cite{Lovasz75SpectraOfGraphsWithTransitiveGroups}, the eigenvalues of a Cayley graph are determined by the irreducible characters of the group and the generating set of the graph. We will hence use the results above to compute the eigenvalues of the multilinear forms graph.

\subsection{Graph theory fundamentals}
\label{subsec:graphtheoryfundamentals}
Let us introduce the graph theory notions relevant to our study. We will confine ourselves to non-oriented finite graphs in the entirety of this paper. We refer the reader to \cite{Biggs1974AlgebraicGraphTheory,Godsil1993AlgebraicCombinatorics,GodsilRoyle2001Ch3TransitiveGraphs} for more details.
A {graph} is a pair $\Gamma = (V,E)$ with $E\subseteq \{\{v_1,v_2\}\ : \ v_1,v_2 \in V\}$, where $V$ is a set called the set of vertices of $\Gamma$ and $E$ is called its set of edges. We assume that $V$ is finite. The adjacency matrix of the graph $\Gamma$ is the symmetric matrix $A \in \{0,1\}^{V\times V}$ defined for each $u,v \in V$ by $A_{u,v} = 1$ if ${\{u,v\} \in E}$ and $A_{u,v} = 0$ else. The eigenvalues of the graph are the eigenvalues of its adjacency matrix.
For each positive integer $i$, a {walk} of length $i$ on the graph $\Gamma$ is a sequence $(v_0,\dots,v_i) \in V^{i+1}$ such that $\{v_{k-1},v_{k}\} \in E$ for each $k \in \llbracket 1,i\rrbracket$. If $v_i = v_0$, we say that $(v_0,\dots,v_i)$ is a {closed walk}. If there exists a walk between each pair of vertices in $V$, we say that $\Gamma$ is {connected}. 
The minimum number of vertices in any walk between a pair of vertices defines a notion of distance between them. 
\begin{Def}
    Let $\Gamma$ be a connected finite graph. The function $d_\Gamma : V(\Gamma)^2 \to \N_0$ defined by 
    \begin{equation*}
        d_\Gamma(x,y) = \min\{ i \in \N_0 \ : \ \exists (v_0,\dots,v_i) \in V(\Gamma)^{i+1}, (v_0,v_i) = (x,y), \forall k \in \llbracket 1,i\rrbracket, \{v_{k-1},v_k\} \in E(\Gamma)\}
    \end{equation*}
    for each $(x,y) \in V(\Gamma)^2$ 
    is a distance function called the {geodesic distance} of $\Gamma$. 
\end{Def}

\begin{Def} 
    Let $\Gamma$ be a graph and let $k$ be a positive integer. We say that $\Gamma$ is {$k$-partially walk-regular} if, for each $\ell \leq k$, the number of closed walks of length $\ell$ starting from a certain point depends uniquely on $\ell$ (and not on the choice of the starting point). We say that $\Gamma$ is {walk-regular} if the graph is $k$-partially regular for any $k \in \N$.
\end{Def}

\begin{Def}
    Let $\Gamma = (V,E)$ be a connected finite graph. For each $x,y \in V$ and each integer $i,j,h \in \N$, we denote by $p^i_{j,h}(x,y)$ the integer
    \begin{equation*}
        p^i_{j,h}(x,y) = |\{z \in V \mid d_\Gamma(x,z) = j, d_\Gamma(y,z) = h\}|.
    \end{equation*}
    We say that $\Gamma$ is {$k$-partially distance-regular} if $p^i_{j,h}(x,y)$ depends only on $j,h$ below a certain threshold, more precisely if for each integer $i \in\N$ and each $x,x',y,y' \in V$ such that  $d_\Gamma(x,y) = d_\Gamma(x',y') = i$ we have
    \begin{equation*}
        \forall j,h\in \N_0, i \leq j+h \leq k : p^i_{j,h}(x,y) = p^i_{j,h}(x',y').
    \end{equation*} 
    We say that $\Gamma$ is {distance-regular} if it is $k$-partially distance-regular for any $k \in \N$.
\end{Def}

If a graph is distance-regular, its eigenvalues can be computed using the intersection numbers; see \cite[Section 4.1]{Brouwer1989TheoryOfDistanceRegularGraphs}. Additionally, for a graph $\Gamma = (V,E)$ of geodesic distance $d_\Gamma$, the binary relations $(\RR_i)_{i \geq 0}$ given by 
\[
    \RR_i = \{ (x,y) \in V^2 \ : \ d_\Gamma(x,y) = i \}
\]
form a symmetric association scheme if and only if the graph $\Gamma$ is distance-regular. For applications to coding theory (to obtain upper bounds on the size of a code), assessment of such properties is essential \cite{AbiadPetersRavagnani2025EigenvalueMethod,Delsarte1973AlgebraicAproachAssociationSchemes}. Here, we will study the following parameters of the graphs.

\begin{Def}
    Let $\Gamma = (V,E)$ be a graph and let $k$ be a positive integer. The $k$-th independence number is $\alpha_k(\Gamma) = \max\left\{|\VVV| \ : \ \VVV \subseteq V , \forall v,v' \in \VVV, v\neq v' \implies d_{\Gamma}(v,v') > k\right\}$.
\end{Def}

This notion can be used in the context of coding theory as follows. Regarding the study of codes $C$ subsets of a space $G$ with a metric $\delta : G \times G \to \N_0$, assume that there is a graph $\Gamma = (V,E)$ such that $V = G$ and such that the geodesic distance of the graph $G$ coincides with the metric $\delta$. Then the maximum cardinality of a code $C \subseteq G$ of minimum distance $d$ is precisely $\alpha_{d-1}(\Gamma)$, the $(d-1)$-th independence number of the graph. This perspective for the study of bounds on the cardinality of graphs has been used in \cite{AbiadAlfranoRavagnani2025EVboundsAlternatingRMC,AbiadKhramovaAntoniaRavagnani2024EigenvalueBoundSumRankMetric,AbiadNeriReijnders2025EVboundsLeeCodes,AbiadPetersRavagnani2025EigenvalueMethod}. To obtain bounds, we will consider the following bounds on the independence number of graphs.

\begin{Thm}[{\cite{AbiadCoutinhoFiol2019kindependancenumber}}]\label{thm:inertiaratio}
    Let $\Gamma = (V,E)$ be a graph with $N$ vertices and eigenvalues $\lambda_1,\dots,\lambda_{N}$ counted with multiplicities in decreasing order. Let $A$ be the adjacency matrix of $\Gamma$, let $k$ be an integer and let $p(X) \in \R_k[X]$ be a polynomial. Denote by $W(p) = \max \{ (p(A))_{u,u} : u \in V\}$, by $w(p) = \min \{ (p(A))_{u,u} : u \in V\}$, and by  $\lambda(p) = \min \{ (p(\lambda_i)) : i \in \llbracket 2,N\rrbracket\}$.
    \begin{enumerate}[label = (\roman*)]
        \item \emph{Inertia-type bound:} We have 
        \begin{equation}
            \label{eq:intertiatype}
            \alpha_k \leq \min\{ |\{ i \in \llbracket 1,N\rrbracket \ : \ p(\lambda_i) \geq w(p)\} | , |\{ i \in \llbracket 1,N\rrbracket \ : \ p(\lambda_i) \leq W(p)\} |.
        \end{equation}
        \item \emph{Ratio-type bound:} If $\Gamma$ is a regular graph and if $p(\lambda_1) > \lambda(p)$, then we have 
        \begin{equation}
            \label{eq:ratiotype}
            \alpha_k \leq N\frac{W(p) - \lambda(p)}{p(\lambda_1) - \lambda(p)}.
        \end{equation}
    \end{enumerate}
\end{Thm}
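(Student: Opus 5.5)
The plan is to reduce both bounds to one structural fact about $k$-independent sets. Let $\VVV\subseteq V$ be a set with pairwise distances greater than $k$, and put $s=|\VVV|$. For any polynomial $p\in\R_k[X]$, the entry $(A^\ell)_{u,v}$ counts walks of length $\ell$ from $u$ to $v$. This count is zero whenever $\ell< d_\Gamma(u,v)$. Hence for distinct $u,v\in\VVV$ we get $(p(A))_{u,v}=0$, since every power $\ell\le k$ is smaller than $d_\Gamma(u,v)$. So the principal submatrix $p(A)_{\VVV}$ of $p(A)$ indexed by $\VVV$ is diagonal, with diagonal entries in $[w(p),W(p)]$. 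Since $A$ is real symmetric, $p(A)$ is real symmetric with eigenvalues $p(\lambda_1),\dots,p(\lambda_N)$, counted with multiplicity.

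For the inertia-type bound (i), I will apply Cauchy interlacing to the principal submatrix $p(A)_{\VVV}$. Order the eigenvalues of $p(A)$ decreasingly as $\mu_1\ge\dots\ge\mu_N$, and those of $p(A)_{\VVV}$ as $\nu_1\ge\dots\ge\nu_s$. Interlacing gives $\mu_i\ge\nu_i\ge w(p)$ for $i\le s$. Therefore at least $s$ of the values $p(\lambda_i)$ are $\ge w(p)$. Symmetrically, $\mu_{N-s+i}\le\nu_i\le W(p)$, so at least $s$ of them are $\le W(p)$. Taking $\VVV$ of size $\alpha_k$ yields (\ref{eq:intertiatype}).

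For the ratio-type bound (ii), I will use a quadratic-form (Rayleigh quotient) argument. Since $\Gamma$ is regular, the all-ones vector $\mathbf 1$ is an eigenvector of $A$ for $\lambda_1$, hence of $p(A)$ with eigenvalue $p(\lambda_1)$. Let $x$ be the characteristic vector of $\VVV$ and write $x=\frac sN\mathbf 1+z$ with $z\perp\mathbf 1$, so that $\|z\|^2=s-\frac{s^2}{N}$.

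On one side, diagonality of $p(A)_{\VVV}$ gives
\[
x^\top p(A)x=\sum_{u\in\VVV}(p(A))_{u,u}\le sW(p).
\]
On the other side, $z$ lies in the span of eigenvectors for $\lambda_2,\dots,\lambda_N$, so
\[
x^\top p(A)x=\tfrac{s^2}{N}p(\lambda_1)+z^\top p(A)z\ge \tfrac{s^2}{N}p(\lambda_1)+\lambda(p)\bigl(s-\tfrac{s^2}{N}\bigr).
\]
Combining the two inequalities, dividing by $s$, and using $p(\lambda_1)>\lambda(p)$ gives $\frac sN\bigl(p(\lambda_1)-\lambda(p)\bigr)\le W(p)-\lambda(p)$. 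This is (\ref{eq:ratiotype}).

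The only delicate point is the lower bound on $z^\top p(A)z$. It needs $z$ to be orthogonal to the whole $\lambda_1$-eigenvector used. If $\lambda_1$ has multiplicity greater than one, I will still decompose only along $\mathbf 1$: any other eigenvalues equal to $\lambda_1$ are among $\lambda_2,\dots,\lambda_N$ and are therefore already accounted for by $\lambda(p)$. So the bound remains valid. The rest is routine.
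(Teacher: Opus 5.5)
Your proof is correct. The paper gives no proof of this statement, since it is quoted from Abiad, Coutinho and Fiol, so the natural comparison is with the argument in that source.

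For (i) you do exactly what they do. The principal submatrix of $p(A)$ indexed by a $k$-independent set is diagonal with entries in $[w(p),W(p)]$, and Cauchy interlacing gives both counts.

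For (ii) the cited proof stays inside the interlacing framework, in the style of Haemers' proof of Hoffman's bound. It takes the $2\times 2$ quotient matrix of $p(A)$ with respect to the partition $\{\VVV, V\setminus\VVV\}$. By regularity this matrix has eigenvalues $p(\lambda_1)$ and $b-\frac{s}{N-s}\bigl(p(\lambda_1)-b\bigr)$, where $b=\frac1s\sum_{u\in\VVV}(p(A))_{u,u}\le W(p)$. Quotient interlacing then bounds the second eigenvalue below by the least eigenvalue of $p(A)$, which equals $\lambda(p)$ because $p(\lambda_1)>\lambda(p)$. Rearranging gives the same inequality as yours.

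Your decomposition $x=\frac sN\mathbf 1+z$ is what that quotient-interlacing step unpacks to, so your route is more self-contained. It also makes two things visible:
\begin{itemize}
\item a repeated $\lambda_1$ causes no trouble, because orthogonality to $\mathbf 1$ alone suffices;
\item the hypothesis $p(\lambda_1)>\lambda(p)$ is needed only for the final division, not to identify the least eigenvalue of $p(A)$.
\end{itemize}
The interlacing route, in exchange, treats (i) and (ii) uniformly and extends directly to finer vertex partitions.

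You use one fact tacitly: the valency of a regular graph is its largest eigenvalue $\lambda_1$. This is standard, e.g.\ by the row-sum bound on eigenvalues.
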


Without any assumptions on the graph, there exists a mixed integer linear-programming program that computes the best possible inertia-type bound on the $k$-independence number of a graph; see \cite{AbiadCoutinhoFiol2019kindependancenumber}. If the graph is $k$-partially walk-regular, then there exists an improvement of the former algorithm as well as a linear-programming algorithm that computes the best possible ratio-type bound on the $k$-independence number of a graph; see \cite{AbiadCoutinhoFiol2019kindependancenumber,FIOL20201}. We will apply these last two algorithms to the multilinear forms graph in Section~\ref{sec:bounds}. For the $2$- and the $3$-independence number of a graph, the best possible ratio-type bound can also be stated as follows. 

\begin{Thm}[{\cite[Corollary 3.3]{AbiadCoutinhoFiol2019kindependancenumber},\cite[Theorem 11]{KaviNewman2023OptimalBound3IndepNumber}}]\label{thm:ratiobound23}
    Let $\Gamma$ be a regular graph with $N$ vertices and eigenvalues $\theta_0,\dots,\theta_r$ sorted in strictly decreasing order with respective multiplicities $m_0,\dots,m_r$. 
    \begin{enumerate}[label = (\roman*)]
        \item Assume $r \geq 2$. With $i = \min\{j \in \llbracket 1,r\rrbracket \ : \ \theta_j \leq -1\}$, we have 
        \begin{equation}
            \label{eq:ratiotype2}
            \alpha_2 \leq N\frac{\theta_0 + \theta_i\theta_{i-1}}{(\theta_0 - \theta_i)(\theta_0 - \theta_{i-1})},
        \end{equation}
        and this is the best possible bound that can be obtained by a choice of polynomial in (\ref{eq:ratiotype}).
        \item Assume $r \geq 3$. Denote by $A$ the adjacency matrix of $\Gamma$ and let $\Delta = \max\left\{ (A^3)_{u,u} : u \in V\right\}$. With $s = \max\left\{j \in \llbracket 0,r-1\rrbracket : \theta_j \geq -\frac{\theta_0^2 + \theta_0\theta_r - \Delta}{\theta_0(\theta_r+1)}\right\}$, we have 
        \begin{equation}
            \label{eq:ratiotype3}
            \alpha_3 \leq N\frac{\Delta - \theta_0(\theta_s + \theta_{s+1}+\theta_r) - \theta_s\theta_{s+1}\theta_r}{(\theta_0 - \theta_s)(\theta_0 - \theta_{s+1})(\theta_0 - \theta_r)},
        \end{equation}
        and this is the best possible bound that can be obtained by a choice of polynomial in (\ref{eq:ratiotype}).
    \end{enumerate}
\end{Thm}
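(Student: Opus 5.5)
Since this theorem is quoted from \cite{AbiadCoutinhoFiol2019kindependancenumber,KaviNewman2023OptimalBound3IndepNumber}, I would prove it by specialising the ratio-type bound (\ref{eq:ratiotype}) of Theorem~\ref{thm:inertiaratio} to polynomials of degree $2$ and $3$. I would then optimise over all such polynomials. Two reductions come first. Replacing $p$ by $ap+b$ with $a>0$ leaves the right-hand side of (\ref{eq:ratiotype}) unchanged, so I may assume $p$ is monic and $\lambda(p)=0$. Regularity of $\Gamma$ gives $(A)_{u,u}=0$, $(A^2)_{u,u}=\theta_0$ and $(A^3)_{u,u}\le \Delta$, so $W(p)$ depends only on the coefficients of $p$, on $\theta_0$, and, for $k=3$, on $\Delta$.

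\emph{Case $k=2$.} Take $p(x)=(x-\theta_i)(x-\theta_{i-1})$. Since no eigenvalue lies strictly between $\theta_i$ and $\theta_{i-1}$, $p$ is nonnegative on $\theta_1,\dots,\theta_r$, and it vanishes at $\theta_i$, so $\lambda(p)=0$. The diagonal of $p(A)$ is $\theta_0+\theta_i\theta_{i-1}$, which gives (\ref{eq:ratiotype2}) directly.

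For optimality, note that any normalised monic quadratic with $\lambda(p)=0$ can be raised until it touches the spectrum at two points. Raising it only improves the ratio, since the numerator $W-\lambda$ and the denominator $p(\theta_0)-\lambda$ shift by the same amount and the ratio is below $1$. By convexity the two touching points must be consecutive eigenvalues $\theta_{j},\theta_{j-1}$. The bound then becomes the explicit function
\[
f(j)=\frac{\theta_0+\theta_j\theta_{j-1}}{(\theta_0-\theta_j)(\theta_0-\theta_{j-1})}.
\]
Comparing $f(j)$ with $f(j+1)$, i.e.\ clearing the positive denominators and factoring out $\theta_{j-1}-\theta_{j+1}$, should show that the sign of the difference is governed by whether $\theta_j\le -1$. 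So $f$ decreases until the first index with $\theta_j\le -1$ and increases afterwards, which yields the index $i$ in the statement.

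\emph{Case $k=3$.} Take $p(x)=(x-\theta_s)(x-\theta_{s+1})(x-\theta_r)$. It has positive leading coefficient and roots at the smallest eigenvalue and at a consecutive pair, so it is nonnegative on $\theta_1,\dots,\theta_r$ and $\lambda(p)=0$. Expanding, the diagonal of $p(A)$ is at most
\[
\Delta-\theta_0(\theta_s+\theta_{s+1}+\theta_r)-\theta_s\theta_{s+1}\theta_r .
\]
Here the $x^2$ coefficient contributes via $(A^2)_{u,u}=\theta_0$, the linear term contributes nothing, and the constant is $-\theta_s\theta_{s+1}\theta_r$. This gives (\ref{eq:ratiotype3}).

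For optimality I would argue as for $k=2$. A minimising cubic (after normalisation) must touch the spectrum $\{\theta_1,\dots,\theta_r\}$ at enough points to be rigid. A sign analysis of a cubic with positive leading coefficient that is nonnegative on the spectrum forces one root at or below $\theta_r$, and pushing it up to $\theta_r$ is optimal. The other two roots must then bracket a gap, i.e.\ be a consecutive pair $\theta_j,\theta_{j+1}$. The bound becomes a one-parameter function $g(j)$.

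\emph{Main obstacle.} The hard step is the discrete optimisation over $j$ for $k=3$. I would compute $g(j)-g(j+1)$ and show its sign is that of $\theta_{j+1}+\frac{\theta_0^2+\theta_0\theta_r-\Delta}{\theta_0(\theta_r+1)}$, possibly up to a positive factor. That identifies the optimal $s$ as the last index with $\theta_s$ above the stated threshold, as in \cite{KaviNewman2023OptimalBound3IndepNumber}. Justifying rigorously that the root at $\theta_r$ is forced, rather than some other configuration of roots, also requires care, and I would handle it by comparing the ratio directly under perturbation of each root.
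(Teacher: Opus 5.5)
\textbf{Verdict.} The inequalities are fine, but the two ``best possible'' claims are not established: the $k=2$ argument rests on a vacuous step, and for $k=3$ the key reduction is asserted rather than proved and fails in general.

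\textbf{The inequalities.} The paper gives no proof of this statement; it imports it from \cite{AbiadCoutinhoFiol2019kindependancenumber} and \cite{KaviNewman2023OptimalBound3IndepNumber}. Your derivation of the two inequalities is correct: you plug the two polynomials into (\ref{eq:ratiotype}) and compute the diagonal of $p(A)$, as those references do.

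\textbf{The case $k=2$.} Your ``raising'' step does nothing. You showed yourself that $p\mapsto p+b$ leaves the ratio unchanged, so it can neither improve the bound nor create a second contact point. Also, ``without loss of generality monic'' discards leading coefficient $\le 0$ without justification; for example $p(x)=x$ gives Hoffman's ratio.

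The missing idea is a linear program. Normalise $p(\theta_0)=1$ and $p(\theta_l)\ge 0$ for $1\le l\le r$. Writing $p=a_2x^2+a_1x+a_0$, regularity gives $W(p)=a_2\theta_0+a_0=\frac1N\sum_l m_lp(\theta_l)$. This is linear in the coefficients and at least $m_0/N$ on this set, so its minimum is attained at a vertex. A vertex is the interpolant with $p(\theta_0)=1$ vanishing at two eigenvalues $\theta_a,\theta_b$ with $a,b\ge 1$. Such a $p$ automatically has positive leading coefficient, and feasibility forces the two roots to be consecutive. Your comparison then finishes the argument, since
\[
f(j)-f(j+1)=\frac{\theta_0(\theta_{j-1}-\theta_{j+1})(1+\theta_j)}{(\theta_0-\theta_{j-1})(\theta_0-\theta_j)(\theta_0-\theta_{j+1})}.
\]

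\textbf{The case $k=3$: the discrete step is routine.} Set $T=-\frac{\theta_0^2+\theta_0\theta_r-\Delta}{\theta_0(\theta_r+1)}$. A direct expansion gives
\[
g(j)-g(j+1)=\frac{-\theta_0(1+\theta_r)(\theta_j-\theta_{j+2})(\theta_{j+1}-T)}{(\theta_0-\theta_j)(\theta_0-\theta_{j+1})(\theta_0-\theta_{j+2})(\theta_0-\theta_r)}.
\]
So the step you call the main obstacle is easy.

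\textbf{The case $k=3$: the unproved reduction.} What is missing is the reduction to the family $(x-\theta_j)(x-\theta_{j+1})(x-\theta_r)$, and your heuristic for it fails in general. Three things go wrong:
\begin{itemize}
\item Writing $p=a_3x^3+\dots+a_0$, the formula $W(p)=a_3\Delta+a_2\theta_0+a_0$ needs $a_3\ge 0$; otherwise $\min_u (A^3)_{u,u}$ enters.
\item Polynomials of degree $\le 2$ also lie in $\R_3[X]$ and must be compared.
\item ``Pushing the lowest root up is optimal'' can go the wrong way.
\end{itemize}
For the third point, take a consecutive pair $\theta_{j-1}>\theta_j$ and $p_\rho=(x-\theta_{j-1})(x-\theta_j)(x-\rho)$, with $\rho$ below every other $\theta_l$, $l\ge 1$. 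Then $\lambda(p_\rho)=0$ and the bound is
\[
N\,\frac{D+E/(\theta_0-\rho)}{(\theta_0-\theta_{j-1})(\theta_0-\theta_j)},\qquad D=\theta_0+\theta_{j-1}\theta_j,\quad E=\Delta-\theta_0\left(\theta_0-1+(1+\theta_{j-1})(1+\theta_j)\right).
\]
This is monotone in $\rho$. If $E>0$, the quadratic limit $(x-\theta_{j-1})(x-\theta_j)$ beats every member of the family.

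\textbf{The boundary case $s=r-1$.} For the pair $(\theta_{r-1},\theta_r)$, $E\ge 0$ is exactly $\theta_{r-1}\ge T$, that is, $s=r-1$. The polynomial in (\ref{eq:ratiotype3}) is then $(x-\theta_{r-1})(x-\theta_r)^2$, the member $\rho=\theta_r$. It touches the spectrum at only two points, so no rigidity argument can produce it. If moreover $\theta_{r-1}>T$, the quadratic $(x-\theta_{r-1})(x-\theta_r)$ gives a strictly smaller value. This case therefore needs an extra argument or hypothesis.

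\textbf{When the argument does go through.} Everything works when $(A^3)_{u,u}$ is constant, as for the Cayley graphs to which the paper applies the theorem. Then $W(p)=\frac1N\sum_l m_lp(\theta_l)$ is linear in all coefficients. The vertices of the normalised polyhedron are exactly the cubics vanishing at $\theta_r$ and at a consecutive pair above it, with positive leading coefficient automatically. Moreover
\[
E=\frac1N\sum_{l\notin\{0,j-1,j\}}m_l(\theta_l-\theta_{j-1})(\theta_l-\theta_j)(\theta_l-\theta_0)<0
\]
for every pair, so in particular $s\le r-2$. Your sketch supplies neither this linear-programming reduction nor a treatment of graphs where $(A^3)_{u,u}$ varies.
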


In Section \ref{sec:eigenvalues233}, we will compute upper bounds on the cardinality of a tensor code of size $3\times 2 \times 2$ over any field $\F_q$ of minimum distance $3$ or $4$ using these self-contained expressions of the ratio-type bounds for the $2$- and $3$-independence number.

We now focus on a particular type of graph constructed from the structure of a given group. The multilinear forms graph introduced and studied in Section \ref{sec:multilinearfromsgraph} will be a certain Cayley graph.

\begin{Def}
    Let $G$ be a finite abelian group and let $S$ be a subset of $G$. The {Cayley graph} of $G$ with generating set $S$ is the graph denoted by $\Cay(G,S) = (V,E)$ such that $V= G$ and $E = \{\{g,g+s\} \ : \ g \in G, s\in S \}$.
\end{Def}

We briefly recall some properties of Cayley graphs; the reader is referred to \cite{Cayley1878GraphicalRepresentationGroups,GodsilRoyle2001Ch3TransitiveGraphs,LiuZhou2022EigenvaluesOfCayleyGraphs} for more details. Let $G$ be a finite abelian group and $S$ a subset of $G$. The Cayley graph $\Cay(G,S)$ is a regular graph of degree $|S|$. More precisely, $\Cay(G,S)$ is vertex-transitive, i.e. for any given pair of vertices $u,v$, there exists a graph automorphism that maps $u$ to $v$. This in turn implies that $\Cay(G,S)$ is walk-regular. The graph is connected if and only if $S$ generates the group $G$. Since the 
$\Cay(G,S)$ is undirected, its adjacency matrix is a real symmetric matrix and hence is diagonalisable. Moreover, its trace is exactly 
$|G|$ if the identity element of $G$ is contained in $S$ and is $0$ otherwise.

\begin{Thm}[{\cite[Corollary 3.2]{Babai79SpectraofCayleyGraphs}, \cite{Lovasz75SpectraOfGraphsWithTransitiveGroups}}]\label{thm:eigenvaluesofaCayleyGraph}
    Let $G$ be a finite abelian group of order $N$ with irreducible characters $\chi_1,\dots,\chi_N$ over $\C$. Let $S$ be a subset of $G$. Then the eigenvalues of $\Cay(G,S)$ are exactly $\lambda_1,\dots,\lambda_N$ with
    \[
        \lambda_i = \sum_{s\in S} \chi_i(s)
    \]
    for each $i \in \llbracket 1,N\rrbracket$.
\end{Thm}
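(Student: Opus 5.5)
The plan is to exhibit a full basis of eigenvectors of the adjacency matrix $A$ of $\Cay(G,S)$, built from the characters of $G$. For each irreducible character $\chi_i$, I consider the vector $f_i \in \C^{G}$ given by $f_i(g) = \chi_i(g)$. The goal is to show that $f_i$ is an eigenvector with eigenvalue $\sum_{s\in S}\chi_i(s)$, and that the $f_i$ span $\C^G$.

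\textbf{Step 1: the eigenvalue equation.} Here I use the convention implicit in the definition of $\Cay(G,S)$: $S$ is symmetric, i.e. $S=-S$, so that $g$ and $h$ are adjacent exactly when $h-g\in S$. This is automatic in the application, since the set $\S_{\mathbf n}$ of rank-one tensors is closed under negation. Then for every $g\in G$,
\[
(Af_i)(g)=\sum_{s\in S} f_i(g+s)=\sum_{s\in S}\chi_i(g)\chi_i(s)=\Big(\sum_{s\in S}\chi_i(s)\Big)f_i(g),
\]
using only that $\chi_i$ is a group morphism $G\to\C^\times$. Since $\chi_i(0)=1$, each $f_i$ is nonzero, so $\lambda_i=\sum_{s\in S}\chi_i(s)$ is an eigenvalue.

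\textbf{Step 2: a complete eigenbasis.} It remains to show that these $N$ eigenvectors are linearly independent, so that the list $\lambda_1,\dots,\lambda_N$ is exactly the spectrum counted with multiplicity. I would invoke the orthogonality relations for characters of a finite abelian group: $\sum_{g}\chi_i(g)\overline{\chi_j(g)} = N\delta_{ij}$. These follow because $\chi_i\overline{\chi_j}$ is a nontrivial character when $i\neq j$, and the sum of a nontrivial character $\psi$ over $G$ vanishes (translate by some $h$ with $\psi(h)\neq 1$). Hence the $f_i$ are pairwise orthogonal and nonzero, and therefore form a basis of $\C^G$.

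Writing $F$ for the matrix whose columns are the $f_i$, we get $A = F\,\mathrm{diag}(\lambda_i)\,F^{-1}$, which yields the full spectrum with multiplicities.

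The only delicate points are the symmetry convention on $S$ and the count of exactly $N$ distinct degree-one characters. The latter is standard for finite abelian groups, whose dual group is isomorphic to $G$; for $G=\F_q^{\mathbf n}$ it is also explicit in (\ref{eq:irreduciblecharactersofFqtensors}) via Proposition~\ref{prop:propertiestensorproddotprod}.
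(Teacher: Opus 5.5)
Your proof is correct and is the standard argument. The paper gives no proof of its own, only citing Babai and Lov\'asz. Its remark right after the statement, that $(\chi_i(g))_{g\in G}$ is an eigenvector for $\lambda_i$, is exactly your Step 1, and combining it with character orthogonality to get a full eigenbasis is the usual proof in the abelian case.

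Your symmetry caveat is also necessary. Under the paper's undirected definition the graph is really $\Cay(G,S\cup(-S))$, so the statement requires $S=-S$, or else a directed reading of the adjacency. For example, $G=\Z/3\Z$ and $S=\{1\}$ give the graph $K_3$, whose eigenvalues are real, while the character sums $1,\omega,\omega^2$ with $\omega=e^{2\pi i/3}$ are not all real.
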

For each $i \in \llbracket 1,N\rrbracket$, the vector $(\chi_i(g))_{g \in G}$ is an eigenvector associated to the eigenvalue $\lambda_i$. Consequently, one can use the following well-known fact for any polynomial evaluation of the adjacency matrix of the graph.

\begin{Corol}\label{corol:polynomialexpressionofadjacencymatrixcayley}
    Let $G$ be a finite abelian group of order $N$, and let $S$ be a subset of $G$. Let $f(X) \in \C[X]$ be a polynomial and let $A \in \{0,1\}^{N \times N}$ be the adjacency matrix of $\Cay(G,S)$. Let $\theta_0,\dots,\theta_r$ be the distinct eigenvalues of $A$ with respective multiplicities $m_0,\dots,m_r$. Then, for each $u \in G$, we have  
    \[
        (f(A))_{u,u} = \frac{1}{N}\sum_{i=0}^r m_if(\theta_i).
    \]
\end{Corol}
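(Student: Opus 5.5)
The plan is to diagonalise $A$ with the characters of $G$ and then read off the diagonal of $f(A)$. By Theorem~\ref{thm:eigenvaluesofaCayleyGraph}, each vector $v_i = (\chi_i(g))_{g\in G}$ is an eigenvector of $A$ with eigenvalue $\lambda_i = \sum_{s\in S}\chi_i(s)$. First I will check this directly: $(Av_i)_u = \sum_{s\in S}\chi_i(u+s) = \chi_i(u)\lambda_i$. This step only uses that $S$ is symmetric, which is implicit in the Cayley graph being undirected.

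By the orthogonality relations for the irreducible characters of the abelian group $G$, the vectors $v_1,\dots,v_N$ are pairwise orthogonal for the Hermitian inner product, and each has squared norm $N$. So the matrix $U$ with columns $v_i/\sqrt N$ is unitary, and $A = U\,\diag(\lambda_1,\dots,\lambda_N)\,U^*$. It follows that
\[
f(A) = U\,\diag(f(\lambda_1),\dots,f(\lambda_N))\,U^*
\]
for every polynomial $f\in\C[X]$, since powers and linear combinations are preserved under conjugation.

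Reading off the diagonal entry at $u$ gives
\[
(f(A))_{u,u} = \sum_{i=1}^N f(\lambda_i)\,\frac{|\chi_i(u)|^2}{N}.
\]
Every $\chi_i(u)$ is a root of unity, so $|\chi_i(u)|^2 = 1$. Hence $(f(A))_{u,u} = \frac1N\sum_{i=1}^N f(\lambda_i)$. Finally, I will group the indices $i$ according to which distinct eigenvalue $\theta_j$ equals $\lambda_i$. Because $U$ diagonalises $A$, the multiset $\{\lambda_i\}$ is exactly the spectrum of $A$ with multiplicities, so each $\theta_j$ appears $m_j$ times. This yields $\frac1N\sum_{j=0}^r m_j f(\theta_j)$.

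There is no real obstacle here. The only point needing care is to justify that the $\lambda_i$ listed with repetition give exactly the multiplicities $m_j$; this follows from the completeness of the character basis, which is the unitarity of $U$. Equivalently, one could note that $\frac1N\sum_i f(\lambda_i)=\frac1N\Tr f(A)$, and that all diagonal entries of $f(A)$ are equal by vertex-transitivity, since translations $g\mapsto g+t$ commute with $A$.
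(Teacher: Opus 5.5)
Your proof is correct and is essentially the argument the paper has in mind: the paper states this corollary as a well-known consequence of the characters $(\chi_i(g))_{g\in G}$ being eigenvectors of $A$, and your unitary diagonalisation with unimodular entries makes precisely this step explicit, showing that every diagonal entry of $f(A)$ equals the normalised trace of $f(A)$. One small correction: the undirected graph does not force $S$ to be symmetric; its actual connection set is $S\cup(-S)$, and if $S$ is not symmetric you simply run the same argument with that set, which leaves the statement unchanged since it is phrased in terms of the eigenvalues of $A$.
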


\subsection{Rank-metric codes and tensor codes}
\label{sec:rankmetriccodesandtensorcodes}
In this section, we will recall the notions of a rank-metric code and tensor code. We will also recall the definitions and properties of the mathematical objects that will be relevant in the rest of the paper. We refer the reader to \cite{ConciseCodingTheoryRankMetricCodes} for more information on rank-metric codes, and \cite{ROTHTensorCodesForRankMetric,RothMaximumArrayCodes} for tensor codes. 

\begin{Def}
    A (linear) rank-metric code over $\F_q$ is a subspace of the matrix space $\F_q^{n_1\times n_2}$. We say that a $k$-dimensional subspace $\CCC$ of $\F_q^{n_1 \times n_2}$ is an $[n_1\times n_2,k]_q$ rank-metric code. If $k\geq 1$, we define its minimum rank distance to be $d = \min_{(C,C') \in \CCC^2, C \neq C'} \rank(C-C') = \min_{C \in \CCC\backslash\{0\}} \rank(C)$, and we say that $\CCC$ is an $[n_1 \times n_2,k,d]_q$ code. 
\end{Def}

Moreover, if $\CCC$ is an $[n_1\times n_2,k]_q$ rank-metric code, we have the Singleton bound 
$$k \leq \max(n_1,n_2)(\min(n_1,n_2) - d +1),$$ 
and we say that $\CCC$ is a maximum rank distance (MRD) code if this inequality is an equality. 

The rank induces a metric $d_{\rank} : \F_{q}^{n_1 \times n_2} \times \F_{q}^{n_1 \times n_2} \to \N_0$ defined by $d_{\rank}(C,C') = \rank(C - C')$ for all $C,C' \in \F_q^{n_1 \times n_2}$. More generally, for each tuple of positive integers $\mathbf{n} = (n_1,\dots,n_m)$ with $m \geq 1$, the tensor rank induces a metric on $\F_q^{\mathbf n}$ the space of $n_1 \times \cdots \times n_m$ tensors, which is given by 
\[
    d_{\rank} : \F_q^{\mathbf n} \times \F_q^{\mathbf n} \to \N_0, (x,y) \mapsto \rank(x-y).
\]
This allows one to introduce tensor codes as a generalisation of rank-metric codes.

\begin{Def}
    An $(\mathbf n,M)_q$ tensor code (or an $(n_1 \times \cdots \times n_m,M)_q$ tensor code), is a subset $\CCC$ of $\F_q^{\mathbf n}$ such that $|\CCC| = M$. If $M>1$, we define its minimum rank to be
    \[
        d = \min\left\{ d_{\rank}(x,y) \ : \ x,y \in \CCC, x\neq y\right\},
    \]
    and we say that $\CCC$ is an $(\mathbf n,M,d)_q$ tensor code (or $(n_1 \times \cdots \times n_m,M,d)_q$ tensor code).
    If $\CCC$ is $\F_q$-linear of dimension $k$, we say that it is an $[\mathbf n,k,d]_q$ tensor code.
\end{Def}

\begin{Def} 
    Let $\CCC$ be a subspace of $\F_q^{m\times n}$. For each $r \in \llbracket 0,\min(m,n)\rrbracket$, we denote by $A_r(\CCC)$ the number of codewords of $\CCC$ with rank $r$, i.e.,
    \[
        A_r(\CCC) = |\{M \in \CCC \ : \ \rank(M) = r\}|.
    \]
    In general, if $\CCC$ is a subset of $\F_q^{\mathbf n}$, for each $r \in \N_0$, we denote by $A_r(\CCC)$ the number of codewords of $\CCC$ with rank $r$, i.e.,
    \[
        A_r(\CCC) = |\{M \in \CCC \ : \ \rank(M) = r\}|,
    \]
    and we say that the sequence of integers $(A_0(\CCC),A_1(\CCC),\dots)$ is the weight-distribution of the code $\CCC$.
\end{Def}

\begin{Def}
    Let $\CCC$ be an $\F_q\text{-}[n_1\times n_2,k]$ rank-metric code. We then define the {rank-weight enumerator} of $\CCC$ to be the two variable polynomial 
    \[
        W_\CCC(X,Y) = \sum_{r = 0}^{\min(n_1,n_2)} A_r(\CCC) X^{\min(n_1,n_2)-r}Y^r.
    \]  
\end{Def}

In the rest of the paper, we will provide bounds on the cardinality of tensor codes from the study of the graphs that will be introduced in the following section. First we recall some known upper bounds. The two following statements are direct generalisations of bounds introduced by Roth in \cite{RothMaximumArrayCodes}. 
\begin{Thm}[Singleton-like bound, \cite{RothMaximumArrayCodes}]\label{thm:singletonlike}
    Let $\CCC$ be an $(\mathbf n,M,d)_q$ tensor code with $M>1$. Denote by $n_{\max} = \max\{n_i \ : \ i \in \llbracket 1,m\rrbracket\}$. Then we have 
    \begin{equation}
        \label{eq:singletonlike}
        \log_q(M) \leq n_{\max}\left(\frac{\prod_{i = 1}^m n_i}{n_{\max}} - d +1 \right).
    \end{equation}
\end{Thm}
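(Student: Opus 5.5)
We reduce the bound to the classical Singleton bound for matrix codes by flattening the tensors. Let $j_0$ be an index with $n_{j_0} = n_{\max}$, and set $N' = \prod_{i=1}^m n_i / n_{\max}$. Consider the linear isomorphism (flattening) $\Phi : \F_q^{\mathbf n} \to \F_q^{n_{\max} \times N'}$. It sends $u_1\otimes\cdots\otimes u_m$ to the matrix $u_{j_0}^\top w$, where $w \in \F_q^{N'}$ is the vectorisation of $\bigotimes_{i\neq j_0} u_i$. Thus $\Phi$ maps rank-one tensors to rank-one matrices.

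By linearity and the definition of tensor rank, any decomposition of $x$ into $\rank(x)$ simple tensors gives a decomposition of $\Phi(x)$ into at most $\rank(x)$ rank-one matrices. Hence $\rank(\Phi(x)) \leq \rank(x)$. Equivalently, $\rank(\Phi(x)) = \dim \slicesp_{j_0}(x)$, which is consistent with Proposition~\ref{prop:slicespdimlowerboundstrank}.

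Next we lower-bound the rank distance after flattening. For $x \neq y$ in $\CCC$, the tensor $x-y$ has rank at least $d$. We need $\rank(\Phi(x-y))$ to be large, but the inequality above goes the wrong way, so we use instead a bound on the gap between the two ranks.

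\textbf{Key step.} Write $x - y$ as a sum of its $n_{\max}$ slices along mode $j_0$, namely $e_i \otimes z_i$ with $z_i \in \F_q^{\mathbf n(j_0)}$. Choose a basis of $\slicesp_{j_0}(x-y)$ of size $s = \rank(\Phi(x-y))$. Then $x - y$ can be written as $\sum_{k=1}^s a_k \otimes b_k$ with $b_k$ in that slice space. Each $b_k$ is an $(m-1)$-order tensor with tensor rank at most $\prod_{i\neq j_0} n_i = N'$; a sharper bound via Proposition~\ref{prop:howellbound} is not needed here. The argument aims instead at a Singleton argument by deletion rather than via ranks.

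\textbf{Singleton via puncturing.} Replacing the rank comparison, we argue directly as in Roth's proof. Decompose $\F_q^{\mathbf n} = \F_q^{n_{\max}} \otimes \F_q^{\mathbf n(j_0)}$, and fix a coordinate subset $T \subseteq \prod_{i\neq j_0}\llbracket 1,n_i\rrbracket$ of size $d-1$. Let $\pi$ delete from each of the $n_{\max}$ slices the entries indexed by $T$, so $\pi$ lands in a space of dimension $n_{\max}(N' - d + 1)$.

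We claim $\pi$ is injective on $\CCC$. Suppose $\pi(x) = \pi(y)$. Then $x - y$ is supported on the positions $\llbracket 1,n_{\max}\rrbracket \times T$, i.e. $x - y = \sum_{t\in T} v_t \otimes e_{t}$, where $e_t$ is the simple tensor $e_{t_1}\otimes\cdots$ in $\F_q^{\mathbf n(j_0)}$ and $v_t \in \F_q^{n_{\max}}$. Each summand $v_t \otimes e_t$ is a simple tensor after reordering modes. Hence $\rank(x-y) \leq |T| = d-1 < d$, forcing $x = y$.

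Therefore $M \leq q^{n_{\max}(N'-d+1)}$, which is \eqref{eq:singletonlike}. This needs $d-1 \leq N'$, which holds since $d \leq N'$ by Proposition~\ref{prop:howellbound}.

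The main obstacle was the rank comparison in the flattening approach. The puncturing argument bypasses it, since it only uses the trivial upper bound on the rank of a tensor supported on few fibres.
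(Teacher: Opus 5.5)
Your final puncturing argument is correct and is the standard one due to Roth. The paper only cites this bound, but its proof of Theorem~\ref{thm:improvedsingletonlike} uses exactly this device: project onto a set of retained fibres and show that every kernel element has rank below $d$ (here, a tensor supported on $d-1$ mode-$j_0$ fibres has rank at most $d-1$).

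The flattening discussion and the ``Key step'' before it lead nowhere and can simply be deleted. Also, $d\le N'$ follows directly from writing any tensor as the sum of its $N'$ mode-$j_0$ fibres, so Proposition~\ref{prop:howellbound} is not needed.
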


The next statement is a small generalisation of \cite[Theorem 5]{RothMaximumArrayCodes}. We include a proof for the convenience of the reader.

\begin{Thm}[Improved Singleton-like bound, {\cite[Theorem 5]{RothMaximumArrayCodes}}]\label{thm:improvedsingletonlike}
    Let $n$ and $\ell$ be two positive integers such that $\ell \leq n$. Let $\CCC$ be an $(\ell \times n \times n,M,d)_q$ tensor code with $M > 1$. Let $\sigma = n - \left\lceil \sqrt{n^2 - d +1} \right\rceil$. Then we have 
    \begin{equation}
        \label{eq:improvedsingletonlike}
        \log_q(M) \leq n \left(n\ell - d + 1 - \sigma^2\right).
    \end{equation}
\end{Thm}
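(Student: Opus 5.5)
The plan is a puncturing (Singleton-type) argument. I would look for a set $S$ of coordinate positions of $\F_q^{\ell\times n\times n}$ with $|S| = n(d-1+\sigma^2)$ and the following property: every tensor supported on $S$ has tensor rank at most $d-1$. Given such an $S$, the projection $\pi$ of $\F_q^{\ell\times n\times n}$ onto the coordinates outside $S$ is injective on $\CCC$. Indeed, if $\pi(x)=\pi(y)$ with $x\neq y$ in $\CCC$, then $x-y$ is supported on $S$, so $d_{\rank}(x,y)=\rank(x-y)\le d-1$, contradicting the minimum distance. Hence $M\le q^{\ell n^2-|S|}$, and it remains to check that $\ell n^2-|S| = n(n\ell-d+1-\sigma^2)$, which is immediate.

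First I would record what $\sigma$ encodes. Since $n-\sigma=\lceil\sqrt{n^2-d+1}\rceil$, we get $(n-\sigma)^2\ge n^2-d+1$, that is, $2n\sigma-\sigma^2\le d-1$. I would also check that $\sigma\le \ell$ is forced, using the Howell bound of Proposition~\ref{prop:howellbound} ($d\le n\ell-\lfloor \ell^2/4\rfloor$). This inequality is the budget that the set $S$ must respect.

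To build $S$, I would view a tensor as an $\ell\times n$ array of mode-$3$ fibres, each in $\F_q^n$. A tensor supported on a single fibre is $e_i\otimes e_j\otimes v$, which has rank at most one. Taking $d-1$ arbitrary fibres therefore already recovers the plain bound of Theorem~\ref{thm:singletonlike}. To gain the extra $n\sigma^2$, I would let $S$ consist of the fibres in a $\sigma\times\sigma$ block $I\times J$ of the $\ell\times n$ array, together with the fibres in the $\sigma$ full mode-$2$ "rows" or "columns" through that block, padded by further arbitrary fibres up to the count $d-1+\sigma^2$. The rank estimate would come from the criterion before Proposition~\ref{prop:slicespdimlowerboundstrank}: the rank is at most the number of rank-one tensors whose span contains a slice space. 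Concretely, in the block region I would take mode-$2$ slices indexed by $j\in J$. Each is an $\ell\times n$ matrix supported on rows in $I$, so it lies in the span of the $\sigma n$ tensors $e_i\otimes e_k$ with $i\in I$. These can be shared across all $j\in J$, so $\sigma^2$ fibres cost only $\sigma n/ \!\cdot$ amortised rank, and I would arrange the rows and columns crossing the block so that the total number of rank-one terms is $2n\sigma-\sigma^2$ plus the padding. This gives rank at most $d-1$ while occupying $\sigma^2$ more fibres than rank-one terms.

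The main obstacle is exactly this step: choosing the shape of $S$ and the common spanning family of rank-one tensors so that the overlap saves precisely $\sigma^2$ fibres. Roth's original proof is the model here, and I would follow his choice if the naive block picture above does not match the count. The other steps are bookkeeping: verifying the padding is possible, which uses $d-1+\sigma^2\le n\ell$, and verifying the final exponent. If the direct fibre construction fails, my fallback is the equivalent matrix formulation. Regard each codeword through its mode-$1$ slices as an $n\times n$ matrix over $\F_{q^\ell}$-like coordinates, and use the known fact that matrices supported on $\sigma$ rows and $\sigma$ columns have rank at most $2\sigma$, to bound the tensor rank of tensors supported there.
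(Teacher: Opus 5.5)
Your overall framework is the same as the paper's. You delete a set $S$ of $d-1+\sigma^2$ mode-3 fibres such that every tensor supported on $S$ has rank at most $d-1$, so that projecting onto the remaining fibres is injective on $\CCC$; your final exponent count is also right. However, the step that carries all the content, namely exhibiting $S$ and proving a rank bound that saves $\sigma^2$, is missing, and the construction you sketch does not supply it.

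\begin{itemize}
\item The family $\{e_i\otimes e_k : i\in I,\ k\in\llbracket 1,n\rrbracket\}$ that you propose to share across the mode-2 slices has $\sigma n$ elements. That is at least the number $\sigma^2$ of fibres in the block $I\times J$, and exactly the number of fibres in the $\sigma$ full slices $I\times\llbracket 1,n\rrbracket$. So this spanning argument gives one rank-one term per fibre (or worse), i.e.\ no saving at all.
\item Even Howell's bound applied to $\sigma$ full slices would only save $\lfloor\sigma^2/4\rfloor$.
\item Your fallback fails too. The $\F_{q^\ell}$-rank of the associated $n\times n$ matrix does not bound the $\F_q$-tensor rank from above: an $\F_{q^\ell}$-rank-one matrix $ab^\top$ can encode a full copy of the multiplication tensor of $\F_{q^\ell}$ over $\F_q$, whose tensor rank is at least $2\ell-1$.
\end{itemize}

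The missing idea is to take \emph{$2\sigma$} full mode-1 slices and apply the second part of Proposition~\ref{prop:howellbound} to them.

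\begin{itemize}
\item Put $\tau = d-1-(2\sigma n-\sigma^2)$. This is $\geq 0$ by your own inequality $2n\sigma-\sigma^2\le d-1$.
\item Let $S$ consist of the $2\sigma n$ fibres with first index in $\llbracket 1,2\sigma\rrbracket$, together with any $\tau$ further fibres. Then $S$ has $2\sigma n+\tau=d-1+\sigma^2$ fibres.
\item A tensor $x$ supported on $S$ splits as $x^{(1)}+x^{(2)}$, where $x^{(1)}$ is a $2\sigma\times n\times n$ tensor and $x^{(2)}$ is supported on $\tau$ fibres. Hence $\rank(x)\le \bigl(2\sigma n-\lfloor (2\sigma)^2/4\rfloor\bigr)+\tau = d-1$.
\end{itemize}

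This rests on the sharper fact $2\sigma<\ell$, not merely $\sigma\le\ell$. It follows from $d\le n\ell-\lfloor\ell^2/4\rfloor<n\ell+1-\ell^2/4$, which gives $\sqrt{n^2-d+1}>n-\ell/2$ and hence $\sigma<\ell/2$. This fact guarantees three things:
\begin{itemize}
\item the $2\sigma$ slices exist;
\item $2\sigma\le n$, so Howell's bound applies to the $2\sigma\times n\times n$ part;
\item since $\sigma^2\le\lfloor\ell^2/4\rfloor$, we have $d-1+\sigma^2<n\ell$, so the padding fits.
\end{itemize}
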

\begin{proof}
    Following \cite[Theorem 5]{RothMaximumArrayCodes}, we let $\tau = \left\lceil \sqrt{n^2 - d +1}\right\rceil^2 - (n^2 - d + 1)$ which is the integer satisfying $\tau \leq n$ and $2\sigma n + \tau - \sigma^2 = d - 1$. By Proposition~\ref{prop:howellbound}     we have 
    \[
        d \leq \ell n - \left\lfloor \frac{\ell^2}{4} \right\rfloor < \ell n + 1 - \frac{\ell^2}{4},
    \]
    from which we obtain $\sqrt{n^2 -d + 1} > n - \frac \ell 2 $. Since $\sigma < n - \sqrt{n^2 -d + 1}$, we have $\sigma^2 < n\ell - d +1$.
    Hence we have $\ell n - 2\sigma n - \tau = \ell n - d + 1 - \sigma^2 > 0$. Let $\III$ be the set of indexes of the last $\ell n - 2\sigma n - \tau$ 3-fibres of an $\ell \times n \times n$ tensor, i.e. $ \III = \llbracket 2\sigma +1, \ell\rrbracket \times \llbracket 1,n\rrbracket \backslash \{(2\sigma +1,1),\dots,(2\sigma + 1,\tau) \}$. Consider the epimorphism defined by
    \[ 
    \phi : \F_q^{\ell \times n \times n} \to  (\F_q^n)^{\III}, x \mapsto \left((x_{i,j,1},\dots,x_{i,j,n})\right)_{(i,j) \in \III} \;\text{ for all } x \in \F_q^{\ell \times n \times n}.     \]  
    Let $\psi : \F_{q}^{\ell \times n \times n} \to \F_{q}^{2\sigma \times n \times n} \times \F_q^{(\ell - 2\sigma)\times n \times n}$  be the isomorphism given for each $u \otimes v \otimes w \in \S_{\ell \times n \times n}$ by 
    \[
        \psi(u\otimes v \otimes w) = ((u_1,\dots,u_{2\sigma})\otimes v \otimes w\ ,\ (u_{2\sigma +1},\dots,u_\ell)\otimes v \otimes w),
    \]
    that is, the isomorphism that separates the first $2\sigma$ 1-slices from the last $(\ell -2\sigma)$ 1-slices of any tensor. Let $x \in \ker \phi$ and $(x^{(1)},x^{(2)}) =\psi(x)$. By Proposition~\ref{prop:howellbound}, we have 
    \begin{align*}
        \rank(x) \leq \rank(x^{(1)}) + \rank(x^{(2)}) \leq \left(2\sigma n - \left\lfloor \frac{2^2\sigma^2}{4}\right\rfloor\right) + \tau  < \mu.
    \end{align*}
    Therefore, the restriction map $\phi_{|\CCC} : \CCC \to (\F_q^{n})^{\III}$ is injective and we have $|\CCC| \leq |\F_q^{\III}|$ and in turn we have $\log_q(|\CCC|) \leq  {n(\ell n- 2\sigma n - \tau)} = n(\ell n -d +1-\sigma^2)$.
\end{proof}

    The improved Singleton-like bound (\ref{eq:improvedsingletonlike}) is a strict improvement on the Singleton-like bound (\ref{eq:singletonlike}) for $((\ell,n,n),M,d)_q$ tensor codes with $\ell \leq n$ if and only if $\sigma = n - \left\lceil \sqrt{n^2 - d +1} \right\rceil > 0$. This is equivalent to $d \geq 2n$. Similarly, by Proposition~\ref{prop:howellbound}, there exists no $((\ell,n,n),M,d)_{q}$ tensor code such that $d > n\ell - \left\lfloor \frac{\ell^2}{4} \right\rfloor$. Therefore, the bound (\ref{eq:improvedsingletonlike}) shows a strict improvement compared to (\ref{eq:singletonlike})     only for  $((\ell,n,n),M,d)_q$ tensor codes for which 
    \[
        2n \leq d \leq n\ell - \left\lfloor \frac{\ell^2}{4} \right\rfloor.
    \]
    This indicates that (\ref{eq:improvedsingletonlike}) sharpens the other bounds for tensors codes with $\ell$ large and large minimum distance. In the following parts of the paper, we will focus on the computation of graph eigenvalues for small parameters ($q$, $m$, and $\mathbf n$); see Table \ref{tab:RatioType2345}. For larger parameters, the computations are much less feasible. Hence, parameters for which (\ref{eq:improvedsingletonlike}) is a strict improvement of (\ref{eq:singletonlike}) do not often arise in our results.
    
    The sphere-packing bound is another type of generic bound on the size of a code of given minimum distance. The bound for tensor codes can be stated as follows.

\begin{Thm}[Sphere-packing bound  {\cite[Lemma 1]{ROTHTensorCodesForRankMetric}}] \label{thm:spherepackingbound}
    Let $\CCC$ be an $(\mathbf{n},M,d)_q$ tensor code. Then we have 
    \begin{equation}
        M \leq q^{n_1\cdots n_m} \left| \left\{x \in \F_q^{\mathbf{n}} \ : \ \rank(x) \leq \left\lfloor \frac{d-1}2 \right\rfloor\right\}\right|^{-1}.
        \label{eq:spherepacking}
    \end{equation}
\end{Thm}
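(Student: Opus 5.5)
The plan is to run the standard ball-packing argument, with balls taken in the tensor rank metric $d_{\rank}$ on $\F_q^{\mathbf n}$. Set $t = \left\lfloor \frac{d-1}{2}\right\rfloor$. For each $c \in \CCC$, consider the ball
\[
B_t(c) = \{ y \in \F_q^{\mathbf n} \ : \ \rank(y-c) \leq t\}.
\]
The argument has three steps: the balls all have the same size, they are pairwise disjoint, and comparing total size with $|\F_q^{\mathbf n}|$ gives the bound.

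First, translation $y \mapsto y - c$ is a bijection from $B_t(c)$ onto $B_t(0) = \{x \in \F_q^{\mathbf n} : \rank(x) \leq t\}$. So every ball has cardinality $|B_t(0)|$, which is exactly the quantity appearing in (\ref{eq:spherepacking}).

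Second, the balls centred at distinct codewords are pairwise disjoint. I will use that $d_{\rank}(x,y) = \rank(x-y)$ is a metric. The only point needing care is the triangle inequality, which follows from subadditivity $\rank(a+b) \leq \rank(a) + \rank(b)$: concatenating minimal rank forms of $a$ and $b$ gives a decomposition of $a+b$. Symmetry holds because $\rank(-x) = \rank(x)$, since we may absorb the sign into one factor. Now suppose $y \in B_t(c) \cap B_t(c')$ with $c \neq c'$ in $\CCC$. Then
\[
d \leq \rank(c-c') \leq \rank(c-y) + \rank(y-c') \leq 2t \leq d-1,
\]
which is a contradiction.

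Third, summing over the code, $M\,|B_t(0)| = \left|\bigcup_{c \in \CCC} B_t(c)\right| \leq |\F_q^{\mathbf n}| = q^{n_1\cdots n_m}$. Dividing by $|B_t(0)|$ gives (\ref{eq:spherepacking}).

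I do not expect a real obstacle. The only thing to verify explicitly is subadditivity of tensor rank, which the definition gives immediately. For the degenerate case $M = 1$, where $d$ is undefined, the bound holds trivially provided $|B_t(0)| \leq q^{n_1\cdots n_m}$.
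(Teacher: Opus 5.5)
Your proof is correct. It is the standard sphere-packing argument: balls in the rank distance are translates of $B_t(0)$, they are pairwise disjoint by the triangle inequality (which follows from subadditivity of tensor rank), and $M\,|B_t(0)| \leq q^{n_1\cdots n_m}$. The paper gives no proof of its own and simply cites Roth's Lemma~1, which is the same counting argument, so your route coincides with the intended one.
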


Contrary to the Hamming-metric, the rank-metric or the sum-rank metric, the tensor rank distribution in the ambient space $\F_q^{\mathbf{n}}$ is not known in general. In particular, for $d$ large, one can only use lower-bounds for the ball size on the right-hand side of inequality (\ref{eq:spherepacking}). 

For minimum distance $3$ tensor codes, the value of this ball-size is known as the ball in question is $\S_{\mathbf n} \sqcup \{0\}$. Consequently, we have the following bound.

\begin{Corol}[Sphere-packing bound for $d = 3$ {\cite[Theorem 1]{ROTHTensorCodesForRankMetric}}] \label{corol:spherepackingboundwithd3}
    Let $\CCC$ be an $(\mathbf{n},M,3)_q$ tensor code. Then we have 
    \begin{equation}
    \label{eq:spherepacking3}
    M \leq \frac{q^{n_1\cdots n_m}}{ 1 + \frac{\prod_{j=1}^m (q^{n_j}-1)}{(q-1)^{m-1}}}.
    \end{equation}
\end{Corol}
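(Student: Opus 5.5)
This is a direct specialisation of Theorem~\ref{thm:spherepackingbound} to $d = 3$. The plan is to identify the ball of radius $\lfloor (3-1)/2\rfloor = 1$ around the origin and count its elements.

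First, the ball $\{x \in \F_q^{\mathbf n} : \rank(x) \leq 1\}$ consists of the zero tensor, which is the only tensor of rank $0$, together with the tensors of rank exactly one. A tensor has rank one precisely when it can be written as $u_1 \otimes \cdots \otimes u_m$ with every $u_j$ nonzero; if some factor were zero the product would be $0$. So the ball is the disjoint union $\S_{\mathbf n} \sqcup \{0\}$.

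Next, we use the cardinality $|\S_{\mathbf n}| = \prod_{j=1}^m (q^{n_j}-1)/(q-1)^{m-1}$ stated in the preliminaries. If a justification is wanted, it goes as follows. The map $(u_1,\dots,u_m) \mapsto u_1\otimes\cdots\otimes u_m$ from $\prod_j (\F_q^{n_j}\setminus\{0\})$ onto $\S_{\mathbf n}$ has fibres of size exactly $(q-1)^{m-1}$. Indeed, $u_1\otimes\cdots\otimes u_m = v_1\otimes\cdots\otimes v_m$ with all factors nonzero holds if and only if $v_j = \lambda_j u_j$ for scalars $\lambda_j \in \F_q^\times$ with $\prod_j \lambda_j = 1$. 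One checks this by comparing the nonzero coordinates of the two tensors, or by contracting with suitable dual vectors to recover each factor up to a scalar. The only point needing any care is this fibre count, and it is standard.

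Finally, substituting $|\S_{\mathbf n}\sqcup\{0\}| = 1 + |\S_{\mathbf n}|$ into inequality~(\ref{eq:spherepacking}) with $d = 3$ gives exactly (\ref{eq:spherepacking3}).
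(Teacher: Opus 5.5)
Your proposal is correct and is essentially the paper's argument: specialise Theorem~\ref{thm:spherepackingbound} to $d=3$, identify the ball of radius $1$ as $\S_{\mathbf n}\sqcup\{0\}$, and substitute $|\S_{\mathbf n}| = \prod_{j=1}^m (q^{n_j}-1)/(q-1)^{m-1}$. Your fibre-count justification of that cardinality (tuples $(\lambda_1,\dots,\lambda_m)\in(\F_q^\times)^m$ with $\prod_j\lambda_j=1$) supplies a step the paper only states in its preliminaries.
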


For $3$-order tensors, the exact value of the ball-size of radius $2$ can be computed using the number of tensors of rank $2$ in $\F_q^{n_1\times n_2\times n_3}$ which is given in \cite[Lemma 5.1]{byrne2026decodingalgorithmstensorcodes}. Therefore, we have the following statement.

\begin{Corol}[Sphere-packing bound for $d = 5$ and $m = 3$] \label{corol:spherepackingboundwithd5m3}
    Let $\CCC$ be an $(n_1 \times n_2 \times n_3,M,5)_q$ tensor code. Then 
    \begin{equation}
    \label{eq:spherepacking5}
        M \leq  \frac{q^{n_1n_2n_3}}{ 1 + \frac{\prod_{j=1}^3 (q^{n_j}-1)}{(q-1)^{2}} + \Nnn_2},
    \end{equation}
    where $\Nnn_2$ the number of tensors of rank $2$ in $\F_q^{n_1 \times n_2 \times n_3}$ is given by 
    \[
        \Nnn_2 := \frac{qABC}{(q-1)^3(q^2-1)}\left(\frac{q^2(q+1)}{2}A'B'C' + (q-1)\left(A'B' + A'C' + B'C'\right)\right),
    \]
    and $A = q^{n_1}-1$, $B = q^{n_2}-1$, $C = q^{n_3}-1$, $A' = q^{n_1-1}-1$, $B' = q^{n_2-1}-1$, and $C' = q^{n_3-1}-1$.
\end{Corol}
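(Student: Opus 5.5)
This follows directly from the sphere-packing bound of Theorem~\ref{thm:spherepackingbound} with $d=5$. There $\lfloor (d-1)/2\rfloor = 2$, so the denominator is the size of the ball of radius $2$ around $0$:
\[
\left|\{x \in \F_q^{n_1\times n_2\times n_3} : \rank(x)\le 2\}\right| = 1 + |\S_{n_1\times n_2\times n_3}| + \Nnn_2 .
\]
The three terms count the zero tensor, the rank-one tensors and the rank-two tensors. The plan has three steps.

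First, partition the ball by rank. Rank zero contributes only $0$. Rank one is exactly $\S_{\mathbf n}$, whose cardinality $\prod_{j=1}^3(q^{n_j}-1)/(q-1)^{2}$ was recorded earlier. Second, take the number of rank-two tensors from \cite[Lemma 5.1]{byrne2026decodingalgorithmstensorcodes}, which is allowed since the paper cites that count, and rewrite it in the stated form with $A,B,C,A',B',C'$. Third, substitute the ball size into (\ref{eq:spherepacking}) to obtain (\ref{eq:spherepacking5}).

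The only step that is more than routine is checking that the cited formula matches $\Nnn_2$ as written. If I had to derive it myself, I would count unordered pairs $\{s_1,s_2\}$ of distinct simple tensors whose sum has rank exactly $2$, and then correct for tensors that have several minimal rank forms. I would split into cases according to how many of the three factor pairs are proportional. If no factor pair is proportional, each of the three sequences of factors has Kruskal rank $2$, so $2t=4\le 1+3$ and Theorem~\ref{thm:kruskaluniquenesstheorem} gives a unique decomposition. This case produces the $\tfrac{q^2(q+1)}{2}A'B'C'$-type term once the scalar redundancies are quotiented out.

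If exactly one factor pair is proportional, the sum is essentially a rank-two matrix tensored with a fixed vector, and it has many decompositions. In that case I would count the resulting tensors directly instead of counting pairs; this produces the $(q-1)(A'B'+A'C'+B'C')$ terms. If two factor pairs are proportional, the sum is itself simple or zero and is excluded. Getting these multiplicities and normalisations right is the main obstacle, but for the corollary itself it suffices to quote the lemma.
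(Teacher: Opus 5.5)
Your proposal is correct and follows the paper's own argument. The paper obtains the corollary directly from Theorem~\ref{thm:spherepackingbound} at $d=5$: it splits the radius-$2$ ball as $\{0\}\sqcup\S_{\mathbf n}\sqcup\{x : \rank(x)=2\}$ and quotes the rank-two count from \cite[Lemma 5.1]{byrne2026decodingalgorithmstensorcodes}, exactly as you do. Your optional case analysis also checks out: the no-proportional-pair case gives $\frac{q^3ABCA'B'C'}{2(q-1)^4}$, the three single-proportional-pair cases give $\frac{qABC\,(A'B'+A'C'+B'C')}{(q-1)^2(q^2-1)}$, and these sum to the stated $\Nnn_2$.
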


In Table \ref{tab:spherepacking}, for different values of $q$ and $\mathbf{n}$, we compute the number of tensors in $\F_q^{\mathbf n}$ of rank $1$ and $2$, as well as the sphere-packing bound for $(\mathbf n,M,d)_q$ tensor codes of distance $d=3$ and $d=5$. For $3$-order tensors, the number of elements of tensor rank $2$ and the value of the sphere-packing bound are computed using Corollary~\ref{corol:spherepackingboundwithd5m3}. For the $4$-order tensors cases in the table, the number of elements of rank $2$ is computed by brute-force using MAGMA, and we apply Theorem~\ref{thm:spherepackingbound} with this value to compute the sphere-packing bound. We write `n/a' in the table to indicate that there exists no code with the corresponding parameters, and so the bounds are not applicable.

\begin{table}[ht!]   \centering
\renewcommand\arraystretch{1.06}
    \begin{tabular}{|c|c||c|c||c|c|}\hline
$q$&
$\mathbf{n}$&
\!\!\!\!\!\begin{minipage}{3cm} \centering\phantom{x}\\[-0.2cm] Number of\\rank-$1$ tensors\\[-0.2cm]\phantom{x}\end{minipage}\!\!\!\!\!&
\!\!\!\!\!\begin{minipage}{3cm} \centering\phantom{x}\\[-0.2cm] Number of\\rank-$2$ tensors\\[-0.2cm]\phantom{x}\end{minipage}\!\!\!\!\!&
\!\!\!\!\!\begin{minipage}{3cm} \centering\phantom{x}\\[-0.2cm] Sphere-packing \\bound for $d=3$\\[-0.2cm]\phantom{x}\end{minipage}\!\!\!\!\!&
\!\!\!\!\!\begin{minipage}{3cm}\centering\phantom{x}\\[-0.2cm]Sphere-packing\\bound for $d=5$\\[-0.2cm]\phantom{x}
\end{minipage}\!\!\!\!\!\\\hline\hline
2&(2,2,2) & 27 & 162 & 3.17 & {\footnotesize n/a}\\\hline
2&(3,2,2) & 63 & 1,050 & 6.00 & {\footnotesize n/a} \\\hline
2&(4,2,2) & 135 & 5,130 & 8.91 & {\footnotesize n/a}\\\hline
2&(3,3,2) & 147 & 6,762 & 10.79 & 5.21\\\hline
2&(4,3,2) & 315 & 32,970 & 15.70 & 8.98\\\hline
2&(5,3,2) & 651 & 144,522 & 20.66 & 12.86\\\hline
2&(6,3,2) & 1,323 & 604,170 & 25.63 & 16.80\\\hline
2&(4,4,2) & 675 & 160,649 & 22.60 & 14.70\\\hline
2&(5,4,2) & 1,395 & 704,009 & 29.56 & 20.58\\\hline
2&(6,4,2) & 2,835 & 2,942,729 & 36.53 & 26.51\\\hline
2&(7,4,2) & 5,715 & 12,028,169 & 43.52 & 32.48\\\hline
2&(3,3,3) & 343 & 43,218 & 18.58 & 11.59\\\hline
2&(4,3,3) & 735 & 210,210 & 26.48 & 18.32\\\hline
2&(5,3,3) & 1,519 & 920,514 & 34.43 & 25.19\\\hline
2&(6,3,3) & 3,087 & 3,846,402 & 42.41 & 32.13\\\hline
2&(7,3,3) & 6,223 & 15,719,298 & 50.40 & 39.10\\\hline
2&(2,2,2,2) & 81 & 2,268 & 9.65 & 4.76 \\\hline
2&(3,2,2,2) & 189 & 13,608 & 16.43 & 10.25\\\hline
2&(4,2,2,2) & 405 & 64,800 & 23.34 & 16.01\\\hline
2&(5,2,2,2) & 837 & 281,232 & 30.29 & 21.90\\\hline
3&(2,2,2) & 64 & 4,032 & 4.20 & {\footnotesize n/a}\\\hline
3&(3,2,2) & 208 & 50,544 & 7.14 & {\footnotesize n/a}\\\hline
3&(4,2,2) & 640 & 501,120 & 10.12 & {\footnotesize n/a}\\\hline
3&(3,3,2) & 676 & 632,736 & 12.07 & 5.84\\\hline
3&(4,3,2) & 2,080 & 6,271,200 & 17.05 & 9.76\\\hline
3&(5,3,2) & 6,292 & 58,213,584 & 22.04 & 13.73\\\hline
3&(6,3,2) & 18,928 & 529,283,664 & 27.04 & 17.72\\\hline
4&(2,2,2) & 125 & 38,699 & 4.52 & {\footnotesize n/a}\\\hline
4&(3,2,2) & 525 & 797,580 & 7.49 & {\footnotesize n/a}\\\hline
4&(4,2,2) & 2,125 & 13,509,900 & 10.48 & {\footnotesize n/a}\\\hline
4&(3,3,2) & 2,205 & 16,431,660 & 12.45 & 6.02\\\hline
4&(4,3,2) & 8,925 & 278,310,060 & 17.44 & 9.98\\\hline
4&(5,3,2) & 35,805 & 4,515,296,940 & 22.44 & 13.97\\\hline
4&(6,3,2) & 143,325 & 72,494,816,940 & 27.44 & 17.97\\\hline
5&(2,2,2) & 216 & 224,640 & 4.66 & {\footnotesize n/a}\\\hline
5&(3,2,2) & 1,116 & 6,889,440 & 7.64 & {\footnotesize n/a}\\\hline
5&(4,2,2) & 5,616 & 178,813,440 & 10.64 & {\footnotesize n/a}\\\hline
5&(3,3,2) & 5,766 & 211,266,240 & 12.62 & 6.09\\\hline
5&(4,3,2) & 29,016 & 5,483,250,240 & 17.62 & 10.07\\\hline
5&(5,3,2) & 145,266 & 138,095,670,240 & 22.62 & 14.07\\\hline
\end{tabular}
\renewcommand\arraystretch{1}
    \caption{Number of tensors of rank $1$, number of tensor of rank $2$, and upper bounds on $\log_q(M)$ for $(\mathbf n,M,d)_{q}$ tensor codes with $d \in \{3,5\}$, rounded to 2 decimal places.}
    \label{tab:spherepacking}
\end{table}

In Table \ref{tab:bestbetween}, for different values of $q$ and $\mathbf n$, we compare the previously mentioned upper bounds on $M$ for $(\mathbf n,k,d)_q$ tensor codes, that is, the Singleton-like bound (\ref{eq:singletonlike}), its improvement (\ref{eq:improvedsingletonlike}) as well as the sphere-packing bounds for $d=3$ or $d=5$ in Table \ref{tab:spherepacking}. We use the colour \legendbox{lightgray} to indicate that (\ref{eq:improvedsingletonlike}) is the best-bound out the three, colour \legendbox{gray} to indicate that the sphere-packing bound is, and by the colour \legendbox{white} to indicate that (\ref{eq:singletonlike}) is. We use again write `n/a' in the table to indicate that there exists no code with the corresponding parameters. 

\begin{table}[ht!]   \centering
\renewcommand\arraystretch{1.06}
    \begin{tabular}{|c|c||c|c|c|c|c|c|c|}\hline
        $q$&$\mathbf{n}$&$d=3$&$d=4$&$d=5$&$d=6$&$d=7$&$d=8$&$d=9$\\\hline\hline
2 & (2,2,2) & \cellcolor{gray}3.17 & {\footnotesize n/a} & {\footnotesize n/a} & {\footnotesize n/a} & {\footnotesize n/a} & {\footnotesize n/a} & {\footnotesize n/a}\\\hline
2 & (3,2,2) & 6.00 & 3.00 & {\footnotesize n/a} & {\footnotesize n/a} & {\footnotesize n/a} & {\footnotesize n/a} & {\footnotesize n/a}\\\hline
2 & (4,2,2) & 8.00 & 4.00 & {\footnotesize n/a} & {\footnotesize n/a} & {\footnotesize n/a} & {\footnotesize n/a} & {\footnotesize n/a}\\\hline
2 & (3,3,2) & \cellcolor{gray}10.79 & 9.00 & \cellcolor{gray}5.21 & {\footnotesize n/a} & {\footnotesize n/a} & {\footnotesize n/a} & {\footnotesize n/a}\\\hline
2 & (4,3,2) & \cellcolor{gray}15.70 & 12.00 & 8.00 & 4.00 & {\footnotesize n/a} & {\footnotesize n/a} & {\footnotesize n/a}\\\hline
2 & (5,3,2) & 20.00 & 15.00 & 10.00 & 5.00 & {\footnotesize n/a} & {\footnotesize n/a} & {\footnotesize n/a}\\\hline
2 & (6,3,2) & 24.00 & 18.00 & 12.00 & 6.00 & {\footnotesize n/a} & {\footnotesize n/a} & {\footnotesize n/a}\\\hline
2 & (4,4,2) & \cellcolor{gray}22.60 & 20.00 & \cellcolor{gray}14.70 & 12.00 & 8.00 & {\footnotesize n/a} & {\footnotesize n/a}\\\hline
2 & (5,4,2) & \cellcolor{gray}29.56 & 25.00 & 20.00 & 15.00 & 10.00 & 5.00 & {\footnotesize n/a}\\\hline
2 & (6,4,2) & 36.00 & 30.00 & 24.00 & 18.00 & 12.00 & 6.00 & {\footnotesize n/a}\\\hline
2 & (7,4,2) & 42.00 & 35.00 & 28.00 & 21.00 & 14.00 & 7.00 & {\footnotesize n/a}\\\hline
2 & (3,3,3) & \cellcolor{gray}18.58 & 18.00 & \cellcolor{gray}11.59 & \cellcolor{lightgray}9.00 & \cellcolor{lightgray}6.00 & {\footnotesize n/a} & {\footnotesize n/a}\\\hline
2 & (4,3,3) & \cellcolor{gray}26.48 & 24.00 & \cellcolor{gray}18.32 & 16.00 & 12.00 & 8.00 & 4.00\\\hline
2 & (5,3,3) & \cellcolor{gray}34.43 & 30.00 & 25.00 & 20.00 & 15.00 & 10.00 & 5.00\\\hline
2 & (6,3,3) & 42.00 & 36.00 & 30.00 & 24.00 & 18.00 & 12.00 & 6.00\\\hline
2 & (7,3,3) & 49.00 & 42.00 & 35.00 & 28.00 & 21.00 & 14.00 & 7.00\\\hline
2 & (2,2,2,2) & \cellcolor{gray}9.65 & \cellcolor{gray}9.65 & \cellcolor{gray}4.76 & \cellcolor{gray}4.76 & 4.00 & 2.00 & {\footnotesize n/a}\\\hline
2 & (3,2,2,2) & \cellcolor{gray}16.43 & 15.00 & \cellcolor{gray}10.26 & 9.00 & 6.00 & 3.00 & {\footnotesize n/a}\\\hline
2 & (4,2,2,2) & \cellcolor{gray}23.34 & 20.00 & 16.00 & 12.00 & 8.00 & 4.00 & {\footnotesize n/a}\\\hline
2 & (5,2,2,2) & 30.00 & 25.00 & 20.00 & 15.00 & 10.00 & 5.00 & {\footnotesize n/a}\\\hline
3 & (2,2,2) & 4.00 & {\footnotesize n/a} & {\footnotesize n/a} & {\footnotesize n/a} & {\footnotesize n/a} & {\footnotesize n/a} & {\footnotesize n/a}\\\hline
3 & (3,2,2) & 6.00 & 3.00 & {\footnotesize n/a} & {\footnotesize n/a} & {\footnotesize n/a} & {\footnotesize n/a} & {\footnotesize n/a}\\\hline
3 & (4,2,2) & 8.00 & 4.00 & {\footnotesize n/a} & {\footnotesize n/a} & {\footnotesize n/a} & {\footnotesize n/a} & {\footnotesize n/a}\\\hline
3 & (3,3,2) & 12.00 & 9.00 & \cellcolor{gray}5.84 & {\footnotesize n/a} & {\footnotesize n/a} & {\footnotesize n/a} & {\footnotesize n/a}\\\hline
3 & (4,3,2) & 16.00 & 12.00 & 8.00 & 4.00 & {\footnotesize n/a} & {\footnotesize n/a} & {\footnotesize n/a}\\\hline
3 & (5,3,2) & 20.00 & 15.00 & 10.00 & 5.00 & {\footnotesize n/a} & {\footnotesize n/a} & {\footnotesize n/a}\\\hline
3 & (6,3,2) & 24.00 & 18.00 & 12.00 & 6.00 & {\footnotesize n/a} & {\footnotesize n/a} & {\footnotesize n/a}\\\hline
4 & (2,2,2) & 4.00 & {\footnotesize n/a} & {\footnotesize n/a} & {\footnotesize n/a} & {\footnotesize n/a} & {\footnotesize n/a} & {\footnotesize n/a}\\\hline
4 & (3,2,2) & 6.00 & 3.00 & {\footnotesize n/a} & {\footnotesize n/a} & {\footnotesize n/a} & {\footnotesize n/a} & {\footnotesize n/a}\\\hline
4 & (4,2,2) & 8.00 & 4.00 & {\footnotesize n/a} & {\footnotesize n/a} & {\footnotesize n/a} & {\footnotesize n/a} & {\footnotesize n/a}\\\hline
4 & (3,3,2) & 12.00 & 9.00 & 6.00 & {\footnotesize n/a} & {\footnotesize n/a} & {\footnotesize n/a} & {\footnotesize n/a}\\\hline
4 & (4,3,2) & 16.00 & 12.00 & 8.00 & 4.00 & {\footnotesize n/a} & {\footnotesize n/a} & {\footnotesize n/a}\\\hline
4 & (5,3,2) & 20.00 & 15.00 & 10.00 & 5.00 & {\footnotesize n/a} & {\footnotesize n/a} & {\footnotesize n/a}\\\hline
4 & (6,3,2) & 24.00 & 18.00 & 12.00 & 6.00 & {\footnotesize n/a} & {\footnotesize n/a} & {\footnotesize n/a}\\\hline
5 & (2,2,2) & 4.00 & {\footnotesize n/a} & {\footnotesize n/a} & {\footnotesize n/a} & {\footnotesize n/a} & {\footnotesize n/a} & {\footnotesize n/a}\\\hline
5 & (3,2,2) & 6.00 & 3.00 & {\footnotesize n/a} & {\footnotesize n/a} & {\footnotesize n/a} & {\footnotesize n/a} & {\footnotesize n/a}\\\hline
5 & (4,2,2) & 8.00 & 4.00 & {\footnotesize n/a} & {\footnotesize n/a} & {\footnotesize n/a} & {\footnotesize n/a} & {\footnotesize n/a}\\\hline
5 & (3,3,2) & 12.00 & 9.00 & 6.00 & {\footnotesize n/a} & {\footnotesize n/a} & {\footnotesize n/a} & {\footnotesize n/a}\\\hline
5 & (4,3,2) & 16.00 & 12.00 & 8.00 & 4.00 & {\footnotesize n/a} & {\footnotesize n/a} & {\footnotesize n/a}\\\hline
5 & (5,3,2) & 20.00 & 15.00 & 10.00 & 5.00 & {\footnotesize n/a} & {\footnotesize n/a} & {\footnotesize n/a}\\\hline
    \end{tabular} 
    \renewcommand\arraystretch{1}

    \caption{Best upper bound on $\log_q(M)$ for $(\mathbf n,M,d)_q$ tensor codes between the Singleton-like bound \legendbox{white} (\ref{eq:singletonlike}), the improved Singleton-like bound \legendbox{lightgray} (\ref{eq:improvedsingletonlike}), and the sphere-packing bound \legendbox{gray} for $d=3$ (\ref{eq:spherepacking3}) and $d=5$ (\ref{eq:spherepacking5}).}
    \label{tab:bestbetween}
\end{table}

\section{Multilinear forms Cayley graph}
\label{sec:multilinearfromsgraph}
We define the \emph{multilinear forms graph} of size $\mathbf n$ over the finite field $\F_q$ to be the graph 
$\Cay(\F_q^{\mathbf{n}},\S_{\mathbf n})$. Since the set $\S_{\mathbf n}$ is stable under multiplication by non-zero scalars, this is a particular case of the graph associated to the $\FFF$-projective metric discussed in our context in \cite[Section 4.2]{AbiadPetersRavagnani2025EigenvalueMethod}. Let us focus on the study of the properties of this graph as well as the computation of its eigenvalues.

By equation (\ref{eq:irreduciblecharactersofFqtensors}) and Theorem~\ref{thm:eigenvaluesofaCayleyGraph}, the eigenvalues (counted with multiplicity) of the multilinear forms graph $\Cay(\F_q^{\mathbf{n}},\S_{\mathbf n})$ are $(\lambda_x)_{x \in \F_q^{\mathbf n}}$, where for each tensor $x \in \F_q^{\mathbf n}$ we have
\[
    \lambda_x  = \sum_{s \in \S_{\mathbf n}} \chi(\langle x|s\rangle).
\]
In the rest of this paper, we will say that $\lambda_x$ is the eigenvalue of $\Cay(\F_q^{\mathbf{n}},\S_{\mathbf n})$ associated to the tensor $x\in\F_q^{\mathbf n}$. Two tensors may have an identical associated eigenvalue in the multilinear forms graph, and the number of tensors with an identical associated eigenvalue is exactly the multiplicity of the eigenvalue in the graph. 

Since the set of simple tensors $\S_{\mathbf n}$ is invariant under the action of $\GL_{\mathbf n}(\F_q)$, and by Proposition~\ref{prop:propertiestensorproddotprod}, for each $x \in \F_{q}^{\mathbf n}$, the eigenvalue associated to a $x$ depends only on its orbit $\GL_{\mathbf n}(\F_q)\cdot x$ under the action of the group $\GL_{\mathbf n}(\F_q)$. Indeed, for each $P = (P_1,\dots,P_m) \in \GL_{\mathbf n}(\F_q)$, since $P^\top = (P_1^\top,\dots,P_m^\top) \in \GL_{\mathbf n}(\F_q)$, we have 
\begin{equation}
    \label{eq:invarianceeigenvalueactionGL}
    \lambda_{M \cdot x} 
    = \sum_{s\in \S_{\mathbf n}} \chi(\langle P \cdot x | s \rangle)
    = \sum_{s\in \S_{\mathbf n}} \chi(\langle x | P^\top \cdot s \rangle)
    = \lambda_x.
\end{equation}
Since the action of $\GL_{\mathbf n}(\F_q)$ corresponds to changes of bases for the slices of the tensor, this immediately implies that the eigenvalue associated to a tensor only depends on its different slice spaces and not on the particular choice of coordinate tensor.

Although not directly relevant to our work, we remark that, similarly to the above, any group action for which the adjoint action under 
$\langle \cdot | \cdot \rangle$ preserves the set of rank-one tensors presents a similar invariance. In particular, if some entries of $\mathbf n$ are equal, the mode permutation on the tensors is an action that will preserve the associated eigenvalues. More precisely, if we have $n_{j_1} = n_{j_2}$ for two indices $j_1,j_2 \in \llbracket 1,m\rrbracket$ with $j_{1} < j_2$, then there is an action of the permutation group $S_2$ on $\F_q^{\mathbf n}$ given by $\text{id} \cdot u = u$ and 
\[
    (1,2) \cdot u =  u_1 \otimes \cdots \otimes u_{j_1-1} \otimes u_{j_2} \otimes u_{j_1+1} \otimes \cdots \otimes   u_{j_2-1} \otimes u_{j_1} \otimes u_{j_2+1} \otimes \cdots u_{m}
\]
for each $u = u_1 \otimes \cdots \otimes u_{m} \in \S_{\mathbf n}$, which can be extended by linearity to any element of the group $\F_q^{\mathbf n}$. Then one can easily note that $\langle (1,2)\cdot x | y\rangle = \langle x | (1,2) \cdot y\rangle$ for each $x,y \in \F_q^{\mathbf n}$, and thus obtain the invariance of the eigenvalues $\lambda_{(1,2)\cdot x} = \lambda_{x}$ for each $x \in \F_q^{\mathbf n}$, since $\S_{\mathbf n}$ is also invariant under mode permutation. 

For $m \leq 2$, the multilinear forms graph eigenvalues are already known as explained below. The following sections will provide formulas for the computation of the eigenvalues for the other cases. For $m = 1$, since $\S_{n_1} = \F_q^{n_1} \backslash \{0\}$, the graph $\Cay(\F_q^{n_1},\S_{n_1})$ is the complete graph $K_{q^{n_1}}$ and thus its eigenvalues are known. Namely, its eigenvalues are $-1$ and $(q^{n_1}-1)$ with respective multiplicities $(q^{n_1}-1)$ and $1$. For $m = 2$, the graph $\Cay(\F_q^{n_1\times n_2},\S_{n_1 \times n_2})$ corresponds to the underlying graph of the bilinear forms association scheme $\Bil_q(n_1,n_2)$ introduced in \cite{DELSARTE1978226}. Its eigenvalues have been computed and can be stated as follows.

\begin{Prop}[{\cite[Section 9.5]{Brouwer1989TheoryOfDistanceRegularGraphs},\cite{DELSARTE1978226}}]\label{prop:eigenvaluesCayleyGraphMatrices}
    The eigenvalues of $\Cay(\F_q^{n_1\times n_2},\S_{n_1 \times n_2})$ are $\theta_0,\dots,\theta_{\min(n_1,n_2)}$ where
    \[
        \theta_r = |\S_{n_1 \times n_2}| - \frac{q^{n_1+n_2}(1-q^{-r})}{q-1},
    \]
    and each $\theta_r$ has multiplicity 
    $ m_r = A_r(\F_q^{n_1\times n_2}) = \prod_{s= 0}^{r-1} \frac{(q^{n_1-s}-1)(q^{n_2}-q^{s})}{q^{s+1}-1}$ for each $r \in \llbracket 0,\min(n_1,n_2)\rrbracket$.
\end{Prop}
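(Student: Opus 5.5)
The plan is to compute the eigenvalues directly from the character-sum formula $\lambda_x = \sum_{s\in\S_{n_1\times n_2}}\chi(\langle x|s\rangle)$ (Theorem~\ref{thm:eigenvaluesofaCayleyGraph}). The multiplicities then follow from the fact that $\lambda_x$ depends only on the $\GL_{n_1}(\F_q)\times\GL_{n_2}(\F_q)$-orbit of $x$, by (\ref{eq:invarianceeigenvalueactionGL}). For matrices these orbits are exactly the rank classes. So if we show $\lambda_x=\theta_r$ whenever $\rank(x)=r$, and that the $\theta_r$ are pairwise distinct, the multiplicity of $\theta_r$ is the number $A_r(\F_q^{n_1\times n_2})$ of rank-$r$ matrices. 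This count is the classical formula $\prod_{s=0}^{r-1}\frac{(q^{n_1-s}-1)(q^{n_2}-q^s)}{q^{s+1}-1}$. I would derive it by counting pairs ($r$-dimensional row space in $\F_q^{n_2}$, surjection from $\F_q^{n_1}$ onto it), giving $\sgbc{r}{n_2}{q}\prod_{s=0}^{r-1}(q^{n_1}-q^s)$, and then rearranging.

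To evaluate $\lambda_x$, write each rank-one matrix as $u\otimes v = u^\top v$ with $u\ne0$, $v\ne0$. Each element of $\S_{n_1\times n_2}$ arises from exactly $q-1$ pairs $(u,v)$. By Proposition~\ref{prop:propertiestensorproddotprod}, $\langle x|u\otimes v\rangle = u x v^\top$, so
\[
(q-1)\lambda_x=\sum_{u\ne0}\sum_{v\ne0}\chi(uxv^\top)=\sum_{u\ne0}\Bigl(\sum_{v\in\F_q^{n_2}}\chi\bigl((ux)\cdot v\bigr)-1\Bigr).
\]
The inner full sum is the standard orthogonality relation for the characters (\ref{eq:irreduciblecharactersofFqtensors}): it equals $q^{n_2}$ if $ux=0$ and $0$ otherwise. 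The number of nonzero $u$ with $ux=0$ is $q^{n_1-r}-1$, where $r=\rank(x)$. Hence
\[
(q-1)\lambda_x = q^{n_2}(q^{n_1-r}-1)-(q^{n_1}-1),
\]
so
\[
\lambda_x=\frac{q^{n_1+n_2-r}-q^{n_2}-q^{n_1}+1}{q-1}.
\]
It remains to check that this matches $\theta_r$. Using $|\S_{n_1\times n_2}|=(q^{n_1}-1)(q^{n_2}-1)/(q-1)$ and subtracting $q^{n_1+n_2}(1-q^{-r})/(q-1)$ gives exactly this expression, which is a routine expansion.

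Finally, $\theta_r$ is strictly decreasing in $r$ because $q^{-r}$ is, so the values $\theta_0,\dots,\theta_{\min(n_1,n_2)}$ are distinct and the multiplicities are exactly the rank counts. The only step requiring care is the orthogonality evaluation together with the $(q-1)$-to-one parametrisation of $\S_{n_1\times n_2}$. The rest is bookkeeping, so I do not expect a genuine obstacle.
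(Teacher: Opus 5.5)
Your proof is correct and follows essentially the paper's own route. Besides citing \cite{DELSARTE1978226,Brouwer1989TheoryOfDistanceRegularGraphs}, the paper points to the direct character-sum computation via Theorem~\ref{thm:eigenvaluesofaCayleyGraph}, and re-derives $\theta_r$ from Theorem~\ref{thm:eigenvaluesmultilinearformsgraphrecursive} at $m=2$, which is the same double sum you evaluate with the order of summation swapped. In fact your nonzero $u$ with $ux=0$ are exactly the rank-one elements of $\slicesp_2(x)^\perp$, so your formula is also the $m=2$ base case of Theorem~\ref{thm:eigenvaluesmultilineargraphdual}; your multiplicity count and distinctness argument are fine, and the orbit-invariance step is unnecessary since your computation already shows $\lambda_x$ depends only on $\rank(x)$.
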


More precisely, the eigenvalue associated to a matrix $x \in \F_{q}^{n_1 \times n_2}$ is $\lambda_x = \theta_{\rank(x)}$. In particular, there is a one-to-one correspondence between the orbits of $\F_q^{n_1 \times n_2}$ under the action of $\GL_{n_1\times n_2}(\F_q)$ and the different eigenvalues of $\Cay(\F_{q}^{n_1 \times n_2},\S_{n_1 \times n_2})$. We will observe in Section \ref{sec:eigenvalues233} that this is not the case in general for higher order tensors; see Table \ref{tab:spectrumFq233}. It has also been established that the graph $\Cay(\F_q^{n_1\times n_2},\S_{n_1 \times n_2})$ is distance-regular; see \cite[Section 9.5]{Brouwer1989TheoryOfDistanceRegularGraphs}. In particular, its eigenvalues can be computed using the intersection array of the graph; see \cite[4.1.B]{Brouwer1989TheoryOfDistanceRegularGraphs}. Alternatively, it is also possible to compute, or each $r \in \llbracket 0,\min(n_1,n_2) \rrbracket$ the eigenvalue $\theta_r$ associated to canonical representative of the orbit of rank-$r$ matrices in $\F_{q}^{n_1 \times n_2}$  with Theorem~\ref{thm:eigenvaluesofaCayleyGraph}, and extend the result to all matrices using the  aforementioned invariance of the eigenvalues up to equivalence of the associated matrix.

For the rest of this section, we will give properties of the multilinear forms graph and then give an iterative formula that characterises the eigenvalue of any multilinear forms graph (Theorem~\ref{thm:eigenvaluesmultilinearformsgraphrecursive}), as well as a dual formula linking the eigenvalues to the types of intersection that a vector subspace of high enough dimension has with the Segre variety (see Theorem~\ref{thm:eigenvaluesmultilineargraphdual}). 

We remark that in \cite{DhankharKadyan2026EigengaluesIntegralCayley}, the authors provide formulas to compute the spectrum of a Cayley graph defined over an abelian group whose eigenvalues are integral, and deduce bounds on the least eigenvalue of the graph. In our work, we additionally exploit the vector space structure of $\F_q^{\mathbf n}$ and use the properties of the different tensor contraction spaces to characterise the eigenvalues of the multilinear forms graph. In particular, we obtain formulas for the exact values of the second-largest and least eigenvalues in the spectrum of the graph.

\subsection{Geodesic distance, walk-regularity and distance-regularity of the graph}

As a Cayley graph on a finite abelian group, we can first state the following properties of the multilinear forms graph deduced from the ones recalled in Section \ref{subsec:graphtheoryfundamentals}. 

\begin{Prop}\label{prop:tensorrankdistanceandgeodesiccoincides}
    The (tensor) rank distance $d_{\rank}$ on $\F_{q}^{\mathbf{n}}$ and the Geodesic distance $d_{\Cay(\F_q^{\mathbf{n}},\S_{\mathbf n})}$ coincide. Moreover, the graph $\Cay(\F_q^{\mathbf{n}},\S_{\mathbf n})$ is vertex-transitive, and in particular regular and walk-regular.
\end{Prop}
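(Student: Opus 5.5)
The plan is to prove the two assertions of Proposition~\ref{prop:tensorrankdistanceandgeodesiccoincides} separately, the first from the definition of tensor rank and the second from general facts about Cayley graphs recalled in Section~\ref{subsec:graphtheoryfundamentals}.

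\emph{Distances coincide.} Write $\Gamma = \Cay(\F_q^{\mathbf n},\S_{\mathbf n})$ and fix $x,y \in \F_q^{\mathbf n}$. Two vertices $g,h$ are adjacent exactly when $h-g \in \S_{\mathbf n}$; since $\S_{\mathbf n}=-\S_{\mathbf n}$, this relation is symmetric. I will prove the two inequalities between $d_\Gamma(x,y)$ and $\rank(y-x)$.
\begin{itemize}
\item First suppose $(v_0,\dots,v_i)$ is a walk from $x$ to $y$. Then $y-x=\sum_{k=1}^i (v_k-v_{k-1})$ is a sum of $i$ simple tensors, so $\rank(y-x)\le i$. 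Taking a shortest walk gives $\rank(y-x) \le d_\Gamma(x,y)$.
\item Conversely, let $\tau=\rank(y-x)$ and take a minimal rank form $y-x=\sum_{k=1}^{\tau} s_k$. By minimality, no term $s_k=v_{k,1}\otimes\cdots\otimes v_{k,m}$ is zero, since otherwise deleting it would give a shorter decomposition. A nonzero product of this form has all factors nonzero, so each $s_k\in\S_{\mathbf n}$. Setting $v_k = x+\sum_{l\le k}s_l$ then gives a walk of length $\tau$ from $x$ to $y$, so $d_\Gamma(x,y)\le \rank(y-x)$.
\end{itemize}
The case $x=y$ is consistent, with rank $0$ and distance $0$.

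\emph{Connectivity.} The definition of the geodesic distance presupposes that $\Gamma$ is connected. The argument above already shows this, because every tensor has finite rank (it is a sum of its coordinate simple tensors $x_i\, e_{i_1}\otimes\cdots\otimes e_{i_m}$). Equivalently, $\S_{\mathbf n}$ generates the group $\F_q^{\mathbf n}$.

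\emph{Vertex-transitivity.} For each $g\in\F_q^{\mathbf n}$, the translation $t_g: v\mapsto v+g$ is a bijection of the vertex set. It preserves adjacency, since $(v+g)-(w+g)=v-w$. It sends $0$ to $g$, so for any vertices $u,v$ the map $t_{v-u}$ is an automorphism taking $u$ to $v$. Regularity follows because every vertex has degree $|\S_{\mathbf n}|$.

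\emph{Walk-regularity.} The automorphism $t_{v-u}$ maps closed walks of length $\ell$ at $u$ bijectively to closed walks of length $\ell$ at $v$. Hence the number of such walks is independent of the base vertex for every $\ell$. Alternatively, this is the equality of diagonal entries of $A^\ell$ given by Corollary~\ref{corol:polynomialexpressionofadjacencymatrixcayley}.

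I do not expect a real obstacle here. The only subtle point is making sure that the terms in a minimal rank form genuinely lie in $\S_{\mathbf n}$, which requires nonzero factors; the minimality argument in the second inequality handles this.
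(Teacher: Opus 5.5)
Your proof is correct and follows the same route as the paper. There, the coincidence of the two distances is noted as an immediate consequence of the definition of tensor rank, and vertex-transitivity, regularity and walk-regularity are taken from the general properties of Cayley graphs on abelian groups (translations are automorphisms). You additionally spell out details the paper leaves implicit, namely $\S_{\mathbf n}=-\S_{\mathbf n}$, connectivity, and that every term of a minimal rank form lies in $\S_{\mathbf n}$.
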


The fact that the geodesic distance is identical to the rank distance is a direct consequence of the definition of the rank. The distance-regularity of the graph however depends on the parameter $m$. To observe this, let us start by characterising a particular number $p^i_{j,k}(x,y)$ of the graph using the different minimal rank forms of the tensor $x-y$.

\begin{Lemma}\label{lemma:piandnumberofsecompositionsstartingat}
    Let $i$ be an integer with $i\geq 2$ and let $x,y \in \F_{q}^{\mathbf{n}}$ be such that $\rank(x-y) = i$. Then the number $p^i_{1,i-1}(x,y)$ of the graph $\Cay(\F_q^{\mathbf n},\S_{\mathbf n})$ is exactly the number of elements in $\S_{\mathbf n}$ that appear in at least one minimal rank form of $y-x$, i.e. we have 
    \[
        p^i_{1,i-1}(x,y) = |\{s_1 \in \S_{\mathbf n} \mid \exists s_2,\dots,s_i \in \S_{\mathbf n} : y - x = s_1 + s_2 + \cdots + s_i\}|.
    \] 
\end{Lemma}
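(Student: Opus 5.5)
We prove the equality by exhibiting a bijection, via translation by $x$, between the set counted by $p^i_{1,i-1}(x,y)$ and the set of simple tensors that occur in some minimal rank form of $y-x$. By definition,
\[
p^i_{1,i-1}(x,y) = |\{z \in \F_q^{\mathbf n} : d_\Gamma(x,z) = 1,\ d_\Gamma(y,z) = i-1\}|,
\]
where $\Gamma = \Cay(\F_q^{\mathbf n},\S_{\mathbf n})$. By Proposition~\ref{prop:tensorrankdistanceandgeodesiccoincides}, geodesic distance equals rank distance, so the conditions become $\rank(z-x) = 1$ and $\rank(y-z) = i-1$. Writing $z = x + s_1$, the first condition says exactly that $s_1 \in \S_{\mathbf n}$, since the rank-one tensors are precisely the nonzero simple tensors. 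The map $z \mapsto z - x$ is a bijection, so $p^i_{1,i-1}(x,y)$ counts the $s_1 \in \S_{\mathbf n}$ with $\rank(y - x - s_1) = i-1$.

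It remains to show that, for $s_1 \in \S_{\mathbf n}$, the condition $\rank(y-x-s_1) = i-1$ holds if and only if there exist $s_2,\dots,s_i \in \S_{\mathbf n}$ with $y - x = s_1 + \cdots + s_i$.

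For the forward direction, suppose $\rank(y-x-s_1) = i-1$. Since $i-1 \geq 1$, a minimal rank form writes $y-x-s_1$ as a sum of $i-1$ simple tensors. Each term is nonzero, since otherwise the rank would be smaller, so each lies in $\S_{\mathbf n}$. Adding $s_1$ gives the required decomposition.

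For the converse, suppose $y-x = s_1+\cdots+s_i$ with all $s_k \in \S_{\mathbf n}$. Then $\rank(y-x-s_1) \leq i-1$. On the other hand, rank is subadditive, so
\[
i = \rank(y-x) \leq \rank(s_1) + \rank(y-x-s_1) = 1 + \rank(y-x-s_1),
\]
which gives $\rank(y-x-s_1) \geq i-1$. Hence $\rank(y-x-s_1) = i-1$.

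Any decomposition $y - x = s_1 + \cdots + s_i$ into simple tensors has exactly $\rank(y-x)$ terms, so it is automatically a minimal rank form. The set just described is therefore the set of elements of $\S_{\mathbf n}$ appearing in at least one minimal rank form of $y-x$. No step poses a real obstacle; the only points needing care are that the summands in a minimal form are nonzero, and the subadditivity argument that forces $\rank(y-x-s_1)$ to equal $i-1$ rather than being merely at most $i-1$.
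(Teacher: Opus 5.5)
Your proof is correct and follows the paper's argument: translating by $x$ turns $p^i_{1,i-1}(x,y)$ into a count of the $s_1\in\S_{\mathbf n}$ with $\rank(y-x-s_1)=i-1$, and the bound $\rank(y-x-s_1)\geq i-1$ forced by $\rank(y-x)=i$ identifies these with the $s_1$ that admit a decomposition $y-x=s_1+\cdots+s_i$ into simple tensors. The only difference is cosmetic: you treat $i=2$ and $i\geq 3$ together, whereas the paper handles them separately, and you say explicitly why such a decomposition is automatically a minimal rank form.
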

\begin{proof}
    By definition of the rank we have
    \begin{align*}
        p^i_{1,i-1}(x,y) 
        &= |\{z \in \F_{q}^{\mathbf n} \ : \  \rank(z - x) = 1, \rank(y - z) = i-1\}|\\
        &= |\{s_1 \in \S_{\mathbf n} \ : \  \rank(y - x - s_1) = i-1\}|.
    \end{align*}
    If $i = 2$, then $p^{2}_{1,1}(x,y) = |\{s_1 \in \S_{\mathbf n} \ : \  \exists s_2 \in \S_{\mathbf n}, y - x - s_1 = s_2\}|$, which concludes the proof. Assume now that $i \geq 3$. Let $s_1 \in \S_{\mathbf n}$ be a simple tensor. Since $i-1 \geq 2$ we have 
    \begin{equation}
        \label{eq:equivalencerank}         
        \rank(y-x-s_1) \leq i-1 \iff \exists s_2,\dots,s_i \in \S_{\mathbf n} : y - x -s_1 =  s_2 + \cdots + s_i.
    \end{equation}
    Moreover, since $\rank(y-x) = i$ we have $\rank(y-x-s_1) \geq i-1$. Therefore, $\rank(y-x-s_1) = i-1$ is equivalent to (\ref{eq:equivalencerank}) which concludes the proof.
\end{proof}

\begin{Prop}\label{prop:distanceregularityofsegregraph}
    Let $\mathbf{n} = (n_1,\dots,n_m)$ be a sequence of integers with $n_j \geq 2$ for each $j \in \llbracket 1,m\rrbracket$. Then the graph $\Cay(\F_{q}^{\mathbf{n}},\S_{\mathbf n})$ is distance-regular if and only if $m \leq 2$.
\end{Prop}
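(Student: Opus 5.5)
For $m\le 2$ the claim follows from known results. If $m=1$ the graph is the complete graph $K_{q^{n_1}}$, which is trivially distance-regular. If $m=2$ it is the bilinear forms graph $\Bil_q(n_1,n_2)$, which is distance-regular by \cite[Section 9.5]{Brouwer1989TheoryOfDistanceRegularGraphs}. The remaining task is to show that for $m\ge 3$ the graph is not distance-regular. The plan is to exhibit two pairs $(x,y)$ and $(x',y')$ at the same distance $i=2$ with $p^2_{1,1}(x,y)\neq p^2_{1,1}(x',y')$. By vertex-transitivity we may take $x=x'=0$. By Lemma~\ref{lemma:piandnumberofsecompositionsstartingat}, $p^2_{1,1}(0,y)$ is the number of simple tensors $s_1$ that occur in some decomposition $y=s_1+s_2$ with $s_1,s_2\in\S_{\mathbf n}$.

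For the first tensor, take $y=e_1\otimes\cdots\otimes e_1+e_2\otimes\cdots\otimes e_2$, using the first two basis vectors in every mode (possible since $n_j\ge2$). Its slice space has dimension $2$, so $\rank(y)=2$ by Proposition~\ref{prop:slicespdimlowerboundstrank}. In each mode the factor vectors $e_1,e_2$ have Kruskal rank $2$. The condition of Theorem~\ref{thm:kruskaluniquenesstheorem} with $t=2$ reads $4\le 1+m$, which holds exactly when $m\ge3$. So the minimal rank form is unique up to order, and the only simple tensors appearing are the two terms themselves, giving $p^2_{1,1}(0,y)=2$.

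For the second tensor, take $y'=e_1\otimes\cdots\otimes e_1\otimes e_1+e_1\otimes\cdots\otimes e_1\otimes e_2=e_1\otimes\cdots\otimes e_1\otimes (e_1+e_2)$. This has rank one, so it is not at distance $2$ and cannot be used directly. Instead take
\[
y'=e_1\otimes e_1\otimes\cdots\otimes e_1+e_1\otimes e_2\otimes\cdots\otimes e_2,
\]
where the first mode is shared and the others differ. Then $y'=e_1\otimes z$ with $z$ a rank-$2$ tensor of order $m-1$, and a decomposition $y'=s_1+s_2$ corresponds to a decomposition of $z$ once the first factors are forced. More concretely, the $1$-slice space of $y'$ is spanned by the single $(m-1)$-tensor $z$; I will argue that every rank-$2$ decomposition of $y'$ has both terms with first factor proportional to $e_1$.

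The order of steps is therefore:
\begin{itemize}
\item[(i)] Establish the $m\le2$ direction by citation.
\item[(ii)] Compute $p^2_{1,1}(0,y)=2$ via Kruskal's theorem.
\item[(iii)] Find $y'$ of rank $2$ with $p^2_{1,1}(0,y')>2$.
\end{itemize}

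Step (iii) is the main obstacle, because for $m-1\ge3$ the tensor $z$ is itself Kruskal-unique, so the shared-first-mode choice above may also give only $2$. The fix I would try first is to share two modes instead:
\[
y'=e_1\otimes e_1\otimes(e_1\otimes\cdots\otimes e_1+e_2\otimes\cdots\otimes e_2)\quad\text{when } m\ge4.
\]
For $m=3$, use $y'=e_1\otimes(e_1\otimes e_1+e_2\otimes e_2)$, which is essentially an identity matrix in modes $2,3$. Its decompositions into two simple tensors correspond to writing the $2\times2$ identity matrix as a sum of two rank-one matrices. There are many of these, namely $s_1=e_1\otimes u\otimes v$ for any $u,v$ with $I-u^\top v$ of rank one, roughly $q^2$ choices, which is far more than $2$. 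The argument that all decompositions of $y'$ have first factors in $\langle e_1\rangle$ goes through the slice space: $\slicesp_1(y')$ is one-dimensional, and if $y'=s_1+s_2$ with first factors $a,b$ linearly independent, then contracting gives two independent slices, a contradiction. If $a,b$ are dependent, both are multiples of $e_1$. The general-$m$ version places $I_2$ in the last two modes and $e_1$ in the others. Counting rank-one matrices $M$ with $I_2-M$ also of rank one yields $q^2+q>2$, so the counts differ and the graph is not distance-regular.
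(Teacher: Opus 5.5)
Your argument is correct and is essentially the paper's proof specialised to $t=2$. The paper compares $x_1=\sum_{i=1}^t e_i\otimes\cdots\otimes e_i$, which is Kruskal-unique with exactly $t$ simple terms, against $x_2=\sum_{i=1}^t e_i\otimes e_i\otimes e_1\otimes\cdots\otimes e_1$, where one explicit alternative decomposition already gives at least $t+1$ simple terms; your final $y'=e_1\otimes\cdots\otimes e_1\otimes(e_1\otimes e_1+e_2\otimes e_2)$ is this $x_2$ up to a permutation of modes, and your exact count $q^2+q$ is correct but more than is needed. You should discard the intermediate candidate $e_1\otimes e_1\otimes(e_1\otimes\cdots\otimes e_1+e_2\otimes\cdots\otimes e_2)$: for $m\geq 5$ the remaining factor is again Kruskal-unique, so that candidate gives only $2$.
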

\begin{proof}
    The statement is void for $m=1$ since the graph is complete. The statement is true for $m=2$ by \cite[Section 9.5]{Brouwer1989TheoryOfDistanceRegularGraphs}. Now assume that $m \geq 3$, and let us prove that for each $t \in \llbracket 2, \min_{j \in \llbracket 1,m\rrbracket}(n_j)\rrbracket$, there exist $x_1,x_2 \in \F_q^{\mathbf{n}}$ two tensors of such that $\rank(x_1) = \rank(x_2) = t$ and such that $p^t_{1,t}(0,x_1) \neq p^t_{1,t}(0,x_2)$. By Lemma~\ref{lemma:piandnumberofsecompositionsstartingat}, one can conclude the proof by finding two such tensors that do not have the same number of elements of $\S_{\mathbf n}$ appearing as terms of their respective list of minimal rank forms. Fix such an integer $t$ and define the tensors
    \[
        x_1 = \sum_{i=1}^t e_i^{(n_1)} \otimes \cdots \otimes e_i^{(n_m)} \text{ and }  x_2 = \sum_{i=1}^t e_i^{(n_1)} \otimes e_i^{(n_2)} \otimes  e_1^{(n_3)} \otimes \cdots \otimes e_1^{(n_m)}.
    \]
    Note that Proposition~\ref{prop:slicespdimlowerboundstrank} implies that $\rank(x_1) = \rank(x_2) = t$. On the one hand, Kruskal uniqueness theorem (Theorem~\ref{thm:kruskaluniquenesstheorem}) implies that $x_1$ has a unique minimal rank form up to permutation of the terms of the sum above. Therefore, there are precisely $t$ elements in $\S_{\mathbf n}$ in the list of its minimal rank forms. On the other hand, note that 
    \begin{align*}
         x_2 
         &= (e_1^{(n_1)} + e_2^{(n_1)}) \otimes e_1^{(n_2)} \otimes \cdots \otimes e_1^{(n_m)}\\ 
         &\ \ \ \ + e_2^{(n_1)} \otimes (e_2^{(n_2)}-e_1^{(n_2)}) \otimes  e_1^{(n_3)} \otimes \cdots \otimes e_1^{(n_m)}\\
         &\ \ \ \ + \sum_{i=3}^t e_i^{(n_1)} \otimes e_i^{(n_2)} \otimes  e_1^{(n_3)} \otimes \cdots \otimes e_1^{(n_m)}.
    \end{align*}
    Therefore, there are at least $t+1$ elements in $\S_{\mathbf n}$ that appear in the minimal rank forms of $x_2$, which concludes the proof.
\end{proof}

\subsection{Iterative computation of the eigenvalues}

Since the multilinear forms graph is not distance-regular for $m \geq 3$, we will use Theorem~\ref{thm:eigenvaluesofaCayleyGraph} and the expression of the complex characters of $\F_{q}^{\mathbf n}$ given in (\ref{eq:irreduciblecharactersofFqtensors}) to obtain the eigenvalues of the multilinear forms graph. Let us first observe that the eigenvalues of the graph can be theoretically computed recursively using the properties of the slice spaces as follows.

\begin{Thm}\label{thm:eigenvaluesmultilinearformsgraphrecursive}
    Assume $m\geq 2$. Let $j \in \llbracket 1,m\rrbracket$ be an integer. For each $y \in \F_q^{\mathbf n(j)}$, denote by $\lambda_{y}$ the eigenvalue of $\Cay(\F_q^{\mathbf n(j)},\S_{\mathbf n(j)})$ associated to $y$. Then, for each $x \in \F_q^{\mathbf n}$, $\lambda_x$ the eigenvalue of $\Cay(\F_q^{\mathbf{n}},\S_{\mathbf n})$ associated to $x$ is given by
    \[
        (q-1)\lambda_x = -|\S_{\mathbf n(j)}| + q^{(n_{j} - \dim \slicesp_{j}(x))}\sum_{y \in \slicesp_{j}(x)} \lambda_{y}.
    \]  
\end{Thm}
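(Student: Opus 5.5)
We start from the character-sum expression $\lambda_x = \sum_{s\in\S_{\mathbf n}}\chi(\langle x|s\rangle)$ and split each simple tensor along mode $j$. Every $s\in\S_{\mathbf n}$ can be written as $u\otimes t$, with $u\in\F_q^{n_j}\setminus\{0\}$ in the $j$-th slot and $t\in\S_{\mathbf n(j)}$ occupying the remaining slots. The map $(u,t)\mapsto s$ is exactly $(q-1)$-to-one, since $(u,t)$ and $(cu,c^{-1}t)$ give the same tensor for every $c\in\F_q^\times$. Hence
\[
(q-1)\lambda_x=\sum_{u\in\F_q^{n_j}\setminus\{0\}}\ \sum_{t\in\S_{\mathbf n(j)}}\chi(\langle x|u\otimes t\rangle).
\]
Next we use Proposition~\ref{prop:propertiestensorproddotprod} (items 1--2) to contract mode $j$. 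This gives the identity $\langle x|u\otimes t\rangle=\langle \modemult_j(u,x)|t\rangle$, which holds by multilinearity: check it on simple $x$ and extend linearly. The inner sum is therefore $\lambda_{\modemult_j(u,x)}$, the eigenvalue of $\Cay(\F_q^{\mathbf n(j)},\S_{\mathbf n(j)})$ associated to the contraction.

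We then add back the $u=0$ term. Its inner sum equals $\sum_{t}\chi(0)=|\S_{\mathbf n(j)}|$, so
\[
(q-1)\lambda_x=-|\S_{\mathbf n(j)}|+\sum_{u\in\F_q^{n_j}}\lambda_{\modemult_j(u,x)}.
\]
The map $u\mapsto \modemult_j(u,x)$ is $\F_q$-linear from $\F_q^{n_j}$ onto $\slicesp_j(x)$. Its kernel has dimension $n_j-\dim\slicesp_j(x)$, so each $y\in\slicesp_j(x)$ has exactly $q^{n_j-\dim\slicesp_j(x)}$ preimages. Regrouping the sum over $u$ by the value $y$ then yields the claimed formula.

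The only point needing care is the contraction identity $\langle x|u\otimes t\rangle=\langle\modemult_j(u,x)|t\rangle$, together with the transpose convention in the action, $u_iM_i^\top$ with $u$ viewed as a $1\times n_j$ matrix. We verify it directly on basis tensors $x=e_{i_1}\otimes\cdots\otimes e_{i_m}$: both sides equal $u_{i_j}\prod_{k\ne j}t_{k,i_k}$ for $t=\bigotimes_{k\neq j}t_k$, and extend by linearity in $x$ and $t$. The remaining steps are bookkeeping. For $m=2$ the formula reduces to the complete-graph case via the identification of $1$-sized modes in the Remark.
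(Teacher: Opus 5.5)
Your proposal is correct and follows essentially the same route as the paper: the $(q-1)$-to-one parametrisation of $\S_{\mathbf n}$ by pairs $(u,t)$, the contraction identity $\langle x|u\otimes t\rangle=\langle \modemult_j(u,x)|t\rangle$, adding back the $u=0$ term, and regrouping over the fibres of the surjective linear map $u\mapsto\modemult_j(u,x)$ onto $\slicesp_j(x)$. Your closing remark about $m=2$ is unnecessary, since the argument is uniform in $m\geq 2$; the identification of size-one modes is what places $\modemult_j(u,x)$ in $\F_q^{\mathbf n(j)}$ for every $m$, not only for $m=2$.
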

\begin{proof}

    Let $x_1,\dots,x_{n_{j}} \in \F_{q}^{\mathbf n(j)}$ be the $j$-th slices of $x$, that is the elements such that $x = \sum_{i =1}^{n_j} x_i \otimes e_i^{(n_j)}$, or equivalently $\modemult_{j}(e_i^{(n_j)},x) = x_i$ for each $i \in \llbracket 1,n_{j}\rrbracket$. Note that for each $s' \in  \F_{q}^{\mathbf n(j)}$ and each $w \in \F_{q}^{n_{j}}$ we have
    \[
        \langle x \mid s'\otimes w\rangle = \sum_{i = 1}^{n_{j}} w_i\langle x_i \mid s'\rangle = \left.\left\langle\sum_{i = 1}^{n_{j}} w_i x_i \right| s' \right\rangle. 
    \]
    Therefore, by (\ref{eq:irreduciblecharactersofFqtensors}) and Theorem~\ref{thm:eigenvaluesofaCayleyGraph}, we have 
    \begin{align*}
        \lambda_x 
        &= \sum_{s \in \S_{\mathbf n}} \chi(\langle x \mid s\rangle) \\
        &= \frac{1}{q-1}\sum_{s' \in \S_{\mathbf n(j)}}\sum_{w \in \F_{q}^{n_{j}} \backslash \{0\}} \chi(\langle x \mid s'\otimes w\rangle) \\
        &= \frac{1}{q-1}\sum_{s' \in \S_{\mathbf n(j)}}\sum_{w \in \F_{q}^{n_{j}} \backslash \{0\}} \chi\left(\left.\left\langle\sum_{i = 1}^{n_{j}} w_i x_i \right| s' \right\rangle\right).
    \end{align*}
    Moreover, the map $\modemult_{j}(\cdot,x) : \F_{q}^{n_{j}} \to \slicesp_{j}(x), w \mapsto \sum_{i=1}^{n_{j}} w_ix_i$, surjective by construction, has a kernel of dimension $n_j-\dim \slicesp_{j}(x)$, therefore we have
    \begin{align*}
        (q-1)\lambda_x 
            &= \sum_{s' \in\S_{\mathbf n(j)}}\left( -1 + \sum_{w \in \F_{q}^{n_{j}} } \chi\left(\left.\left\langle\sum_{i = 1}^{n_{j}} w_i x_i \right| s' \right\rangle\right)\right)\\
        &= -|\S_{\mathbf n(j)}| + q^{(n_{j}-\dim\slicesp_{j}(x))}\sum_{s' \in\S_{\mathbf n(j)}}\sum_{y \in \slicesp_{j}(x) } \chi\left(\left.\left\langle y \right| s' \right\rangle\right)\\
        &= -|\S_{\mathbf n(j)}| + q^{(n_{j}-\dim\slicesp_{j}(x))}\sum_{y \in \slicesp_{j}(x) }  \lambda_{y}.
    \end{align*}
\end{proof}

    Since $\F_q^{\mathbf n}$ is an abelian group isomorphic to a product of cyclic groups of order $p$, and since $\S_{\mathbf n}$ is a union of cyclic subgroups, the results of \cite{BridgesMena1982RationalGmatrices} establish that the eigenvalues of $\Cay(\F_q^{\mathbf n},\S_{\mathbf n})$ are integers. 
    This can also be deduced as a corollary of Theorem~\ref{thm:eigenvaluesmultilinearformsgraphrecursive} as follows.

\begin{Corol}\label{corol:eigenvaluesareintegers}
    For every $\mathbf{n}$, the eigenvalues of $\Cay(\F_q^{\mathbf{n}},\S_{\mathbf n})$ are integers.
\end{Corol}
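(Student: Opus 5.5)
We argue by induction on $m$, using the recursion of Theorem~\ref{thm:eigenvaluesmultilinearformsgraphrecursive} with $j = m$. By the definition of a Cayley graph eigenvalue, each $\lambda_x$ is a sum of values of $\chi$, hence an algebraic integer. Therefore it suffices to show that $\lambda_x$ is rational; an algebraic integer lying in $\mathbb{Q}$ lies in $\Z$.

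For the base case $m=1$, the graph is the complete graph $K_{q^{n_1}}$. Its eigenvalues $-1$ and $q^{n_1}-1$ are integers. (For $m=2$ one can also read integrality off Proposition~\ref{prop:eigenvaluesCayleyGraphMatrices}, though this is not needed.)

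For the inductive step, assume $m \geq 2$ and that every eigenvalue of $\Cay(\F_q^{\mathbf n(m)},\S_{\mathbf n(m)})$ is an integer. Fix $x \in \F_q^{\mathbf n}$. Theorem~\ref{thm:eigenvaluesmultilinearformsgraphrecursive} gives
\[
(q-1)\lambda_x = -|\S_{\mathbf n(m)}| + q^{(n_m - \dim \slicesp_m(x))}\sum_{y \in \slicesp_m(x)} \lambda_y .
\]
The right-hand side is an integer: $|\S_{\mathbf n(m)}|$ is an integer, the exponent $n_m - \dim\slicesp_m(x)$ is nonnegative, and each $\lambda_y$ is an integer by the induction hypothesis. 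Hence $\lambda_x \in \mathbb{Q}$, and since $\lambda_x$ is an algebraic integer, $\lambda_x \in \Z$.

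The only delicate point is that the recursion carries the factor $(q-1)$, so it yields integrality of $(q-1)\lambda_x$ rather than of $\lambda_x$ directly. We close this gap with the algebraic-integer argument above. If we wanted to avoid algebraic number theory, we could instead check the congruence $q^{n_m-\dim\slicesp_m(x)}\sum_y \lambda_y \equiv |\S_{\mathbf n(m)}| \pmod{q-1}$, but the algebraic-integer route is cleaner.
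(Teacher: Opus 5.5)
Your proof is correct, but it closes the $(q-1)$ gap differently from the paper. The paper stays within integer arithmetic. It uses two facts: $|\S_{\mathbf n(j)}|$ is divisible by $q-1$, and $\lambda_{cy}=\lambda_y$ for every $c\in\F_q^\times$. The nonzero elements of $\slicesp_j(x)$ therefore split into classes of size $q-1$ with equal eigenvalues, and $\lambda_0=|\S_{\mathbf n(j)}|$. So the right-hand side of the recursion is divisible by $q-1$; this is exactly the congruence you mention as a fallback.

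You instead use the recursion only to get $\lambda_x\in\mathbb{Q}$. You then invoke the fact that $\lambda_x$ is a sum of $p$-th roots of unity (equivalently, an eigenvalue of an integer matrix), hence an algebraic integer.

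Your route is shorter and more robust. It needs neither the scalar invariance of eigenvalues nor any divisibility bookkeeping, and the induction hypothesis could be weakened to rationality. The paper's route is fully elementary and makes explicit the arithmetic reason for the divisibility, namely the $\F_q^\times$-symmetry of $\S_{\mathbf n}$. The same symmetry is why $A_1(\cdot)$ turns out to be a multiple of $q-1$ later in the paper.
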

\begin{proof}
    Since the eigenvalues of the complete graph are integers, the statement is true for $m=1$. Moreover, the statement can be extended by induction using Theorem~\ref{thm:eigenvaluesmultilinearformsgraphrecursive} since $|\S_{\mathbf n}|$ is divisible by $(q-1)$ for each integer tuple $\mathbf n$ and since the eigenvalue associated to a tensor is the same as the one associated to any of its non-zero scalar multiples.
\end{proof}


Note that Theorem~\ref{thm:eigenvaluesmultilinearformsgraphrecursive} can be used to obtain the result of Proposition~\ref{prop:eigenvaluesCayleyGraphMatrices} from the spectrum of the complete graph $\Cay(\F_q^{n_1},\S_{n_1})$. Indeed, for each $x \in \F_q^{n_1\times n_2}$, we have 
\[
    q^{n_2 - \dim \slicesp_2(x)}\sum_{y \in \slicesp_2(x)} \lambda_{y} = q^{n_2 - \rank(x)}\left((q^{n_1}-1) - (q^{\rank(x)}-1) \right) = q^{n_1 + n_2 - \rank(x)} - q^{n_2},
\]
since in the graph $\Cay(\F_q^{n_1},\S_{n_1})$, the eigenvalue associated to a non-zero vector is $-1$ while the eigenvalue associated to the zero vector is $(q^{n_1}-1)$. Therefore, we have
\[
    (q-1)\lambda_x = -(q^{n_1}-1) + q^{n_1 + n_2 - \rank(x)} - q^{n_2} = (q^{n_1} -1)(q^{n_2}-1) - q^{n_1 + n_2} + q^{n_1 + n_2 - \rank(x)},
\]
which proves the statement. 

Moreover, we can use the fact that the eigenvalues of $\Cay(\F_q^{n_1\times n_2},\S_{n_1 \times n_2})$ only depend on the rank of the associated matrix to obtain an expression of the eigenvalues of $\Cay(\F_q^{n_1\times n_2\times n_3},\S_{n_1 \times n_2 \times n_3})$ in terms of the weight enumerator of the tensor slice space as a linear rank-metric code.
\begin{Corol}\label{corol:eigenvaluestrilinearformsgraph}
    Let $x \in \F_q^{n_1\times n_2\times n_3}$ be a tensor, and let $j \in \{1,2,3\}$. Then the eigenvalue of the trilinear forms graph $\Cay(\F_q^{n_1\times n_2\times n_3},\S_{n_1 \times n_2 \times n_3})$ associated to $x$ is given by
    \[
        \lambda_x 
        = |\S_{n_1 \times n_2 \times n_3}| + \frac{q^{n_1 +n_2+  n_3}}{(q-1)^2}\left( -1 +  \sum_{r \geq 0} \frac{A_r(\slicesp_j(x))}{|\slicesp_j(x)|}q^{-r} \right).
    \]
\end{Corol}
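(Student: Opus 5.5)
The plan is to apply Theorem~\ref{thm:eigenvaluesmultilinearformsgraphrecursive} with $m=3$ and the chosen $j$, and to replace each term $\lambda_y$, $y\in\slicesp_j(x)\subseteq\F_q^{\mathbf n(j)}$, by the explicit bilinear forms eigenvalue from Proposition~\ref{prop:eigenvaluesCayleyGraphMatrices}. Write $\mathbf n(j)=(a,b)$, $k=\dim\slicesp_j(x)$ and $|\slicesp_j(x)|=q^k$. Since $\lambda_y=\theta_{\rank(y)}$, grouping the sum by rank gives
\[
\sum_{y\in\slicesp_j(x)}\lambda_y=\sum_{r\geq 0}A_r(\slicesp_j(x))\,\theta_r = q^k|\S_{a\times b}| - \frac{q^{a+b}}{q-1}\sum_{r\ge0}A_r(\slicesp_j(x))(1-q^{-r}),
\]
using $\sum_r A_r(\slicesp_j(x))=q^k$.

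Next I would substitute this into the recursion. The factor $q^{n_j-k}$ turns $q^k|\S_{a\times b}|$ into $q^{n_j}|\S_{a\times b}|$. It also turns the second term into
\[
-\frac{q^{n_1+n_2+n_3}}{q-1}\Bigl(1-\sum_r \frac{A_r(\slicesp_j(x))}{|\slicesp_j(x)|}q^{-r}\Bigr),
\]
where $a+b+n_j=n_1+n_2+n_3$.

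The main check is the constant term. We have $(q-1)\lambda_x=(q^{n_j}-1)|\S_{a\times b}|+(\text{second term})$. Since $|\S_{a\times b}|=(q^a-1)(q^b-1)/(q-1)$, the product $(q^{n_j}-1)|\S_{a\times b}|$ equals $(q-1)|\S_{n_1\times n_2\times n_3}|$, by the cardinality formula for $\S_{\mathbf n}$. Dividing by $q-1$ then yields exactly the stated expression, with the $q^{n_1+n_2+n_3}/(q-1)^2$ prefactor and bracket $-1+\sum_r A_r q^{-r}/|\slicesp_j(x)|$.

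No step is genuinely hard; the only care needed is tracking signs and the $q^{n_j-k}$ normalisation so that the weight distribution appears divided by $|\slicesp_j(x)|$.
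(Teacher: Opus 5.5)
Your proposal is correct and follows essentially the same route as the paper: apply Theorem~\ref{thm:eigenvaluesmultilinearformsgraphrecursive} with the chosen $j$, substitute $\lambda_y=\theta_{\rank(y)}$ from Proposition~\ref{prop:eigenvaluesCayleyGraphMatrices}, write $q^{n_j-\dim\slicesp_j(x)}$ as $q^{n_j}/|\slicesp_j(x)|$, and use $(q^{n_j}-1)|\S_{\mathbf n(j)}|=(q-1)|\S_{n_1\times n_2\times n_3}|$ before dividing by $q-1$. The only difference is cosmetic: the paper fixes $j=1$ and sums over $y\in\slicesp_1(x)$ directly, whereas you first group the terms by rank.
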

\begin{proof}
    For convenience, we will fix $j = 1$, the proof is similar for $j = 2$ or $j = 3$. By Theorem~\ref{thm:eigenvaluesmultilinearformsgraphrecursive} and Proposition~\ref{prop:eigenvaluesCayleyGraphMatrices}, if we denote by $\lambda_y$ the eigenvalue of $\Cay(\F_{q}^{n_2 \times n_3},\S_{n_2 \times n_3})$ associated to any tensor $y \in \F_q^{n_2 \times n_3}$ we have 
    \begin{align*}
        (q-1)\lambda_x 
        &= -|\S_{n_2 \times n_3}| + q^{(n_{1} - \dim \slicesp_{1}(x))}\sum_{y \in \slicesp_{1}(x)} \lambda_{y}\\
        &= -|\S_{n_2 \times n_3}| + \frac{q^{n_{1}}}{|\slicesp_1(x)|} \sum_{y \in \slicesp_{1}(x)}\left(|\S_{n_2 \times n_3}| - \frac{q^{n_2+n_3}(1-q^{-\rank(y)})}{q-1} \right) \\
        &= |\S_{n_2 \times n_3}|\left(-1+q^{n_1}\right) - \frac{q^{n_1+n_2+n_3}}{(q-1)|\slicesp_1(x)|}\sum_{y \in \slicesp_1(x)}(1-q^{-\rank(y)})\\
        &= {|\S_{n_1 \times n_2 \times n_3}|}(q-1) - \frac{q^{n_1+n_2+n_3}}{(q-1)} +  \frac{q^{n_1+n_2+n_3}}{(q-1)|\slicesp_1(x)|}\sum_{y \in \slicesp_1(x)}q^{-\rank(y)}.
    \end{align*}
    Therefore, we have 
    \begin{align*}
        \lambda_x 
        &= |\S_{n_1 \times n_2 \times n_3}| + \frac{q^{n_1+n_2+n_3}}{(q-1)^2}\left( -1  + \sum_{r\geq  0} \frac{|\{y \in \slicesp_1(x) : \rank(y) = r\}|}{|\slicesp_1(x)|}q^{-r}\right),
    \end{align*}
    which gives us the wanted result.
\end{proof}

\begin{Rq}
    The result of Corollary~\ref{corol:eigenvaluestrilinearformsgraph} can be rephrased using either the weight enumerator or the generating function of the discrete variables associated to the tensors in question. Namely, if $x \in \F_q^{n_1\times n_2 \times n_3}$ is a tensor, then for each $j \in \{1,2,3\}$ we have 
    \begin{equation}
        \label{eq:eigenvalue3tensorwithweightenum}
        \lambda_x = |\S_{n_1 \times n_2 \times n_3}| + \frac{q^{n_1+n_2+n_3}}{(q-1)^2}\left(q^{-\dim\slicesp_j(x)}W_{\slicesp_j(x)}(1,q^{-1})-1\right).
    \end{equation}
    This in turn indicates that the value of $\sum_{r \geq 0} \frac{A_r(\slicesp_j(x))}{|\slicesp(x)|}q^{-r} = q^{-\dim\slicesp_j(x)}W_{\slicesp_j(x)}(1,q^{-1}) $ is independent of the choice of $j \in \{1,2,3\}$. We remark that this independence property was also observed in \cite[Theorem 3.11]{alfaranoborelloneri2026geometryrankmetriccodes}. Indeed, the authors observed that $\sum_{M \in \slicesp_1(x) \backslash \{0\}}(q^{n_3 - \rank(M)} -1) = \sum_{X \in \slicesp_3(x) \backslash \{0\}}(q^{n_1 - \rank(X)} -1)$ for any tensor $x \in \F_q^{n_1 \times n_2 \times n_3}$ such that $\dim \slicesp_1(x) = n_1$ and $\dim \slicesp_3(x) = n_3$, and this equality may be rewritten as  $1-q^{n_1} - q^{n_3} + q^{n_1 + n_3}q^{-n_1}W_{\slicesp_1(x)}(1,q^{-1})= 1-q^{n_1} - q^{n_3} + q^{n_1 + n_3}q^{-n_3}W_{\slicesp_3(x)}(1,q^{-1})$.\\
    Additionally, equation (\ref{eq:eigenvalue3tensorwithweightenum}) can be also rephrased as follows. Let $X_j$ is a random variable chosen uniformly at random in $\slicesp_j(x)$. Then $\lambda_x$ is a function of the probability generating function $\E[t^{\rank(X_j)}]$ of $\rank(X_j)$ at the point $t= q^{-1}$, i.e. we have 
    \begin{equation}
        \lambda_x = |\S_{n_1 \times n_2 \times n_3})| + \frac{q^{n_1+n_2+n_3}}{(q-1)^2}\left(\E\left[q^{-\rank(X_j)}\right] -1\right).
    \end{equation}
\end{Rq}
   
\subsection{Dual expressions}
Using MacWilliams identities for rank-metric codes, we can obtain a dual expression of the eigenvalues of $\Cay(\F_q^{n_1 \times n_2 \times n_3},\S_{n_1 \times n_2 \times n_3})$. We can also provide a similar dual expression of all the eigenvalues of $\Cay(\F_q^{\mathbf{n}},\S_{\mathbf n})$ for any sequence of positive integers $\mathbf{n}$. To achieve that, we will start by studying the value of the rank-weight enumerator of a rank-metric code at the point $(1,q^{-t})$ for any integer $t \in \llbracket 1,n\rrbracket$.

For any $[m\times n, k]_q$ rank-metric code $\CCC$ and subspace $U \leq \F_q^m$ we denote by $\CCC(U)$ the subspace of $\CCC$ given by $\CCC(U)=\{C \in \CCC \ : \ \rowsp(C) \subseteq U\}$.
For any integer $i \in \llbracket 1,n\rrbracket$, 
we denote by $B_{i}(\CCC)$ the $i$-th binomial moment of $\CCC$, that is the integer given by
\[
    B_{i}(\CCC) = \sum_{U \subseteq \F_{q}^{n}, \dim U = i} |\CCC(U)|.
\]
We denote by $p^n_i(X,Y)$ the homogeneous polynomial given by
\[
    p^n_{i}(X,Y) = Y^i\prod_{\ell = 0}^{n-i-1}(X-q^iY).
\]
Then we can state the following relation between the weight enumerator and the rank-distribution of its dual code.

\begin{Prop}\label{prop:weightennumeratorandbernstein}
    Let $\CCC$ be an $[m\times n,k]_q$ rank-metric code. 
    Then 
    \begin{align*}
        W_{\CCC}(X,Y) = \sum_{ j=0}^n A_j(\CCC^\perp) \sum_{i=0}^{n-j} q^{k-m(n-i)} \gbc{i}{n-j}{q} p^n_i(x,y)
    \end{align*}
\end{Prop}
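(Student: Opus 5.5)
The plan is to prove the identity through binomial moments, in three steps. I read $p^n_i(X,Y)$ as $Y^i\prod_{\ell=0}^{n-i-1}(X-q^{\ell}Y)$; the factor $q^iY$ in the displayed definition appears to be a typo for $q^\ell Y$, and I also read $p^n_i(x,y)$ as $p^n_i(X,Y)$.

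\emph{Step 1: write $W_\CCC$ as a sum over subspaces.} I will show
\[
    W_\CCC(X,Y)=\sum_{i=0}^n\ \sum_{U\le \F_q^n,\ \dim U=i}|\CCC(U)|\,p^n_i(X,Y).
\]
To do this, fix $C\in\CCC$ of rank $r$ with $V=\rowsp(C)$. The matrix $C$ is counted in $\CCC(U)$ exactly when $V\subseteq U$. The subspaces $U\supseteq V$ of dimension $r+t$ correspond to $t$-dimensional subspaces of $\F_q^n/V$, so there are $\sgbc{t}{n-r}{q}$ of them. The contribution of $C$ to the right-hand side is therefore
\[
    Y^r\sum_{t=0}^{n-r}\gbc{t}{n-r}{q}Y^t\prod_{\ell=0}^{n-r-t-1}(X-q^\ell Y).
\]
I then need the $q$-binomial identity $X^N=\sum_{t=0}^N\sgbc{t}{N}{q}Y^t\prod_{\ell=0}^{N-t-1}(X-q^\ell Y)$. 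I will prove it by induction on $N$, using the Pascal rule $\sgbc{t}{N+1}{q}=\sgbc{t-1}{N}{q}+q^{t}\sgbc{t}{N}{q}$ and writing $X=(X-q^{N-t}Y)+q^{N-t}Y$ to match terms. Alternatively, one can view it as an identity in $X$ and verify it at the $N+1$ points $X=q^sY$ for $s=0,\dots,N$. With this identity, the contribution of $C$ collapses to $X^{n-r}Y^r$, and summing over $C\in\CCC$ gives Step 1.

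\emph{Step 2: the local duality $|\CCC(U)|=q^{k-m(n-i)}|\CCC^\perp(U^\perp)|$ for $\dim U=i$.} Here $U^\perp$ is the orthogonal complement of $U$ in $\F_q^n$. The space $\F_q^{m\times n}(U)$ of matrices with row space in $U$ has dimension $mi$, and its orthogonal for $\langle\cdot|\cdot\rangle$ is $\F_q^{m\times n}(U^\perp)$. Hence $\CCC(U)=\CCC\cap \F_q^{m\times n}(U)$, and
\[
    \bigl(\CCC(U)\bigr)^\perp=\CCC^\perp+\F_q^{m\times n}(U^\perp).
\]
Taking dimensions, $\dim\CCC(U)=mn-\dim(\CCC^\perp+\F_q^{m\times n}(U^\perp))$. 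Expanding the sum by the dimension formula gives
\[
    \dim\CCC(U)=mn-(mn-k)-m(n-i)+\dim\CCC^\perp(U^\perp),
\]
which is the claim.

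\emph{Step 3: substitute and count.} Substituting Step 2 into Step 1, it remains to compute $\sum_{\dim U=i}|\CCC^\perp(U^\perp)|$. Each $D\in\CCC^\perp$ of rank $j$ is counted once for every $(n-i)$-dimensional $W=U^\perp$ containing $\rowsp(D)$. As in Step 1, there are $\sgbc{n-i-j}{n-j}{q}=\sgbc{i}{n-j}{q}$ such $W$. Grouping by $j$ yields
\[
    W_\CCC(X,Y)=\sum_{j}A_j(\CCC^\perp)\sum_{i=0}^{n-j}q^{k-m(n-i)}\gbc{i}{n-j}{q}p^n_i(X,Y),
\]
which is the statement. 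The terms with $i>n-j$ vanish automatically, since the $q$-binomial coefficient is then $0$.

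The main obstacle I expect is in Step 1: checking the $q$-binomial expansion of $X^N$ in the basis $Y^t\prod_{\ell<N-t}(X-q^\ell Y)$, and keeping the index ranges consistent with the paper's convention for $p^n_i$. Steps 2 and 3 are standard double-counting and dimension arguments.
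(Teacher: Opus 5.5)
Your proposal is correct and follows the paper's route: the paper likewise writes $W_\CCC(X,Y)=\sum_i B_i(\CCC)\,p^n_i(X,Y)$, uses the duality $B_i(\CCC)=q^{k-m(n-i)}B_{n-i}(\CCC^\perp)$, and expands $B_{n-i}(\CCC^\perp)=\sum_j \sgbc{i}{n-j}{q}A_j(\CCC^\perp)$, except that it cites these three facts while you prove each one. Your reading $q^{\ell}Y$ in $p^n_i$ is the one the paper itself uses later.

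Two small points on details. First, with your split $X=(X-q^{N-t}Y)+q^{N-t}Y$, the coefficient of the $t$-th basis term that arises is $\sgbc{t}{N}{q}+q^{N+1-t}\sgbc{t-1}{N}{q}$. So you need the companion Pascal rule, not the one you quote; both rules hold, so the induction goes through. Second, your Step 1 produces $\sum_r A_r(\CCC)X^{n-r}Y^r$, which agrees with the paper's $W_\CCC$ (whose $X$-exponent is $\min(m,n)-r$) only when $n\le m$. For $n>m$ the identity as literally stated is off by a factor $X^{n-m}$. This is a convention issue in the statement that the paper's proof shares, and it is harmless for the later evaluations at $X=1$.
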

\begin{proof}
    Rewriting \cite[(4.7)]{BlancoChaconByrneDuursmaSheekey2018RankMetricZeta}, we obtain 
       $ \displaystyle W_{\CCC}(x,y) = \sum_{i = 0}^n p_{i}^n(X,Y)B_i(\CCC).$
    Moreover, by \cite[Lemmas 28 and 30]{RavagnaniRankMetricCodes} we have 
    \[
        B_i(\CCC) = q^{k - m(n-i)}B_{n-i}(\CCC^\perp) \text{ and } B_{n-i}(\CCC^\perp) = \sum_{j=0}^{n-i} \gbc{i}{n-j}{q} A_{j}(\CCC^\perp).
    \]
    Therefore we have 
    \begin{align*}
        W_{\CCC}(X,Y) &= \sum_{i = 0}^n\sum_{j = 0}^{n-i} p_{i}^n(X,Y)q^{k-m(n-i)}\gbc{i}{n-j}{q}A_j(\CCC^\perp)\\
        &= \frac{1}{|\CCC^\perp|}\sum_{j = 0}^n A_j(\CCC^\perp)  \sum_{i = 0}^{n-j} p_{i}^n(X,Y)q^{mi}\gbc{i}{n-j}{q}.
    \end{align*}
\end{proof}

The following formula has already been computed in \cite[Theorem 30]{delaCruzGorlaLopezRavagnani2018}. We obtain it here by evaluation 
of $W_{\CCC}(X,Y)$ at $(1,q^{-t})$.

\begin{Corol}\label{corol:weightennumeratorexpressiondual}
        Let $\CCC$ be an $[m\times n,k]_q$ rank-metric code. Let $t \in \llbracket 1,n\rrbracket$ be a positive integer. Then $W_{\CCC}(1,q^{-t})$ only depends on $q$, $m$, $n$, $k$ and $A_j(\CCC^\perp)$ for $j\in \llbracket 1,t\rrbracket$. That is,
    \[
        W_{\CCC}(1,q^{-t}) = \frac{1}{|\CCC^\perp|} \sum_{j=0}^{t} {A_j(\CCC^\perp)} \sum_{i=n-t}^{n-j} p^n_i(1,q^{-t})q^{mi}\gbc{i}{n-j}{q}.
    \]
    In particular, we have 
    \[
        W_{\CCC}(1,q^{-1}) = q^{k-m-n}\left(q^{n} + q^{m} - 1 + (q-1)A_1(\mathcal C^\perp)\right).
    \]
\end{Corol}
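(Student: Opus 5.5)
The plan is to specialise the expansion of Proposition~\ref{prop:weightennumeratorandbernstein} at $(X,Y)=(1,q^{-t})$ and observe that most of the terms vanish. I will take the second form derived in that proof,
\[
W_{\CCC}(X,Y)=\frac{1}{|\CCC^\perp|}\sum_{j=0}^n A_j(\CCC^\perp)\sum_{i=0}^{n-j} p^n_i(X,Y)\,q^{mi}\gbc{i}{n-j}{q}.
\]
I read the product in $p^n_i$ as $p^n_i(X,Y)=Y^i\prod_{\ell=0}^{n-i-1}(X-q^{\ell}Y)$, the standard rank-metric basis polynomial. With the exponent $q^{i}$ written in the definition the factors would not vanish, so I would first check this reading against the cited source.

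\textbf{Vanishing step.} At $(1,q^{-t})$ the factor with $\ell=t$ equals $1-q^{t}q^{-t}=0$. This factor occurs in the product exactly when $t\le n-i-1$, that is, when $i\le n-t-1$. Hence $p^n_i(1,q^{-t})=0$ for every $i<n-t$, so the inner sum can be restricted to $i\ge n-t$.

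\textbf{Restricting $j$.} The inner sum runs over $i\le n-j$, and the Gaussian binomial $\sgbc{i}{n-j}{q}$ is zero when $i>n-j$. So a nonzero term needs some $i$ with $n-t\le i\le n-j$, which forces $j\le t$. The outer sum therefore truncates to $j\in\llbracket 0,t\rrbracket$. This gives the displayed general formula, and shows it depends only on $q,m,n,k$ and $A_0(\CCC^\perp)=1,\dots,A_t(\CCC^\perp)$.

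\textbf{The case $t=1$.} Here $p^n_n(1,q^{-1})=q^{-n}$ and $p^n_{n-1}(1,q^{-1})=q^{-(n-1)}(1-q^{-1})$. The surviving terms are:
\begin{itemize}
\item $j=0$, $i=n$: contributes $q^{mn-n}$;
\item $j=0$, $i=n-1$: contributes $q^{m(n-1)}\cdot q^{-(n-1)}(1-q^{-1})\cdot\frac{q^n-1}{q-1}=q^{mn-m-n}(q^n-1)$;
\item $j=1$, $i=n-1$: contributes $A_1(\CCC^\perp)\,q^{mn-m-n}(q-1)$.
\end{itemize}
Multiplying by $1/|\CCC^\perp|=q^{k-mn}$ gives
\[
W_{\CCC}(1,q^{-1})=q^{k-m-n}\bigl(q^m+q^n-1+(q-1)A_1(\CCC^\perp)\bigr),
\]
as claimed.

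\textbf{Main obstacle.} The only delicate point is the one flagged above: the vanishing argument needs the product to be $\prod_{\ell}(X-q^{\ell}Y)$. Once that convention is fixed, everything else is index bookkeeping and a routine computation.
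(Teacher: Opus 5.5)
Your proposal is correct and follows the paper's proof step for step. You specialise the expansion from Proposition~\ref{prop:weightennumeratorandbernstein} at $(1,q^{-t})$, note that $p^n_i(1,q^{-t})=0$ for $i<n-t$ so the sum over $j$ truncates at $t$, and evaluate the three surviving terms when $t=1$. Your reading $p^n_i(X,Y)=Y^i\prod_{\ell=0}^{n-i-1}(X-q^{\ell}Y)$ is exactly what the paper's proof uses, since it writes $p^n_i(1,q^{-t})=q^{-ti}\prod_{\ell=0}^{n-i-1}(1-q^{\ell-t})$; the $q^{i}$ in the displayed definition is just a typo.
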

\begin{proof}
    By definition, for each $i \in \llbracket 0,n\rrbracket$ we have $p_{i}^n(1,q^{-t}) = q^{-ti}\prod_{\ell = 0}^{n-i-1}(1-q^{\ell -t})$, which is non-zero if and only if $n-i-1 <t$, i.e. $i\geq n-t$. Therefore, by Proposition~\ref{prop:weightennumeratorandbernstein}, we have 
    \begin{align*}
        W_{\CCC}(1,q^{-t}) &= \frac{1}{|\CCC^\perp|}\sum_{j = 0}^n A_j(\CCC^\perp)  
        \sum_{i = n-t}^{n-j} p_{i}^n(1,q^{-t})q^{mi}\gbc{i}{n-j}{q}.
    \end{align*}
    Since the sum over the variable $i$ is empty when $j >t$, one can assume that the sum over the variable $j$ stops at $j = t$ and we have the desired formula for $W_{\CCC}(1,q^{-t})$. In particular, since $p_{n-1}^n(1,q^{-1}) = q^{-n}(q-1)$ and $p_{n}^n(1,q^{-1}) = q^{-n}$ we have 
    \begin{align*}
        W_{\CCC}(1,q^{-1})
        &= \frac{1}{|\CCC^\perp|}\sum_{j = 0}^1 A_j(\CCC^\perp)  \sum_{i = n-1}^{n-j} p_{i}^n(1,q^{-1})q^{mi}\gbc{i}{n-j}{q}\\
        &=  q^{k - mn} \left(q^{-n}(q-1)q^{m(n-1)}\gbc{n-1}{n}{q} + q^{-n}q^{mn} + A_1(\CCC^\perp)q^{-n}(q-1)q^{m(n-1)} \right)\\
        &= q^{k-n-m} \left(   q^m +  q^n -1+ A_1(\CCC^\perp)(q-1)\right).
    \end{align*}
\end{proof}

\begin{Corol}\label{corol:eigenvaluestrilinearformsgraphdual}
    Let $\mathbf n = (n_1,n_2,n_3)$ be a tuple of positive integers and $x \in \F_q^{n_1 \times n_2 \times n_3}$ be a tensor, and let $j \in \{1,2,3\}$. Then the eigenvalue of the trilinear forms graph $\Cay(\F_q^{n_1 \times n_2 \times n_3},\S_{n_1 \times n_2 \times n_3})$ associated to $x$ is given by
    \[
        \lambda_x 
        = \frac{q^{n_j}A_1(\slicesp_j(x)^\perp) - |\S_{\mathbf n(j)}|}{q-1}.
    \]
\end{Corol}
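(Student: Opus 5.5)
Take $j=1$ (the cases $j=2,3$ are identical after permuting modes) and combine equation (\ref{eq:eigenvalue3tensorwithweightenum}) with Corollary~\ref{corol:weightennumeratorexpressiondual} at $t=1$. Write $\CCC=\slicesp_1(x)\le\F_q^{n_2\times n_3}$ with $k=\dim\CCC$. Corollary~\ref{corol:weightennumeratorexpressiondual} is stated for $[m\times n,k]_q$ codes with $m,n$ the matrix sizes, so we apply it with $(m,n)=(n_2,n_3)$. It gives
\[
    q^{-k}W_{\CCC}(1,q^{-1}) = q^{-n_2-n_3}\left(q^{n_3}+q^{n_2}-1+(q-1)A_1(\CCC^\perp)\right),
\]
where the dual is taken in $\F_q^{n_2\times n_3}$ with respect to $\langle\cdot|\cdot\rangle$. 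This is the trace dual used in the rank-metric MacWilliams identities, so the corollary applies directly.

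Substituting into (\ref{eq:eigenvalue3tensorwithweightenum}) gives
\[
    \lambda_x = |\S_{\mathbf n}| + \frac{q^{n_1}}{(q-1)^2}\left(q^{n_3}+q^{n_2}-1+(q-1)A_1(\CCC^\perp) - q^{n_2+n_3}\right).
\]
Since $q^{n_2}+q^{n_3}-1-q^{n_2+n_3} = -(q^{n_2}-1)(q^{n_3}-1) = -(q-1)|\S_{n_2\times n_3}|$, the bracket equals $(q-1)\bigl(A_1(\CCC^\perp)-|\S_{\mathbf n(1)}|\bigr)$. Hence
\[
    \lambda_x = |\S_{\mathbf n}| + \frac{q^{n_1}A_1(\CCC^\perp) - q^{n_1}|\S_{\mathbf n(1)}|}{q-1}.
\]
Finally, $(q-1)|\S_{\mathbf n}| = (q^{n_1}-1)|\S_{\mathbf n(1)}|$, so $|\S_{\mathbf n}| - \frac{q^{n_1}|\S_{\mathbf n(1)}|}{q-1} = -\frac{|\S_{\mathbf n(1)}|}{q-1}$, which is the claimed formula.

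The only real obstacle is checking conventions. One must make sure Corollary~\ref{corol:weightennumeratorexpressiondual} is applied with the correct ambient parameters: the formula is symmetric in $m$ and $n$, so orientation does not matter. One must also confirm that $\CCC^\perp$ there is the same dual as in the statement, i.e.\ the dual of the slice space inside $\F_q^{\mathbf n(j)}$. The rest is direct algebra.
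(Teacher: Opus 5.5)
Your proposal is correct and follows the paper's own proof essentially step for step. The paper also fixes $j=1$, substitutes the $t=1$ evaluation $W_{\CCC}(1,q^{-1})=q^{k-n_2-n_3}\bigl(q^{n_2}+q^{n_3}-1+(q-1)A_1(\CCC^\perp)\bigr)$ from Corollary~\ref{corol:weightennumeratorexpressiondual} into the weight-enumerator form of Corollary~\ref{corol:eigenvaluestrilinearformsgraph}, and simplifies using the product formulas for $|\S_{\mathbf n}|$ and $|\S_{\mathbf n(1)}|$. Your algebra checks out, and your observation that the $t=1$ formula is symmetric in the two matrix dimensions correctly settles the orientation question.
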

\begin{proof}
    For convenience, let us fix $j = 1$. The proofs for $j = 2$ and $j=3$ are similar. By Corollary~\ref{corol:eigenvaluestrilinearformsgraph} and Corollary~\ref{corol:weightennumeratorexpressiondual}, we have
    \begin{align*}
        \lambda_x 
        &= |\S_{n_1 \times n_2 \times n_3}| + \frac{q^{n_1+n_2+n_3}}{(q-1)^2}\left(-1+q^{-n_2-n_3} \left(   q^{n_2 }+  q^{n_3} -1+ A_1(\slicesp_1(x)^\perp)(q-1)\right)\right)\\
        &= |\S_{n_1 \times n_2 \times n_3}| -\frac{q^{n_1+n_2+n_3} + q^{n_1 + n_2}+q^{n_1 + n_3}-q^{n_1}}{(q-1)^2} + \frac{q^{n_1}A_1(\slicesp_1(x)^\perp)}{q-1}\\
        &=  -\frac{|\S_{n_2 \times n_3}|}{q-1} + \frac{q^{n_1}A_1(\slicesp_1(x)^\perp)}{q-1}.
    \end{align*}
\end{proof}

In the rest of this section, we will show that the eigenvalue formula of the trilinear forms graph given in Corollary~\ref{corol:eigenvaluestrilinearformsgraphdual} may be extended to the multilinear forms graphs with an independent proof (see Theorem~\ref{thm:eigenvaluesmultilineargraphdual}). We will proceed by induction starting with the base case $m=2$. We first use a counting lemma.

In the following lemma and in the proof of the theorem below, we will use the following abuse of notation: for $\mathbf n = (n_1,\dots,n_m)$, $x \in \F_q^{\mathbf n}$, and $u \in \F_q^{n_j}$, we will enumerate the slice spaces and the contraction maps of $y = \modemult_j(u,x)$ by the numbering of the original tensor $x$, i.e. $\llbracket 1,m\rrbracket\backslash\{j\}$ and not by $\llbracket 1,m-1\rrbracket$. More precisely, for each $\ell \in \llbracket 1,m\rrbracket\backslash\{j\}$, we denote by $\slicesp_{\ell}(y)$ the vector-space generated by $M\cdot x$ where $M$ a tuple of matrices such that $M_{j} = u$, $M_\ell \in \F_q^{n_\ell}$ and $M_{s} = I_{n_s}$ for $s \neq j,\ell$, as well as $\modemult_{\ell}(\cdot,y) : \F_q^{n_{\ell}} \to \F_q^{\mathbf n(j,\ell)}$ the corresponding contraction map (without the abuse of notation, the mentioned slice space is $\slicesp_\ell(y)$ if $\ell < j$, and is $\slicesp_{\ell-1}(y)$-th if $j<\ell$).

\begin{Lemma}\label{lemma:sumofA1overslicespace}
    Let $\mathbf{n} = (n_1,\dots,n_m)$ be a sequence of positive integers with $m\geq 3$. Let $j,\ell \in \llbracket 1,m\rrbracket$ be distinct integers. Let $x \in \F_{q}^{\mathbf{n}}$. Then we have 
    \[
        \sum_{y \in \slicesp_{j}(x)}A_1(\slicesp_{\ell}(y)^\perp) = q^{\dim {\slicesp_{j}(x)}-n_{j}} \left(\left|\S_{\mathbf n(j,\ell)}\right| + (q-1) A_1\left(\left(\slicesp_{\ell}\left(x\right)\right)^\perp\right)\right).
    \]
\end{Lemma}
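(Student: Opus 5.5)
The plan is a double-counting argument. I will rewrite both sides of the identity as the same sum over simple tensors $s \in \S_{\mathbf n(j,\ell)}$, with each term depending on a single $n_j \times n_\ell$ matrix $Z_s$ obtained by contracting $x$ against $s$ in all modes other than $j$ and $\ell$. The hypothesis $m \geq 3$ guarantees that $\mathbf n(j,\ell)$ is non-empty, so $\S_{\mathbf n(j,\ell)}$ makes sense.

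\emph{Step 1 (the matrices $Z_s$).} For $s \in \S_{\mathbf n(j,\ell)}$, $v \in \F_q^{n_j}$ and $w \in \F_q^{n_\ell}$, let $v \otimes s \otimes w$ denote the tensor of $\F_q^{\mathbf n}$ with $v$ in mode $j$ and $w$ in mode $\ell$. By multilinearity there is a unique matrix $Z_s \in \F_q^{n_j \times n_\ell}$ with
\[
\langle x \mid v \otimes s \otimes w\rangle = v Z_s w^\top \quad \text{for all } v,w .
\]
Let $u \in \F_q^{n_j}$ and $y = \modemult_j(u,x)$. Using Proposition~\ref{prop:propertiestensorproddotprod}, a simple tensor $s$ lies in $\slicesp_\ell(y)^\perp$ if and only if $\langle y \mid s \otimes w\rangle = \langle x \mid u \otimes s \otimes w\rangle = 0$ for every $w$. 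This is equivalent to $u Z_s = 0$.

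\emph{Step 2 (left-hand side).} The map $u \mapsto \modemult_j(u,x)$ from $\F_q^{n_j}$ onto $\slicesp_j(x)$ is linear and surjective, so each fibre has size $q^{n_j - \dim \slicesp_j(x)}$. Summing over $u$ and exchanging the order of summation gives
\[
\sum_{y \in \slicesp_j(x)} A_1(\slicesp_\ell(y)^\perp) = q^{\dim\slicesp_j(x) - n_j}\sum_{s \in \S_{\mathbf n(j,\ell)}} |\{u : uZ_s = 0\}| = q^{\dim\slicesp_j(x) - n_j}\sum_{s} q^{\,n_j - \rank Z_s}.
\]

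\emph{Step 3 (right-hand side).} The simple tensors of $\F_q^{\mathbf n(\ell)}$ are exactly the tensors $v \otimes s$ with $v \neq 0$ and $s \in \S_{\mathbf n(j,\ell)}$. Such a tensor is orthogonal to $\slicesp_\ell(x)$ if and only if $vZ_s = 0$, by the same computation as in Step 1. The map $(v,s) \mapsto v \otimes s$ is exactly $(q-1)$-to-one, since $v \otimes s = (cv) \otimes (c^{-1}s)$ and $\S_{\mathbf n(j,\ell)}$ is closed under nonzero scalars. Therefore
\[
A_1(\slicesp_\ell(x)^\perp) = \frac{1}{q-1}\sum_{s} \left(q^{\,n_j-\rank Z_s} - 1\right),
\]
and hence $|\S_{\mathbf n(j,\ell)}| + (q-1)A_1(\slicesp_\ell(x)^\perp) = \sum_s q^{\,n_j - \rank Z_s}$. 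This matches Step 2 and proves the lemma.

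The main point requiring care is Step 3. One must check that the parametrisation of $\S_{\mathbf n(\ell)}$ by pairs $(v,s)$ is exactly $(q-1)$-to-one, and one must keep the mode bookkeeping consistent with the abuse of notation introduced before the lemma. Apart from that, the argument is a direct exchange of summations.
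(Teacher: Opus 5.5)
Your proof is correct and is essentially the paper's argument: the paper also shows that a simple tensor $\sigma$ lies in $\slicesp_\ell(\modemult_j(u,x))^\perp$ if and only if $u \otimes \sigma$ (with $u$ in mode $j$) lies in $\slicesp_\ell(x)^\perp$. It then sums over $u \in \F_q^{n_j}$ using the fibre size $q^{n_j-\dim\slicesp_j(x)}$, and separates $u=0$ (contributing $|\S_{\mathbf n(j,\ell)}|$) from $u\neq 0$ (contributing $(q-1)A_1(\slicesp_\ell(x)^\perp)$). Your matrices $Z_s$ only reorder this double count by summing over $s$ first, which makes the common value $\sum_{s} q^{\,n_j-\rank Z_s}$ explicit, and you also justify the $(q-1)$-to-one parametrisation that the paper uses without comment.
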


\begin{proof}
    Without loss of generality, we assume that $j = 1$ and $\ell = 2$. By Proposition~\ref{prop:propertiestensorproddotprod}, for each element $u_{1} \in \F_{q}^{n_{1}}$ and for each simple tensor $\sigma = u_{3}\otimes \cdots \otimes u_m \in \S_{\mathbf n(1,2)}$ we have
    \begin{align*}
        \sigma \in \left(\slicesp_{2}\left(\modemult_{1}(u_{1},x)\right)\right)^\perp
        &\iff  \langle \sigma \ \mid \ \modemult_{2}(u_{2},\modemult_{1}(u_{1},x)) \rangle = 0 \text{ for all }u_{2} \in \F_{q}^{n_{2}} ,\\ 
        &\iff \langle \sigma \ \mid \ (u_1,u_2,I_{n_3},\dots, I_{n_m})\cdot x \rangle = 0 \text{ for all }u_{2} \in \F_{q}^{n_{2}} ,\\ 
        &\iff  (u_{1},u_2,u_3,\dots, u_{m})\cdot x  = 0 \text{ for all }u_{2} \in \F_{q}^{n_{2}} ,\\ 
        &\iff  \langle u_1 \otimes \sigma \ \mid \ (u_2,I_{n_3},\dots, I_{n_m})\cdot x \rangle = 0 \text{ for all }u_{2} \in \F_{q}^{n_{2}} ,\\ 
        &\iff \langle u_1 \otimes \sigma \ \mid \ \modemult_{2}(u_{2},x) \rangle = 0 \text{ for all }u_{2} \in \F_{q}^{n_{2}} ,\\
        &\iff\sigma \otimes u_{1}  \in \slicesp_{2}\left(x\right)^\perp.
    \end{align*}
    Consequently, we have 
    \begin{align*}
        &\sum_{y \in \slicesp_{1}(x)}A_r(\slicesp_{2}(y)^\perp) \\
        &= q^{\dim {\slicesp_{1}(x)}-n_{1}} \sum_{u_{1} \in \F_{q}^{n_{1}}} A_1\left(\left(\slicesp_{2}(\modemult_{1}(u_{1},x))\right)^\perp\right)\\
        &= q^{\dim {\slicesp_{1}(x)}-n_{1}} \sum_{u_{1} \in \F_{q}^{n_{1}}} \left| \left\{ \sigma\in \S_{\mathbf n(1,2)} \ : \ \sigma \in \left(\slicesp_{2}(\modemult_{1}(u_{1},x))\right)^\perp\right\}\right| \\
        &= q^{\dim {\slicesp_{1}(x)}-n_{1}} \sum_{u_{1} \in \F_{q}^{n_{1}}} \left| \left\{ \sigma\in \S_{\mathbf n(1,2)} \ : \  \sigma \otimes u_{1}  \in \left(\slicesp_{2}\left(x\right)\right)^\perp \right\}\right| \\
        &= q^{\dim {\slicesp_{1}(x)}-n_{1}} \left(\left|\S_{\mathbf n(1,2)}\right| + \sum_{u_{1} \in \F_{q}^{n_{1}}\backslash\{0\}} \left| \left\{ \sigma\in \S_{\mathbf n(1,2)} \ : \  \sigma \otimes u_{1}  \in \left(\slicesp_{2}\left(x\right)\right)^\perp  \right\}\right|  \right)\\
        &= q^{\dim {\slicesp_{1}(x)}-n_{1}} \left(\left|\S_{\mathbf n(1,2)}\right| + (q-1) \left| \left\{ s' \in \S_{\mathbf n(2)} \ : \  s'  \in \left(\slicesp_{2}\left(x\right)\right)^\perp  \right\}\right|\right) \\
        &= q^{\dim {\slicesp_{1}(x)}-n_{1}} \left(\left|\S_{\mathbf n(1,2)}\right| + (q-1) A_1\left(\left(\slicesp_{2}\left(x\right)\right)^\perp\right)\right).
    \end{align*}
\end{proof}

We now prove the main result of this section, which gives an expression of the eigenvalue $\lambda_x$ of a tensor $x \in \F_q^{\mathbf{n}}$ in terms of the number of simple tensors of any one of its slice spaces. 

\begin{Thm}\label{thm:eigenvaluesmultilineargraphdual}
    Assume $m \geq 2$ and let $j \in \llbracket 1,m\rrbracket$ be an integer. Then, for each $x \in \F_q^{\mathbf n}$, the eigenvalue $\lambda_x$ of the multilinear forms graph $\Cay(\F_q^{\mathbf{n}},\S_{\mathbf n})$ associated to $x$ is given by
    \[
        \lambda_x = \frac{q^{n_j}A_1(\slicesp_j(x)^\perp) -  |\S_{\mathbf n(j)}|}{q-1}.
    \]  
\end{Thm}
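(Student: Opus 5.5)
The plan is to prove the formula directly from the character-sum expression $\lambda_x=\sum_{s\in\S_{\mathbf n}}\chi(\langle x\mid s\rangle)$, by the same manipulation used at the start of the proof of Theorem~\ref{thm:eigenvaluesmultilinearformsgraphrecursive}. The difference is that I will sum over the $j$-th factor first instead of over the slice space. This avoids induction on $m$. Alternatively, one could induct on $m$ starting from Corollary~\ref{corol:eigenvaluestrilinearformsgraphdual} (or from $m=2$), combining Theorem~\ref{thm:eigenvaluesmultilinearformsgraphrecursive} with Lemma~\ref{lemma:sumofA1overslicespace}; the direct route seems shorter.

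\textbf{Step 1 (parametrise $\S_{\mathbf n}$).} Write $x=\sum_{i=1}^{n_j} x_i\otimes e_i^{(n_j)}$ with $x_i=\modemult_j(e_i^{(n_j)},x)\in\F_q^{\mathbf n(j)}$; here the $j$-th factor is placed last for notational convenience. The map $(s',w)\mapsto s'\otimes w$ from $\S_{\mathbf n(j)}\times(\F_q^{n_j}\setminus\{0\})$ onto $\S_{\mathbf n}$ is exactly $(q-1)$-to-one, since $s'\otimes w=(as')\otimes(a^{-1}w)$ for $a\in\F_q^\times$. Hence
\[
(q-1)\lambda_x=\sum_{s'\in\S_{\mathbf n(j)}}\sum_{w\in\F_q^{n_j}\setminus\{0\}}\chi\bigl(\langle x\mid s'\otimes w\rangle\bigr).
\]

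\textbf{Step 2 (evaluate the inner sum).} As in the proof of Theorem~\ref{thm:eigenvaluesmultilinearformsgraphrecursive}, $\langle x\mid s'\otimes w\rangle=\sum_i w_i\langle x_i\mid s'\rangle=w\cdot v(s')$, where $v(s')=(\langle x_i\mid s'\rangle)_{i}\in\F_q^{n_j}$. By orthogonality of the characters $w\mapsto\chi(w\cdot v)$ of $\F_q^{n_j}$, the sum over all $w\in\F_q^{n_j}$ equals $q^{n_j}$ if $v(s')=0$ and $0$ otherwise. Removing the term $w=0$ contributes $-1$ for each $s'$. Therefore
\[
(q-1)\lambda_x=-|\S_{\mathbf n(j)}|+q^{n_j}\,\bigl|\{s'\in\S_{\mathbf n(j)}: v(s')=0\}\bigr|.
\]

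\textbf{Step 3 (identify the count).} The condition $v(s')=0$ means $\langle x_i\mid s'\rangle=0$ for all $i$. Since the $x_i$ span $\slicesp_j(x)$, this is equivalent to $s'\in\slicesp_j(x)^\perp$. Rank-one tensors in $\F_q^{\mathbf n(j)}$ are exactly the elements of $\S_{\mathbf n(j)}$, so the count equals $A_1(\slicesp_j(x)^\perp)$. Dividing by $q-1$ gives the stated formula.

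\textbf{Main obstacle.} The only delicate points are bookkeeping. The first is the $(q-1)$-to-one parametrisation of $\S_{\mathbf n}$. The second is the degenerate case $m=2$, where $\mathbf n(j)$ has length one, so that $\S_{\mathbf n(j)}=\F_q^{n_\ell}\setminus\{0\}$ and $A_1$ counts the nonzero vectors of $\slicesp_j(x)^\perp$. I would check this case against Proposition~\ref{prop:eigenvaluesCayleyGraphMatrices} as a sanity check. For a matrix of rank $r$, $\slicesp_j(x)^\perp$ has dimension $n_\ell-r$, and the formula gives $(q^{n_j}(q^{n_\ell-r}-1)-(q^{n_\ell}-1))/(q-1)=\theta_r$, as required.
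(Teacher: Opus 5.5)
Your argument is correct, and it takes a genuinely different route from the paper.

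The paper proves the statement by induction on $m$. It starts from the matrix case and applies the recursion of Theorem~\ref{thm:eigenvaluesmultilinearformsgraphrecursive} along a second index $\ell \neq j$. It then uses the induction hypothesis for each $y \in \slicesp_\ell(x)$. Finally it needs the counting Lemma~\ref{lemma:sumofA1overslicespace} to collapse $\sum_{y\in\slicesp_\ell(x)} A_1(\slicesp_j(y)^\perp)$ back into $A_1(\slicesp_j(x)^\perp)$.

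You start from the same double sum $\sum_{s'\in\S_{\mathbf n(j)}}\sum_{w}\chi(\langle \modemult_j(w,x)\mid s'\rangle)$ that opens the proof of Theorem~\ref{thm:eigenvaluesmultilinearformsgraphrecursive}. The difference is that you evaluate the inner sum over $w$ directly by character orthogonality. The paper instead regroups it into $q^{n_j-\dim\slicesp_j(x)}\sum_{y\in\slicesp_j(x)}\lambda_y$ and swaps the order of summation. Your computation shows that the recursion and the dual formula are two evaluations of one expression: summing over $s'$ first gives the recursion, summing over $w$ first gives the dual formula.

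What your route buys is that it needs no base case, no induction and no auxiliary lemma, and it handles $m=2$ uniformly. The paper's route is longer. In exchange it exhibits the dual formula as compatible with the recursion and with Corollary~\ref{corol:eigenvaluestrilinearformsgraphdual}, and it produces Lemma~\ref{lemma:sumofA1overslicespace} as a byproduct.

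One small point: calling the parametrisation \emph{exactly $(q-1)$-to-one} also needs the converse direction. That is, $s'\otimes w = s''\otimes w''$ must force $s''=as'$ and $w''=a^{-1}w$ for some $a\in\F_q^\times$. This follows by flattening to a rank-one matrix, and the paper uses the same factor $\frac{1}{q-1}$ without comment, so it is not a gap.
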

\begin{proof}
    We will proceed by induction on $m$. Assume first that $m = 2$ and let $x \in \F_q^{n_1 \times n_2}$ be a matrix. For $j =1$, we have $\slicesp_1(x) = \rowsp(x)$ and since the rank on $\F_q^{n_2}$ is either 0 or 1, we have $A_1(\slicesp_1(x)^\perp) = q^{n_2 -\dim \rowsp(x)} -1 = q^{n_2-\rank(x)} -1$, and we obtain the eigenvalues given in Proposition~\ref{prop:eigenvaluesCayleyGraphMatrices}. The same holds for $j = 2$ since $\slicesp_2(x) = \colsp(x)$. Now assume that the statement of the theorem is true for a given $m\geq 2$. Let $\mathbf n = (n_1,\dots,n_m,n_{m+1})$ be a sequence of positive integers. Let $j,\ell \in \llbracket 1,m+1\rrbracket$ be distinct integers. For each tensor $y \in \F_{q}^{\mathbf n(\ell)}$, denote by $\lambda_y$ the eigenvalue of $\Cay(\F_{q}^{\mathbf n(\ell)},\S_{\mathbf n(\ell)})$ associated to $y$. Then by Theorem~\ref{thm:eigenvaluesmultilinearformsgraphrecursive} and Lemma~\ref{lemma:sumofA1overslicespace} we have 
    \begin{align*}
        (q-1)\lambda_x 
        &= -|\S_{\mathbf n(\ell)}| + q^{(n_{\ell} - \dim \slicesp_{\ell}(x))}\sum_{y \in \slicesp_{\ell}(x)} \lambda_{y}\\
        &= -|\S_{\mathbf n(\ell)}| + \frac{q^{(n_{\ell} - \dim \slicesp_{\ell}(x))}}{q-1}\sum_{y \in \slicesp_{\ell}(x)}\left(q^{n_j}A_1(\slicesp_j(y)^\perp) - |\S_{\mathbf n(j,\ell)}|\right)\\
        &= -|\S_{\mathbf n(\ell)}| - \frac{q^{n_{\ell}}}{q-1}|\S_{\mathbf n(j,\ell)}| +  \frac{q^{(n_j + n_{\ell} - \dim \slicesp_{\ell}(x))}}{q-1}\sum_{y \in \slicesp_{\ell}(x)}A_1(\slicesp_j(y)^\perp) \\
        &= \frac{1-q^{n_j}-q^{n_{\ell}}}{q-1} |\S_{\mathbf n(j,\ell)}| +  \frac{q^{n_j}}{q-1} \left(\left|\S_{\mathbf n(j,\ell)}\right| + (q-1) A_1\left(\left(\slicesp_{j}\left(x\right)\right)^\perp\right)\right) \\
        &= \frac{1-q^{n_{\ell}}}{q-1} |\S_{\mathbf n(j,\ell)}| +    q^{n_j}A_1\left(\left(\slicesp_{j}\left(x\right)\right)^\perp\right) \\
        &= - |\S_{\mathbf n(j)}| +    q^{n_j}A_1\left(\left(\slicesp_{j}\left(x\right)\right)^\perp\right),
    \end{align*}
    which proves the statement.
\end{proof}

\begin{Rq}
    Using the group isomorphism $\F_q^{\mathbf n} \simeq \F_p^{n_1\times\cdots \times n_m \times\delta}$, \cite[Corollary 3.4.1]{DhankharKadyan2026EigengaluesIntegralCayley} can be applied to yield a formula for $\lambda_x$, for any tensor $x \in \F_q^{\mathbf n}$, as a function of the number of elements of rank one in an $\F_p$-vector subspace of $\F_q^{\mathbf n}$, while Theorem~\ref{thm:eigenvaluesmultilineargraphdual} provides a formula for $\lambda_x$ as a function of the number of elements of rank one in an $\F_q$-vector subspace (of lower dimension). 
\end{Rq}

\begin{Rq}
    Since the adjacency matrix of the multilinear forms graph has null trace, we have $\sum_{x \in \F_{q}^{\mathbf n}} \lambda_x = 0$ and thus Theorem~\ref{thm:eigenvaluesmultilineargraphdual} implies that for each $j \in \llbracket 1,m\rrbracket$ we have
    \[
        \sum_{x \in \F_{q}^{\mathbf n}} A_1(\slicesp_j(x)^\perp) = q^{(n_1\cdots n_{j-1}n_{j+1}\cdots n_m -1)n_j}|\S_{\mathbf n(j)}|.
    \]
\end{Rq}

Using Theorem~\ref{thm:eigenvaluesmultilineargraphdual}, we can state the spectrum of the graph $\Cay(\F_{q}^{\mathbf n},\S_{\mathbf n})$ in terms of the possible cardinalities of the intersection between $\S_{\mathbf n(j)}$ and the dual of tensors $j$-th slice space.  

\begin{Corol}\label{corol:eigenvaluesmultilinearformscountingspaces}
    Let $\mathbf n = (n_1,\dots,n_m)$ be a sequence of positive integers with $m \geq 2$ and let $j \in \llbracket 1,m\rrbracket$ be an integer. Then for each $a \in \N_0$, the integer
    \[
        \lambda_a = \frac{q^{n_j}a - |\S_{\mathbf n(j)}|}{q-1}
    \]
    is an eigenvalue of $\Cay(\F_q^{{\mathbf n}},\S_{\mathbf n})$ if and only if there exists a subspace $\CCC$ of $\F_q^{{\mathbf n}(j)}$ such that $\codim \CCC \leq n_j$ and $A_1(\CCC) = a$. In this case, $\lambda_a$ has multiplicity $1$ if $a = |\S_{\mathbf n(j)}|$ and else
    \[
        m_a = \sum_{k = 1}^{n_j} \gbc{k}{n_j}{q} \prod_{t=0}^{k-1}(q^{n_j}-q^t) \left| \left\{ \CCC \leq \F_q^{{\mathbf n}(j)} \ : \ \codim \CCC = k , A_1(\CCC) = a  \right\} \right|.
    \]
\end{Corol}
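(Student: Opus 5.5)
The plan is to combine Theorem~\ref{thm:eigenvaluesmultilineargraphdual} with an orbit-type count over the $j$-th slice spaces. Throughout, fix $j$ and use the fact that a tensor $x\in\F_q^{\mathbf n}$ is the same thing as the linear map $\modemult_j(\cdot,x):\F_q^{n_j}\to\F_q^{\mathbf n(j)}$, whose image is $\slicesp_j(x)$.

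\emph{Step 1 (the eigenvalue set).} By Theorem~\ref{thm:eigenvaluesmultilineargraphdual}, $\lambda_x=\lambda_a$ with $a=A_1(\slicesp_j(x)^\perp)$. The map $a\mapsto \lambda_a$ is strictly increasing, so $\lambda_x=\lambda_a$ holds if and only if $A_1(\slicesp_j(x)^\perp)=a$.
\begin{itemize}
\item For the forward direction, set $\CCC=\slicesp_j(x)^\perp$. Since $\dim\slicesp_j(x)\le n_j$, we get $\codim\CCC\le n_j$.
\item For the converse, take $\CCC$ with $\codim \CCC=k\le n_j$. Then $D=\CCC^\perp$ has dimension $k\le n_j$. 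Choose any surjection $\F_q^{n_j}\to D$, for instance by sending $k$ of the basis vectors $e_i^{(n_j)}$ to a basis of $D$ and the rest to $0$. The corresponding tensor $x$ satisfies $\slicesp_j(x)=D$, hence $\slicesp_j(x)^\perp=\CCC$, so $\lambda_x=\lambda_a$.
\end{itemize}

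\emph{Step 2 (multiplicity).} The multiplicity of $\lambda_a$ is the number of $x$ with $A_1(\slicesp_j(x)^\perp)=a$. Partition these $x$ according to $\CCC=\slicesp_j(x)^\perp$ and $k=\codim\CCC=\dim\slicesp_j(x)$.
\begin{itemize}
\item The case $k=0$ is the single tensor $x=0$. For it, $\CCC$ is the whole space and $a=|\S_{\mathbf n(j)}|$, which gives multiplicity $1$.
\item I will also need that no nonzero $x$ yields $a=|\S_{\mathbf n(j)}|$. This holds because $\S_{\mathbf n(j)}$ spans $\F_q^{\mathbf n(j)}$, so a proper subspace cannot contain all rank-one tensors.
\end{itemize}

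\emph{Step 3 (counting tensors with a prescribed slice space).} For $k\ge1$, I count the $x$ with $\slicesp_j(x)=\CCC^\perp$ for a fixed $\CCC$ of codimension $k$. I will do it in two stages, matching the two factors in the statement.
\begin{itemize}
\item First choose the kernel $K$ of $\modemult_j(\cdot,x)$. This is a subspace of $\F_q^{n_j}$ of codimension $k$, giving $\sgbc{k}{n_j}{q}$ choices.
\item Then choose a linear isomorphism $\F_q^{n_j}/K\to \CCC^\perp$. This is the same as choosing an ordered basis of $\CCC^\perp$ as images of a fixed basis of the quotient.
\end{itemize}
Summing over $\CCC$ and $k$ then gives $m_a$ as a sum over $k$ of (number of kernels) $\times$ (number of maps) $\times\,|\{\CCC:\codim\CCC=k,\ A_1(\CCC)=a\}|$.

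\emph{Main obstacle.} I expect the real difficulty to be the second factor. The straightforward count of isomorphisms $\F_q^{n_j}/K\to\CCC^\perp$ is $|\GL_k(\F_q)|=\prod_{t=0}^{k-1}(q^k-q^t)$. The statement instead has $\prod_{t=0}^{k-1}(q^{n_j}-q^t)$, and the latter product already counts all surjections $\F_q^{n_j}\to\CCC^\perp$ without first choosing the kernel.

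So the key verification is to reconcile these two normalisations. First I will check the identity $\sgbc{k}{n_j}{q}\,|\GL_k(\F_q)|=\prod_{t=0}^{k-1}(q^{n_j}-q^t)$. Then I will check directly, on the smallest case $n_j=1$, $k=1$, whether the stated combination $\sgbc{k}{n_j}{q}\prod_{t}(q^{n_j}-q^t)$ is what appears.

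If that check confirms the formula as written, I will finish by summing the per-subspace count over all $\CCC$ with $\codim \CCC=k$ and $A_1(\CCC)=a$, and then over $k\in\llbracket1,n_j\rrbracket$.
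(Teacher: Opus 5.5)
Your strategy is the same as the paper's. You read off the eigenvalue from $A_1(\slicesp_j(x)^\perp)$ via Theorem~\ref{thm:eigenvaluesmultilineargraphdual}, realise every subspace of dimension at most $n_j$ as a $j$-th slice space, and count the tensors with a prescribed slice space. Steps~1 and~2 are correct. In particular, your remark that no nonzero $x$ gives $a=|\S_{\mathbf n(j)}|$, because $\S_{\mathbf n(j)}$ spans $\F_q^{\mathbf n(j)}$, is a point the paper leaves implicit.

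The gap is that you leave Step~3 conditional on a check that cannot decide it. For $n_j=k=1$ one has $\sgbc{k}{n_j}{q}=1$, so the two candidate counts coincide and the check would \emph{spuriously} confirm the statement. Your own computation already decides the matter. The identity $\sgbc{k}{n_j}{q}\,|\GL_k(\F_q)|=\prod_{t=0}^{k-1}(q^{n_j}-q^t)$ is true. So your kernel-then-isomorphism count gives exactly $\prod_{t=0}^{k-1}(q^{n_j}-q^t)$ tensors $x$ with $\slicesp_j(x)=\CCC^\perp$. This is the number of full-rank $n_j\times k$ matrices, which is also what the paper's proof says before its last line. The statement's extra factor $\sgbc{k}{n_j}{q}$ exceeds $1$ whenever $1\le k\le n_j-1$. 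It arises because the paper then writes that number of full-rank matrices as $\sgbc{k}{n_j}{q}\prod_{t=0}^{k-1}(q^{n_j}-q^t)$; the same slip appears in Lemma~\ref{lemma:orbitsofcodesvstensors}. So the multiplicity formula as written is false, and no completion of your Step~3 can prove it.

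A test with $k<n_j$ exposes this. Take $q=2$, $\mathbf n=(2,2)$, $j=2$ and $a=1$. The three lines of $\F_2^2$ each contain one rank-one element, so the stated formula gives $3\cdot3\cdot3=27$. The true multiplicity is $A_1(\F_2^{2\times2})=9$ by Proposition~\ref{prop:eigenvaluesCayleyGraphMatrices}, and the stated multiplicities would sum to $1+27+6=34\neq 2^4$. Likewise, the paper's Table~\ref{tab:spectrumFq233} is computed with the correct count: for example $m_1=(q^3-1)(q+1)(q^2+q+1)=|\S_{2\times3\times3}|$, not $q^2+q+1$ times this.

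You should therefore finish by concluding that, for $a\neq|\S_{\mathbf n(j)}|$,
\[
m_a=\sum_{k=1}^{n_j}\prod_{t=0}^{k-1}(q^{n_j}-q^t)\,\bigl|\{\CCC\le\F_q^{\mathbf n(j)} : \codim\CCC=k,\ A_1(\CCC)=a\}\bigr|.
\]
Your Steps~1--3 prove this completely. The eigenvalue characterisation and the multiplicity-one claim in the statement are correct as stated.
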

\begin{proof}
    Firstly, note that since the expression of $\lambda_a$ is affine in $a$, the eigenvalues for distinct $a$ are pairwise different. Additionally, for each subspace $\DDD$ of $\F_q^{{\mathbf n}(j)} $, there exists an $x \in \F_q^{{\mathbf n}}$ such that $\DDD = \slicesp_j(x)$ if and only if $\dim \DDD \leq n_j$, in which case we have
    \[
        \left|\left\{x \in \F_q^{{\mathbf n}} \ : \ \DDD = \slicesp_j(x) \right\}\right| = \left|\left\{M \in \F_q^{n_j \times \dim(\DDD)} \ : \ \rank M = \dim \DDD\right\} \right|.
    \]
    Indeed, choosing an $x \in \F_q^{{\mathbf n}} $ such that $\DDD = \slicesp_j(x) $ corresponds to choosing a generating sequence of $\DDD$ of length $n_j$, which is exactly the number of rank $(\dim \DDD)$ matrices in $\F_q^{n_j \times \dim \DDD}$. By Theorem~\ref{thm:eigenvaluesmultilineargraphdual}, the multiplicity of $\lambda_a$ for any $a$ is $1$ for $a = |\S_{\mathbf n(j)}|$, as this is the eigenvalue associated to the zero-tensor, and else
    \begin{align*}
        &\left|\left\{x \in \F_q^{{\mathbf n}} \ : \ A_1(\slicesp_j(x)^\perp) = a \right\}\right|\\
        &=\sum_{k = 1}^{n_j}\sum_{\substack{\DDD \leq \F_q^{{\mathbf{n}(j)}}\\ \dim \DDD =k \\A_1(\DDD^\perp) = a}}\left|\left\{x \in \F_q^{{\mathbf n}} \ : \ \slicesp_j(x) = \DDD \right\}\right|\\
         &= \sum_{k = 1}^{n_j} \left|\left\{\DDD \leq \F_q^{{\mathbf{n}(j)}} \ : \ \dim \DDD = k, A_1(\DDD^\perp) = a\right\} \right|  \left|\left\{M \in \F_q^{n_j \times k} \ : \ \rank M = k\right\} \right|.\\
         & =  \sum_{k = 1}^{n_j} \left|\left\{\DDD \leq \F_q^{{\mathbf{n}(j)}} \ : \ \dim \DDD = k, A_1(\DDD^\perp) = a\right\} \right| \gbc{k}{n_j}{q} \prod_{t=0}^{k-1}(q^{n_j}-q^t) 
    \end{align*}
    Since $ \DDD\mapsto \DDD^\perp$ is a bijection of the set of subspaces of $\F_q^{{\mathbf n}(j)}$ such that $\dim \DDD = \codim (\DDD^\perp)$, we have the required statement.
\end{proof}

\subsection{Eigenvalues range and order}

In this section, we will study the spectrum of a multilinear forms graph. It is known that the largest eigenvalue of a Cayley graph generated by a subset $S$ of a finite abelian group $G$ is $|S|$. The following proposition characterises the second largest eigenvalue of the multilinear forms graph.

\begin{Prop}\label{prop:secondbigesteigenvalue}
     Assume $m\geq 2$ and let $x \in\S_{\mathbf n}$. For each $y \in \F_q^{\mathbf n}$, denote by $\lambda_y$ the eigenvalue of $\Cay(\F_{q}^{\mathbf n},\S_{\mathbf n})$ associated to $y$. Then we have
    \[
        \lambda_0 > \lambda_{x} > \max\{ \lambda_y  :  y \in \F_q^{\mathbf n} , \rank(y) \geq 2\}.    \]
\end{Prop}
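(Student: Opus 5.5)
We work with the dual formula of Theorem~\ref{thm:eigenvaluesmultilineargraphdual} and fix $j$ with $n_j = \max_i n_i$. This choice is what makes the comparison work. Since $\lambda_y = (q^{n_j}A_1(\slicesp_j(y)^\perp) - |\S_{\mathbf n(j)}|)/(q-1)$ is strictly increasing in $a_y := A_1(\slicesp_j(y)^\perp)$, the statement reduces to comparing the number of simple tensors of $\F_q^{\mathbf n(j)}$ lying in the orthogonal complements of the $j$-th slice spaces.

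\emph{First inequality.} For $y=0$ we have $a_0 = |\S_{\mathbf n(j)}|$. For $x = u_1\otimes\cdots\otimes u_m \in \S_{\mathbf n}$, the slice space $\slicesp_j(x)$ is the line spanned by $\sigma_0 := \bigotimes_{i\neq j} u_i$. Then $a_x$ counts the simple tensors $\sigma = \bigotimes_{i\neq j} v_i$ with $\langle \sigma\mid\sigma_0\rangle = \prod_{i\neq j}(v_i\cdot u_i) = 0$. This count is strictly less than $|\S_{\mathbf n(j)}|$, since $\sigma_0$ itself is excluded up to a suitable choice of a $v_i$ with $v_i \cdot u_i \ne 0$. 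Hence $\lambda_0 > \lambda_x$.

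For the second inequality we compute $a_x$ explicitly. After a $\GL$ change of basis (equation~(\ref{eq:invarianceeigenvalueactionGL})) we may take $u_i = e_1$. Then $a_x$ is the number of simple tensors with at least one $v_i$ having first coordinate zero:
\[
a_x = |\S_{\mathbf n(j)}| - \frac{\prod_{i\ne j}(q^{n_i}-q^{n_i-1})}{(q-1)^{m-2}} = |\S_{\mathbf n(j)}| - (q-1)q^{\sum_{i\neq j}(n_i-1)}.
\]

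\emph{Second inequality.} Let $\rank(y)\geq 2$. If $\dim\slicesp_j(y) \geq 2$, pick a $2$-dimensional subspace $\DDD\le\slicesp_j(y)$. Then $\slicesp_j(y)^\perp\subseteq\DDD^\perp$, so $a_y \le A_1(\DDD^\perp)$, and it suffices to show $A_1(\DDD^\perp) < a_x$ for every $2$-dimensional $\DDD$. If $\dim\slicesp_j(y)=1$, then $\slicesp_j(y) = \langle z\rangle$ with $\rank(z)\ge 2$ in $\F_q^{\mathbf n(j)}$, since otherwise $y$ would be simple. In that case we compare $A_1(z^\perp)$ with $A_1(\sigma_0^\perp)$, that is, we count simple $\sigma$ with $\langle\sigma\mid z\rangle = 0$. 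This count equals $\bigl(|\S_{\mathbf n(j)}| - \lambda'_z\bigr)/(q-1)$ up to an affine reparametrisation, because $\sum_{c\in\F_q}\chi(c\langle\sigma\mid z\rangle)$ detects zeros; here $\lambda'_z$ denotes the eigenvalue of $z$ in the lower-order graph. So $A_1(z^\perp) < A_1(\sigma_0^\perp)$ is equivalent to $\lambda'_z < \lambda'_{\sigma_0}$, which is exactly the statement for order $m-1$. This suggests an induction on $m$, with base case $m=2$ given by Proposition~\ref{prop:eigenvaluesCayleyGraphMatrices}, where $\theta_r$ is strictly decreasing in $r$. (When $m-1 = 1$, $z$ cannot have rank $\ge 2$, so this case does not occur.)

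\emph{Main obstacle.} The $2$-dimensional case is the hard step: bounding $A_1(\DDD^\perp)$ uniformly over all $2$-dimensional $\DDD$. I would use the same character trick. The number of simple $\sigma$ orthogonal to $\DDD$ equals $q^{-2}\sum_{d\in\DDD}\sum_{\sigma}\chi(\langle d\mid\sigma\rangle)$, which is an average of lower-order eigenvalues $\lambda'_d$ over $d\in\DDD$. Every nonzero $d$ satisfies $\lambda'_d \le \lambda'_{\sigma_0}$, by the first inequality at lower order. This gives $A_1(\DDD^\perp) \le \bigl(|\S_{\mathbf n(j)}| + (q^2-1)\lambda'_{\sigma_0}\bigr)/q^2$. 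It then remains to check by explicit computation that this bound is $< a_x$. The comparison hinges on $\lambda'_{\sigma_0}$ being much smaller than $|\S_{\mathbf n(j)}|$, and I expect it to be where the maximality of $n_j$ is used. Equality edge cases, such as $q=2$ with small $n_i$, would be verified by hand.
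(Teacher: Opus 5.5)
Your argument is sound and uses the same mechanism as the paper. Via the character identity behind Theorem~\ref{thm:eigenvaluesmultilinearformsgraphrecursive}, the quantity attached to $y$ is an average of order-$(m-1)$ eigenvalues $\lambda'_d$ over $d$ in (a subspace of) $\slicesp_j(y)$. You bound every nonzero term by $\lambda'_{\sigma_0}$ and induct on $m$.

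The step you left as the ``main obstacle'' closes in one line. The same identity gives $a_x = q^{-1}\bigl(\lambda'_0 + (q-1)\lambda'_{\sigma_0}\bigr)$ with $\lambda'_0 = |\S_{\mathbf n(j)}|$, so
\[
a_x - q^{-2}\bigl(\lambda'_0 + (q^2-1)\lambda'_{\sigma_0}\bigr) = (q^{-1}-q^{-2})\bigl(\lambda'_0 - \lambda'_{\sigma_0}\bigr) > 0
\]
by the first inequality at order $m-1$. Maximality of $n_j$ plays no role, there are no edge cases to check by hand, and any $j$ works. The explicit formula for $a_x$ is also unnecessary.

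The paper streamlines further by choosing $j$ adaptively, with $k=\dim\slicesp_j(y)\geq 2$. Such a $j$ exists because a tensor all of whose slice spaces have dimension at most one is simple. This adaptive choice eliminates your Case B entirely. The paper then works with the recursive formula directly, comparing $q^{-1}\lambda'_0+(1-q^{-1})\lambda'_s$ with the bound $q^{-k}\lambda'_0+(1-q^{-k})\lambda'_s$. So neither the detour through the dual formula nor the passage to a $2$-dimensional $\DDD$ is needed. Your fixed-$j$ version costs an extra case but is equally valid.

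Two small slips to fix:
\begin{enumerate}
\item In Case B the count is $A_1(z^\perp) = q^{-1}\bigl(|\S_{\mathbf n(j)}| + (q-1)\lambda'_z\bigr)$, which is increasing in $\lambda'_z$. Your displayed formula has the opposite sign, although the equivalence you conclude is the right one.
\item The bound $\lambda'_d \leq \lambda'_{\sigma_0}$ for nonzero $d$ does not follow from the first inequality at lower order. It comes from the second inequality at order $m-1$ together with $\GL$-invariance, since all simple tensors share one eigenvalue.
\end{enumerate}
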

\begin{proof}
    We prove the statement by induction on $m$. By Proposition~\ref{prop:eigenvaluesCayleyGraphMatrices}, the statement is true for $m= 2$. 
    Let $m\geq 2$ and assume the statement holds for any sequence of positive integers 
    $(n_1,\dots,n_m)$.  Let $\mathbf n = (n_1,\dots,n_{m+1})$ be a sequence of positive integers, and let $x_1 \in \S_{\mathbf n}$ and $x_2 \in \F_q^{\mathbf n}$ have $\rank(x_2) \geq 2$. Then there exists $j \in \llbracket 1,m\rrbracket$ such that $\dim \slicesp_j(x_2) \geq 2$. Denote by ${\lambda}_{y}$ the eigenvalue of $\Cay(\F_q^{{\mathbf{n}(j)}},\S_{\mathbf n(j)})$ associated to $y \in \F_q^{{\mathbf n}(j)}$. Finally, let $s \in \S_{\mathbf n(j)}$ be a simple tensor. By Theorem~\ref{thm:eigenvaluesmultilinearformsgraphrecursive}, we have 
    \[
        \begin{array}{ll}
             &q^{-n_j}\left((q-1)\lambda_0 +  |\S_{\mathbf n(j)}| \right) = {\lambda}_0,\\[0.3cm]
             &q^{-n_j}\left((q-1)\lambda_{x_1} +  |\S_{\mathbf n(j)}| \right) = q^{-1}{\lambda}_{0} + (1-q^{-1}){\lambda}_{ s},\\
             \text{and}& \displaystyle q^{-n_j}\left((q-1)\lambda_{x_2} +  |\S_{\mathbf n(j)}| \right) = q^{-k}\left({\lambda}_0 + \sum_{y \in \slicesp_j(x_2)\backslash \{0\}}{\lambda}_{y}\right) \leq q^{-k}{\lambda}_0 + (1-q^{-k}){\lambda}_{ s},
        \end{array}
    \]
    by the induction hypothesis, where $k = \dim \slicesp_j(x_2) \geq 2$. And since ${\lambda}_0 > \lambda_{s}$, again by hypothesis, we have 
    ${\lambda}_0 > q^{-1}{\lambda}_{0} + (1-q^{-1}){\lambda}_{ s} >q^{-k}{\lambda}_0 + (1-q^{-k}){\lambda}_{ s}$, which concludes the proof.
\end{proof}

Since the eigenvalue associated to a tensor depends only on its class under the action of 
$\GL_{\mathbf{n}}(\F_q)$, we can give an explicit formula for the eigenvalue associated to any element of rank one (the second-largest eigenvalue) of any multilinear forms graph as follows.

\begin{Corol}\label{corol:valuesecondlargestev}
    Let $\mathbf n= (n_1,\dots,n_m)$ be a sequence of positive integers with $m\geq 2$. Let $x \in \S_{\mathbf n}$ be a simple tensor of $\F_{q}^{\mathbf n}$. Then the eigenvalue of $\Cay(\F_{q}^{\mathbf n},\S_{\mathbf n})$ associated to $x$ is 
    \[
        \lambda_x = |\S_{\mathbf n}|- q^{n_1 + \cdots + n_{m-1} + n_m -m+1}.
    \]
\end{Corol}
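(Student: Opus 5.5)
By the $\GL_{\mathbf n}(\F_q)$-invariance (\ref{eq:invarianceeigenvalueactionGL}), $\lambda_x$ depends only on the orbit of $x$. All simple tensors lie in one orbit, so we may take $x = e_1^{(n_1)}\otimes\cdots\otimes e_1^{(n_m)}$. We then apply Theorem~\ref{thm:eigenvaluesmultilineargraphdual} with $j=m$. The $m$-th slice space of $x$ is the line $\slicesp_m(x)=\Span\{e_1^{(n_1)}\otimes\cdots\otimes e_1^{(n_{m-1})}\}$ in $\F_q^{\mathbf n(m)}$. Its dual is the hyperplane of $(m-1)$-tensors whose $(1,\dots,1)$ coordinate vanishes.

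The main step is to count $A_1(\slicesp_m(x)^\perp)$. A simple tensor $u_1\otimes\cdots\otimes u_{m-1}$ lies in this dual if and only if $\prod_{i=1}^{m-1}(u_i)_1=0$. So we count the complement instead: the simple tensors with every $(u_i)_1\neq 0$. Up to the scalar redundancy, we can normalise each $(u_i)_1=1$ and keep one overall nonzero scalar. This gives $(q-1)q^{\sum_{i<m}(n_i-1)}$ simple tensors with nonzero first coordinate. Hence
\[
A_1(\slicesp_m(x)^\perp)=|\S_{\mathbf n(m)}|-(q-1)q^{n_1+\cdots+n_{m-1}-(m-1)}.
\]

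Substituting into Theorem~\ref{thm:eigenvaluesmultilineargraphdual} gives
\[
(q-1)\lambda_x = (q^{n_m}-1)|\S_{\mathbf n(m)}| - (q-1)q^{n_1+\cdots+n_m-m+1}.
\]
Since $(q^{n_m}-1)|\S_{\mathbf n(m)}|=(q-1)|\S_{\mathbf n}|$, dividing by $q-1$ yields the claimed formula.

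The only point needing care is the normalisation count. As a sanity check, the case $m=2$ reproduces $\theta_1$ of Proposition~\ref{prop:eigenvaluesCayleyGraphMatrices}. Alternatively, one could induct using Theorem~\ref{thm:eigenvaluesmultilinearformsgraphrecursive} and the identity $q^{-n_j}((q-1)\lambda_x+|\S_{\mathbf n(j)}|)=q^{-1}\lambda_0+(1-q^{-1})\lambda_s$ from the proof of Proposition~\ref{prop:secondbigesteigenvalue}. However, the direct dual count is shorter.
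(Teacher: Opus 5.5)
Your proposal is correct and matches the paper's proof: reduce to $x=e_1^{(n_1)}\otimes\cdots\otimes e_1^{(n_m)}$, count simple tensors in the hyperplane $\slicesp_m(x)^\perp$ via the complement, and substitute into Theorem~\ref{thm:eigenvaluesmultilineargraphdual} with $j=m$. Your normalisation count $(q-1)q^{\sum_{i<m}(n_i-1)}$ agrees with the paper's $(q-1)^{-(m-2)}\prod_{j<m}(q-1)q^{n_j-1}$.
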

\begin{proof}
    We may assume without loss of generality that $x = e_1^{(n_1)} \otimes \cdots \otimes e_1^{(n_m)}$. Note that 
    \begin{align*}
        \S_{\mathbf n(m)} \cap \slicesp_m(x)^\perp
        &= \S_{\mathbf n(m)} \cap \left\langle e_1^{(n_1)} \otimes \cdots \otimes e_1^{(n_{m-1})}\right\rangle^\perp\\
        &= \S_{\mathbf n(m)} \cap  \left\{y \in \F_{q}^{\mathbf n(m)} \ : \ y_{(1,\dots,1)} = 0\right\}\\
        &= \left\{u_1 \otimes \cdots \otimes u_{m-1} \in \S_{\mathbf n(m)} \ : \ u_{1,1}\cdots u_{m-1,1} = 0\right\}.
    \end{align*}
    Therefore we have 
    \begin{align*}
        &A_1(\slicesp_m(x)^\perp)\\
        &= |\S_{\mathbf n(m)}| - \left|\left\{u_1 \otimes \cdots \otimes u_{m-1} \in \S_{\mathbf n(m)} \ : \ u_{1,1}\cdots u_{m-1,1} \neq 0\right\}\right|\\
        &= |\S_{\mathbf n(m)}| - {(q-1)^{-(m-2)}} \left|\left\{(u_1,\dots, u_{m-1}) \in \prod_{j=1}^{m-1} \F_q^{n_j} \backslash \{0\}, \forall j \in \llbracket 1,m-1\rrbracket , u_{j,1} \neq 0\right\}\right|\\
        &= |\S_{\mathbf n(m)}| - {(q-1)^{-(m-2)}}  \prod_{j=1}^{m-1} ((q-1)q^{n_j-1})\\
        &= |\S_{\mathbf n(m)}| - (q-1)q^{n_1 + \cdots + n_{m-1} -m+1}.
    \end{align*} 
    Consequently, by Theorem~\ref{thm:eigenvaluesmultilineargraphdual}, we have
    \begin{align*}
        \lambda_x 
        = \frac{(q^{n_m}-1)|\S_{\mathbf n(m)}| - (q-1)q^{n_1 + \cdots + n_{m-1} + n_m -m+1}}{q-1}
        = |\S_{\mathbf n}|- q^{n_1 + \cdots + n_{m-1} + n_m -m+1}.
    \end{align*}
\end{proof}

It is known that one can bound the diameter $D(\Gamma)$ of a regular graph $\Gamma$ with eigenvalues $\theta_0 > \theta_1 > \cdots > \theta_r$ using the ratio between the two largest eigenvalues, that is 
\begin{equation}
        D(\Gamma) \leq \frac{\log(|V(G)|-1)}{\log(\theta_0/\theta_1)},
        \label{eq:diameterboundeigenvalue}
\end{equation}
see \cite{Chung1989DiameterEigenvalue} for more details. In the case of the multilinear forms graph $\Gamma = \Cay(\F_{q}^{\mathbf n},\S_{\mathbf n}))$, since the geodesic distance of the graph is exactly the rank distance, the diameter of the graph is exactly the maximum rank in the space of tensors, that is 
\[
    D(\Cay(\F_{q}^{\mathbf n},\S_{\mathbf n})) = \max\{ \rank(x) \ : \ x \in \F_{q}^{\mathbf n}\}.
\]
We can observe here that the bound on the diameter that we can obtain using equation (\ref{eq:diameterboundeigenvalue}) does not improve the bounds given in \cite{HOWELL19789} on the maximum rank; see Proposition~\ref{prop:howellbound}. Indeed, let $m \geq 3$ and $\mathbf n = (n_1,\dots,n_m)$ be such that $n_j \geq 2$ for each $j \in \llbracket 1,m\rrbracket$. Since $(1-q^{-1})^{m-1}(1+q^{-1})\leq (1-q^{-2})^m$, we have
\[
    q^{n_1 + \cdots + n_m - m} (q-1)^{m-1}(q+1) \leq \prod_{j=1}^m (q^{n_j}-1).
\]
In turn, we obtain 
\[
    \frac{q^{n_1 + \cdots + n_m - m}}{|\S_{\mathbf n}| - q^{n_1 + \cdots + n_m - m+1}} \leq 1,
\]
and hence, by Proposition~\ref{prop:secondbigesteigenvalue} and Corollary~\ref{corol:valuesecondlargestev}, we get that
\[
    \log_q\left(\frac{\lambda_0}{\lambda_x}\right)\leq 1,
\]
for any $x \in \S_{\mathbf n}$, thus proving that the upper bound (\ref{eq:diameterboundeigenvalue}) is at least $\log_q(q^{n_1\cdots n_m}-1)$.

Using Theorem~\ref{thm:eigenvaluesmultilineargraphdual}, we can also provide more information on the possible integers that can be found in the spectrum of the multilinear forms graph. Let us start with a coding theory lemma on the existence of minimum distance $2$ tensor codes that uses the optimal codes found by \cite{ROTHTensorCodesForRankMetric}, and deduce the value of the least eigenvalue of the spectrum. 
\begin{Lemma}\label{lemma:existancemindistancetwocodes}
    Let $\mathbf n = (n_1,\dots,n_m)$ be a sequence of positive integers with $m \geq 2$. Denote the greatest integer in $\mathbf n$ by $\eta = \max (n_1,\dots,n_m)$. Then there exists an (optimal) $[{\mathbf n},n_1\cdots n_m- \eta,2]_q$ tensor code.
\end{Lemma}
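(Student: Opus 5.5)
We need a linear code of dimension $n_1\cdots n_m-\eta$ with minimum tensor rank $2$, i.e. one that contains no simple tensor. Optimality follows from the Singleton-like bound (Theorem~\ref{thm:singletonlike}) with $d=2$, which gives $\log_q M \le n_1\cdots n_m - \eta$. So the whole task is the construction.

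Without loss of generality, and using mode permutation, assume $\eta = n_m$. Write $N = n_1\cdots n_{m-1}$. I would aim for a code of the form $\CCC = \{x \in \F_q^{\mathbf n} : \modemult_{m}^{\ast}(x) = 0\}$, namely the kernel of an $\F_q$-linear map $\phi:\F_q^{\mathbf n}\to\F_q^{n_m}$ that is surjective, so that $\dim\CCC = N n_m - n_m$. For $\CCC$ to avoid $\S_{\mathbf n}$, we need $\phi(s\otimes w)\neq 0$ for every $s\in\S_{\mathbf n(m)}$ and $w\in\F_q^{n_m}\setminus\{0\}$.

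The natural choice is as follows. Identify $\F_q^{n_m}$ with $\F_{q^{n_m}}$. Pick an $\F_q$-linear map $L:\F_q^{\mathbf n(m)}\to \F_{q^{n_m}}$ and set
\[
\phi\Big(\sum_i y_i\otimes e_i^{(n_m)}\Big)=\sum_i L(y_i)\,\alpha_i ,
\]
where $\alpha_1,\dots,\alpha_{n_m}$ is an $\F_q$-basis of $\F_{q^{n_m}}$. Then $\phi(s\otimes w)=L(s)\cdot\omega$ with $\omega=\sum w_i\alpha_i\neq0$. It therefore suffices that $L(s)\neq 0$ for all simple $s$, and surjectivity is automatic once $L\neq 0$.

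The simplest way to get $L(s)\ne0$ is multiplicative. Choose embeddings $\iota_j:\F_q^{n_j}\hookrightarrow\F_{q^{n_m}}$, which exist only when $n_j\mid n_m$. This is the main obstacle: in general $n_j\nmid n_m$. Instead, I would embed each $\F_q^{n_j}$ as an $\F_q$-subspace of $\F_{q^{n_m}}$ (possible since $n_j\le n_m$) and set $L(u_1\otimes\cdots\otimes u_{m-1})=\iota_1(u_1)\cdots\iota_{m-1}(u_{m-1})$. This is multilinear, hence extends linearly, and a product of nonzero field elements is nonzero.

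So the map $\phi(u_1\otimes\cdots\otimes u_m)=\iota_1(u_1)\cdots\iota_m(u_m)$, with $\iota_m$ an isomorphism, works. It is a surjective linear map $\F_q^{\mathbf n}\to\F_{q^{n_m}}$ that is nonzero on every simple tensor. Its kernel is the desired code, and this is exactly the product-of-field-elements construction behind Roth's optimal codes in \cite{ROTHTensorCodesForRankMetric}.

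The remaining checks are routine. The map is well defined by multilinearity. Its image contains $\iota_m(\F_q^{n_m})=\F_{q^{n_m}}$, since some $\iota_j(u_j)$ can be taken equal to $1$ after scaling the embeddings. The $m=2$ case recovers the MRD-type code.
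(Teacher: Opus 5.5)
Your proof is correct, and it takes a genuinely different route from the paper.

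The paper does not construct anything. It takes the $[\eta\times\cdots\times\eta,\eta^m-\eta,2]_q$ code from Roth's Theorem~5 and shortens it to the tensors supported on $\prod_j\llbracket 1,n_j\rrbracket$. This loses at most $\eta^m-n_1\cdots n_m$ dimensions, and the Singleton-like bound then forces the dimension to be exactly $n_1\cdots n_m-\eta$.

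You instead build the code directly for the given shape. It is the kernel of the $\F_q$-linear map $\phi:\F_q^{\mathbf n}\to\F_{q^{\eta}}$ induced by the multilinear map $(u_1,\dots,u_m)\mapsto\iota_1(u_1)\cdots\iota_m(u_m)$. Your key observation, made after you notice that subfield embeddings need not exist, is that $\F_q$-linear injections $\iota_j$ suffice. Non-vanishing on $\S_{\mathbf n}$ only uses that $\F_{q^\eta}$ has no zero divisors. Your route is self-contained and explicit, and it gets the dimension exactly from the surjectivity of $\phi$ rather than from a shortening count. The paper's route is shorter but rests entirely on the cited result, and your argument does not depend on whether it matches Roth's own construction.

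Three small presentational points:
\begin{itemize}
\item Surjectivity is cleanest as follows. Fix nonzero $u_1,\dots,u_{m-1}$, so that $a=\iota_1(u_1)\cdots\iota_{m-1}(u_{m-1})\neq 0$. Then $u_m\mapsto a\,\iota_m(u_m)$ is already a bijection onto $\F_{q^\eta}$, and no rescaling of the embeddings is needed.
\item Avoiding simple tensors only gives minimum rank at least $2$. You should say explicitly that the Singleton-like bound forces it to be exactly $2$. This needs $n_1\cdots n_m>\eta$ so that the code is nonzero, a degenerate case the paper's argument shares.
\item The undefined notation $\modemult_m^\ast$ in your opening should simply be $\phi$.
\end{itemize}
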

\begin{proof}
    By \cite[Theorem 5]{ROTHTensorCodesForRankMetric}, there exists $\CCC$ an $[\eta \times \cdots \times \eta, \eta^m - \eta,2]_q$ linear tensor code. Moreover, the shortened subcode
    \[
        \CCC' = \left\{ x \in \CCC \ : \ \forall i \in \llbracket 1,\eta\rrbracket^m \backslash \left(\prod_{j = 1}^m \llbracket 1,n_j\rrbracket\right), x_i = 0 \right\}
    \]
    has dimension $\dim \CCC' \geq \dim \CCC - (\eta^m - n_1\cdots n_m) = n_1 \cdots n_m - \eta$. By the Singleton-like bound (\ref{eq:singletonlike}), we have $\dim \CCC' \leq n_1\cdots n_m - \eta$, and hence $\dim \CCC' = n_1\cdots n_m - \eta$. Finally, since $\CCC' \subseteq \CCC$ and since $\CCC'$ can be identified with a subspace of $\F_q^{\mathbf n}$, the code $\CCC'$ can be identified with an $[{\mathbf n},n_1\cdots n_m- \eta,2]_q$ linear tensor code.
\end{proof}

\begin{Corol}
    Let $\mathbf n = (n_1,\dots,n_m)$ be a sequence of positive integers with $m\geq 2$. For each $x \in \F_q^{\mathbf n}$, denote by $\lambda_x$ the eigenvalue of $\Cay(\F_{q}^{\mathbf n},\S_{\mathbf n})$ associated to $y$. Let $\ell \in \llbracket 1,m\rrbracket$ be an index such that we have $n_{\ell} = \max(n_1,\dots,n_m)$. Then we have
    \begin{equation}
       \lambda_x \in \left\{ |\S_{\mathbf n}| -q^{n_{\ell}}t \ : \ t \in \N_0 \right\} \cap \left[ \frac{-|\S_{\mathbf n(\ell)}|}{q-1} \ , \  |\S_{\mathbf n}|\right].
       \label{eq:eigenvaluelocation}
    \end{equation}
    The least eigenvalue of the graph $\Cay(\F_{q}^{\mathbf{n}},\S_{\mathbf n})$ is 
    \begin{equation}
        \frac{-|\S_{\mathbf n(\ell)}|}{q-1},
        \label{eq:eigenvaluemin}
    \end{equation}
    and is the eigenvalue associated to any $x \in \F_{q}^{\mathbf n}$ such that $A_1(\slicesp_j(x)^\perp) = 0$. In particular for $\mathbf n = (n_1,n_2,n_3)$ with $n_1 \geq n_2 \geq n_3$, this is the eigenvalue of  $\Cay(\F_q^{n_1\times n_2 \times n_3},\S_{n_1 \times n_2 \times n_3})$ associated to any $3$-order tensor $x \in \F_{q}^{n_1\times n_2 \times n_3}$ such that $\slicesp_1(x)$ is an $[n_2\times n_3, n_2(n_3-d+1),d]_q$ MRD code, namely 
    \[
        \lambda_x = -\frac{(q^{n_2}-1)(q^{n_3}-1)}{(q-1)^2}.
    \]
\end{Corol}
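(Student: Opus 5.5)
The plan is to apply Theorem~\ref{thm:eigenvaluesmultilineargraphdual} with $j=\ell$, where $n_\ell=\max(n_1,\dots,n_m)$. This gives, for every $x\in\F_q^{\mathbf n}$,
\[
\lambda_x=\frac{q^{n_\ell}A_1(\slicesp_\ell(x)^\perp)-|\S_{\mathbf n(\ell)}|}{q-1}.
\]

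\textbf{Step 1: the arithmetic progression.} Put $a=A_1(\slicesp_\ell(x)^\perp)$. Since $\S_{\mathbf n(\ell)}\cap\slicesp_\ell(x)^\perp$ is closed under nonzero scalar multiplication, it is a union of punctured lines, so $a=(q-1)a'$ for some integer $0\le a'\le|\S_{\mathbf n(\ell)}|/(q-1)$. Also $|\S_{\mathbf n}|=(q^{n_\ell}-1)|\S_{\mathbf n(\ell)}|/(q-1)$. Setting $t=(|\S_{\mathbf n(\ell)}|-a)/(q-1)$, which is a nonnegative integer, we get
\[
\lambda_x=|\S_{\mathbf n}|-q^{n_\ell}t .
\]

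\textbf{Step 2: the interval.} Because $a\ge 0$, we have $\lambda_x\ge -|\S_{\mathbf n(\ell)}|/(q-1)$. The upper bound $\lambda_x\le|\S_{\mathbf n}|$ is the general bound for a Cayley graph (equivalently, $a\le|\S_{\mathbf n(\ell)}|$). This establishes (\ref{eq:eigenvaluelocation}).

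\textbf{Step 3: the lower bound is attained.} We need some $x$ with $A_1(\slicesp_\ell(x)^\perp)=0$. By Corollary~\ref{corol:eigenvaluesmultilinearformscountingspaces}, it suffices to find a subspace of $\F_q^{\mathbf n(\ell)}$ of codimension at most $n_\ell$ that contains no rank-one tensor.

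\emph{Main obstacle (case $m\ge3$).} Apply Lemma~\ref{lemma:existancemindistancetwocodes} to $\mathbf n(\ell)$, which has length $m-1\ge2$. It gives a code of minimum distance $2$ and codimension $\max\mathbf n(\ell)\le n_\ell$. Any subspace of codimension exactly $n_\ell$ inside it still has no rank-one elements, so some $\slicesp_\ell(x)$ is its dual.

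\emph{Case $m=2$.} Here $\mathbf n(\ell)=(n_{\ell'})$ with $n_{\ell'}\le n_\ell$. Taking the subspace $\{0\}$, of codimension $n_{\ell'}\le n_\ell$, gives $A_1=0$; this corresponds to a full-rank matrix, consistent with Proposition~\ref{prop:eigenvaluesCayleyGraphMatrices}.

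The statement "associated to any $x$ with $A_1(\slicesp_\ell(x)^\perp)=0$" then follows directly from the formula.

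\textbf{Step 4: the $3$-order case.} Take $n_1\ge n_2\ge n_3$ and $\ell=1$. The least eigenvalue is
\[
-\frac{|\S_{n_2\times n_3}|}{q-1}=-\frac{(q^{n_2}-1)(q^{n_3}-1)}{(q-1)^2}.
\]
If $\slicesp_1(x)$ is an MRD code of dimension $n_2(n_3-d+1)\le n_1$, its dual is MRD with minimum distance $n_3-(n_3-d+1)+1=d'$. For $d'\ge2$ the dual contains no rank-one matrix, so $A_1=0$ and $\lambda_x$ equals this value. This uses the standard fact that the dual of an MRD code is MRD. The edge case $d'=1$, i.e. $d=n_3$ with $\dim=n_2$, has to be checked separately; I expect the intended parameters are those with $\dim\slicesp_1(x)\le n_1$ and $A_1(\slicesp_1(x)^\perp)=0$.
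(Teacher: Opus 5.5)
There is a genuine gap in your Step 4. The expression $n_3-(n_3-d+1)+1$ simplifies to $d$, and that is not the minimum distance of the dual. The code $\slicesp_1(x)^\perp$ has dimension $n_2n_3-n_2(n_3-d+1)=n_2(d-1)$. Since it is MRD, its minimum distance $d^\perp$ satisfies $n_2(n_3-d^\perp+1)=n_2(d-1)$, i.e. $d^\perp=n_3-d+2$. Because $d\le n_3$, this gives $d^\perp\ge 2$ for every $d\ge 2$; in particular $d=n_3$ gives $d^\perp=2$. So the edge case you flag is an artefact of the miscalculation. Your closing hedge then simply assumes $A_1(\slicesp_1(x)^\perp)=0$, which is exactly what had to be shown. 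The one case genuinely outside the duality statement is $d=1$: there $\slicesp_1(x)=\F_q^{n_2\times n_3}$, its dual is $\{0\}$, and $A_1=0$ trivially. This is how the paper finishes: Delsarte's duality for $d\ge 2$, and the trivial dual for $d=1$. With this correction your Step 4 is complete.

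Steps 1--3 are correct, and for the location claim your route is more direct than the paper's. The paper applies Theorem~\ref{thm:eigenvaluesmultilineargraphdual} for every $j$ and intersects the resulting sets. You take $j=\ell$ at once, which suffices because the set obtained for $j=\ell$ is contained in the set for any other $j$: $q^{n_j}\mid q^{n_\ell}$ and $|\S_{\mathbf n(\ell)}|\le|\S_{\mathbf n(j)}|$. Your attainment argument via Corollary~\ref{corol:eigenvaluesmultilinearformscountingspaces} and Lemma~\ref{lemma:existancemindistancetwocodes} matches the paper's. Your separate treatment of $m=2$ is a reasonable precaution, since the lemma is stated for tuples of length at least $2$.

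Two small remarks. First, the code from the lemma already has codimension $\max\mathbf n(\ell)\le n_\ell$, so passing to a subspace of codimension exactly $n_\ell$ is unnecessary, and impossible if $n_\ell>\prod_{i\neq\ell}n_i$. Second, the statement's clause about $A_1(\slicesp_j(x)^\perp)=0$ also covers $j\neq\ell$. For such $j$ the formula gives $\lambda_x=-|\S_{\mathbf n(j)}|/(q-1)$, and together with your lower bound this forces $|\S_{\mathbf n(j)}|=|\S_{\mathbf n(\ell)}|$, so it is again the least eigenvalue.
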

\begin{proof}
   Since for each $x \in \F_q^{\mathbf n}$ and each $j \in \llbracket 1,m\rrbracket$, the value $A_1(\slicesp_j(x)^\perp)$ is a multiple of $(q-1)$, hence Theorem~\ref{thm:eigenvaluesmultilineargraphdual} gives
    \begin{align*}
        \lambda_x  \in \left\{ |\S_{\mathbf n}| -q^{n_j}t \ : \ t \in \N_0 \right\} \cap \left[ \frac{-|\S_{\mathbf n(j)}|}{q-1} \ , \  |\S_{\mathbf n}|\right].
    \end{align*}
    Since this is true for each $j \in \llbracket 1,m \rrbracket$, and since $\min_{j \in \llbracket 1,m\rrbracket} |\S_{\mathbf n(j)}| = |\S_{\mathbf n(\ell)}|$ we have 
    \[
        \lambda_x \in \bigcap_{j = 1}^m \left( \left\{ |\S_{\mathbf n}| -q^{n_j}t \ : \ t \in \N_0 \right\} \cap \left[ \frac{-|\S_{\mathbf n(j)}|}{q-1} \ , \  |\S_{\mathbf n}|\right]\right), 
    \]
    and this intersection is exaclty the set given in (\ref{eq:eigenvaluelocation}).
    Assume now that $n_1 \geq \cdots \geq n_m$, i.e. $\ell = 1$. The other cases are similar. By Lemma~\ref{lemma:existancemindistancetwocodes}, there exists an $[n_2 \times \cdots \times n_{m},n_2\cdots n_{m}-n_{2},2]_q$ tensor code $\CCC$. Since $n_{1} \geq n_{2}$ there exists an $[n_2 \times \cdots \times n_{m},n_2\cdots n_{m}-n_{1},d]_q$ linear tensor code $\tilde \CCC$ with $d \geq 2$. Therefore, there exists $x \in \F_{q}^{\mathbf n}$ such that $\slicesp_1(x)^{\perp} = \tilde \CCC$. In particular $A_1(\slicesp_1(x)^{\perp}) = 0$ and by Theorem~\ref{thm:eigenvaluesmultilineargraphdual} we have $(q-1)\lambda_x = -|\S_{\mathbf n(1)}|$. Finally, assume $\mathbf n = (n_1,n_2,n_3)$ with $n_1 \geq n_2 \geq n_3$, and $x \in \F_{q}^{n_1 \times n_2 \times n_3}$ is such that $\slicesp_1(x)$ is an $[n_2\times n_3, n_2(n_3-d+1),d]_q$ MRD codes. If $d = 1$, then $\slicesp_1(x) = \F_{q}^{n_2 \times n_3}$ and $\slicesp_1(x)^\perp = \{0\}$. Otherwise, by \cite[Theorem 5.5]{DELSARTE1978226}, the dual code of an MRD code of minimum distance at least two is also an MRD code of minimum distance at least two, thus we have $A_1(\slicesp_1(x)^\perp) = 0$. Therefore, $x$ is associated to the least eigenvalue of the trilinear forms graph, which concludes the proof.
\end{proof}

\section{Eigenvalues of the trilinear forms graph of 2x3x3 tensors.}
\label{sec:eigenvalues233}

In this section, we will compute the spectrum of the graph $\Cay(\F_q^{2 \times 3 \times 3},\S_{2\times 3 \times 3})$ and derive bounds on the possible cardinality of a tensor code in $\F_q^{2 \times 3 \times 3}$ of given minimum distance. In general, the computation of each eigenvalue multiplicity of the graph turns out to be a difficult task as it can be reduced to computing the number of matrix subspaces with sufficiently large dimension that have a given number of rank one elements, as seen in Corollary~\ref{corol:eigenvaluestrilinearformsgraphdual}, or the number of matrix subspaces with sufficiently low dimension that have a given rank distribution as seen in Corollary~\ref{corol:eigenvaluestrilinearformsgraph}. This task becomes much simpler with the classification of matrix subspaces up to equivalence.

In \cite{LavrauwSheekey2015CanonicalFormsTensors} and \cite{LavrauwSheekey2017ClassificationSubspacesFqr}, Lavrauw and Sheekey classified the subspaces of $\F^{2 \times 3}$ under the action of $\GL_2(\F)\times \GL_3(\F)$ for an arbitrary field $\F$ and the classification of orbits of the tensors in $\F^{2 \times 3 \times 3}$ under the action of $\GL_{2 \times 3 \times 3}(\F)$. In the case that $\F$ is the finite field $\F_q$, they showed that there are twenty non-trivial orbits of subspaces $\F_q^{2 \times 3}$, which they denoted by $o_1$, $o_2$, $o_3$, $o_4$, $o_4^T$, $o_5$, $o_6$, $o_7$, $o_7^T$, $o_8$, $o_9$, $o_{10}$, $o_{11}$, $o_{11}^T$, $o_{12}$, $o_{13}$, $o_{14}$, $o_{15}$, $o_{16}$, $o_{17}$. Two orbits $o_{10}, o_{15}$ have representatives corresponding to pairs $(u,v) \in \F_q^2$ such that $v \neq 0$ and $v\lambda^2 - uv\lambda  - 1 \neq 0$ for each $\lambda \in \F_q$. In the case of $o_{17}$ there is also a parameter in the form of a triple $(\alpha,\beta,\gamma) \in \F_q^3$ such that $\lambda^3 + \gamma \lambda^2 - \beta \lambda + \alpha \neq 0$ for each $\lambda \in \F_q$. The representative of each orbit is given in Table \ref{tab:SizeAndRankDistribSpaces}.

As seen above, two tensors with equivalent slice spaces will have the same eigenvalues. Thanks to the computation of the number of rank one matrices in each subspace of each orbit that was given in \cite{LavrauwSheekey2017ClassificationSubspacesFqr}, we can compute the rank distribution and thus the eigenvalue associated to each orbit. With the orbits sizes computations below and summed up in Table \ref{tab:SizeAndRankDistribSpaces}, one can obtain the spectrum of the graph in its entirety as stated in Tables \ref{tab:spectrumFq233} and \ref{tab:spectrumFq233q2}. 
 
As we show in this section, using the results of \cite{AbiadCoutinhoFiol2019kindependancenumber} and in particular Theorem~\ref{thm:ratiobound23}, we can derive bounds on the independence number of the graph $\Cay(\F_q^{2 \times 3 \times 3},\S_{2\times 3 \times 3})$. In particular, we obtain the following statement.

\begin{Thm}\label{thm:sumup233}
    Let $\CCC$ be subset of $\F_q^{2 \times 3 \times 3}$. Assume that $|\CCC| \geq 2$ and denote its minimum rank distance by $d := \min\{\rank(C-C') \ : \ C,C'\in \CCC,C\neq C'\}$. 
    \begin{itemize}
        \item Assume that $d =3$. Then
        \begin{itemize}
            \item if $q =2$, we have $|\CCC| < 2^{12}\left(\frac{22}{57}\right)$,
            \item and if $q \geq 3$, we have $|\CCC| < q^{12}(1-4q^{-1} + 12q^{-2})$.
        \end{itemize}
        \item Assume that $d=4$. Then 
        \begin{itemize}
            \item if $q = 2$, we have $|\CCC|\leq 2^{9}\left(\frac{3780}{6426}\right)$,
            \item if $q = 3$, we have $|\CCC|\leq 3^{9}\left(\frac{1017}{1225}\right)$,
            \item and if $q = 4$, we have $|\CCC| \leq 4^9\left(\frac{1688}{1725}\right)$.
        \end{itemize}
    \end{itemize}
\end{Thm}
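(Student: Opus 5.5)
The plan is to apply the ratio-type bounds of Theorem~\ref{thm:ratiobound23} to $\Gamma=\Cay(\F_q^{2\times 3\times 3},\S_{2\times 3\times 3})$. By Proposition~\ref{prop:tensorrankdistanceandgeodesiccoincides}, the geodesic distance is the tensor rank distance. Hence a code of minimum distance $d$ is a $(d-1)$-independent set, and $|\CCC|\le \alpha_{d-1}(\Gamma)$. The case $d=3$ then uses part (i) with $k=2$, and the case $d=4$ uses part (ii) with $k=3$. The graph is regular (a Cayley graph), and $N=q^{18}$.

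\textbf{Step 1: the spectrum.} The essential input is the full spectrum, with multiplicities, as a function of $q$. I would use Theorem~\ref{thm:eigenvaluesmultilineargraphdual} with $j=1$, so $n_j=2$. Then $\lambda_x=(q^2A_1(\slicesp_1(x)^\perp)-|\S_{3\times3}|)/(q-1)$. By Corollary~\ref{corol:eigenvaluesmultilinearformscountingspaces}, the eigenvalues correspond to values $a=A_1(\DDD^\perp)$ for subspaces $\DDD\le\F_q^{3\times 3}$ of dimension at most $2$. Computing these directly is awkward.

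Instead, I would take $j=2$ or $j=3$, where the slice spaces are subspaces of $\F_q^{2\times 3}$ of dimension at most $3$. Corollary~\ref{corol:eigenvaluestrilinearformsgraph} then expresses $\lambda_x$ through the rank distribution $A_0,A_1,A_2$ of that subspace, and $A_2$ is forced once $A_1$ and the dimension are known. So I would run through the twenty Lavrauw--Sheekey orbits $o_1,\dots,o_{17}$ (including the transposes and the parametrised families $o_{10},o_{15},o_{17}$) restricted to dimension $\le 3$. For each orbit I would record its number of rank-one points and its orbit size, since $|\GL_2\times\GL_3|$ divided by the stabiliser is given in \cite{LavrauwSheekey2017ClassificationSubspacesFqr}. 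Each orbit size is then weighted by the number $\prod_{t<k}(q^3-q^t)$ of generating $3$-tuples, and equal eigenvalues are merged.

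As a sanity check, the multiplicities must sum to $q^{18}$ and the eigenvalues must satisfy $\sum m_i\theta_i=0$. The results should also agree with Corollary~\ref{corol:valuesecondlargestev} and the least-eigenvalue corollary, which give $\theta_1=|\S|-q^{6}$ and $\theta_r=-(q^3-1)^2/(q-1)^2$ for $\ell=2$ or $3$.

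\textbf{Step 2: $d=3$.} Here I would use formula (\ref{eq:ratiotype2}). The work is to locate the index $i$ of the first eigenvalue $\le -1$, which needs the sorted order of the explicit polynomial eigenvalues in $q$. I expect this order to be uniform for $q\ge3$ and to differ at $q=2$, which is why $q=2$ is treated separately. Substituting, $N(\theta_0+\theta_i\theta_{i-1})/((\theta_0-\theta_i)(\theta_0-\theta_{i-1}))$ is a rational function of $q$. I would then show it is strictly less than $q^{12}(1-4q^{-1}+12q^{-2})$ for $q\ge3$ by clearing denominators and checking that a polynomial is positive. The value at $q=2$ I would evaluate numerically to get $2^{12}\cdot 22/57$.

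\textbf{Step 3: $d=4$.} Here I would use formula (\ref{eq:ratiotype3}), which needs $\Delta=(A^3)_{u,u}$. By Corollary~\ref{corol:polynomialexpressionofadjacencymatrixcayley}, $\Delta=q^{-18}\sum m_i\theta_i^3$, or equivalently $\Delta$ is the number of triples $s_1+s_2+s_3=0$ in $\S$. I would compute it from the spectrum. I would then determine $s$ and substitute, for $q=2,3,4$ separately. Only these values are claimed, presumably because the index $s$ varies with $q$, so this is a finite numerical computation.

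\textbf{Main obstacle.} The real difficulty is Step 1. Each classified orbit has to be matched to the correct $A_1$ and orbit size uniformly in $q$, including the parametrised orbits, whose counts require counting irreducible quadratics and cubics over $\F_q$. The characteristic-$2$ and small-$q$ anomalies in the classification also have to be handled. Only once the spectrum is determined does the ordering of eigenvalues (and hence $i$ and $s$) become a routine check.
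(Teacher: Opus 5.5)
Your plan matches the paper's proof: the paper also builds the spectrum from the Lavrauw--Sheekey orbits of subspaces of $\F_q^{2\times 3}$ via Corollary~\ref{corol:eigenvaluestrilinearformsgraph} and orbit counting, and then applies Theorem~\ref{thm:ratiobound23} with $\Delta=q^{-18}\sum_i m_i\theta_i^3$. One difference is that the paper sidesteps your ``main obstacle'' by never counting the parametrised orbits $o_{15},o_{17}$ directly; it solves for the last two multiplicities from $\sum_i m_i=q^{18}$ and $\sum_i m_i\theta_i=0$ (and gets $|o_{10}|$ by subtraction from $\sgbc{2}{6}{q}$), so you should use those relations as equations rather than as sanity checks. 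One slip in your check: for $\ell\in\{2,3\}$ one has $\mathbf n(\ell)=(2,3)$, so the least eigenvalue is $-|\S_{2\times 3}|/(q-1)=-(q+1)(q^2+q+1)$, not $-(q^3-1)^2/(q-1)^2$.
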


The bounds listed above are obtained by the eigenvalue method and are strict improvements on the bounds mentioned in Section \ref{sec:rankmetriccodesandtensorcodes}. First, we provide details on the link between the orbit sizes of the classification of subspaces and the orbit size of the tensors under the action of the general linear group. 

\begin{Lemma}\label{lemma:orbitsofcodesvstensors}
    Let $j \in \llbracket 1,m\rrbracket$ be an integer. Let $x \in \F_q^{\mathbf{n}}$ be a tensor. Then the orbit of $x$ under the action of the group $\GL_{\mathbf{n}}(\F_q)$ is 
    \[
        \GL_{\mathbf n}(\F_q)\cdot x = \left\{ y \in \F_q^{\mathbf{n}} , \slicesp_j(y) \in \GL_{{\mathbf{n}(j)}}(\F_q)\cdot \slicesp_j(x) \right\}.
    \]
    In particular, we have
    \[
         \left|\GL_{\mathbf n}(\F_q)\cdot x\right| =  \gbc{\dim \slicesp_j(x)}{n_j}{q} \prod_{t=0}^{\dim \slicesp_j(x)-1}(q^{n_j}-q^t)\left|\GL_{{\mathbf n}(j)}(\F_q)\cdot \slicesp_j(x)\right|. 
    \]
\end{Lemma}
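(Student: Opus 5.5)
The plan is to split the action of $\GL_{\mathbf n}(\F_q)$ into the factor $\GL_{n_j}(\F_q)$ acting on mode $j$ and the complementary factor $\GL_{\mathbf n(j)}(\F_q)$ acting on the other modes. We analyse how each factor moves the $j$-th slice space. Write $P = (P_1,\dots,P_m)$ and let $P' \in \GL_{\mathbf n(j)}(\F_q)$ be the tuple with the $j$-th entry removed.

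First we show that $\slicesp_j(P\cdot x) = P'\cdot \slicesp_j(x)$. Using Remark~\ref{rk:LinearityOfActionOnTensors} and Definition~\ref{def:modeactionslicespace}, for $u \in \F_q^{n_j}$ we have
\[
\modemult_j(u,P\cdot x) = P'\cdot \modemult_j(uP_j,x),
\]
up to the transpose convention. As $u$ ranges over $\F_q^{n_j}$, so does $uP_j$, so taking images gives the identity. Hence the orbit of $x$ is contained in the right-hand set.

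For the reverse inclusion, let $y$ satisfy $\slicesp_j(y) = P'\cdot\slicesp_j(x)$ for some $P'\in \GL_{\mathbf n(j)}(\F_q)$. Set $x' = (P',I_{n_j})\cdot x$ with $I_{n_j}$ in slot $j$. Then $\slicesp_j(x') = \slicesp_j(y)$. By the fact recalled after Proposition~\ref{prop:propertiestensorproddotprod}, two tensors with equal $j$-th slice spaces differ by some $P_j\in\GL_{n_j}(\F_q)$ acting in mode $j$. So $y$ lies in the orbit of $x$. This is the step needing most care: that recalled fact is stated as an equivalence but really asserts that two generating sequences of length $n_j$ of the same space $\DDD$ differ by an invertible $n_j\times n_j$ matrix. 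I would justify it by extending a basis relation. Both sequences are images of surjections $\F_q^{n_j}\to\DDD$, and two surjections from a finite-dimensional space onto the same space differ by an automorphism of the domain, obtained by matching kernels and complements.

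For the cardinality, the map $y\mapsto \slicesp_j(y)$ sends the orbit onto the orbit $\GL_{\mathbf n(j)}(\F_q)\cdot\slicesp_j(x)$ of subspaces. It remains to count each fibre. As in the proof of Corollary~\ref{corol:eigenvaluesmultilinearformscountingspaces}, the tensors $y$ with $\slicesp_j(y)=\DDD$ correspond bijectively to surjective linear maps $\F_q^{n_j}\to\DDD$, since $y$ is determined by its slices $\modemult_j(e_i,y)$. Let $k=\dim\DDD=\dim\slicesp_j(x)$. The number of such surjections equals the number of rank-$k$ matrices in $\F_q^{n_j\times k}$. We count these by choosing the kernel, which gives $\sgbc{k}{n_j}{q}$ options via the duality between $(n_j-k)$-dimensional and $k$-dimensional subspaces, and then an isomorphism onto $\DDD$ from the quotient, which gives $\prod_{t=0}^{k-1}(q^{k}-q^t)$ options. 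The statement's product $\prod_{t=0}^{k-1}(q^{n_j}-q^t)$ should be checked against this. I expect the intended count to be the standard number of rank-$k$ $n_j\times k$ matrices, namely $\prod_{t=0}^{k-1}(q^{n_j}-q^t)$, which already equals the full count without the Gaussian factor. So I would carefully reconcile the factors and, if needed, note the discrepancy. The fibre size is constant over the orbit of subspaces, and multiplying it by the size of that orbit gives the formula.
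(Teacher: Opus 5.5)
Your argument follows the paper's proof. The forward inclusion comes from $\slicesp_j(P\cdot x)=P'\cdot\slicesp_j(x)$. The reverse inclusion goes through $(P',I_{n_j})\cdot x$ and the fact that tensors with equal $j$-th slice space differ by some $P_j\in\GL_{n_j}(\F_q)$; you justify that fact more carefully than the paper does, by matching kernels and complements. The cardinality comes from counting the fibres of $y\mapsto\slicesp_j(y)$.

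The discrepancy you suspect is genuine, and you should assert it rather than hedge. With $k=\dim\slicesp_j(x)$, the fibre size is $\prod_{t=0}^{k-1}(q^{n_j}-q^t)=\gbc{k}{n_j}{q}\prod_{t=0}^{k-1}(q^{k}-q^t)$. So the extra factor $\gbc{k}{n_j}{q}$ in the printed formula makes it false whenever $0<k<n_j$. For example, take $x=e_1\otimes e_1\in\F_q^{n_1\times n_2}$ and $j=2$: the printed formula overcounts the $(q^{n_1}-1)(q^{n_2}-1)/(q-1)$ rank-one matrices by the factor $\gbc{1}{n_2}{q}$.

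The paper's own proof reaches your value, namely the number of rank-$k$ matrices in $\F_q^{k\times n_j}$, before misstating it. Its later multiplicities also use the corrected formula, e.g.\ $m_1=|o_1|(q^3-1)$ in Table~\ref{tab:spectrumFq233}.
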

\begin{proof}
    Let $x,y \in \F_q^{\mathbf{n}}$. Suppose first that $y \in \GL_{\mathbf n}(\F_q)\cdot x$. By definition, there exists $P \in \GL_{\mathbf n}(\F_q)$ such that $P\cdot x = y$. In particular, we have 
    \begin{align*}
        \slicesp_j(y) &= \{(I_{n_1},\dots,I_{n_{j-1}},u_j,I_{n_{j+1}},\dots,I_{n_m}) \cdot y  \ : \ u_j \in \F_q^{n_j} \}\\
        &= \{(P_{1},\dots,P_{{j-1}},P_ju_j,P_{{j+1}},\dots,P_{m}) \cdot x  \ : \ u_j \in \F_q^{n_j} \}\\
        &= \{(P_{1},\dots,P_{{j-1}},v_j,P_{{j+1}},\dots,P_{m}) \cdot x  \ : \ v_j \in \F_q^{n_j} \}\\
        &= \{(P_{1},\dots,P_{{j-1}},1,P_{{j+1}},\dots,P_{m}) \cdot (I_{n_1},\dots,I_{n_{j-1}},v_j,I_{n_{j+1}},\dots,I_{n_m}) \cdot x \ : \ v_j \in \F_q^{n_j} \}\\
        &= \{(P_{1},\dots,P_{{j-1}},1,P_{{j+1}},\dots,P_{m}) \cdot z \ : \ z \in \slicesp_j(x) \}.
    \end{align*}
    Thus we have $\slicesp_j(y) \in \GL_{{\mathbf n}(j)}(\F_q)\cdot \slicesp_j(x)$. 

    Conversely, if there exists $(P_{1},\dots,P_{{j-1}},P_{{j+1}},\dots,P_{m}) \in \GL_{{\mathbf n}(j)}(\F_q) $ such that 
    $$\slicesp_j(y) = \{(P_{1},\dots,P_{{j-1}},P_{{j+1}},\dots,P_{m}) \cdot z \ : \ z \in \slicesp_j(x) \},$$
    then $\slicesp_j(y) =\slicesp_j((P_{1},\dots,P_{{j-1}},I_{n_j},P_{{j+1}},\dots,P_{m}) \cdot x)$
    and thus there exists an invertible matrix $P_j \in \GL_{n_j}(\F_q)$ such that
    $y =(P_{1},\dots,P_{{j-1}},P_j,P_{{j+1}},\dots,P_{m}) \cdot x)$. Therefore, we obtain 
    $$y = (I_{n_1},\dots,I_{n_{j-1}},P_j,I_{n_{j+1}},\dots,I_{n_m})\cdot (P_{1},\dots,P_{{j-1}},I_{n_j},P_{{j+1}},\dots,P_{m}) \cdot x,$$ hence $y = P\cdot y$ which concludes the first part of the proof. Regarding the cardinality of the orbit of $x$, the first part establishes that, to count every element one and only one time in the orbit, it suffices to count the possible choices for a $j$-th slice space, and to fix a basis of that space, hence 
    \[
        \left|\GL_{\mathbf n}(\F_q)\cdot x\right| = \left|\left\{ M \in \F_q^{{(\dim \slicesp_j(x)) \times  n_j}} \ : \ \rank M = \dim \slicesp_j(x)\right\} \right| \left|\GL_{{\mathbf n}(j)}(\F_q)\cdot \slicesp_j(x)\right|.
    \]
    The results follows from the value of the number of matrices of given rank. 
\end{proof}

\subsection{Stabiliser sizes}

In this section, we will compute the stabiliser sizes of the orbits of subspaces of $\F_{q}^{2 \times 3}$. They will later be used with the orbit-stabiliser theorem to compute each orbit size. Let us start by computing the stabiliser sizes of fifteen non-trivial orbits of subspaces of $2\times 3$ up to equivalence. 

\begin{Lemma} \label{lemma:sizeofstabilisers}
    Regarding the action of $\GL_{2\times 3} (\F_q)$ on the subspaces of $\F_q^{2 \times 3}$, we have 
    \begin{itemize}
        \item \underline{$o_2$:} $\left|\Stab\left(\left\langle   
        \left[\begin{smallmatrix}
        1&0&0\\0&0&0
        \end{smallmatrix}\right],
        \left[\begin{smallmatrix}
        0&1&0\\0&0&0
        \end{smallmatrix}\right]
        \right\rangle\right)\right| = q^4(q-1)^5(q+1)$,

        \item \underline{$o_4^T$:} $\left|\Stab\left(\left\langle   
        \left[\begin{smallmatrix}
        1&0&0\\0&0&0
        \end{smallmatrix}\right],
        \left[\begin{smallmatrix}
        0&0&0\\1&0&0
        \end{smallmatrix}\right]
        \right\rangle\right)\right| =  q^4(q-1)^5(q+1)^2$,

        \item \underline{$o_5$:} $\left|\Stab\left(\left\langle   
        \left[\begin{smallmatrix}
        1&0&0\\0&0&0
        \end{smallmatrix}\right],
        \left[\begin{smallmatrix}
        0&0&0\\0&1&0
        \end{smallmatrix}\right]
        \right\rangle\right)\right| =  2q^2(q-1)^5$,
        
        \item \underline{$o_6$:} $\left|\Stab\left(\left\langle   
        \left[\begin{smallmatrix}
        1&0&0\\0&1&0
        \end{smallmatrix}\right],
        \left[\begin{smallmatrix}
        0&0&0\\1&0&0
        \end{smallmatrix}\right]
        \right\rangle\right)\right| = q^4(q-1)^4$,

        \item \underline{$o_7$:} $\left|\Stab\left(\left\langle   
        \left[\begin{smallmatrix}
        1&0&0\\0&0&1
        \end{smallmatrix}\right],
        \left[\begin{smallmatrix}
        0&1&0\\0&0&0
        \end{smallmatrix}\right]
        \right\rangle\right)\right| =  q^2(q-1)^4$,
    
        \item \underline{$o_{11}$:} $\left|\Stab\left(\left\langle   
        \left[\begin{smallmatrix}
        1&0&0\\0&1&0
        \end{smallmatrix}\right],
        \left[\begin{smallmatrix}
        0&1&0\\0&0&1
        \end{smallmatrix}\right]
        \right\rangle\right)\right|  = q(q-1)^3(q+1) $,

        \item \underline{$o_{3}$:} $\left|\Stab\left(\left\langle   
        \left[\begin{smallmatrix}
        1&0&0\\0&0&0
        \end{smallmatrix}\right],
        \left[\begin{smallmatrix}
        0&1&0\\0&0&0
        \end{smallmatrix}\right],
        \left[\begin{smallmatrix}
        0&0&1\\0&0&0
        \end{smallmatrix}\right]
        \right\rangle\right)\right| =  q^4(q-1)^5(q+1)(q^2+q+1)$,

        \item \underline{$o_{7}^T$:} $\left|\Stab\left(\left\langle   
        \left[\begin{smallmatrix}
        1&0&0\\0&0&0
        \end{smallmatrix}\right],
        \left[\begin{smallmatrix}
        0&0&0\\1&0&0
        \end{smallmatrix}\right],
        \left[\begin{smallmatrix}
        0&0&0\\0&1&0
        \end{smallmatrix}\right]
        \right\rangle\right)\right| =  q^4(q-1)^5$,

        \item \underline{$o_{8}$:} $\left|\Stab\left(\left\langle   
        \left[\begin{smallmatrix}
        1&0&0\\0&0&0
        \end{smallmatrix}\right],
        \left[\begin{smallmatrix}
        0&0&0\\0&1&0
        \end{smallmatrix}\right],
        \left[\begin{smallmatrix}
        0&0&0\\0&0&1
        \end{smallmatrix}\right]
        \right\rangle\right)\right| =  q(q-1)^5(q+1)$,

        \item \underline{$o_{9}$:} $\left|\Stab\left(\left\langle   
        \left[\begin{smallmatrix}
        1&0&0\\0&0&1
        \end{smallmatrix}\right],
        \left[\begin{smallmatrix}
        0&0&0\\1&0&0
        \end{smallmatrix}\right],
        \left[\begin{smallmatrix}
        0&0&0\\0&1&0
        \end{smallmatrix}\right]
        \right\rangle\right)\right| =  q^4(q-1)^4$,

        \item \underline{$o_{11}^T$:} $\left|\Stab\left(\left\langle   
        \left[\begin{smallmatrix}
        1&0&0\\0&0&0
        \end{smallmatrix}\right],
        \left[\begin{smallmatrix}
        0&1&0\\1&0&0
        \end{smallmatrix}\right],
        \left[\begin{smallmatrix}
        0&0&0\\0&1&0
        \end{smallmatrix}\right]
        \right\rangle\right)\right| = q^3(q-1)^5(q+1)$,

        \item \underline{$o_{12}$:} $\left|\Stab\left(\left\langle   
        \left[\begin{smallmatrix}
        1&0&0\\0&0&1
        \end{smallmatrix}\right],
        \left[\begin{smallmatrix}
        0&1&0\\0&0&0
        \end{smallmatrix}\right],
        \left[\begin{smallmatrix}
        0&0&0\\0&1&0
        \end{smallmatrix}\right]
        \right\rangle\right)\right| =  q^3(q-1)^4(q+1)$,
        
        \item \underline{$o_{13}$:} $\left|\Stab\left(\left\langle   
        \left[\begin{smallmatrix}
        1&0&0\\0&1&0
        \end{smallmatrix}\right],
        \left[\begin{smallmatrix}
        0&1&0\\0&0&0
        \end{smallmatrix}\right],
        \left[\begin{smallmatrix}
        0&0&0\\0&0&1
        \end{smallmatrix}\right]
        \right\rangle\right)\right| =  q(q-1)^4$,

        \item \underline{$o_{14}$:} $\left|\Stab\left(\left\langle   
        \left[\begin{smallmatrix}
        1&0&0\\0&0&0
        \end{smallmatrix}\right],
        \left[\begin{smallmatrix}
        0&1&0\\0&1&0
        \end{smallmatrix}\right],
        \left[\begin{smallmatrix}
        0&0&0\\0&0&1
        \end{smallmatrix}\right]
        \right\rangle\right)\right| =  6(q-1)^4$,

        \item \underline{$o_{16}$:} $\left|\Stab\left(\left\langle   
        \left[\begin{smallmatrix}
        1&0&0\\0&1&0
        \end{smallmatrix}\right],
        \left[\begin{smallmatrix}
        0&1&0\\0&0&1
        \end{smallmatrix}\right],
        \left[\begin{smallmatrix}
        0&0&1\\0&0&0
        \end{smallmatrix}\right]
        \right\rangle\right)\right| =  q^3(q-1)^3$.
    \end{itemize}
\end{Lemma}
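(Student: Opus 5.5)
We compute each stabiliser directly from the action of $\GL_2(\F_q)\times\GL_3(\F_q)$ on $\F_q^{2\times 3}$, namely $(A,B)\cdot M = AMB^\top$. A pair $(A,B)$ lies in the stabiliser of a subspace $\CCC$ exactly when it maps a basis of $\CCC$ into $\CCC$. For each representative we therefore use the structural invariants of $\CCC$ to cut down the possible pairs $(A,B)$. These invariants are the set of rank-one elements of $\CCC$ (known from \cite{LavrauwSheekey2017ClassificationSubspacesFqr}), the sum of the row spaces and the sum of the column spaces of the elements of $\CCC$, and the lines spanned by the rank-one elements. We then solve the remaining linear conditions on the entries of $A$ and $B$.

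Some cases need no case analysis, and here the argument is a direct parametrisation. For $o_2$, $o_3$, $o_4^T$ and $o_7^T$ the subspace is a full tensor-product subspace $U\otimes W$ or a subspace of one. For example, $o_3$ consists of all matrices with column space $\langle e_1\rangle$, so the stabiliser is $\{(A,B): Ae_1\in\langle e_1\rangle\}$. This gives $|\mathrm{Borel}_2|\cdot|\GL_3| = (q-1)^2q\cdot q^3(q-1)^3(q+1)(q^2+q+1)$, which matches the claim. Similarly, $o_2$ forces $A$ to fix $\langle e_1\rangle$ and $B$ to fix the plane $\langle e_1,e_2\rangle$. The case $o_4^T$ forces only that $B$ fixes $\langle e_1\rangle$.

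For the remaining cases we carry out the following steps for each representative.
\begin{enumerate}
\item List the rank-one elements of $\CCC$ up to scalars, for instance the two points in $o_5$ or the three points in $o_{14}$. Any stabilising pair must permute these projective points.
\item This yields a homomorphism from the stabiliser to a permutation group on those points, for instance $S_2$ for $o_5$ and $S_3$ for $o_{14}$. Exhibit explicit permutation matrices $(A,B)$ realising each permutation, which accounts for the factors $2$ and $6$.
\item Compute the kernel, that is, the pairs fixing each rank-one point. These pairs make $A$ and $B$ stabilise the corresponding flags, so each is triangular or block-diagonal. Impose the remaining condition that the non-rank-one basis elements are mapped into $\CCC$. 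This is a small linear system in the entries, and it produces the powers of $q$ and $(q-1)$.
\end{enumerate}
One check must be made throughout: the scalar pairs $(aI_2,bI_3)$ act on matrices only through $ab$, but they are distinct group elements. The stabiliser is therefore counted in $\GL_2\times\GL_3$ itself, and it always contains this $(q-1)^2$ torus. This is consistent with the exponent of $(q-1)$ being at least $3$ in every case.

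The main obstacle is the group of cases without rank-one points, or with a single one: $o_{11}$, $o_{13}$ and $o_{16}$. For $o_{11}$, the pencil $\lambda\left[\begin{smallmatrix}1&0&0\\0&1&0\end{smallmatrix}\right]+\mu\left[\begin{smallmatrix}0&1&0\\0&0&1\end{smallmatrix}\right]$ consists entirely of rank-$2$ matrices. Its stabiliser induces a $\mathrm{PGL}_2$-type action on the pencil, since this is the $2\times 3$ Kronecker block of the rational normal curve, and this accounts for the factor $q(q+1)(q-1)$. We will prove it by identifying $\F_q^2$ with linear forms and $\F_q^3$ with quadratic forms, so that $A$ determines $B$ up to a scalar through $\mathrm{Sym}^2$. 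For $o_{13}$ and $o_{16}$ we first locate the unique distinguished rank-one or rank-two line to fix a flag, and then solve for the unipotent parts. As a final check, we will verify the orbit-stabiliser totals against the number of $2$- and $3$-dimensional subspaces of $\F_q^{2\times3}$, namely $\sgbc{2}{6}{q}$ and $\sgbc{3}{6}{q}$, once the remaining orbits are included.
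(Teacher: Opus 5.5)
Your approach is essentially the paper's: a representative-by-representative computation of each stabiliser inside $\GL_2(\F_q)\times\GL_3(\F_q)$, counted in the product group as you rightly insist, driven by the rank-one elements of $\CCC$ and the row and column spaces they span. For $o_{14}$, both you and the paper use the induced $S_3$ on the three rank-one points.

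For $o_{11}$, your $\mathrm{Sym}^2$ picture gives existence of $Q$ for every $P$ more cleanly than the paper's three-case construction, and it works in every characteristic. You still owe the uniqueness half, which the same picture supplies. If $(I_2,Q)$ stabilises $\CCC$, then $Q$ induces a map $\beta$ on quadratic forms with $\beta(m\ell)=m\,\phi(\ell)$ for some linear $\phi$. Taking $(m,\ell)=(x,y)$ and $(y,x)$ gives $x\phi(y)=y\phi(x)$, so $\phi$ and $\beta$ are scalar.

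As written, however, the proposal does not establish the statement, for two concrete reasons.

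\textbf{The $o_{11}^T$ item is false as stated.} Your plan to check that every value ``matches the claim'' cannot succeed here. Use the paper's convention $M\mapsto PMQ$, so $P=A$ and $Q=B^\top$ in your notation. The representative is $\CCC=\{[S\,|\,0] : S\in\F_q^{2\times 2}\text{ symmetric}\}$. A pair $(P,Q)$ stabilises it iff $Q_{13}=Q_{23}=0$ and $PS\tilde Q$ is symmetric for every symmetric $S$, where $\tilde Q$ is the upper-left $2\times 2$ block of $Q$. The latter holds iff $P^{-1}\tilde Q^\top$ is a scalar. Hence $|\Stab(\CCC)|=|\GL_2(\F_q)|\cdot(q-1)\cdot q^2(q-1)=q^3(q-1)^4(q+1)$, not $q^3(q-1)^5(q+1)$. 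The paper's own appendix computation reaches $q^3(q-1)^4(q+1)$, and the orbit size $q(q-1)(q+1)(q^2+q+1)$ in its table requires it, so the exponent $5$ is a misprint. Your Gaussian-binomial sanity check would not catch this, for two reasons. The two values agree at $q=2$. Also, the sizes of $o_{10}$, $o_{15}$ and $o_{17}$ are not independently available: the paper obtains $|o_{10}|$ from exactly that count, and $|o_{15}|,|o_{17}|$ only later from the spectrum.

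\textbf{$o_{13}$ is misdescribed, which hides its one non-routine step.} With $E_{ij}$ the matrix units, $o_{13}=\langle E_{11}+E_{22},E_{12},E_{23}\rangle$ consists of the matrices with rows $(a,b,0)$ and $(0,a,c)$. Its rank-one elements are the nonzero multiples of $E_{12}$ and of $E_{23}$. So it has two rank-one points, not at most one. There is no distinguished rank-two line either: the stabiliser sends $E_{11}+E_{22}$ to $\lambda(E_{11}+E_{22})+\mu E_{12}$ with $\mu$ arbitrary.

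What must be proved is that the two rank-one points cannot be swapped. A swap forces $P_{11}=P_{22}=0$ and $\row_2(Q)\in\langle e_3\rangle$. Then the $(1,3)$ entry of $P(E_{11}+E_{22})Q$ is $P_{12}Q_{23}\neq 0$, which is impossible in $\CCC$. Without a swap, $P$ is diagonal, $Q_{13}=Q_{21}=Q_{23}=Q_{31}=Q_{32}=0$, and $P_{11}Q_{11}=P_{22}Q_{22}$. This gives $q(q-1)^4$.

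More generally, your Step 2 must determine the image of the stabiliser in the permutation group of the rank-one points, rather than realise every permutation. The image is all of $S_2$ or $S_3$ only for $o_5$ and $o_{14}$. It is trivial for $o_{13}$. For $o_{11}^T$, the $q+1$ rank-one points lie on a conic on which the stabiliser acts only through $\mathrm{PGL}_2(\F_q)$, which is smaller than $S_{q+1}$ once $q\geq 4$.
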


We give the proof of the statement in Appendix \ref{appendix:stabilisersizes}. In this proof, we describe the structure of each orbit representative stabiliser, and deduce the number of pairs $(P,Q) \in \GL_{2 \times 3}(\F_q)$ in the stabiliser from its structure. The proof only uses direct linear algebra and combinatorial techniques.

This statement alone is sufficient to compute the orbit sizes of all orbits listed above in the action of $\GL_{2 \times 3}(\F_q)$ on the subspaces of $\F_q^{2 \times 3}$ using the orbit-stabiliser formula. In the following, we will compute the orbit sizes of $o_{10}$, $o_{13}$ and $o_{15}$ in a different way. Namely, we can recover the orbit size of $o_{10}$ from the orbit sizes of all other $2$-dimensional subspaces orbits, and in Theorem~\ref{thm:Spectrum233} and Remark \ref{rk:retroactivecomputationoforbitsize}, we will explain how it is possible to compute the spectrum without the knowledge of the orbit size of $o_{15}$ and $o_{17}$, and how to compute those orbit sizes with the knowledge of the spectrum in its entirety.

\subsection{Eigenvalues computation}

Using the results of the stabiliser sizes obtained above, we will now compute the spectrum of the graph $\Cay(\F_{q}^{2\times 3 \times 3},\S_{2 \times 3 \times 3})$ for any prime power $q$. We will later use the spectrum in the computation of the ratio-type bounds on $\alpha_2$ and $\alpha_3$, as can be seen in the next part.

\begin{Prop}\label{prop:orbitsizes23subspaaces}
    Except for the orbit sizes of $o_{15}$ and $o_{17}$, for each orbit of the action of $\GL_2(\F_q) \times \GL_3(\F_q)$ on the subspaces of $\F_q^{2 \times 3}$ of dimension at most 3, the orbit size, the number $A_1(\mathcal C)$ of rank one matrices in $\CCC$, and the value of $q^{5-\dim\CCC}W_\CCC(1,q^{-1})$ of any representative $\CCC \leq \F_q^{2 \times 3}$ are given in Table \ref{tab:SizeAndRankDistribSpaces}.
\end{Prop}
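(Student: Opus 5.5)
Three quantities are needed for each orbit representative $\CCC \leq \F_q^{2\times 3}$ of dimension at most $3$ (other than $o_{15}$ and $o_{17}$): the orbit size, $A_1(\CCC)$, and $q^{5-\dim\CCC}W_\CCC(1,q^{-1})$. I will compute them in that order.

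\textbf{Orbit sizes.} Write $G=\GL_2(\F_q)\times\GL_3(\F_q)$, so
\[
|G|=q^4(q-1)^5(q+1)^2(q^2+q+1).
\]
For every orbit listed in Lemma~\ref{lemma:sizeofstabilisers}, the orbit size is $|G|/|\Stab(\CCC)|$ by orbit--stabiliser. For example, $o_2$ has size $(q+1)(q^2+q+1)$.

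The remaining orbits are handled separately.
\begin{itemize}
\item $o_1$ (the span of a rank-one matrix) has size $|\S_{2\times 3}|/(q-1)=(q+1)(q^2+q+1)$.
\item The one-dimensional rank-two orbit (presumably $o_4$) has size $\bigl(|\F_q^{2\times3}|-1-|\S_{2\times3}|\bigr)/(q-1)$, since every matrix is of rank $0$, $1$ or $2$.
\item $o_{10}$ is a parametrised family. Its total size is obtained by subtraction:
\[
\gbc{2}{6}{q}-\sum_{\text{other 2-dim orbits}}|o|.
\]
\item $o_{13}$ can be computed either from its stabiliser in Lemma~\ref{lemma:sizeofstabilisers} or by a similar subtraction among the $3$-dimensional orbits. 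Because $o_{15}$ and $o_{17}$ are excluded, the subtraction only closes for the $2$-dimensional orbits.
\end{itemize}

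\textbf{Counting $A_1(\CCC)$.} For each representative, I write a general element as a linear combination $\sum a_iB_i$ of the listed basis $B_1,B_2,\dots$. I then impose the condition that all $2\times2$ minors vanish and count the nonzero solutions.

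In most cases the count is immediate. For instance, in $o_5=\langle E_{11},E_{22}\rangle$ the element $aE_{11}+bE_{22}$ has rank one exactly when $ab=0$ and $(a,b)\neq(0,0)$, giving $2(q-1)$. For the families $o_{10}$ and $o_{15}$, the rank-one elements correspond to roots of the given quadratic or cubic, and by hypothesis these have none. These counts agree with those stated in \cite{LavrauwSheekey2017ClassificationSubspacesFqr}, so I would cross-check against that source.

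\textbf{The weight-enumerator entry.} Every nonzero matrix in $\F_q^{2\times3}$ has rank $1$ or $2$. Hence
\[
W_\CCC(1,q^{-1})=1+A_1q^{-1}+\bigl(q^{\dim\CCC}-1-A_1\bigr)q^{-2}.
\]
So this column is a direct function of $A_1(\CCC)$ and $\dim\CCC$, and no further counting is needed. It can also be checked against Corollary~\ref{corol:weightennumeratorexpressiondual} applied to $\CCC^\perp$.

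\textbf{Main obstacle.} The step I expect to be hardest is getting the orbit sizes for the $2$-dimensional orbits right, including the subtraction for $o_{10}$. This depends on the stabiliser computations of Lemma~\ref{lemma:sizeofstabilisers}, and a single error there propagates through the subtraction. As a sanity check, I would verify that all orbit sizes of each dimension $k$ sum to $\gbc{k}{6}{q}$ wherever every orbit of that dimension is known.
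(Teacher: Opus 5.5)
Your plan is the paper's own proof: orbit--stabiliser with Lemma~\ref{lemma:sizeofstabilisers}, direct counts for $o_1$ and $o_4$, subtraction from $\sgbc{2}{6}{q}$ for $o_{10}$, and Lavrauw--Sheekey for $A_1$. Your identity $W_{\CCC}(1,q^{-1})=1+A_1(\CCC)q^{-1}+(q^{\dim\CCC}-1-A_1(\CCC))q^{-2}$ makes explicit what the paper leaves implicit for the last column, and it reproduces every entry there.

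The one step that is wrong as written is your claim that $o_{15}$, like $o_{10}$, has no rank-one elements. Its third generator $E_{13}$ has rank one, and the proposition does assert $A_1(o_{15})=q-1$; only the orbit size of $o_{15}$ is excluded. Taken literally, your sentence gives $A_1=0$ and $q^{2}W_{\CCC}(1,q^{-1})=q^3+q^2-1$. That is the $o_{17}$ row, not the correct value $q^3+2q^2-2q$.

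The correct count is short. A general element of the $o_{15}$ representative is $\left[\begin{smallmatrix} a & au+b & c\\ bv & a & 0\end{smallmatrix}\right]$, whose $2\times 2$ minors on columns $\{1,3\}$ and $\{2,3\}$ are $-bcv$ and $-ac$.
\begin{itemize}
\item If $c\neq0$, these minors force $a=b=0$ (as $v\neq0$). This gives exactly the $q-1$ nonzero multiples of $E_{13}$.
\item If $c=0$, you are in the $o_{10}$ representative. The remaining minor $a^2-uvab-vb^2$ can vanish with $(a,b)\neq(0,0)$ only if $a\neq0$ and $-b/a$ is a root of $v\lambda^2-uv\lambda-1$, which has no roots.
\end{itemize}
The cubic you had in mind belongs to $o_{17}$, where it does give $A_1=0$.

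One further caution for the orbit--stabiliser step. The stabiliser of $o_{11}^T$ stated in Lemma~\ref{lemma:sizeofstabilisers}, $q^3(q-1)^5(q+1)$, carries one factor $(q-1)$ too many. The appendix's own count gives $q^3(q-1)^4(q+1)$, and that is what produces the table's $q(q-1)(q+1)(q^2+q+1)$.
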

\begin{proof}
    The result is trivial for the orbit of $0$. The results for the orbit $o_1$ and $o_4$ are known since these orbits are the orbits of one-dimensional subspaces generated respectively by a matrix of rank one or of rank two. For all the other orbits except $o_{10}$, $o_{15}$ and $o_{17}$, the size of the stabiliser under the action of $\GL_2(\F_q)\times \GL_3(\F_q)$ has been computed in Lemma~\ref{lemma:sizeofstabilisers}. Therefore, if an orbit $o_i$ has stabiliser of size $N$, then the size of the orbit is 
    \begin{equation}
        |o_i| = \frac{|\GL_2(\F_q)|\cdot |\GL_3(\F_q)|}{N} = \frac{q^4(q-1)^5(q+1)^2(q^2+q+1)}{N}.
        \label{eq:orbitstabequation}
    \end{equation}
    Regarding orbit $o_{10}$, we have 
    \[
        |o_{10}| = \gbc{2}{6}{q} - |o_2| - |o_4^T| - |o_5| - |o_6| - |o_7| - |o_{11}|.
    \]
    We obtain $|o_{10}| = \frac{1}{2}q^2(q-1)^2(q^2+q+1)$.  The number of rank one matrices in the subspaces has been computed in \cite{LavrauwSheekey2015CanonicalFormsTensors} and \cite{LavrauwSheekey2017ClassificationSubspacesFqr}.   
\end{proof}

\begin{table}[ht!]
    \centering
    \renewcommand{\arraystretch}{1.2}
    
    {
    \begin{tabular}{|c|c|c|c|c|} \hline
        \textbf{Orb}&\textbf{Representative $\CCC$}&\textbf{Orbit size}&$A_1(\CCC)$&$q^{5-\dim\CCC}W_\CCC(1,q^{-1})$\\\hline 
         
        $0$ & $\{0\}$ & $1$ & $0$ & $q^5$\\

        {$o_1$} &$\left\langle   
        \left[\begin{smallmatrix}
        1&0&0\\0&0&0
        \end{smallmatrix}\right]
        \right\rangle$ & $(q+1)(q^2+q+1)$ & $q-1$ & $2q^4 -q^3 $\\

        {$o_4$} &$\left\langle   
        \left[\begin{smallmatrix}
        1&0&0\\0&1&0
        \end{smallmatrix}\right]
        \right\rangle$ & $q(q-1)(q+1)(q^2+q+1)$& $0$ & $q^4 + q^3 - q^2$\\
        
        {$o_2$} &$\left\langle   
        \left[\begin{smallmatrix}
        1&0&0\\0&0&0
        \end{smallmatrix}\right],
        \left[\begin{smallmatrix}
        0&1&0\\0&0&0
        \end{smallmatrix}\right]
        \right\rangle$ & $(q+1)(q^2+q+1)$  & $q^2-1$ & $q^4 + q^3 - q^2$\\

        {$o_4^T$} &$\left\langle   
        \left[\begin{smallmatrix}
        1&0&0\\0&0&0
        \end{smallmatrix}\right],
        \left[\begin{smallmatrix}
        0&0&0\\1&0&0
        \end{smallmatrix}\right]
        \right\rangle$ & $q^2+q+1$  & $q^2-1$ &  $q^4 + q^3 - q^2$\\

        {$o_5$} &$\left\langle   
        \left[\begin{smallmatrix}
        1&0&0\\0&0&0
        \end{smallmatrix}\right],
        \left[\begin{smallmatrix}
        0&0&0\\0&1&0
        \end{smallmatrix}\right]
        \right\rangle$ & $\frac 12q^2(q+1)^2(q^2+q+1)$ & $2q-2$ & $4q^3 - 4q^2 + q$\\
        
        {$o_6$} &$\left\langle   
        \left[\begin{smallmatrix}
        1&0&0\\0&1&0
        \end{smallmatrix}\right],
        \left[\begin{smallmatrix}
        0&0&0\\1&0&0
        \end{smallmatrix}\right]
        \right\rangle$ & $(q-1)(q+1)^2(q^2 + q + 1)$& $q-1$ & $3q^3 - 2q^2$\\

        {$o_7$} &$\left\langle   
        \left[\begin{smallmatrix}
        1&0&0\\0&0&1
        \end{smallmatrix}\right],
        \left[\begin{smallmatrix}
        0&1&0\\0&0&0
        \end{smallmatrix}\right]
        \right\rangle$ & $ q^2(q-1)(q+1)^2(q^2+q+1)$ & $q-1$ & $3q^3 - 2q^2$\\

        {$o_{10}$} &$\left\langle   
        \left[\begin{smallmatrix}
        1&u&0\\0&1&0
        \end{smallmatrix}\right],
        \left[\begin{smallmatrix}
        0&1&0\\v&0&0
        \end{smallmatrix}\right]
        \right\rangle$ & $\frac{1}{2}q^2(q-1)^2(q^2+q+1)$ & $0$ & $2q^3 -q$\\

        {$o_{11}$} &$\left\langle   
        \left[\begin{smallmatrix}
        1&0&0\\0&1&0
        \end{smallmatrix}\right],
        \left[\begin{smallmatrix}
        0&1&0\\0&0&1
        \end{smallmatrix}\right]
        \right\rangle$ & $q^3(q-1)^2(q+1)(q^2+q+1)$ & $0$ & $2q^3 -q$ \\

        {$o_{3}$} &$\left\langle   
        \left[\begin{smallmatrix}
        1&0&0\\0&0&0
        \end{smallmatrix}\right],
        \left[\begin{smallmatrix}
        0&1&0\\0&0&0
        \end{smallmatrix}\right],
        \left[\begin{smallmatrix}
        0&0&1\\0&0&0
        \end{smallmatrix}\right]
        \right\rangle$ & $q+1$  & $q^3-1$ & $q^4 + q^2 -q $ \\

        {$o_{7}^T$} &$\left\langle   
        \left[\begin{smallmatrix}
        1&0&0\\0&0&0
        \end{smallmatrix}\right],
        \left[\begin{smallmatrix}
        0&0&0\\1&0&0
        \end{smallmatrix}\right],
        \left[\begin{smallmatrix}
        0&0&0\\0&1&0
        \end{smallmatrix}\right]
        \right\rangle$ & $(q+1)^2(q^2+q+1)$ & $2q^2-q-1$ & $3q^3 -2q^2 $\\

        {$o_{8}$} &$\left\langle   
        \left[\begin{smallmatrix}
        1&0&0\\0&0&0
        \end{smallmatrix}\right],
        \left[\begin{smallmatrix}
        0&0&0\\0&1&0
        \end{smallmatrix}\right],
        \left[\begin{smallmatrix}
        0&0&0\\0&0&1
        \end{smallmatrix}\right]
        \right\rangle$ & $q^3(q+1)(q^2+q+1)$ & $q^2+q-2$ & $ 2q^3 + q^2 -3q +1$\\

        {$o_{9}$} &$\left\langle   
        \left[\begin{smallmatrix}
        1&0&0\\0&0&1
        \end{smallmatrix}\right],
        \left[\begin{smallmatrix}
        0&0&0\\1&0&0
        \end{smallmatrix}\right],
        \left[\begin{smallmatrix}
        0&0&0\\0&1&0
        \end{smallmatrix}\right]
        \right\rangle$ & $(q-1)(q+1)^2(q^2+q+1)$& $q^2-1$ & $ 2q^3 -q$\\

        {$o_{11}^T$} &$\left\langle   
        \left[\begin{smallmatrix}
        1&0&0\\0&0&0
        \end{smallmatrix}\right],
        \left[\begin{smallmatrix}
        0&1&0\\1&0&0
        \end{smallmatrix}\right],
        \left[\begin{smallmatrix}
        0&0&0\\0&1&0
        \end{smallmatrix}\right]
        \right\rangle$ & $ q(q-1)(q+1)(q^2+q+1) $ & 
        $q^2-1$ & $ 2q^3 -q$ \\
        
        {$o_{12}$} &$\left\langle   
        \left[\begin{smallmatrix}
        1&0&0\\0&0&1
        \end{smallmatrix}\right],
        \left[\begin{smallmatrix}
        0&1&0\\0&0&0
        \end{smallmatrix}\right],
        \left[\begin{smallmatrix}
        0&0&0\\0&1&0
        \end{smallmatrix}\right]
        \right\rangle$ & $q(q-1)(q+1)(q^2+q+1)$ & $q^2-1$ & $2q^3 -q$\\
        
        {$o_{13}$} &$\left\langle   
        \left[\begin{smallmatrix}
        1&0&0\\0&1&0
        \end{smallmatrix}\right],
        \left[\begin{smallmatrix}
        0&1&0\\0&0&0
        \end{smallmatrix}\right],
        \left[\begin{smallmatrix}
        0&0&0\\0&0&1
        \end{smallmatrix}\right]
        \right\rangle$ & $q^3(q-1)(q+1)^2(q^2+q+1)$ & $2q-2$ & $ q^3 + 3q^2 - 4q +1$\\

        {$o_{14}$} &$\left\langle   
        \left[\begin{smallmatrix}
        1&0&0\\0&0&0
        \end{smallmatrix}\right],
        \left[\begin{smallmatrix}
        0&1&0\\0&1&0
        \end{smallmatrix}\right],
        \left[\begin{smallmatrix}
        0&0&0\\0&0&1
        \end{smallmatrix}\right]
        \right\rangle$ & $\frac 16q^4(q-1)(q+1)^2(q^2+q+1)$& $3q-3$& $q^3 +4q^2 -6q +2$ \\

        {$o_{15}$} &$\left\langle   
        \left[\begin{smallmatrix}
        1&u&0\\0&1&0
        \end{smallmatrix}\right],
        \left[\begin{smallmatrix}
        0&1&0\\v&0&0
        \end{smallmatrix}\right],
        \left[\begin{smallmatrix}
        0&0&1\\0&0&0
        \end{smallmatrix}\right]
        \right\rangle$ & $\frac{1}{2}q^4(q-1)^2(q+1)(q^2+q+1)$ & $ q-1$ & $q^3 + 2q^2 -2q $ \\ 

        {$o_{16}$} &$\left\langle   
        \left[\begin{smallmatrix}
        1&0&0\\0&1&0
        \end{smallmatrix}\right],
        \left[\begin{smallmatrix}
        0&1&0\\0&0&1
        \end{smallmatrix}\right],
        \left[\begin{smallmatrix}
        0&0&1\\0&0&0
        \end{smallmatrix}\right]
        \right\rangle$ & $ q(q-1)^2(q+1)^2(q^2+q+1)$ & $q-1$ & $q^3 + 2q^2 -2q$\\

        {$o_{17}$} &$\left\langle   
        \left[\begin{smallmatrix}
        1&0&0\\0&1&0
        \end{smallmatrix}\right],
        \left[\begin{smallmatrix}
        0&1&0\\0&0&1
        \end{smallmatrix}\right],
        \left[\begin{smallmatrix}
        0&0&1\\\alpha&\beta&\gamma
        \end{smallmatrix}\right]
        \right\rangle$ & $\frac{1}{3}q^4(q-1)^3(q+1)^2$ &  $0$ & $q^3 + q^2 -1$ \\\hline
    \end{tabular}
   }
    \renewcommand{\arraystretch}{1}
    \caption{Orbit size and rank-distribution of subspaces of $\F_q^{2 \times 3}$ of dimension at most $3$}
    \label{tab:SizeAndRankDistribSpaces}
\end{table}

Denote by $\theta_i$ the $i$-th eigenvalue of the graph $\Cay(\F_q^{2 \times 3 \times 3},\S_{2 \times 3 \times 3})$ and by $m_i$ its multiplicity. From the twelve different values of the normalised weight enumerator given in Table \ref{tab:SizeAndRankDistribSpaces}, we can compute the eigenvalues as follows.

\begin{Thm}\label{thm:Spectrum233}
    The spectrum of $\Cay(\F_q^{2 \times 3 \times 3},\S_{2\times 3 \times 3})$ is given by Table \ref{tab:spectrumFq233}.
\end{Thm}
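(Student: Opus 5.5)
We work with the slice space in the mode of size $2$, i.e. $j=1$, $\mathbf n=(2,3,3)$, so that $\slicesp_1(x)$ is a subspace of $\F_q^{3\times 3}$ of dimension at most $2$. Alternatively, and more conveniently given Table~\ref{tab:SizeAndRankDistribSpaces}, we take $j=3$ (or $j=2$), a mode of size $3$. Then $\slicesp_3(x)$ is a subspace $\CCC\le \F_q^{2\times 3}$ of dimension at most $3$, and Corollary~\ref{corol:eigenvaluestrilinearformsgraph} gives
\[
\lambda_x=|\S_{2\times3\times3}|+\frac{q^{8}}{(q-1)^2}\Bigl(q^{-5}\cdot q^{5-\dim\CCC}W_\CCC(1,q^{-1})-1\Bigr).
\]
Here $|\S_{2\times 3\times 3}|=(q^2-1)(q^3-1)^2/(q-1)^2$, and the quantity $q^{5-\dim\CCC}W_\CCC(1,q^{-1})$ is precisely the last column of Table~\ref{tab:SizeAndRankDistribSpaces}. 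As a check, one can instead use Corollary~\ref{corol:eigenvaluestrilinearformsgraphdual} with $j=3$, which gives $\lambda_x=(q^3A_1(\CCC^\perp)-|\S_{2\times 3}|)/(q-1)$. So the first step is to substitute each of the twelve distinct values of the last column into this formula. This produces the distinct eigenvalues $\theta_i$, which are ordered using Proposition~\ref{prop:secondbigesteigenvalue} for the top two. Since the formula is affine in the table entry, distinct entries give distinct eigenvalues, and grouping orbits with equal entries gives the eigenvalue classes.

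The multiplicities come from Lemma~\ref{lemma:orbitsofcodesvstensors}. The number of tensors $x$ whose third slice space lies in a given orbit $o$ of dimension $k$ is $|o|\cdot\prod_{t=0}^{k-1}(q^3-q^t)$, which counts the surjections $\F_q^3\to\CCC$ for each $\CCC\in o$. Summing over all orbits sharing the same value of $q^{5-\dim\CCC}W_\CCC(1,q^{-1})$ gives $m_i$. For example, $\{o_2,o_4^T,o_4\}$ share $q^4+q^3-q^2$, $\{o_6,o_7,o_7^T\}$ share $3q^3-2q^2$, $\{o_{10},o_{11},o_9,o_{11}^T,o_{12}\}$ share $2q^3-q$, and $\{o_{15},o_{16}\}$ share $q^3+2q^2-2q$. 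All orbit sizes except those of $o_{15}$ and $o_{17}$ are available from Proposition~\ref{prop:orbitsizes23subspaaces}.

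The main obstacle is the two orbits $o_{15}$ and $o_{17}$, whose sizes were not derived from stabilisers. To handle them we use two linear constraints. The first is the count of $3$-dimensional subspaces: $|o_{15}|+|o_{17}|=\sgbc{3}{6}{q}-\sum_{\text{other 3-dim orbits}}|o|$. The second is the trace identity $\sum_x\lambda_x=0$, equivalently $\sum_i m_i\theta_i=0$ (Corollary~\ref{corol:polynomialexpressionofadjacencymatrixcayley} with $f=X$). Because $o_{15}$ and $o_{17}$ carry different eigenvalues, these two equations determine both unknown sizes uniquely. If needed, the identity for $f=X^2$ (the diagonal of $A^2$ equals $|\S_{\mathbf n}|$) serves as a consistency check. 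This recovers the multiplicities of the eigenvalues involving $o_{15}$ and $o_{17}$ and completes Table~\ref{tab:spectrumFq233}.

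Finally, for small $q$ (notably $q=2$), some of the twelve values may coincide or the parameterised orbits may be empty or degenerate. We treat these cases by direct comparison of the polynomials and record them separately, as in Table~\ref{tab:spectrumFq233q2}. The remaining work is routine polynomial simplification.
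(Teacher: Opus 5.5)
Your proposal is correct and is essentially the paper's proof. The eigenvalues come from Corollary~\ref{corol:eigenvaluestrilinearformsgraph} with $j=3$ applied to the last column of Table~\ref{tab:SizeAndRankDistribSpaces}, and the multiplicities come from orbit sizes times the $\prod_{t=0}^{k-1}(q^3-q^t)$ surjections. The contributions of $o_{15}$ and $o_{17}$ are then recovered from one counting identity together with $\sum_i m_i\theta_i=0$, which is solvable since $\theta_{10}-\theta_{11}=q^3\neq 0$.

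The only difference is cosmetic. You count $3$-dimensional subspaces, $|o_{15}|+|o_{17}|=\sgbc{3}{6}{q}-\sum|o|$, whereas the paper uses $\sum_i m_i=q^{18}$. The two are equivalent because the orbits of dimension at most $2$ already exhaust the corresponding Gaussian binomials.
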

\begin{proof}
    By Corollary~\ref{corol:eigenvaluestrilinearformsgraph}, the eigenvalue associated to an element $x \in \F_q^{2 \times 3 \times 3}$ is 
    \begin{align*}
        (q-1)^2\lambda_x &= (q^2-1)(q^3-1)^2 - q^{2+3+3} + q^{2+3+3-\dim \slicesp_3(x)} \sum_{r = 0,1,2} A_{r}(\slicesp_3(x))q^{-r}\\
        &= -q^6 -2q^5 + 2q^3 + q^2 -1 + q^3q^{5-\dim\slicesp_3(x)}W_{\slicesp_3(x)}(1,q^{-1}).
    \end{align*}
    Therefore, we have the values $\theta_0,\dots, \theta_{11}$ given in Table \ref{tab:spectrumFq233} from the values of $q^{5-\dim\slicesp_3(x)}W_{\slicesp_3(x)}(1,q^{-1})$ for any $x \in \F_q^{\mathbf n}$ given in Table \ref{tab:SizeAndRankDistribSpaces}. Moreover, by Proposition~\ref{prop:orbitsizes23subspaaces}, the respective multiplicities $m_{0},\cdots, m_{9}$ of the eigenvalues $\theta_0,\dots,\theta_{9}$ can be obtained using the orbit sizes (without using $o_{15}$ and $o_{17}$) given by Table \ref{tab:SizeAndRankDistribSpaces} with Lemma~\ref{lemma:orbitsofcodesvstensors}. Hence, one can check that the multiplicities of the eigenvalues $\theta_0,\dots,\theta_{9}$ are given in Table \ref{tab:spectrumFq233}. 
    The remaining two multiplicities $m_{10}$ and $m_{11}$ are solutions of the linear system given by $\sum_{i = 0}^{11} m_i = q^{18}$ and $\sum_{i = 0}^{11} \theta_im_i = 0$, since the adjacency matrix of the Cayley graph has a null trace. 
    Moreover, if we denote by $\eta_q = (q+1)(q-1)(q^2+q+1)$, we have 
    \renewcommand{\arraystretch}{1.2}
    \[
        \begin{array}{rcl}
            m_1&=&\eta_q\left( 1 +q +q^{2}\right) ,\\
            m_2&=&\eta_q\left( -3q -q^{2} +3q^{4} +q^{5}\right),\\
            m_3&=&\eta_q \left(  q^{3} -q^{4} -q^{5} +q^{6}\right),\\
            m_4&=&\eta_q \left( - \frac{1 }{2}q^{3} -q^{4} -\frac{1 }{2}q^{5} +\frac{1 }{2}q^{6} +q^{7} +\frac{1 }{2}q^{8}\right),\\
            m_5&=&\eta_q \left( q +q^{2} +q^{3} -3q^{5} -3q^{6} -q^{7} +2q^{8} +2q^{9}\right),\\
            m_6&=&\eta_q \left(q^{6} -q^{8} -q^{9} +q^{11} \right),\\
            m_7&=&\eta_q \left( -\frac{3 }{2}q^{3} -2q^{4} +\frac{9 }{2}q^{5} +\frac{9 }{2}q^{6} -2q^{7} -\frac{9 }{2}q^{8} -3q^{9} +4q^{10}\right),\\
            m_8&=&\eta_q \left( -\frac{1 }{6}q^{7} +\frac{1 }{3}q^{9} +\frac{1 }{6}q^{10} -\frac{1 }{6}q^{11} -\frac{1 }{3}q^{12} +\frac{1 }{6}q^{14} \right),\\
            m_9&=&\eta_q \left( -q^{6} +2q^{8} +q^{9} -q^{10} -2q^{11} +q^{13}  \right).\\
        \end{array}
    \]
    \renewcommand{\arraystretch}{1}
    Thus we obtain $ \sum_{i = 0}^9 m_i = 1 + \eta_qP(q)$ with $P \in \Z[q]$ is defined by
    \[  
        P(q) = 
       \left.1 -q +q^{2} -q^{4} +q^{5} +3q^{6} -\frac{13 }{6}q^{7} -q^{8} -\frac{2 }{3}q^{9} +\frac{19 }{6}q^{10} -\frac{7 }{6}q^{11} -\frac{1 }{3}q^{12} +q^{13} +\frac{1 }{6}q^{14}  \right.,
    \]
    and one can observe that observe that 
    \[
        q^{18} - \sum_{i=0}^9 m_i = \frac{5}{6}q^4(q-1)^5(q+1)^2(q^2+q+1)\left(q^5 + \frac 35q^4 + \frac 75q^3 + \frac{12}{5}q^2 + \frac{12}{5}q + \frac{6}{5}\right).
    \]
    Additionally, we have 
    \renewcommand{\arraystretch}{1.2}
    \[
        \begin{array}{l} 
             \theta_0m_0 = \eta_q\!\left(1 + q + q^2\right),\\
             \theta_1m_1 = \eta_q\!\left(-1 -3q - 5q^2 -4q^3 + 4q^5 + 4q^6 + 2q^7 \right), \\
             \theta_2m_2 =\eta_q\!\left( 3q + 7q^2 +8q^3 - q^4 - 13q^5 - 13q^6 - 3q^7 + 6q^8 + 5 q^9 + q^{10} \right), \\
             \theta_3m_3 =\eta_q\!\left( -q^3 - q^4 + q^5 + 3q^6 + q^7 - 2q^8 - 2q^9 + q^{11} \right), \\
             \theta_4m_4 = \eta_q\!\left(\frac 12q^3 + 2q^4 + \frac 72q^5 + \frac 52q^6 - \frac 52q^7 - \frac{13}2q^8 - \frac 92q^9 + \frac 12q^{10} + 3q^{11} + \frac32q^{12} \right),\\
             \theta_5m_5 = \eta_q\!\left( -q - 3q^2\! - 5q^3 \!- 4q^4 \!+ 3q^5 \!+ 11q^6\! + 15q^7\! + 6q^8 \!- 10q^9 \!- 14q^{10} \!- 6q^{11}\! + 4q^{12}\! + 4q^{13}\right),\\
             \theta_6m_6 = \eta_q\!\left( -q^6 - 2q^7 - q^8 + 4q^9 + 5q^{10} - 4q^{12}-3q^{13} + q^{14} + q^{15} \right),\\
             \theta_7m_7 = \eta_q\!\left( \frac 32q^{3}\! + 5q^4 \!+ \frac 52 q^5\!- \frac{19}2 q^6 \!- \frac{35}2q^{7}\! - \frac 52q^8\!+ \frac{41}2 q^{9}\! + \frac{31}{2}q^{10}\! - 4q^{11}\! - \frac{25}{2}q^{12}\! - 3q^{13}\! + 4q^{14}\right),\\
             \theta_8m_8 = \eta_q\!\left(\frac 16 q^7 + \frac 13q^8 - \frac 76q^{10} - \frac 56q^{11} + q^{12} + \frac 43 q^{13} + \frac 16 q^{14} - q^{15} - \frac 13 q^{16} + \frac 13q^{17} \right),\\
             \theta_9m_9 = \eta_q\!\left( q^6 + 2q^7 - 6q^9 - 5q^{10} + 4q^{11} + 7q^{12} + 2q^{13} - 4 q^{14} - 2 q^{15} + q^{16}\right).\\ 
        \end{array}
    \]
    \renewcommand{\arraystretch}{1}
    Thus we obtain $\sum_{i = 0}^9 \theta_im_i = (q-1)(q+1)(q^2 + q +1)Q(q)$ where 
    \begin{align*}
        Q(q) &= q^4 + q^5 - 2q^6 - \frac{29}{6}q^7 + \frac 13 q^8 + 7q^9 + \frac{11}{6}q^{10} - \frac{17}{6}q^{11} - 3q^{12}  + \frac 43q^{13}\\
        & \ \ \ \ \ \ \ \ \ \ \  + \frac 76q^{14} - 2q^{15} + \frac{2}{3}q^{16} + \frac{1}{3}q^{17}\\
        &= \frac 13q^4(q-1)^4(q+1)(q^2+q+1)\left(q^6 + 4q^5 + 2q^4 + \frac{13}{2} q^3  + 12q^2 + 9q + 3\right).
    \end{align*}
    We can then note that 
    \[
        m_{10} = \frac 12 q^4(q-1)^5(q+1)^2(q^2+ q+1)^2(q^3 +2q +2)
        \text{ and } 
        m_{11} = \frac{1}{3}q^7(q-1)^6(q+1)^3(q^2+q+1)
    \]
    is the unique solution of the linear system of equations
    \begin{equation*}
        \left\{ 
        \begin{array}{l}
             m_{10}  +  m_{11} = q^{18} - \sum_{i = 0}^9 m_i\\
             \varepsilon_q m_{10}+ (q^3 + \varepsilon_q)m_{11} = \sum_{i = 0}^9 \theta_im_i.
        \end{array}
        \right.
    \end{equation*}
    or in other words, with $\omega_q = q^4(q-1)^5(q+1)^2(q^2+q+1)$, we have 
    \begin{equation}
        \label{eq:systemM10M11}
        \left\{ 
        \begin{array}{l}
             m_{10}  +  m_{11} = \frac{5}{6}\omega_q\left(q^5 + \frac 35q^4 + \frac 75q^3 + \frac{12}{5}q^2 + \frac{12}{5}q + \frac{6}{5}\right)\\
             \varepsilon_q m_{10}+ (q^3 + \varepsilon_q)m_{11} = \frac 13\omega_q(q^2+q+1)\left(q^6 + 4q^5 + 2q^4 + \frac{13}{2} q^3  + 12q^2 + 9q + 3\right).
        \end{array}
        \right.
    \end{equation}
    with $\varepsilon_q = 2q^2 + 2q + 1$. Indeed, since $\left[\begin{smallmatrix}
        1&1 \\ \varepsilon_q & q^3 + \varepsilon_q
    \end{smallmatrix} \right] \left[\begin{smallmatrix}
        q^3 + \varepsilon_q & -1 \\ -\varepsilon_q & 1
    \end{smallmatrix}  \right] = q^3I_2$, with $\nu_q =q^4(q-1)^5(q+1)^2(q^2 + q +1)$, we have
    \begin{align*}
        q^3m_{10} 
        &=\nu_q\left(\frac 56(q^3+2q^2+2q+1)\left(q^5 + \frac 35q^4 + \frac 75q^3 + \frac{12}{5}q^2 + \frac{12}{5}q + \frac{6}{5}\right)\right)\\
        & \ \ \ \ \ - \nu_q\left(\frac 13(q^2+q+1)\left(q^6 + 4q^5 + 2q^4 + \frac{13}{2} q^3  + 12q^2 + 9q + 3\right)\right)\\
        &= \frac 12 q^7(q-1)^5(q+1)^2(q^2 + q + 1)^2 (q^3 + 2 q + 2),
    \end{align*}
    as well as
    \begin{align*}
        q^3m_{11} 
        &= - \nu_q\left(\frac 56(2q^2+2q+1)\left(q^5 + \frac 35q^4 + \frac 75q^3 + \frac{12}{5}q^2 + \frac{12}{5}q + \frac{6}{5}\right)\right)\\
        & \ \ \ \ \ + \nu_q\left(\frac 13(q^2+q+1)\left(q^6 + 4q^5 + 2q^4 + \frac{13}{2} q^3  + 12q^2 + 9q + 3\right)\right)\\
        &= \frac 13 q^{10}(q-1)^6(q+1)^3(q^2 + q +1).
    \end{align*}
    Therefore, this proves that Table \ref{tab:spectrumFq233} is correct.
\end{proof}

\begin{Lemma}\label{lemma:orderingeigenvaluesq233}
    With the notations of Table \ref{tab:spectrumFq233}, for $q = 2$ we have
    \[
        \theta_0 > \theta_1 > \theta_2 > \theta_3 = \theta_4>\theta_5> \theta_6> \theta_7= \theta_8 > 0 > -1 > \theta_9> \theta_{10}> \theta_{11},
    \]
    and for $q\geq 3$, we have 
    \[
        \theta_0 > \theta_1 > \theta_2 > \theta_3> \theta_4> \theta_5> \theta_6> \theta_7> \theta_8> \theta_9>0>-1> \theta_{10}> \theta_{11}.
    \]
\end{Lemma}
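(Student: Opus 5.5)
The plan is to make every eigenvalue an explicit polynomial in $q$ and then compare them directly. By the proof of Theorem~\ref{thm:Spectrum233}, every eigenvalue has the form
\[
(q-1)^2\theta = -q^6-2q^5+2q^3+q^2-1+q^3 w,
\]
where $w = q^{5-\dim\CCC}W_\CCC(1,q^{-1})$ is read off Table~\ref{tab:SizeAndRankDistribSpaces} for a slice space $\CCC$. Since this expression is strictly increasing in $w$, ordering the eigenvalues amounts to ordering the twelve distinct values of $w$ as polynomials in $q$, at least for $q \geq 3$.

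\textbf{Step 1: order the $w$-values.} The values are
\[
q^5,\quad 2q^4-q^3,\quad q^4+q^3-q^2,\quad q^4+q^2-q,\quad 4q^3-4q^2+q,\quad 3q^3-2q^2,\quad 2q^3+q^2-3q+1,
\]
\[
2q^3-q,\quad q^3+4q^2-6q+2,\quad q^3+3q^2-4q+1,\quad q^3+2q^2-2q,\quad q^3+q^2-1.
\]
Consecutive differences are simple polynomials. For example,
\[
(4q^3-4q^2+q)-(3q^3-2q^2) = q(q-1)^2 - q^2 + \dots,
\]
and one checks the sign of each difference case by case. Two differences need care:
\[
(2q^3+q^2-3q+1)-(2q^3-q) = q^2-2q+1 = (q-1)^2 > 0,
\]
\[
(q^3+4q^2-6q+2)-(2q^3-q) = -q^3+4q^2-5q+2 = -(q-1)^2(q-2).
\]
The second one vanishes exactly at $q=2$. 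This explains the equality $\theta_7=\theta_8$ at $q=2$, and the same mechanism should produce $\theta_3=\theta_4$ at $q=2$: the difference $(q^4+q^2-q)-(4q^3-4q^2+q)$ equals $q(q-2)(q^2-2q+\dots)$ and factors with a $(q-2)$. So the main work is to factor every consecutive difference and isolate the factors $(q-2)$. I will order the $\theta_i$ exactly as labelled in Table~\ref{tab:spectrumFq233}, matching each $\theta_i$ to its $w$, and verify each difference is positive for $q\ge3$, noting which ones vanish at $q=2$.

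\textbf{Step 2: locate $0$ and $-1$.} The sign of $\theta$ is the sign of
\[
q^3w - (q^6+2q^5-2q^3-q^2+1).
\]
For the two smallest values the paper already gives $\theta_{11} = -(q^3-1)^2/(q-1)^2$, and $\theta_{10}$ differs from it by $q^3/(q-1)^2$ according to the system in the proof of Theorem~\ref{thm:Spectrum233}. Both are below $-1$, with $\theta_{10}+1<0$ checkable as a polynomial inequality. For $\theta_9$, corresponding to $w=q^3+3q^2-4q+1$, I compute
\[
(q-1)^2\theta_9 = q^5-q^4-2q^3+\dots \quad\text{(positive for } q\ge3\text{)},
\]
and then evaluate at $q=2$ separately to get $\theta_9<-1$. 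Similarly I will check $\theta_8>0$ at $q=2$ and $\theta_9>0$ for $q\ge3$.

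\textbf{Main obstacle.} The hard part is not conceptual but careful bookkeeping: correctly matching the table's labelling of the $\theta_i$ to the $w$-values, then handling the borderline signs at $q=2$ and $q=3$, where leading-term domination fails. I will treat $q=2$ by direct numerical evaluation of all twelve values, and prove $q\ge3$ by writing each difference as a polynomial in $q-3$ with nonnegative coefficients.
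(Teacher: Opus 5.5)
Your strategy is sound and amounts to the paper's proof, which just reads the ordering off Table~\ref{tab:spectrumFq233}. There each $\theta_i$ is an explicit polynomial minus $\varepsilon_q$. The consecutive differences are therefore $q^6,q^5,q^4,q^4(q-2),q^4,q^3(q-1),q^3,q^3(q-2),q^3,q^3,q^3$. The only sign checks needed are on $\theta_8=2q^3-\varepsilon_q$, $\theta_9=q^3-\varepsilon_q$ and $\theta_{10}=-\varepsilon_q$. Going back to the $w$-values of Table~\ref{tab:SizeAndRankDistribSpaces} reproduces exactly this, since each of your $w$-differences is the corresponding $\theta$-difference times $(q-1)^2/q^3$. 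It is a harmless but unnecessary detour.

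However, several explicit values you wrote down are wrong and must be corrected before the proof is written.
\begin{itemize}
\item $\theta_{11}$ is not $-(q^3-1)^2/(q-1)^2$. In the least-eigenvalue corollary the relevant tuple is $\mathbf n(\ell)=(2,3)$, which gives $\theta_{11}=-(q^2-1)(q^3-1)/(q-1)^2=-q^3-\varepsilon_q$. At $q=2$ this is $-21$, not $-49$.
\item $\theta_{10}-\theta_{11}=q^3$, not $q^3/(q-1)^2$. The latter is not even an integer, contradicting Corollary~\ref{corol:eigenvaluesareintegers}. In the system in the proof of Theorem~\ref{thm:Spectrum233}, the coefficients are $\varepsilon_q=-\theta_{10}$ and $q^3+\varepsilon_q=-\theta_{11}$.
\item $(q-1)^2\theta_9=q^5-4q^4+3q^3+q^2-1$, not $q^5-q^4-2q^3+\cdots$. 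Since $q^5-4q^4<0$ at $q=3$, positivity there is not a leading-term argument. It is cleaner to use $\theta_9=q^3-2q^2-2q-1=t^3+7t^2+13t+2$ with $t=q-3$.
\item $(4q^3-4q^2+q)-(3q^3-2q^2)$ is exactly $q(q-1)^2$.
\end{itemize}
None of these slips changes the truth of the inequalities you need, and your own $w$-formula produces the correct values. As written, though, Step~2 argues about the wrong quantities.
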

\begin{proof}
    This is immediate with the result of Table \ref{tab:spectrumFq233}.
\end{proof}

\begin{table}[ht!]
    \centering
    \renewcommand{\arraystretch}{1.2}
    \begin{tabular}{|c|c|c|c|} \hline
         $i$&\textbf{Orbits}&\textbf{Eigenvalue}&\textbf{Multiplicity} $m_i$  \\\hline
          $\theta_0$ &
          $0$ &
          $q^6 + 2 q^5 + 2 q^4 - \varepsilon_q $ 
          & 1\\
          $\theta_1$ &
          $o_1$ & 
          $2q^5 + 2q^4 - \varepsilon_q$& 
          $(q-1)(q+1)(q^2+q+1)^2 $ \\
          $\theta_2$ &
          $o_2 , o_4 , o_4^T$ &
          $q^5+2q^4-\varepsilon_q$&
          $q(q-1)^2(q+1)(q^2+q+1)^2(q+3)$\\
          $\theta_3$ &
          $o_3$ & 
          $ q^5 + q^4 - \varepsilon_q $&
          $q^3(q-1)^3(q+1)^2(q^2+q+1)$\\
          $\theta_4$ &
          $o_5$ & 
          $ 3q^4 - \varepsilon_q$&
          $\frac 12 q^3(q-1)^2(q+1)^3(q^2+q+1)^2$ \\
          $\theta_5$ &
          $o_6, o_7 , o_7^T$&
          $ 2q^4 - \varepsilon_q $&
          $q(q-1)^3(q+1)^3(q^2+q+1)^2(2q^2+1)$\\
          $\theta_6$ &
          $o_8$ & 
          $q^4 + q^3 - \varepsilon_q $&
          $q^6(q-1)^3(q+1)^2(q^2+q+1)^2$\\
          $\theta_7$ &
          $o_9, o_{10}, o_{11} ,o_{11}^T, o_{12} $ &
          $ q^4 - \varepsilon_q$&
          $\frac{1}{2}q^3(q-1)^4(q+1)(q^2+q+1)^2(4q+3)(2q+1) $\\
          $\theta_8$ &
          $o_{14}$ &
          $ 2q^3 - \varepsilon_q $&
          $ \frac 16 q^7(q-1)^4(q+1)^3(q^2+q+1)^2 $\\
          $\theta_9$ &
          $o_{13}$ &
          $ q^3 - \varepsilon_q $&
          $q^6(q-1)^4(q+1)^3(q^2+q+1)^2$\\
          $\theta_{10}$ &
          $o_{15},o_{16}$ &
          $ -\varepsilon_q$&
          $\frac 12 q^4(q-1)^5(q+1)^2(q^2+ q+1)^2(q^3 +2q +2)$
          \\
          $\theta_{11}$ &
          $o_{17}$ &
          $ -q^3 -\varepsilon_q$&
          $\frac{1}{3}q^7(q-1)^6(q+1)^3(q^2+q+1)$ 
          \\\hline
    \end{tabular}\\
    \phantom{x}\hfill where $\varepsilon_q = 2q^2 + 2q +1$.\hfill \phantom{x}
    \renewcommand{\arraystretch}{1}
    \caption{Spectrum of $\Cay(\F_q^{2 \times 3 \times 3},\S_{2\times 3 \times 3})$.}
    \label{tab:spectrumFq233}
\end{table}

\begin{Rq}\label{rk:retroactivecomputationoforbitsize}
    We can recover the orbit sizes of $o_{15}$ and $o_{17}$ using the values of $m_{10}$, $m_{11}$ and the orbit size of $o_{16}$, which gives the orbit sizes in Table \ref{tab:SizeAndRankDistribSpaces}. Indeed, let 
    $\CCC_{15} = \left\langle \left[\begin{smallmatrix}
        1&u&0\\0&1&0
        \end{smallmatrix}\right],
        \left[\begin{smallmatrix}
        0&1&0\\v&0&0
        \end{smallmatrix}\right],
        \left[\begin{smallmatrix}
        0&0&1\\0&0&0
        \end{smallmatrix}\right]
        \right\rangle
    $,
    $\CCC_{16} = \left\langle   
        \left[\begin{smallmatrix}
        1&0&0\\0&1&0
        \end{smallmatrix}\right],
        \left[\begin{smallmatrix}
        0&1&0\\0&0&1
        \end{smallmatrix}\right],
        \left[\begin{smallmatrix}
        0&0&1\\0&0&0
        \end{smallmatrix}\right]
        \right\rangle
    $, and 
    $ \CCC_{17} = \left\langle   
        \left[\begin{smallmatrix}
        1&0&0\\0&1&0
        \end{smallmatrix}\right],
        \left[\begin{smallmatrix}
        0&1&0\\0&0&1
        \end{smallmatrix}\right],
        \left[\begin{smallmatrix}
        0&0&1\\\alpha&\beta&\gamma
        \end{smallmatrix}\right]
        \right\rangle
    $ be representatives of the orbits $o_{15}$, $o_{16}$, $o_{17}$, respectively. The spaces $\CCC_{15}$, $\CCC_{16}$, and $\CCC_{17}$ are all $3$-dimensional subspaces of $\F_q^{2 \times 3}$, $o_{15}$ and $o_{16}$ correspond to the eigenvalue $\theta_{10}$ and $o_{17}$ to $\theta_{11}$. Hence, by Lemma~\ref{lemma:orbitsofcodesvstensors} we have 
    \[
        m_{10} = |\GL_3(\F_q)|(|o_{15}| + |o_{16}|) \text{ and } m_{11} = |\GL_3(\F_q)|\cdot|o_{17}|.
    \]
    Therefore, we obtain $|o_{15}| = |\GL_3(\F_q)|^{-1}m_{10} - |o_{16}|$ and $|o_{17}| = |\GL_3(\F_q)|^{-1}m_{11}$ which gives the result in Table \ref{tab:SizeAndRankDistribSpaces}.
\end{Rq}

\begin{Rq}\label{rk:eigenvaluescayleyqis2}
    To avoid conflict of notations with the equal eigenvalues in Table \ref{tab:spectrumFq233} when $q=2$, the eigenvalues of the graph $\Cay(\F_2^{2 \times 3 \times 3},\S_{2\times 3 \times 3})$ are given in strictly decreasing order in Table \ref{tab:spectrumFq233q2}. 
\end{Rq}

\begin{table}[ht!]
    \centering
    \renewcommand{\arraystretch}{1.2}
    \setlength{\tabcolsep}{4pt}
    \begin{tabular}{|c|c|c|c|c|c|c|c|c|c|c|} \hline
        $i$ & $0$ & $1$ & $2$ &  $3$ & $4$ & $5$ & $6$ & $7$ & $8$ & $9$ \\\hline
        \textbf{Eigenvalue} $\theta_i'$ & $147$ & $83$ & $51$ & $35$ & $19$ & $11$ & $3$ & $-5$ & $-13$ & $-21$ \\\hline
        \textbf{Multiplicity} $m_i'$ & $1$ & $147$ & $1470$ & $5,796$ & $23,814$ & $28,224$ & $60,564$ & $84,672$ & $49,392$ & $8,064$\\\hline
    \end{tabular}
    \renewcommand{\arraystretch}{1}
    \caption{Spectrum of $\Cay(\F_2^{2 \times 3 \times 3},\S_{2\times 3 \times 3})$.}
    \label{tab:spectrumFq233q2}
\end{table}

\subsection{Application to upper bounds on independence numbers }
With the full spectrum computed above, we can now apply Theorem~\ref{thm:ratiobound23} on $\Cay(\F_q^{2 \times 3 \times 3},\S_{2 \times 3 \times 3})$ and deduce upper bounds on the cardinality of $(2\times 3 \times 3,M,d)_q$ tensor codes for $d = 3$ and $d = 4$.

\begin{Prop}\label{prop:alpha2for233}
    Let $\alpha_2$ be the $2$-independence number of the graph $\Cay(\F_q^{2\times 3 \times  3},\S_{2 \times  3 \times 3})$. 
    \begin{itemize}
        \item If $q =2$, then $\alpha_2 < 2^{10.63}$, 
        \item and if $q \geq 3$, then $\alpha_2 \leq q^{12}\left(1 - 4q^{-1} + 12q^{-2}\right) $.
    \end{itemize}
\end{Prop}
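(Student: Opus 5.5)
The plan is to apply the ratio-type bound for the $2$-independence number, Theorem~\ref{thm:ratiobound23}(i), directly to the spectrum computed in Theorem~\ref{thm:Spectrum233}. The graph is a Cayley graph, hence regular, with $N=q^{18}$ vertices. The only real input is identifying the index $i=\min\{j : \theta_j\le -1\}$ in the strictly decreasing list of distinct eigenvalues. Lemma~\ref{lemma:orderingeigenvaluesq233} supplies this ordering. Because $\theta_3=\theta_4$ and $\theta_7=\theta_8$ when $q=2$, the case $q=2$ has to be handled separately with Table~\ref{tab:spectrumFq233q2}.

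For $q\ge 3$, Lemma~\ref{lemma:orderingeigenvaluesq233} gives $\theta_9>0>-1>\theta_{10}$, so $i=10$. We substitute $\theta_0=q^6+2q^5+2q^4-\varepsilon_q$, $\theta_9=q^3-\varepsilon_q$ and $\theta_{10}=-\varepsilon_q$, where $\varepsilon_q=2q^2+2q+1$. Using $\varepsilon_q q^3=2q^5+2q^4+q^3$ and $\varepsilon_q^2-\varepsilon_q=4q^4+8q^3+6q^2+2q$, the numerator becomes
\[
\theta_0+\theta_9\theta_{10}=q^6+4q^4+7q^3+6q^2+2q.
\]
The denominator factors as
\[
(\theta_0-\theta_{10})(\theta_0-\theta_9)=q^7(q^2+2q+2)(q^3+2q^2+2q-1).
\]
Hence (\ref{eq:ratiotype2}) gives
\[
\alpha_2\le \frac{q^{11}(q^5+4q^3+7q^2+6q+2)}{(q^2+2q+2)(q^3+2q^2+2q-1)}.
\]
It then remains to show this is at most $q^{10}(q^2-4q+12)=q^{12}(1-4q^{-1}+12q^{-2})$. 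Clearing the (positive) denominators, this reduces to a polynomial inequality. Expanding $(q^2+2q+2)(q^3+2q^2+2q-1)=q^5+4q^4+8q^3+7q^2+2q-2$ and multiplying by $q^2-4q+12$, the difference of the two sides should be
\[
q^7-q^6+4q^5+19q^4+63q^3+68q^2+30q-24,
\]
which is clearly positive for $q\ge 2$. So the stated bound follows, with room to spare.

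For $q=2$ we use Table~\ref{tab:spectrumFq233q2}. The first distinct eigenvalue that is at most $-1$ is $\theta'_7=-5$, so $i=7$ and $\theta'_{6}=3$, with $\theta'_0=147$. The bound is then $2^{18}\cdot\frac{147-15}{152\cdot 144}=2^{18}\cdot\frac{132}{21888}\approx 1580.9$. Its base-$2$ logarithm is about $10.626<10.63$.

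The main obstacle is bookkeeping rather than conceptual: making sure the eigenvalue ordering, and hence the correct index $i$, is right in each case. In particular, one must check that no eigenvalue lies in $(-1,0]$, and that the ties at $q=2$ are merged before applying the theorem. The remaining work is carrying out the polynomial expansion above carefully.
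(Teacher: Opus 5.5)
Your route is the paper's: apply Theorem~\ref{thm:ratiobound23}(i) to the spectrum of Table~\ref{tab:spectrumFq233}. You take $(\theta_{i-1},\theta_i)=(\theta_9,\theta_{10})$ for $q\ge 3$ and $(\theta_6',\theta_7')=(3,-5)$ for $q=2$. Your $q=2$ computation, giving $2^{13}\cdot\frac{11}{57}\approx 2^{10.626}$, is exactly the paper's. The $q\ge 3$ case, however, contains a concrete arithmetic error that has to be fixed.

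Your numerator $q^6+4q^4+7q^3+6q^2+2q=q(q^5+4q^3+7q^2+6q+2)$ and denominator $q^7(q^2+2q+2)(q^3+2q^2+2q-1)$ are both correct. But when you multiply by $N=q^{18}$ you drop the factor $q$ pulled out of the numerator. The bound is actually
\[
\alpha_2\le \frac{q^{12}(q^5+4q^3+7q^2+6q+2)}{(q^2+2q+2)(q^3+2q^2+2q-1)},
\]
not $q^{11}(\cdots)$. As a check, at $q=3$ this gives $3^{12}\cdot\frac{434}{850}\approx 3^{11.39}$, which matches the $\mathbf n=(3,3,2)$ entry of Table~\ref{tab:RatioType2345}; your formula would give $3^{10.39}$.

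Because of this, the polynomial inequality you verified is the wrong one. Write $F=q^5+4q^3+7q^2+6q+2$ and $G=q^5+4q^4+8q^3+7q^2+2q-2$. You checked $qF\le (q^2-4q+12)G$, but what is needed is $q^2F\le (q^2-4q+12)G$. Its difference is
\[
(q^2-4q+12)G-q^2F=16q^4+64q^3+72q^2+32q-24,
\]
not your degree-$7$ polynomial. This is still positive for $q\ge 1$, so the statement survives once the exponent is corrected. This difference is exactly the negated residual term $\frac{-16q^9-\cdots+24q^5}{q^7G}$ in the paper's proof.

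Your remark ``with room to spare'' is wrong, though. The true ratio bound is $q^{12}\bigl(1-4q^{-1}+12q^{-2}-O(q^{-3})\bigr)$. The stated bound therefore agrees with it through the $q^{-2}$ term, and the slack is only of relative order $16q^{-3}$.
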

\begin{proof}
    By \cite[Corollary 3.3]{AbiadCoutinhoFiol2019kindependancenumber}, with $\theta_0$ the largest eigenvalue of the graph, with $\theta_i$ the largest eigenvalue such that $\theta_i \leq -1$, and with $\theta_{i-1}$ the smallest eigenvalue such that $\theta_{i-1} > \theta_i$, we have
    \[
        \alpha_2 \leq q^{18} \frac{\theta_0 + \theta_{i}\theta_{i-1}}{(\theta_0 - \theta_i)(\theta_0-\theta_{i-1})}.
    \]
    For $q = 2$, by Remark \ref{rk:eigenvaluescayleyqis2} we observe that $\theta_i = \theta_9 = -5$ and $\theta_{i-1} = \theta_8 = 3$, therefore $\alpha_2 \leq 2^{13}\times \frac{11}{57} < 2^{10.63}$. For $q = 3$, we have 
    \begin{align*}
        \alpha^2q^{-12} &\leq q^{8}\frac{q^6 + 2q^5 + 2 q^4 - 2q^2 - 2q -1 + (-2q^2 -2q -1)(q^3 - 2q^2 -2q -1)}{(q^6 + 2q^5 + 2q^4)(q^6 + 2q^5 + 2q^4 - q^3)}\\
        &= 1 -4q^{-1} + 12q^{-2} +  \frac{-16q^9 - 64q^{8} - 72q^7 - 32q^6 + 24q^5}{q^{12} + 4q^{11} + 8q^{10} + 7q^{9} +  2q^8 - 2q^7}.
    \end{align*}
    Checking the sign of the last fraction suffices to conclude.
\end{proof}

\begin{Corol}
    No $(2\times 3 \times 3,M,3)_q$ tensor code meets the Singleton-like bound. More precisely, $M < 2^{10.63}$ if $q = 2$ and $M < q^{12}$ if $q \geq 3$.
\end{Corol}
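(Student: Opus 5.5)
The corollary follows from Proposition~\ref{prop:alpha2for233} once we identify codes with independent sets. By Proposition~\ref{prop:tensorrankdistanceandgeodesiccoincides}, the geodesic distance of $\Cay(\F_q^{2\times 3\times 3},\S_{2\times 3\times 3})$ is exactly the tensor rank distance. Hence an $(2\times 3\times 3,M,3)_q$ tensor code is a set of vertices whose pairwise distances exceed $2$, and so $M\leq \alpha_2$.

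\emph{Case $q=2$.} Proposition~\ref{prop:alpha2for233} gives $\alpha_2<2^{10.63}$ directly, so $M<2^{10.63}$.

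\emph{Case $q\geq 3$.} Proposition~\ref{prop:alpha2for233} gives $M\leq q^{12}(1-4q^{-1}+12q^{-2})$. We check that $1-4q^{-1}+12q^{-2}<1$, which is equivalent to $12<4q$, i.e.\ $q>3$. At $q=3$ the factor equals exactly $1$, so we cannot stop there. Instead we return to the proof of Proposition~\ref{prop:alpha2for233}. There the bound was written as $1-4q^{-1}+12q^{-2}$ plus a fraction with numerator $-16q^9-64q^8-72q^7-32q^6+24q^5$. This numerator is strictly negative for $q\geq 1$, since already $16q^9>24q^5$, and the denominator is positive. So the ratio bound is strictly below $q^{12}(1-4q^{-1}+12q^{-2})$. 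In particular, for $q=3$ it is strictly below $3^{12}$, and for every $q\geq 3$ we get $M<q^{12}$. The main point of care is this $q=3$ boundary case, where the strict inequality comes from that negative remainder term.

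It remains to compare with the Singleton-like bound. For $\mathbf n=(2,3,3)$ and $d=3$, Theorem~\ref{thm:singletonlike} gives $\log_q M\leq 3(6-3+1)=12$, i.e.\ $M\leq q^{12}$. Since $2^{10.63}<2^{12}$, and $M<q^{12}$ for $q\geq 3$, no such code attains the Singleton-like bound, which proves the statement.
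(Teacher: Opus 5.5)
Your proof is correct and follows the paper's route: a code of minimum distance $3$ is a $2$-independent set, so Proposition~\ref{prop:alpha2for233} is compared with the Singleton-like bound $M\leq q^{12}$. The paper's proof only cites the proposition, which at $q=3$ gives just $M\leq 3^{12}$ because $1-4q^{-1}+12q^{-2}=1$ there. Your appeal to the strictly negative remainder term in that proposition's proof therefore supplies exactly the strictness that one-line argument glosses over (your justification $16q^9>24q^5$ fails only at the irrelevant value $q=1$).
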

\begin{proof}
    The Singleton-like bound for $(2\times 3 \times 3,M,3)_{q}$ tensor codes states that $M \leq q^{3(6-3+1)} = q^{12}$, and since $M \leq \alpha_2$, Proposition~\ref{prop:alpha2for233} yields the wanted result.
\end{proof}

\begin{Lemma}\label{lemma:computationdelta3indepnb}
    For each $i\in \llbracket 0,11\rrbracket$, denote by $m_i$ the multiplicity of the eigenvalue $\theta_i$ of the Cayley graph $\Cay(\F_{q}^{2\times3 \times 3},\S_{2\times 3 \times 3})$ as specified in Table \ref{tab:spectrumFq233}. Denote by  $A$ the adjacency matrix of the graph and by $\Delta = \max_{u \in \F_q^{2 \times 3 \times 3}} (A^3)_{(u,u)}$. Then we have 
    \begin{align*}
        \Delta = 
        q^{-18}\sum_{i=0}^{11} m_i \theta_i^3 &=(q-1)(q+1)(q^2+q+1)^2(2q^3 + q^2 - 2q -2),
    \end{align*}
    and if $q \geq 3$, we have 
    \[
        \theta_9 < -\frac{\theta_0^2 + \theta_0\theta_{11} - \Delta}{\theta_0( \theta_{11}+1)} <\theta_8.
    \]
\end{Lemma}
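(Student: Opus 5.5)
First, since the graph is a Cayley graph and hence walk-regular (Proposition~\ref{prop:tensorrankdistanceandgeodesiccoincides}), the diagonal entry $(A^3)_{u,u}$ does not depend on $u$. Corollary~\ref{corol:polynomialexpressionofadjacencymatrixcayley} with $f(X)=X^3$ then gives $\Delta = q^{-18}\sum_{i=0}^{11} m_i\theta_i^3$. So $\Delta$ is an average, not a genuine maximum, and the first equality is immediate.

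For the closed form, I would avoid the spectral sum and count combinatorially instead. The number $(A^3)_{0,0}$ is the number of triples $(s_1,s_2,s_3)\in\S_{\mathbf n}^3$ with $s_1+s_2+s_3=0$. Equivalently, it is the number of ordered pairs $(s_1,s_2)$ of simple tensors whose sum $-(s_1+s_2)$ is again simple.

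I would write $s_1 = a\otimes b\otimes c$ and $s_2=a'\otimes b'\otimes c'$. The sum has rank at most one exactly when the two tensors agree in at least two of the three modes up to scalars, or when $s_2$ is a scalar multiple of $s_1$. So I would proceed in three steps:
\begin{itemize}[label=--]
\item Fix $s_1$ and count the $s_2 = \mu s_1$ with $\mu\neq 0,-1$. This gives $q-2$ choices.
\item Count the $s_2$ sharing exactly two modes projectively with $s_1$ and differing in the third mode $j$. We may write $s_2 = a\otimes b\otimes c''$ with $c''\notin\langle c\rangle$, and the sum $a\otimes b\otimes(c+c'')$ is then automatically simple and nonzero. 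This gives $q^{n_j}-q$ choices for each $j$.
\item Add these up over $j$ and multiply by $|\S_{\mathbf n}|$.
\end{itemize}
This yields $|\S_{\mathbf n}|\bigl(q-2+\sum_j (q^{n_j}-q)\bigr)$. For $\mathbf n=(2,3,3)$ this is $(q+1)(q^2+q+1)^2(q-1)\cdot(q-2+q^2-q+2q^3-2q)$, which simplifies to the stated factorisation $(2q^3+q^2-2q-2)$. As a cross-check, I would verify that this agrees with $q^{-18}\sum m_i\theta_i^3$ computed from Table~\ref{tab:spectrumFq233}, for instance at $q=2$ using Table~\ref{tab:spectrumFq233q2}.

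For the inequality, I would substitute $\theta_0=q^6+2q^5+2q^4-\varepsilon_q$, $\theta_{11}=-q^3-\varepsilon_q$, $\theta_8=2q^3-\varepsilon_q$, $\theta_9=q^3-\varepsilon_q$ and $\Delta$, with $\varepsilon_q = 2q^2+2q+1$. Since $\theta_{11}+1<0$ and $\theta_0>0$, the denominator is negative. Multiplying through therefore reverses the inequalities, turning each one into a polynomial inequality in $q$. I would put each difference over the common denominator and show the resulting polynomial numerator has constant sign for all $q\ge 3$. To do this I would either factor it or bound it by the dominant terms, then check $q=3$ directly. The main obstacle is this polynomial bookkeeping: the leading terms nearly cancel, because the threshold is about $q^3$ in size while $\theta_0^2$ is about $q^{12}$. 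I expect the threshold to come out as $q^3-2q^2+O(q)$, which must be shown to lie strictly between $q^3-\varepsilon_q$ and $2q^3-\varepsilon_q$, and this requires careful expansion.
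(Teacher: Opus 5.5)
Your computation of $\Delta$ is correct up to one missing line, and it takes a genuinely different route from the paper. The paper evaluates $q^{-18}\sum_{i=0}^{11}m_i\theta_i^3$ from Table~\ref{tab:spectrumFq233}. It argues that the sum is divisible by $q^{18}$ as a polynomial in $q$, and then tracks only the monomials of degree at least $18$ in the twelve products $m_i\theta_i^3$. You use Corollary~\ref{corol:polynomialexpressionofadjacencymatrixcayley} only for the first equality, and compute $(A^3)_{0,0}$ directly by counting ordered pairs of simple tensors whose sum is simple.

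Your route has several advantages:
\begin{itemize}
\item It is much shorter and avoids the divisibility argument.
\item It does not depend on the multiplicities at all, in particular not on $m_{10},m_{11}$, which the paper obtains from a linear system. Combined with the trace identity, it therefore gives an independent consistency check on Table~\ref{tab:spectrumFq233}.
\item It works verbatim for every $\mathbf n$, giving $\Delta=|\S_{\mathbf n}|\bigl(q-2+\sum_{j=1}^m(q^{n_j}-q)\bigr)$, whereas the paper's method needs the full spectrum.
\end{itemize}

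The line you should add is the ``only if'' direction of your case analysis. Suppose, say, $a\not\parallel a'$ and $b\not\parallel b'$. Contracting the first mode against vectors $u$ with $(a\cdot u,a'\cdot u)=(1,0)$ and $(0,1)$ shows that $b\otimes c$ and $b'\otimes c'$ both lie in $\slicesp_1(s_1+s_2)$. These are not proportional, so $\rank(s_1+s_2)\ge 2$ by Proposition~\ref{prop:slicespdimlowerboundstrank}.

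For the inequality, your plan is the same as the paper's, but you have not carried it out, and your heuristic for the threshold is wrong. Your own count already gives the key simplification: $\Delta=\theta_0(2q^3+q^2-2q-2)$ with $\theta_0=|\S_{2\times3\times3}|$. Also $\theta_{11}+1=-q(q^2+2q+2)$. Dividing numerator and denominator by $\theta_0$ gives
\[
-\frac{\theta_0^2+\theta_0\theta_{11}-\Delta}{\theta_0(\theta_{11}+1)}=\frac{\theta_0+\theta_{11}-\Delta/\theta_0}{-(\theta_{11}+1)}=\frac{q^5+2q^4+2q^3-3q^2-5q-2}{q^2+2q+2}=q^3-3+\frac{q+4}{q^2+2q+2}.
\]
So the threshold is about $q^3-3$, not $q^3-2q^2+O(q)$, and no delicate cancellation remains:
\begin{itemize}
\item It exceeds $\theta_9=q^3-\varepsilon_q$ by $2q^2+2q-2+\frac{q+4}{q^2+2q+2}>0$, for every $q$.
\item $\theta_8$ exceeds it by $q^3-2q^2-2q+2-\frac{q+4}{q^2+2q+2}$. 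This is positive for $q\ge 3$, since there $q^3-2q^2-2q+2\ge q+2$. It is negative for $q=2$, which is why the hypothesis $q\ge 3$ is needed.
\end{itemize}
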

\begin{proof}
    By Corollary~\ref{corol:polynomialexpressionofadjacencymatrixcayley}, we have $\Delta =  q^{-18}\sum_{i=0}^{11} m_i \theta_i^3$. Since $A$ is a matrix with integer coefficients, $\Delta$ is an integer. Consequently, $\sum_{i=0}^{18} m_i\theta_i^3$ is divisible by $q^{18}$ as a polynomial in $q$. Hence, to compute the sum, it suffices to compute the monomials of degree at least $18$ of the polynomials $m_i\theta_i^{3},i\in \llbracket0,11\rrbracket$ and sum them to obtain the value of $\delta$. Denote by $\eta_q = (q-1)(q+1)(q^2+q+1)$. Since the sum is also divisible by $\eta_q$, the sum is divisible by $\eta_qq^{18}$. We have 
    \[
        \begin{array}{r>{\displaystyle}l}
             \theta_0^3m_0 
             &= \eta_q(q-1)^2(q+1)^2(q^2+q+1)^5
             = \eta_q o(q^{18}),\\[0.1cm]
             \theta_1^3m_1 
             &= \eta_q(q^2+q+1)\left(2q^5 + 2q^4 - 2q^2 - 2q - 1\right)^3 
             = \eta_q o(q^{18}), \\[0.1cm]
             \theta_2^3m_2 
             &=   q(q-1)(q+1)^3(q^2+q+1)(q+3)(q^4 + q^3 - q^2 - q - 1)^3 \\
             &\qquad = \eta_q \left( 30q^{18} + 9q^{19} + q^{20}  + o(q^{18})  \right),\\[0.1cm]
             \theta_3^3m_3
             &=  q^3(q-1)^2(q+1)(q^2+q+1)^3(q^3 - q - 1)^3\\
             &\qquad = \eta_q \left( -10q^{18} -q^{19} + 2q^{20} + q^{21}  + o(q^{18}) \right), \\[0.1cm]
             \theta_4^3m_4 
             &=  \frac 12q^3(q-1)(q+1)^2(q^2+q+1)(3q^4 - 2q^2 - 2q - 1)^3  \\
             &\qquad = \eta_q \left( -\frac{27}{2}q^{18} + 27q^{19} -\frac{27}{2}q^{20}  + o(q^{18}) \right), \\[0.1cm]
             \theta_5^3m_5 
             &= q(q-1)^2(q+1)^2(q^2+q+1)(2q^2+1)(2q^4 - 2q^2 - 2q - 1)^3 \\
             &\qquad = \eta_q \left( -120q^{18} - 56q^{19} + 16q^{20} + 16q^{21}   + o(q^{18}) \right), \\[0.1cm]
             \theta_6^3m_6
             &=  q^6(q-1)^2(q+1)(q^2+q+1)(q^4 + q^3 - 2q^2 - 2q - 1)^3\\ 
             &\qquad = \eta_q \left( 45q^{18}  -9q^{19}  -21q^{20}  -4q^{21} +3q^{22} + q^{23}  + o(q^{18}) \right), \\[0.1cm]
             \theta_7^3m_7
             &= \frac 12 q^3(q-1)^3(q^2+q+1)(4q+3)(2q+1)(q^4 - 2q^2 - 2q - 1)^3\\   
             &\qquad = \eta_q \left( \frac{171}{2} q^{18} -8q^{19} -\frac{57}{2}q^{20}  -3q^{21} + 4q^{22} + o(q^{18})     \right),\\[0.1cm]
             \theta_8^3m_8
             &=  \frac 16 q^7(q-1)^3(q+1)^2(q^2+q+1)(2q^3 - 2q^2 - 2q - 1)^3 \\ 
             &\qquad = \eta_q \left( -\frac{38}3q^{18} + \frac{28}{3}q^{19} + \frac{34}3q^{20} -\frac{8}{3}q^{21} -4q^{22} +\frac43q^{23}     + o(q^{18})\right) \\[0.1cm]
             \theta_9^3m_9 
             &= q^6(q-1)^3(q+1)^2(q^2+q+1)(q^3 - 2q^2 - 2q - 1)^3   \\
             &\qquad = \eta_q \left(  -5q^{18} +  24 q^{19} +  4 q^{20}  -6q^{21} + q^{22}  + o(q^{18}) \right), \\[0.1cm]
             \theta_{10}^3m_2 
             &= \frac 12 q^4(q-1)^4(q+1)(q^2+q+1)(q^3+2q+2)(-2q^2 - 2q - 1)^3 \\ 
             &\qquad = \eta_q \left( -2q^{18} -4q^{19} - 4q^{20}  + o(q^{18}) \right), \\[0.1cm]
             \theta_{11}^3m_2 
             &=  \frac 13 q^7(q-1)^5(q+1)^2(-q^3 - 2q^2 - 2q - 1)^3  \\ 
             &\displaystyle \qquad = \eta_q \left( \frac{2}{3}q^{18} + \frac{14}{3}q^{19} +\frac{8}{3}q^{20} -\frac{1}{3}q^{21} -q^{22} -\frac{1}{3}q^{23} + o(q^{18})  \right). \\
        \end{array}
    \]
    Hence, we have $\sum_{i = 0}^{11} m_i\theta_i^3 = \eta_q(-2q^{18} -4q^{19} -3q^{20} +q^{21} + 3q^{22} + 2 q^{23} + o(q^{18}))$, and thus
    \[
        \Delta = \eta_q (-2 -4q -3q^2 +q^3 + 3q^4 + 2 q^5) = (q-1)(q+1)(q^2+q+1)^2(2q^3 + q^2 - 2q -2).
    \]
    Note that the value of $\Delta$ computed here is also valid for $q=2$ since $\sum_{i=0}^{11}m_i\theta_i^{3} = \sum_{i=0}^{8}m_i'(\theta_i')^3$ with the notations of Remark \ref{rk:eigenvaluescayleyqis2}. Assume now that $q \geq 3$. Since $\theta_0 = (q-1)(q+1)(q^2+q+1)^2$, $\Delta = \theta_0(2q^3 + q^2 - 2q-2)$, and $\theta_{11} = -(q+1)(q^2+q+1)$ we have 
    \begin{align*}
        \theta_0 + \theta_{11} - \frac{\Delta}{\theta_0}
        &= (q-1)(q+1)(q^2+q+1)^2 - (q+1)(q^2+q+1) - (2q^3 + q^2 - 2q-2)\\
        &= q(q^5+2q^4+2q^3-3q^2-5q-2)
    \end{align*}
    and since $\theta_{11}+1 = -q(q^2 +2q+2)$, we have 
    \begin{align*}
        \frac{\theta_0^2 + \theta_0\theta_{11} - \Delta}{-\theta_0( \theta_{11}+1)} 
        &= \frac{\theta_0 + \theta_{11} - \frac{\Delta}{\theta_0}}{-( \theta_{11}+1)}\\ 
        &= \frac{q^5+2q^4+2q^3-3q^2-5q-2}{q^2 +2q+2}\\
        &= (q^3 - 3) + \frac{q+4}{q^2+2q+2}.
    \end{align*}
 From this we immediately see that this value is between $\theta_9 = q^3 - 2q^2 - 2q -1$ and $\theta_8 = 2q^3 - 2q^2 - 2q -1$ which concludes the proof.
\end{proof}
\begin{Prop}\label{prop:alpha3for233}
    Let $\alpha_3$ be the $3$-independence number of $\Cay(\F_q^{2\times  3 \times  3},\S_{2\times  3 \times  3})$. The following holds. 
    \begin{itemize}
        \item If $q = 2$, then we have $\displaystyle \alpha_3 \leq  2^{8}\frac{3780}{3213}$.
        \item If $q \geq 3$, then we have $\displaystyle \alpha_3 \leq q^9 \left( 2 -\frac{7 q^5  - 14 q^4  - 4q^3 + 5q^2 + 12q -4}{q^6 + 4q^5 + 8q^4 + 5q^3 - 2q^2 - 6q + 2}\right)$.
    \end{itemize}
\end{Prop}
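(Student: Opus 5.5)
Apply Theorem~\ref{thm:ratiobound}(ii), i.e.\ inequality (\ref{eq:ratiotype3}), to $\Gamma=\Cay(\F_q^{2\times3\times3},\S_{2\times3\times3})$ with $N=q^{18}$. Three ingredients are needed: the ordered list of distinct eigenvalues, the quantity $\Delta=\max_u (A^3)_{u,u}$, and the index $s$. The ordering comes from Lemma~\ref{lemma:orderingeigenvaluesq233}. The value $\Delta=(q-1)(q+1)(q^2+q+1)^2(2q^3+q^2-2q-2)$ comes from Lemma~\ref{lemma:computationdelta3indepnb}. This value is the diagonal entry of $A^3$ at every vertex, since the graph is walk-regular. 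The index $s$ is determined by comparing the threshold $T=-\frac{\theta_0^2+\theta_0\theta_r-\Delta}{\theta_0(\theta_r+1)}$ with the spectrum.

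\textbf{Case $q\geq 3$.} There are twelve distinct eigenvalues, so $r=11$ and $\theta_r=\theta_{11}=-q^3-\varepsilon_q$. Lemma~\ref{lemma:computationdelta3indepnb} gives $\theta_9<T<\theta_8$, and therefore $s=8$, $\theta_s=\theta_8=2q^3-\varepsilon_q$, $\theta_{s+1}=\theta_9=q^3-\varepsilon_q$. Substituting into (\ref{eq:ratiotype3}) gives
\[
\alpha_3\leq q^{18}\frac{\Delta-\theta_0(\theta_8+\theta_9+\theta_{11})-\theta_8\theta_9\theta_{11}}{(\theta_0-\theta_8)(\theta_0-\theta_9)(\theta_0-\theta_{11})}.
\]
The denominators factor nicely: $\theta_0-\theta_8=q^3(q^3+2q^2+2q-2)$, $\theta_0-\theta_9=q^3(q^3+2q^2+2q-1)$, and $\theta_0-\theta_{11}=q^3(q^3+2q^2+2q+1)$. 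These supply a factor $q^9$, which leaves $q^9$ times a rational function of degree $0$.

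The remaining work is a polynomial division. I would expand the numerator, which is divisible by $\theta_0$'s common factors as far as possible, and write the quotient as $2-\frac{P(q)}{R(q)}$. The target is $P=7q^5-14q^4-4q^3+5q^2+12q-4$ and $R=q^6+4q^5+8q^4+5q^3-2q^2-6q+2$. I expect this simplification to be the only real obstacle. Concretely, one cancels a common factor between the numerator and the cubic product of denominators, presumably a factor shared with $(q^2+q+1)$ or with one of the cubics, to reach the stated $R$. I would check the resulting identity by cross-multiplication, or by evaluating at enough values of $q$ to pin down both polynomials.

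\textbf{Case $q=2$.} Here I use Table~\ref{tab:spectrumFq233q2}, which has ten distinct eigenvalues: $\theta_0'=147$, $\theta_r'=-21$, and $\Delta=3\cdot 7^2\cdot 10=1470$. The threshold is $T=-\frac{147^2-147\cdot21-1470}{147\cdot(-20)}=\frac{147-21-10}{20}=5.8$. The largest eigenvalue that is at least $T$ is $11$, so $\theta_s=11$ and $\theta_{s+1}=3$. Substituting gives
\[
\alpha_3\leq 2^{18}\frac{1470-147(11+3-21)-11\cdot3\cdot(-21)}{(136)(144)(168)}=2^{18}\frac{3192}{3290112},
\]
and reducing this fraction should give the stated $2^8\cdot 3780/3213$. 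If the arithmetic does not match, I would recheck the choice of $s$ against the strict versus non-strict inequality in Theorem~\ref{thm:ratiobound}.
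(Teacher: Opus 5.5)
Your route is the paper's: apply (\ref{eq:ratiotype3}) with $N=q^{18}$ and $\Delta$ from Lemma~\ref{lemma:computationdelta3indepnb}. For $q\geq 3$ you take $s=8$, and for $q=2$ you take $(\theta_s,\theta_{s+1},\theta_r)=(11,3,-21)$. Your denominator factorisations are also correct. There are, however, two problems.

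\textbf{The case $q=2$ fails.} The factor $2q^3+q^2-2q-2$ equals $14$ at $q=2$, not $10$, so $\Delta=3\cdot 7^2\cdot 14=2058$. You can confirm this directly. The simple tensors $s_1$ for which $a\otimes b\otimes c-s_1$ is simple are exactly those differing from $a\otimes b\otimes c$ in one factor. There are $2+6+6=14$ of them, so $\Delta=147\cdot 14$. With your $\Delta=1470$ you get $2^{8}\cdot 3192/3213$. This is not the stated value. Worse, it is not a valid bound, because underestimating $\Delta=W(p)$ invalidates (\ref{eq:ratiotype}). Rechecking $s$ would not help. With the correct $\Delta$ the threshold is $28/5$, which still selects $11$ and $3$. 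The numerator becomes $2058+1029+693=3780$. Since $136\cdot 144\cdot 168=2^{10}\cdot 3213$, this gives exactly $2^8\cdot 3780/3213$.

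\textbf{For $q\geq 3$, the identity you plan to confirm is false.} Cross-multiplication will not verify it as displayed. The numerator is
\[
\Delta-\theta_0(\theta_8+\theta_9+\theta_{11})-\theta_8\theta_9\theta_{11}=q^2(q+1)(q^2+q+1)(2q^4+q^3+2q^2+6q+1).
\]
So the factor $q^3+2q^2+2q+1=(q+1)(q^2+q+1)$ of $\theta_0-\theta_{11}$ cancels, as you guessed. Set $R(q)=(q^3+2q^2+2q-2)(q^3+2q^2+2q-1)=q^6+4q^5+8q^4+5q^3-2q^2-6q+2$. Then
\[
\alpha_3\leq q^9\,\frac{q^2(2q^4+q^3+2q^2+6q+1)}{R(q)}=q^9\left(2-\frac{7q^5+14q^4+4q^3-5q^2-12q+4}{R(q)}\right).
\]
This expression produces the values $1017/1225$ and $1688/1751$ that the paper quotes for $q=3$ and $q=4$. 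Its numerator exceeds the displayed $7q^5-14q^4-4q^3+5q^2+12q-4$ by $28q^4+8q^3-10q^2-24q+8>0$. Hence the stated inequality holds a fortiori. You must finish with this comparison rather than an equality check. The paper's own proof writes an equality at this step, so it carries the same sign slip.
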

\begin{proof}
    By \cite[Theorem 11]{AbiadCoutinhoFiol2019kindependancenumber}, with $\theta_0$ the largest eigenvalue of the graph, with $\theta_{11}$ the smallest, with $\Delta = \max_{u \in \F_{q}^{2 \times 3 \times 3}} (A^3)_{(u,u)}$, with the largest eigenvalue such that $\theta_s \geq \Delta$, and $\theta_{s+1}$ the largest eigenvalue less than $\theta_s$, we have
    \[
        \alpha_3 \leq q^{18} \frac{\Delta - \theta_0(\theta_s + \theta_{s+1} + \theta_{11}) - \theta_{s}\theta_{s+1}\theta_{11}}{(\theta_0 - \theta_s)(\theta_0-\theta_{s+1})(\theta_0 - \theta_{11})}.
    \]
    For $q = 2$, observe with Lemma~\ref{lemma:computationdelta3indepnb} that $\Delta = 2058$. Then, by Remark \ref{rk:eigenvaluescayleyqis2}, we have 
    \[
        -\frac{\theta_0'^2 + \theta_0'\theta_8' - \Delta}{\theta_0(\theta_8+1)} = \frac{21609 - 3087 - 2058}{2940} = \frac{28}{5}.
    \]
    Therefore,  we have 
    \[
        \alpha_3 \leq 2^{18}\frac{\Delta - \theta_0'(\theta_4' + \theta_5' + \theta_8') - \theta_4'\theta_5'\theta_8'}{(\theta_0'-\theta_4')(\theta_0'-\theta_5')(\theta_0'-\theta_8')} = 2^{18}\frac{2058 - 147\times (-7) - (-693)}{136\times 144 \times 168} = 2^{8}\frac{3780}{3213},
    \]
    which proves the statement for $q = 2$. Now for $q = 3$, with Lemma~\ref{lemma:computationdelta3indepnb}, since $\theta_0 = (q-1)(q+1)(q^2+q+1)^2$ and $\Delta = \theta_0(2q^3 + q^2 - 2q-2)$, we have 
    \begin{align*}
        &(\theta_0 - \theta_8)(\theta_0 - \theta_9)(\theta_0 - \theta_{11}) \\
        &= (q^6 + 2q^5 + 2q^4 - 2q^3)(q^6 + 2q^5 + 2q^4 -q^3) (q^6 + 2q^5 + 2q^4 +q^3)\\
        &= q^9 (q^9 + 6 q^8 + 18 q^7 + 30 q^6 + 28 q^5 + 8 q^4 - 9 q^3 - 10 q^2 - 2 q + 2).
    \end{align*}
    Moreover, we have 
    \begin{align*}
        \Delta - \theta_0(\theta_8 + \theta_9 + \theta_{11})
        &= \theta_0\left(2q^3 + q^2 -2q -2 - \left(2q^3 - 6q^2 - 6q - 3\right) \right)\\
        &= 7 q^8 + 18 q^7 + 23 q^6 + 10 q^5 - 12 q^4 - 22 q^3 - 17 q^2 - 6 q - 1,
    \end{align*}
    and 
    \begin{align*}
        -\theta_8\theta_9\theta_{11}
        &= 2 q^9 - 2 q^8 - 10 q^7 -9 q^6 + 8 q^5 + 28 q^4 +30 q^3 + 18 q^2 + 6 q + 1.
    \end{align*}
    Therefore, we obtain the bound
    \begin{align*}
        \alpha_3 &\leq q^9\frac{2q^9 +5q^8 + 8q^7+ 14q^6 + 18q^5 +16q^4 + 8q^3 + q^2}{q^9 + 6 q^8 + 18 q^7 + 30 q^6 + 28 q^5 + 8 q^4 - 9 q^3 - 10 q^2 - 2 q + 2}\\
        &= q^9 \left( 2 -\frac{7 q^5  - 14 q^4  - 4q^3 + 5q^2 + 12q -4}{q^6 + 4q^5 + 8q^4 + 5q^3 - 2q^2 - 6q + 2}\right).
    \end{align*}
\end{proof}

\begin{Corol}
    Let $q \in \{2,3,4\}$. There is no $(2\times 3 \times 3,M,4)_q$ tensor code that meets the Singleton-like bound. More precisely, we have $M < q^9$.
\end{Corol}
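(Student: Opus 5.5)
Since any $(2\times 3\times 3,M,4)_q$ tensor code $\CCC$ has all pairwise rank distances at least $4$, and the rank distance coincides with the geodesic distance of $\Cay(\F_q^{2\times 3\times 3},\S_{2\times 3\times 3})$ (Proposition~\ref{prop:tensorrankdistanceandgeodesiccoincides}), $\CCC$ is a $3$-independent set of the graph. Hence $M\leq \alpha_3$, and it suffices to show $\alpha_3<q^9$ for $q\in\{2,3,4\}$. The Singleton-like bound (\ref{eq:singletonlike}) gives exactly $M\leq q^{3(6-4+1)}=q^9$, so the strict inequality is what "does not meet" requires. I would take the bounds on $\alpha_3$ from Proposition~\ref{prop:alpha3for233} directly.

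For $q=2$, Proposition~\ref{prop:alpha3for233} gives $\alpha_3\leq 2^8\cdot\frac{3780}{3213}=2^9\cdot\frac{3780}{6426}$. Since $3780<6426$, this is strictly less than $2^9$.

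For $q\geq 3$ the bound reads $\alpha_3\leq q^9\bigl(2-\frac{N(q)}{D(q)}\bigr)$ with
\[
N(q)=7q^5-14q^4-4q^3+5q^2+12q-4,\qquad D(q)=q^6+4q^5+8q^4+5q^3-2q^2-6q+2.
\]
Since $D(q)>0$, the inequality $\alpha_3<q^9$ follows from $N(q)>D(q)$. Evaluating both polynomials directly gives:
\begin{itemize}
\item for $q=3$: $N=1021>D=2044$ fails, so the criterion $N>D$ must be wrong;
\item for $q=4$: $N=3396$ versus $D=8218$, which also fails.
\end{itemize}
So the closed form of the bound (or my reading of it) needs checking. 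This is the main obstacle.

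I would therefore not rely on the simplified expression. Instead I would recompute the ratio-type bound (\ref{eq:ratiotype3}) numerically for $q=3$ and $q=4$ from Table~\ref{tab:spectrumFq233} and the value of $\Delta$ in Lemma~\ref{lemma:computationdelta3indepnb}. Lemma~\ref{lemma:computationdelta3indepnb} locates the threshold between $\theta_9$ and $\theta_8$, so $s=8$ and $s+1=9$, with $\theta_r=\theta_{11}$. Substituting the integer eigenvalues for $q=3$ and $q=4$, I would evaluate the rational number
\[
q^{18}\,\frac{\Delta-\theta_0(\theta_8+\theta_9+\theta_{11})-\theta_8\theta_9\theta_{11}}{(\theta_0-\theta_8)(\theta_0-\theta_9)(\theta_0-\theta_{11})}
\]
and compare it with $q^9$. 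I expect these to give $3^9\cdot\frac{1017}{1225}$ and $4^9\cdot\frac{1688}{1725}$, the values stated in Theorem~\ref{thm:sumup233}. Both are strictly below $q^9$ because the fractions are less than $1$. The only real work is this careful arithmetic; in particular I would double-check the sign and expansion of $-\theta_8\theta_9\theta_{11}$ and of $\Delta-\theta_0(\theta_8+\theta_9+\theta_{11})$, since the displayed polynomial simplification appears inconsistent.
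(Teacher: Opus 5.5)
Your route is the paper's: $M\le\alpha_3$, then Proposition~\ref{prop:alpha3for233} for $q=2$ (handled exactly as in the paper), and the ratio-type bound (\ref{eq:ratiotype3}) with $s=8$, $r=11$ for $q=3,4$. The gap is that for $q=3,4$ you never evaluate that bound; you only say what you expect, and that computation is the whole content of the claim for these $q$. It does go through.

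\emph{Case $q=3$.} We have $\theta_0=1352$, $\theta_8=29$, $\theta_9=2$, $\theta_{11}=-52$ and $\Delta=1352\cdot 55=74360$. The numerator is $74360+1352\cdot 21+3016=105768=104\cdot 1017$. The denominator is $1323\cdot 1350\cdot 1404=3^9\cdot 1225\cdot 104$. Hence $\alpha_3\le 3^9\cdot\frac{1017}{1225}<3^9$.

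\emph{Case $q=4$.} We have $\theta_0=6615$, $\theta_8=87$, $\theta_9=23$, $\theta_{11}=-105$ and $\Delta=6615\cdot 134=886410$. The numerator is $886410-6615\cdot 5+87\cdot 23\cdot 105=1063440=105\cdot 10128$. The denominator is $6528\cdot 6592\cdot 6720=4^9\cdot 102\cdot 103\cdot 105$. Hence $\alpha_3\le 4^9\cdot\frac{10128}{10506}=4^9\cdot\frac{1688}{1751}<4^9$. So the value to expect is $1688/1751$, as in the paper's proof; the $1688/1725$ in Theorem~\ref{thm:sumup233} is a misprint.

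Your suspicion about the closed form is justified, but your evaluations are wrong. For the printed polynomials, $N(3)=536$, $D(3)=2450$, $N(4)=3452$ and $D(4)=10506$. This still gives $N<D$, so the printed formula yields a bound above $q^9$.

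The printed $N$ has sign errors. The factor $(q+1)(q^2+q+1)=q^{-3}(\theta_0-\theta_{11})$ divides $\theta_0$, $\Delta$ and $\theta_{11}$, so it cancels. The bound is then
$$q^9\,\frac{2q^6+q^5+2q^4+6q^3+q^2}{(q^3+2q^2+2q-2)(q^3+2q^2+2q-1)},$$
that is, $2-N/D$ with $N=7q^5+14q^4+4q^3-5q^2-12q+4$. With this $N$ one gets $N(3)=2866$ and $N(4)=10884$, both exceeding $D$.

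The paper's proof attributes $1017/1225$ and $1688/1751$ to the printed expression. Those numbers are right for the actual bound but inconsistent with the printed formula. Once you insert the explicit computation above, your argument is complete, and on this point it is more careful than the paper's.
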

\begin{proof}
    The Singleton-like bound for $(2\times 3 \times 3,M,3)_{q}$ codes states that $M \leq 2^{3(6-4+1)} = 2^{9}$. By Proposition~\ref{prop:alpha3for233}, for $q = 2$, since $\frac{3780}{3213}< 2$, we have $M\leq \alpha_3 < 2^9$. Moreover, for $q =3$ and $q=4$, the value of $\displaystyle \left( 2 -\frac{7 q^5  - 14 q^4  - 4q^3 + 5q^2 + 12q -4}{q^6 + 4q^5 + 8q^4 + 5q^3 - 2q^2 - 6q + 2}\right)$ is respectively $\frac{1017}{1225}$ and $\frac{1688}{1751}$, which concludes the proof.
\end{proof}

The computations of the proof above show that the Singleton-like bound is sharper than the ratio-type bound on $(2\times 3\times 3,M,4)_q$ codes for each $q \geq 5$. The same result can be observed numerically; see Table \ref{tab:RatioType2345}.

\section{Final remarks}
\label{sec:bounds}

In the previous section, the particular cases of the ratio-type bounds given in Theorem~\ref{thm:ratiobound23} allowed us to deduce bounds on $k$-independence number of the graph $\Cay(\F_q^{2\times 3 \times 3},\S_{2\times 3 \times 3})$ for $k = 2$ and $k=3$, that is to say on tensor codes in $\F_q^{2 \times 3 \times 3}$ of minimum distance $3$ and $4$; see Theorem~\ref{thm:sumup233}. The upper bounds presented are sharper than the Singleton-like bound (\ref{eq:singletonlike}), as well as its improved version (\ref{eq:improvedsingletonlike}) since the codes considered have minimum distance strictly less than $6$. 

In \cite{AbiadCoutinhoFiol2019kindependancenumber}, the authors suggested linear programming (LP) and mixed integer linear programming (MILP) algorithms to compute the optimal bounds for the inertia-type bounds and the ratio-type bounds given in Theorem~\ref{thm:inertiaratio}. In particular, Algorithms \cite[(17)]{AbiadCoutinhoFiol2019kindependancenumber} and \cite[(18)]{AbiadCoutinhoFiol2019kindependancenumber} are,  respectively, LP and MILP algorithms that compute the optimal solution for the bounds given in (\ref{eq:ratiotype}) and (\ref{eq:intertiatype}) for $k$-partially walk-regular graphs. The Sage code of these algorithms can be found in \cite{Peters2024BoundingCardinality}. In this section, we will present the numerical computation of those bounds for low parameters.

Let us start by detailing how to compute the spectrum of a multilinear forms graph with a given set of parameters. Corollary~\ref{corol:eigenvaluesmultilinearformscountingspaces} shows that the spectrum of $\Cay(\F_q^{\mathbf{n}},\S_{\mathbf n})$ is determined by the numbers
\[
    N_{a,k} = \left|\{ \CCC \leq \F_q^{{\mathbf n}(j)} \ : \ \dim \CCC = k, A_1(\CCC) = a  \}\right|,
\]
for a choice of $j \in \llbracket 1,m\rrbracket$, and for $k \geq n_1\cdots n_{j-1}n_{j+1} \cdots n_m - n_j$. These numbers can then be computed by fixing a choice of $j$ and listing all subspaces of $\F_q^{{\mathbf n}(j)}$. The computation of the spectrum of the graph $\Cay(\F_q^{\mathbf{n}},\S_{\mathbf n})$ using the computation of the numbers $N_{a,k}$ will require enumerating 
\[
    \sum_{k = n_1\cdots n_{j-1}n_{j+1} \cdots n_m - n_j}^{n_1\cdots n_{j-1}n_{j+1} \cdots n_m } \gbc{k}{n_1\cdots n_{j-1}n_{j+1} \cdots n_m }{q} = \sum_{c = 0}^{n_j}  \gbc{c}{n_1\cdots n_{j-1}n_{j+1} \cdots n_m }{q}
\]
subspaces of $\F_{q}^{\mathbf n(j)}$ and computing the cardinality of their intersection with $\S_{\mathbf n(j)}$. 

\begin{Rq}\label{rem:rk1}
    It has been established in \cite{HASTAD1990644} that the computation of the rank of a $3$-tensor, and by extension of an $m$-order tensor for $m\geq 3$, is NP-complete. However, Proposition~\ref{prop:slicespdimlowerboundstrank} shows that a tensor has rank one if and only if all of its slice spaces are one-dimensional. Therefore, checking if a tensor has rank one or not has complexity 
    polynomial in $m$ and in the entries of $\mathbf n$. Therefore, the computation of the intersection of $\S_{\mathbf n(j)}$ and a given subspace $V$ has complexity $O(|V|)$ in terms of operations in $\F_q$.
\end{Rq}

For small parameters, we compute the numbers $N_{a,k}$ with the enumeration mentioned above, and deduce the spectrum of the corresponding multilinear forms graphs. We give in Table \ref{tab:RatioType2345} the ratio-type bounds. We then compare it to the bounds mentioned in Section \ref{sec:rankmetriccodesandtensorcodes}, that is the Singleton-like bound (Theorem~\ref{thm:singletonlike}), its improved version (Theorem~\ref{thm:improvedsingletonlike}), and the sphere-packing bounds for codes of minimum distance $3$ and $5$ (Corollaries \ref{corol:spherepackingboundwithd3} and \ref{corol:spherepackingboundwithd5m3}). We use the colour \legendbox{lightgray} to indicate that the ratio-type bound in the Table \ref{tab:RatioType2345} is sharper than the corresponding one in Table \ref{tab:bestbetween}.

\begin{table}[ht!]   \centering
    \begin{tabular}{|c|c||c|c|c|c|c|c|c|}\hline
        $q$&$\mathbf{n}$&$d=3$&$d=4$&$d=5$&$d=6$&$d=7$&$d=8$&$d=9$\\\hline\hline
2 & (2,2,2) & 3.17 & {\footnotesize n/a} & {\footnotesize n/a} & {\footnotesize n/a} & {\footnotesize n/a} & {\footnotesize n/a} & {\footnotesize n/a}\\\hline
2 & (3,2,2) & 6.00 & 3.59 & {\footnotesize n/a} & {\footnotesize n/a} & {\footnotesize n/a} & {\footnotesize n/a} & {\footnotesize n/a}\\\hline
2 & (4,2,2) & 8.00 & 5.05 & {\footnotesize n/a} & {\footnotesize n/a} & {\footnotesize n/a} & {\footnotesize n/a} & {\footnotesize n/a}\\\hline
2 & (3,3,2) &  \cellcolor{lightgray}{10.63} &  \cellcolor{lightgray}{8.24} & 5.43 & {\footnotesize n/a} & {\footnotesize n/a} & {\footnotesize n/a} & {\footnotesize n/a}\\\hline
2 & (4,3,2) &  \cellcolor{lightgray}{15.46} & 12.13 & 8.87 & 6.25 & {\footnotesize n/a} & {\footnotesize n/a} & {\footnotesize n/a}\\\hline
2 & (5,3,2) & 20.00 & 15.73 & 12.02 & 8.72 & {\footnotesize n/a} & {\footnotesize n/a} & {\footnotesize n/a}\\\hline
2 & (6,3,2) & 24.00 & 19.22 & 14.85 & 11.45 & {\footnotesize n/a} & {\footnotesize n/a} & {\footnotesize n/a}\\\hline
2 & (4,4,2) &  \cellcolor{lightgray}{22.50} &  \cellcolor{lightgray}{18.99} & 14.95 & 12.04 & 8.97 & {\footnotesize n/a} & {\footnotesize n/a}\\\hline
2 & (5,4,2) &  \cellcolor{lightgray}{29.28} &  \cellcolor{lightgray}{24.85} & 20.55 & 16.75 & 13.40 & 10.24 & {\footnotesize n/a}\\\hline
2 & (6,4,2) & 36.01 & 30.62 & 25.77 & 21.18 & 17.34 & 13.93 & {\footnotesize n/a}\\\hline
2 & (7,4,2) & 42.01 & 36.06 & 30.55 & 25.97 & 21.50 & 17.62 & {\footnotesize n/a}\\\hline
2 & (3,3,3) & 18.58 &  \cellcolor{lightgray}{15.90} & 11.87 & 9.70 & 6.71 & {\footnotesize n/a} & {\footnotesize n/a}\\\hline
2 & (4,3,3) & 26.48 &  \cellcolor{lightgray}{22.80} & 18.57 &  \cellcolor{lightgray}{15.39} & 12.04 & 9.41 & 5.21\\\hline
2 & (5,3,3) &  \cellcolor{lightgray}{34.17} &  \cellcolor{lightgray}{29.57} & 25.16 & 21.11 & 17.33 & 13.87 & 8.49\\\hline
2 & (6,3,3) & 42.00 & 36.42 & 31.33 & 26.37 & 22.11 & 18.20 & 11.85\\\hline
2 & (7,3,3) & 49.00 & 42.75 & 37.01 & 31.98 & 27.08 & 22.95 & 16.89\\\hline
2 & (2,2,2,2) &  \cellcolor{lightgray}{9.57} &  \cellcolor{lightgray}{7.95} &  {5.09} &  \cellcolor{lightgray}{3.81} &  \cellcolor{lightgray}{2.00} &  \cellcolor{lightgray}{1.00} & {\footnotesize n/a}\\\hline
2 & (3,2,2,2) &  \cellcolor{lightgray}{16.34} &  \cellcolor{lightgray}{13.75} &  {10.49} &  \cellcolor{lightgray}{8.33} &  \cellcolor{lightgray}{5.89} & 4.17 & {\footnotesize n/a}\\\hline
2 & (4,2,2,2) &  \cellcolor{lightgray}{23.11} &  \cellcolor{lightgray}{19.49} & 16.06 & 12.97 & 10.01 & 7.60 & {\footnotesize n/a}\\\hline
2 & (5,2,2,2) & 30.01 & 25.37 & 21.20 & 17.13 & 13.63 & 10.77 & {\footnotesize n/a}\\\hline
3 & (2,2,2) &  \cellcolor{lightgray}{3.43} & {\footnotesize n/a} & {\footnotesize n/a} & {\footnotesize n/a} & {\footnotesize n/a} & {\footnotesize n/a} & {\footnotesize n/a}\\\hline
3 & (3,2,2) & 6.00 & 3.83 & {\footnotesize n/a} & {\footnotesize n/a} & {\footnotesize n/a} & {\footnotesize n/a} & {\footnotesize n/a}\\\hline
3 & (4,2,2) & 8.00 & 5.57 & {\footnotesize n/a} & {\footnotesize n/a} & {\footnotesize n/a} & {\footnotesize n/a} & {\footnotesize n/a}\\\hline
3 & (3,3,2) &  \cellcolor{lightgray}{11.39} &  \cellcolor{lightgray}{8.84} & 6.34 & {\footnotesize n/a} & {\footnotesize n/a} & {\footnotesize n/a} & {\footnotesize n/a}\\\hline
3 & (4,3,2) & 16.00 & 12.61 & 9.69 & 7.05 & {\footnotesize n/a} & {\footnotesize n/a} & {\footnotesize n/a}\\\hline
3 & (5,3,2) & 20.00 & 16.23 & 13.07 & 9.82 & {\footnotesize n/a} & {\footnotesize n/a} & {\footnotesize n/a}\\\hline
3 & (6,3,2) & 24.01 & 20.06 & 16.68 & 12.70 & {\footnotesize n/a} & {\footnotesize n/a} & {\footnotesize n/a}\\\hline
4 & (2,2,2) &  \cellcolor{lightgray}{3.59} & {\footnotesize n/a} & {\footnotesize n/a} & {\footnotesize n/a} & {\footnotesize n/a} & {\footnotesize n/a} & {\footnotesize n/a}\\\hline
4 & (3,2,2) & 6.00 & 3.96 & {\footnotesize n/a} & {\footnotesize n/a} & {\footnotesize n/a} & {\footnotesize n/a} & {\footnotesize n/a}\\\hline
4 & (4,2,2) & 8.00 & 5.80 & {\footnotesize n/a} & {\footnotesize n/a} & {\footnotesize n/a} & {\footnotesize n/a} & {\footnotesize n/a}\\\hline
4 & (3,3,2) &  \cellcolor{lightgray}{11.55} &  \cellcolor{lightgray}{8.98} & 6.59 & {\footnotesize n/a} & {\footnotesize n/a} & {\footnotesize n/a} & {\footnotesize n/a}\\\hline
4 & (4,3,2) & 16.00 & 12.73 & 10.06 & 7.40 & {\footnotesize n/a} & {\footnotesize n/a} & {\footnotesize n/a}\\\hline
4 & (5,3,2) & 20.01 & 16.49 & 13.73 & 10.29 & {\footnotesize n/a} & {\footnotesize n/a} & {\footnotesize n/a}\\\hline
4 & (6,3,2) & 24.01 & 20.42 & 17.60 & 13.26 & {\footnotesize n/a} & {\footnotesize n/a} & {\footnotesize n/a}\\\hline
5 & (2,2,2) &  \cellcolor{lightgray}{3.68} & {\footnotesize n/a} & {\footnotesize n/a} & {\footnotesize n/a} & {\footnotesize n/a} & {\footnotesize n/a} & {\footnotesize n/a}\\\hline
5 & (3,2,2) & 6.00 & 4.03 & {\footnotesize n/a} & {\footnotesize n/a} & {\footnotesize n/a} & {\footnotesize n/a} & {\footnotesize n/a}\\\hline
5 & (4,2,2) & 8.00 & 5.92 & {\footnotesize n/a} & {\footnotesize n/a} & {\footnotesize n/a} & {\footnotesize n/a} & {\footnotesize n/a}\\\hline
5 & (3,3,2) & \cellcolor{lightgray}{11.65} & 9.05 & 6.58 & {\footnotesize n/a} & {\footnotesize n/a} & {\footnotesize n/a} & {\footnotesize n/a}\\\hline
5 & (4,3,2) & 16.00 & 12.80 & 10.29 & 7.60 & {\footnotesize n/a} & {\footnotesize n/a} & {\footnotesize n/a}\\\hline
5 & (5,3,2) & 20.01 & 16.64 & 14.10 & 10.55 & {\footnotesize n/a} & {\footnotesize n/a} & {\footnotesize n/a}\\\hline
\end{tabular}
\caption{Ratio-type upper bounds on $\log_q(M)$ for $(\mathbf{n},M,d)_q$ codes for $q \in \{2,3,4,5\}$, rounded up to the next 0.01, and coloured in \legendbox{lightgray} if it is a strict improvement of the value in Table \ref{tab:bestbetween}.}\label{tab:RatioType2345}
\end{table}

A similar computation using the inertia-type bound for this list of examples shows that it is systematically worse than the Singleton-like bound. The code for these computations, based on the code in \cite{Peters2024BoundingCardinality}, may be found in the {Github repository} \url{https://github.com/lucienfrancois/EigenvaluesMultilinearFormsGraph}.

The following empirical observation can be stated as follows: for any set of parameters $q$ and $\mathbf n$, the ratio-type bound seems to be a strict improvement of the Singleton-bound for codes of any minimum distance upper-bounded by a threshold function of $q$ and $\mathbf n$. Therefore, while the improvement on the Singleton-like bound (Theorem~\ref{thm:improvedsingletonlike}) is sharper than the Singleton-like bound for large minimum distance codes, the ratio-type bound is for small minimum distance codes. The strict improvements on the other bounds also seem to be more often for shapes of tensors closer to cubes and hypercubes.

To conclude these remarks, let us discuss about the actual largest tensor codes of given minimum distance and small parameters. The ratio-type bound and the sphere-packing bound for $(2\times 2 \times 2,M,3)_2$ codes, which are both strict improvements of the Singleton-like bound ($M \leq 16$), give $M \leq 9$. However, one can computationally check that the largest $(2\times 2 \times 2,M,3)_2$ code has cardinality $M = 4$. Recall that, from Howell's result (Proposition \ref{prop:howellbound}) that the maximum rank of a tensor in $\F_q^{2\times 2 \times 2}$ is $3$, hence a code in $\F_q^{2\times 2 \times 2}$ of minimum distance $3$ is a code of constant distance $3$. Therefore, such a code corresponds to a clique in the graph $\Cay(\F_2^{2\times 2 \times 2},S)$, where $S$ is the set of rank $3$ tensors. Proceeding this way, one can check that there are exactly $10,624$ such codes of cardinality $4$, and all such codes are affine. For instance, the code $\CCC = \Span(x_1,x_2) \subseteq \F_2^{2 \times 2 \times 2}$ with  
\[
    \begin{array}{clll}
        &x_1 := e_1 \otimes e_1 \otimes e_1 &+\ e_1 \otimes e_2 \otimes e_2 &+\  (e_1 + e_2) \otimes e_1 \otimes e_2,\\[0.15cm]
        \text{and}& x_2 := e_1 \otimes e_1 \otimes e_2 &+\ e_2 \otimes e_1 \otimes e_1  &+\ e_2 \otimes e_2 \otimes e_2
    \end{array}
\]
is a $[2\times 2 \times 2,2,3]_2$ linear tensor-code, i.e. a $(2\times 2 \times 2,4,3)_2$ code, where $e_i = e_i^{(2)}$ for $i\in \{1,2\}$. Regarding the $(3\times 2 \times 2,M,3)_2$ codes, since the actual maximum rank in the space $\F_2^{3\times 2 \times 2}$ is $3$, one can proceed similarly to find the maximum cardinality of such a code. However, the same method proves to be computationally too expensive if done extensively. A partial search through cliques of the graph $\Cay(\F_2^{3\times 2 \times 2},S)$, where $S$ is the set of rank $3$ tensors, exhibits a large list of $[3\times 2 \times 2,5,3]_2$ linear tensor-codes, i.e. $(3\times 2 \times 2,32,3)_2$ tensor-codes. For instance, the code $\CCC = \Span(x_1,x_2,x_3,x_4,x_5) \subseteq \F_2^{3 \times 2 \times 2}$ with
\[
    \begin{array}{clll}
        &x_1 := (e_1^{(3)} + e_3^{(3)}) \otimes e_1 \otimes e_1 
        &+\ e_2^{(3)} \otimes e_1 \otimes e_2 
        &+\ e_3^{(3)} \otimes e_2 \otimes e_2,\\[0.15cm]
        &x_2 := (e_1^{(3)} + e_3^{(3)}) \otimes e_1 \otimes e_2 
        &+\ (e_1^{(3)} + e_3^{(3)}) \otimes e_2 \otimes e_1 
        &+\ e_3^{(3)} \otimes e_1 \otimes e_2,\\[0.15cm]
        &x_3 := (e_1^{(3)} + e_2^{(3)} + e_3^{(3)}) \otimes e_2 \otimes e_1 
        &+\ e_2^{(3)} \otimes e_1 \otimes e_2 
        &+\ e_3^{(3)} \otimes (e_1 + e_2) \otimes e_2,\\[0.15cm]
        &x_4 := (e_1^{(3)} + e_2^{(3)}) \otimes e_2 \otimes e_2 
        &+\ (e_2^{(3)} + e_3^{(3)}) \otimes e_2 \otimes e_1 
        &+\ e_3^{(3)} \otimes e_1 \otimes e_2,\\[0.15cm] 
        \text{and}& x_5 := (e_2^{(3)} + e_3^{(3)}) \otimes e_1 \otimes e_1 
        &+\ e_2^{(3)} \otimes e_2 \otimes (e_1 + e_2) 
        &+\ e_3^{(3)} \otimes e_1 \otimes e_2
    \end{array}
\]
is such a code. This approches, yet does not reach, the bound $\dim(\CCC)\leq 6$ of Table \ref{thm:ratiobound23}.

\section*{Acknowledgements}
{This work has emanated from research conducted with the financial support of the European Union MSCA Doctoral Networks, HORIZON-MSCA-2021-DN-01, Project 101072316.} We would like to thank Prof. John Sheekey for his comments and help on the computation of tensor codes of maximum size.

\newpage
\bibliographystyle{siam}
\bibliography{BiblioCodes_arXiv}

\appendix

\section{Stabiliser sizes of subspaces of 2x3 matrices}
\label{appendix:stabilisersizes}
In this section, we provide the details of the stabiliser sizes computations presented in Lemma~\ref{lemma:sizeofstabilisers}. Each computation uses elementary linear algebra techniques to describe the stabiliser and count their cardinality. For each matrix $M \in \F_q^{m \times n}$, let us denote by $\row_i(M) \in \F_q^{n}$ its $i$-th row and $\col_j(M) \in \F_q^{m}$ its $j$-th column. 

\begin{proof}[Proof of Lemma~\ref{lemma:sizeofstabilisers}] Let $P \in \GL_2(\F_q)$ and $Q \in \GL_3(\F_q)$. Denote respectively by $\{e_1^{(2)},e_2^{(2)}\}$ and by $\{e_1^{(3)},e_2^{(3)},e_3^{(3)}\}$ the canonical bases of $\F_q^2$ and $\F_q^3$. 
\begin{itemize}
    \item \underline{$o_2$:} Denote by $A = \left[\begin{smallmatrix}
        1&0&0\\0&0&0
        \end{smallmatrix}\right]$, by $B = 
        \left[\begin{smallmatrix}
        0&1&0\\0&0&0
        \end{smallmatrix}\right]$, and by $\CCC = \langle A,B\rangle$. We have 
        \[
            P\CCC Q = \CCC \iff \col_1(P) \in \langle e_1^{(2)}\rangle \text{ and } \langle \row_1(Q),\row_2(Q)\rangle =\langle e_1^{(3)},e_2^{(3)}\rangle.
         \]
        Indeed, assume that $P\CCC Q = \CCC$. Then we have $PAQ = \col_1(P)\otimes \row_1(Q)$ and $PBQ = \col_1(P) \otimes \row_2(Q)$, thus $\col_1(P) \in \langle e_1^{(2)}\rangle$ and $\{\row_1(Q),\row_2(Q)\} \in \langle e_1^{(3)},e_2^{(3)}\rangle$. The converse is immediate. Hence there are $(q-1)(q^2-q)$ choices for $P$ and $(q^2-1)(q^2-q)(q^3-q)$ choices for $Q^\top$, therefore
        \[
            |\Stab(\CCC)| 
            = (q-1)(q^2-1)(q^2-q)^2(q^3-q^2) 
            = q^4(q-1)^5(q+1).
        \]

        \item \underline{$o_4^T$:} Denote by $A = \left[\begin{smallmatrix}
        1&0&0\\0&0&0
        \end{smallmatrix}\right]$, by $B = 
        \left[\begin{smallmatrix}
        0&0&0\\1&0&0
        \end{smallmatrix}\right]$, and by $\CCC = \langle A,B\rangle$. A similar argument shows that we have 
        \[
            P\CCC Q = \CCC \iff 
            \row_1(Q) \in \langle e_1^{(3)} \rangle.
        \]
        Hence, there are $(q^2-1)(q^2-q)$ choices for $P$ and $(q-1)(q^3-q)(q^3-q^2)$ choices for $Q$, therefore
        \[
            |\Stab(\CCC)| 
            = (q-1)(q^2-1)(q^2-q)(q^3-q)(q^3-q^2)
            = q^4(q-1)^5(q+1)^2.
        \]    

        \item \underline{$o_5$:} Denote by $A = \left[
        \begin{smallmatrix}
            1&0&0\\0&0&0
        \end{smallmatrix}\right]$, by $B = \left[
        \begin{smallmatrix}
            0&0&0\\0&1&0
        \end{smallmatrix}\right]$, and by $\CCC = \langle A,B\rangle$. Note that the only bases of $\CCC$ composed of two rank one matrices contain one collinear to $A$ and one collinear to $B$, and that all rank one matrices of $\CCC$ are collinear to $A$ or $B$. Therefore, we have $P\CCC Q = \CCC$ if and only if either $(PAQ \in \langle A\rangle \text{ and } PBQ \in \langle B\rangle)$ or $(PAQ \in \langle B\rangle \text{ and } PBQ \in \langle A\rangle)$, and thus we have $ P\CCC Q = \CCC$ if and only if 
        \[
        \begin{array}{l}
            \left(\col_1(P) \in \langle e_1^{(2)} \rangle \text{, } \col_2(P) \in \langle e_2^{(2)}\rangle \text{, } \row_1(Q) \in \langle e_1^{(3)}\rangle \text{, and} \row_2(Q) \in \langle e_2^{(3)} \rangle \right)\\
            \text{or } \left(\col_1(P) \in \langle e_2^{(2)} \rangle \text{, } \col_2(P) \in \langle e_1^{(2)} \rangle \text{, } \row_1(Q) \in \langle e_2^{(3)} \rangle \text{, and } \row_2(Q) \in \langle e_1^{(3)}\rangle \right).
        \end{array}
        \]
        Hence, in each configuration there are $(q-1)^2$ choices for $P$, $(q-1)^2(q^3-q^2)$ for $Q$ and both configuration cannot happen at the same time, so 
        \[
            |\Stab(\CCC)| 
            = 2(q-1)^4(q^3-q^2)
            = 2q^2(q-1)^5.
        \]

        \item \underline{$o_6$:} Denote by $A = \left[
        \begin{smallmatrix}
            1&0&0\\0&1&0
        \end{smallmatrix}\right]$, by $B = \left[
        \begin{smallmatrix}
            0&0&0\\1&0&0
        \end{smallmatrix}\right]$, and by $\CCC = \langle A,B\rangle$. Then we have 
        \[
            P\CCC Q = \CCC \iff P_{1,2} = Q_{1,2} = Q_{1,3} = Q_{2,3} = 0 \text{ and } Q_{2,2} = Q_{1,1}P_{1,1}P_{2,2}^{-1}.
        \]
        Indeed, assume that $P\CCC Q = \CCC$. Since $PBQ = \col_2(P) \otimes \row_1(Q)$ and since the only matrices of rank one in $\CCC$ are collinear with $B$, then $\col_2(P) \in \langle e_2^{(2)} \rangle $ and $\row_1(Q) \in \langle e_1^{(3)}\rangle$, i.e. $P_{1,2} = Q_{1,2} = Q_{1,3} = 0$. Moreover, since the last column on any matrix of $\CCC$ is zero, we have $Q_{2,3} = 0$. Finally, since $PAQ  \in \CCC$ we have $P_{1,1}Q_{1,1} = P_{2,2}Q_{2,2}$. The converse is immediate. There are then $(q-1)(q^2-q)$ choices for $P$ and for each choice of $P$ there are $(q-1)$ choices for the last column of $P$, $(q-1)q^2$ choices for the first, and $q$ choices for the second one. Therefore,
        \[
            |\Stab(\CCC)| 
            = (q-1)(q^2-q)q^3(q-1)^2
            = q^4(q-1)^4.
        \]

        \item \underline{$o_7$:}  Denote by $A = \left[
        \begin{smallmatrix}
            1&0&0\\0&0&1
        \end{smallmatrix}\right]$, by $B = \left[
        \begin{smallmatrix}
            0&1&0\\0&0&0
        \end{smallmatrix}\right]$, and by $\CCC = \langle A,B\rangle$. A similar argument shows that we have
        \[
            P\CCC Q = \CCC \iff P_{2,1}, = Q_{2,1} = Q_{2,3} = Q_{3,1} = Q_{3,2} = 0 \text{ and } (P_{1,2},P_{2,2}) = P_{1,1}Q_{3,3}^{-1}(-Q_{1,3},Q_{1,1}).
        \]
        Hence, there are $q^2(q-1)^3$ choices for $Q$ and for each choice of $Q$, there are $(q-1)$ choices of $P$, therefore
        \[
            |\Stab(\CCC)| 
            = q^2(q-1)^4.
        \]

        \item \underline{$o_{11}$:}  Denote by $A = \left[
        \begin{smallmatrix}
            1&0&0\\0&1&0
        \end{smallmatrix}\right]$, by $B = \left[
        \begin{smallmatrix}
            0&1&0\\0&0&1
        \end{smallmatrix}\right]$, and by $\CCC = \langle A,B\rangle$. Then for each $P \in \GL_2(\F_q)$, there exists a unique $Q \in \GL_3(\F_q)$ up to scalar multiplication such that $P\CCC Q = \CCC$. Indeed, note that the right stabiliser of $\CCC$ is trivial, i.e. $\left\{ Q \in \GL_3(\F_q) \ : \ \CCC Q = \CCC \right\} = \F_q^\times I_3$. Therefore, for each $P \in \GL_2(\F_q)$ and $Q,Q' \in \GL_3(\F_q)$, if $P\CCC Q = P\CCC Q' = \CCC$, then $\CCC Q^{-1}Q' = \CCC$ and hence $Q' \in \F_q^\times Q$, which proves the wanted property. We will now show that for each $P \in \GL_2(\F_q)$, there exists a $Q \in \GL_3(\F_q)$ with $P\CCC Q = \CCC$, proving in turn that 
        \[
            |\Stab(\CCC)| 
            = (q-1)(q^2-1)(q^2-q)
            = q(q-1)^3(q+1).
        \] 
        Let $P \in \GL_2(\F_q)$. We will proceed as follows to construct the appropriate matrices $Q$ using the linear dependencies between the vectors $(P_{1,1},P_{1,2},0)$, $(P_{2,1},P_{2,2},0)$ , $(0,P_{1,1},P_{1,2})$, and $(0,P_{2,1},P_{2,2})$.
        \begin{itemize}
            \item If $P_{1,2} = 0$, then the invertible matrix $Q \in \GL_3(\F_q)$ such that
            \[
                (P_{1,1},0,0)Q = e_1^{(3)}, \  (P_{2,1},P_{2,2},0)Q = e_2^{(3)}, \text{ and }(0,P_{2,1},P_{2,2})Q = P_{2,2}^{-1}(0,-P_{2,1},P_{1,1}),
            \]
            satisfies $P\CCC Q = \CCC$.
            \item If $P_{1,1} = 0$, then the invertible matrix $Q \in \GL_3(\F_q)$ such that
            \[
                (P_{2,1},P_{2,2},0)Q = -P_{2,1}^{-1}(0,-P_{2,2},P_{1,2}), \  (0,0,P_{1,2})Q =  e_1^{(3)}, \text{ and }(0,P_{2,1},P_{2,2})Q =  e_2^{(3)},
            \]
            satisfies $P\CCC Q = \CCC$.
            \item Else, note that 
            \[
                \det\left(\begin{bmatrix}
                    P_{1,1} & P_{1,2} & 0 \\ P_{2,1} & P_{2,2} & 0 \\ 0 & P_{1,1} & P_{1,2}
                \end{bmatrix}\right) = P_{1,1}P_{1,2}P_{2,2} - P_{1,2}^2 P_{2,1} = P_{1,2}\det(P),
            \]
            thus the sequence $\{(P_{1,1},P_{1,2},0), (P_{2,1},P_{2,2},0),(0,P_{1,1},P_{1,2})\}$ is a basis of $\F_{q}^3$ and there exist $a,b,c \in \F_q$ such that 
            \[
                (0,P_{2,1},P_{2,2}) = a(P_{1,1},P_{1,2},0) + b(P_{2,1},P_{2,2},0) + c(0,P_{1,1},P_{1,2}).
            \]
            Since $P$ is invertible, we have $(a,b) \neq (0,0)$. Since $P_{1,1} \neq 0$ we have $b \neq 0$. Consequently, we have $a \neq -bc$, else we would have $-bcP_{11} + bP_{2,1} = 0$ as well as $-bcP_{1,2} + bP_{2,2} + cP_{1,1} =P_{2,1}$, which would imply that the rows of $P$ are collinear which is impossible. Thus the sequence $\{(-c,1,0),(0,-c,1),(a,b,0)\}$ is a basis of $\F_q^3$ and the invertible matrix $Q \in \GL_3(\F_q)$ such that 
            \[
                (P_{1,1},P_{1,2},0)Q = (-c,1,0), \  (P_{2,1},P_{2,2},0)Q = (0,-c,1), \text{ and }(0,P_{1,1},P_{1,2})Q = (a,b,0),
            \]
            satisfies $P\CCC Q = \CCC$.
        \end{itemize}

        \item \underline{$o_{3}$:} Denote by $A = \left[\begin{smallmatrix}
        1&0&0\\0&0&0
        \end{smallmatrix}\right]$, by $B =
        \left[\begin{smallmatrix}
        0&1&0\\0&0&0
        \end{smallmatrix}\right]$, by $C = 
        \left[\begin{smallmatrix}
        0&0&1\\0&0&0
        \end{smallmatrix}\right]$, and by $\CCC = \langle A,B,C\rangle$. With a similar argument as the one for the orbit $o_2$ above, we have 
        \[
            P\CCC Q = \CCC \iff \row_1(P) \in \langle e_1^{(2)}\rangle.
        \]
        Therefore, we have
        \[
            |\Stab(\CCC)| 
            = (q-1)(q^2-q)(q^3-1)(q^3-q)(q^3-q^2)
            = q^4(q-1)^5(q+1)(q^2+q+1).
        \]

        \item \underline{$o_{7}^T$:} Denote by $A = \left[\begin{smallmatrix}
        1&0&0\\0&0&0
        \end{smallmatrix}\right]$, by $B =
        \left[\begin{smallmatrix}
        0&0&0\\1&0&0
        \end{smallmatrix}\right]$, by $C = 
        \left[\begin{smallmatrix}
        0&0&0\\0&1&0
        \end{smallmatrix}\right]$, and by $\CCC = \langle A,B,C\rangle$. Then we have  
        \[
            P\CCC Q = \CCC \iff P_{1,2} = Q_{1,2} = Q_{1,3} = Q_{2,3} = 0.
        \]
        Assume that $P\CCC Q = \CCC$. We have $Q_{1,3} = Q_{2,3} = 0$ since the third columns of $A$, $B$ and $C$ are zero. Moreover, since $PAQ$, $PBQ$, and $PCQ$ have rank one, one of them has only its first column non-zero. Hence, either $\row_1(Q) \in \langle e_1^{(3)} \rangle$ or $\row_2(Q) \in  \langle e_1^{(3)}\rangle$. One can check that $\row_2(Q) \in  \langle e_1^{(3)}\rangle$ is impossible, so $\row_1(Q) \in \langle e_1^{(3)}\rangle$, and we have the wanted property. The converse is immediate. Therefore, we have
        \[
            |\Stab(\CCC)| 
            = (q-1)(q^2-q)(q-1)^3q^3
            = q^4(q-1)^5.
        \]

        \item \underline{$o_{8}$:} Denote by $A = \left[\begin{smallmatrix}
        1&0&0\\0&0&0
        \end{smallmatrix}\right]$, by $B =
        \left[\begin{smallmatrix}
        0&0&0\\0&1&0
        \end{smallmatrix}\right]$, by $C = 
        \left[\begin{smallmatrix}
        0&0&0\\0&0&1
        \end{smallmatrix}\right]$, and by $\CCC = \langle A,B,C\rangle$. Then a similar argument shows that
        \[
            P\CCC Q = \CCC \iff P_{1,2} = P_{2,1} = Q_{1,2} = Q_{1,3} = Q_{2,1} = Q_{3,1}= 0.
        \]
        Therefore we have
        \[
            |\Stab(\CCC)| 
            = (q-1)^2(q-1)(q^2-1)(q^2-q)
            = q(q-1)^5(q+1).
        \]

        \item \underline{$o_{9}$:} Denote by $A = \left[\begin{smallmatrix}
        1&0&0\\0&0&1
        \end{smallmatrix}\right]$, by $B =
        \left[\begin{smallmatrix}
        0&0&0\\1&0&0
        \end{smallmatrix}\right]$, by $C = 
        \left[\begin{smallmatrix}
        0&0&0\\0&1&0
        \end{smallmatrix}\right]$, and by $\CCC = \langle A,B,C\rangle$. Then we have 
        \[
            P\CCC Q = \CCC \iff P_{1,2} = Q_{1,2} = Q_{1,3} = Q_{2,3} = 0 \text{ and } Q_{3,3}= Q_{1,1}P_{1,1}P_{2,2}^{-1}.
        \]
        Assume that $P\CCC Q = \CCC$. The rank one matrices in $\CCC$ are non-trivial linear combinations of $B$ and $C$, hence $Q_{1,3} = Q_{2,3} = 0$. Moreover, the form of the last column of $PAQ$ implies that $e_2^{(2)}$ is an eigenvector of $P$, and thus we have $P_{1,2} = Q_{1,2} = 0$. Consequently we have $P_{1,1}Q_{1,1} = P_{2,2}Q_{3,3}$. The converse is immediate. Therefore, we have 
        \[
            |\Stab(\CCC)| 
            = (q-1)(q^2-q) (q-1)^2q^3
            = q^4(q-1)^4.
        \]

        \item \underline{$o_{11}^T$:} Denote by $A = \left[\begin{smallmatrix}
        1&0&0\\0&0&0
        \end{smallmatrix}\right]$, by $B =
        \left[\begin{smallmatrix}
        0&1&0\\1&0&0
        \end{smallmatrix}\right]$, by $C = 
        \left[\begin{smallmatrix}
        0&0&0\\0&1&0
        \end{smallmatrix}\right]$, and by $\CCC = \langle A,B,C\rangle$. Since $\CCC$ is the space of $2\times 3$ matrices whose left $2\times 2$ submatrix is symmetric, we have 
        \[
            P\CCC Q = \CCC \iff Q_{1,3} = Q_{2,3} = 0 \text{ and } P \in \left\langle \tilde Q^\top\right\rangle,
        \]
        where $\tilde Q$ is the $2\times 2$ upper-left submatrix of $Q$. Therefore we have
        \[
            |\Stab(\CCC)| 
            = (q^2-1)(q^2-q) q^2(q-1)(q-1)
            = q^3(q-1)^4(q+1).
        \]

        \item \underline{$o_{12}$:} Denote by $A = \left[\begin{smallmatrix}
        1&0&0\\0&0&1
        \end{smallmatrix}\right]$, by $B =
        \left[\begin{smallmatrix}
        0&1&0\\0&0&0
        \end{smallmatrix}\right]$, by $C = 
        \left[\begin{smallmatrix}
        0&0&0\\0&1&0
        \end{smallmatrix}\right]$, and by $\CCC = \langle A,B,C\rangle$. Then we have $P\CCC Q = \CCC$ if and only if 
        \[
             Q_{2,1} = Q_{2,3} = 0 \text{ , } (Q_{1,1},Q_{3,1}) \in \row_2(P)^\bot \text{ , and } \left[\begin{smallmatrix}
                Q_{1,3} \\ Q_{3,3}
            \end{smallmatrix} \right] = P^{-1}\left[\begin{smallmatrix}
                0 \\ P_{1,1}Q_{1,1} + P_{1,2}Q_{3,1}
            \end{smallmatrix} \right].
        \]
        Assume that $P\CCC Q = \CCC$. Since the rank one matrices in $\CCC$ are non-trivial linear combinations of $B$ and $C$, we have $Q_{2,1} = Q_{2,3} = 0$. Moreover, since $PAQ \in \CCC$, then we have $(P_{2,1},P_{2,2}) (Q_{1,1},Q_{3,1})^\top = 0$ as well as $P\left[\begin{smallmatrix}
        Q_{1,3} \\ Q_{3,3}
        \end{smallmatrix}\right] = \left[\begin{smallmatrix}
            0 \\ P_{1,1}Q_{1,1} + P_{1,2}Q_{3,1}
        \end{smallmatrix}\right]$. The converse is immediate. Therefore we have 
        \[
            |\Stab(\CCC)| 
            = (q^2-1)(q^2-q) q^2(q-1)^2
            = q^3(q-1)^4(q+1).
        \]

        \item \underline{$o_{13}$:} Denote by $A = \left[\begin{smallmatrix}
        1&0&0\\0&1&0
        \end{smallmatrix}\right]$, by $B =
        \left[\begin{smallmatrix}
        0&1&0\\0&0&0
        \end{smallmatrix}\right]$, by $C = 
        \left[\begin{smallmatrix}
        0&0&0\\0&0&1
        \end{smallmatrix}\right]$, and by $\CCC = \langle A,B,C\rangle$. Then we have $P\CCC Q = \CCC$ if and only if 
        \[
             P_{2,1} = P_{1,2} = Q_{1,3} = Q_{2,1} = Q_{2,3} = Q_{3,1} = Q_{3,2} = 0 \text{ and } Q_{2,2} = Q_{1,1}P_{1,1}P_{2,2}^{-1}.
        \]
        Assume that $P\CCC Q = \CCC$. Since the rank one matrices in $\CCC$ are either collinear to $B$ or to $C$, we have $(PBQ,PCQ) \in \langle B \rangle \times \langle C\rangle$ or $(PBQ,PCQ) \in \langle C \rangle \times \langle B\rangle$. Moreover, since $PAQ \in \CCC$, we have $P\left[\begin{smallmatrix}
        Q_{1,1} \\ Q_{2,1}
        \end{smallmatrix}\right] \in \langle e_1^{(2)}\rangle$, $P\left[\begin{smallmatrix}
        Q_{1,3} \\ Q_{2,3}
        \end{smallmatrix}\right] \in \langle e_2^{(2)}\rangle$ and $P_{1,1}Q_{1,1} + P_{1,2}Q_{2,1} = P_{2,1} Q_{1,2} + P_{2,2}Q_{2,2}$. Once can check that $(PBQ,PCQ) \in \langle C \rangle \times \langle B\rangle$ is then impossible, therefore, $P = \diag(P_{1,1},P_{2,2})$ and $Q_{2,1} = Q_{2,3} = Q_{3,1} = Q_{3,2}  = 0$. Since $(Q_{1,3},0) \in \langle e_1^{(2)}\rangle$, we have $Q_{1,3} = 0$, and we have $P_{1,1}Q_{1,1} = P_{2,2}Q_{2,2}$. Therefore we have
        \[
            |\Stab(\CCC)| 
            = q(q-1)^4.
        \]

        \item \underline{$o_{14}$:} Denote by $A_1 = \left[\begin{smallmatrix}
        1&0&0\\0&0&0
        \end{smallmatrix}\right]$, by $A_2 =
        \left[\begin{smallmatrix}
        0&1&0\\0&1&0
        \end{smallmatrix}\right]$, by $A_3= 
        \left[\begin{smallmatrix}
        0&0&0\\0&0&1
        \end{smallmatrix}\right]$, and by $\CCC = \langle A_1,A_2,A_3\rangle$. Then
        \[
            P\CCC Q = \CCC \implies \exists \sigma \in S_3, \forall i,j \in \llbracket 1,3\rrbracket : Q_{i,j} \neq 0 \implies j = \sigma(i),
        \]
        and that for every such matrix $Q$, there exists a unique up to scalar multiplication matrix $P \in \GL_2(\F_q)$ such that $P\CCC Q = \CCC$. Assume that $P\CCC Q = \CCC$. Since the rank one matrices in $\CCC$ are collinear to one of the three generators, then there exists a permutation $\sigma \in S_3$ such that $PA_iQ \langle A_{\sigma(i)}\rangle$ for each $i \in \llbracket 1,3\rrbracket$. Since $A_{\sigma(i)}$ has only its $\sigma(i)$-th column non-zero, we have 
        \[
            P\left[\begin{smallmatrix}
                Q_{1,\sigma(2)} \\ 0
            \end{smallmatrix} \right]
            =
            P\left[\begin{smallmatrix}
                Q_{1,\sigma(3)} \\ 0
            \end{smallmatrix} \right]
            =
            P\left[\begin{smallmatrix}
                Q_{2,\sigma(1)} \\  Q_{2,\sigma(1)}
            \end{smallmatrix} \right]
            =
            P\left[\begin{smallmatrix}
                Q_{2,\sigma(3)} \\  Q_{2,\sigma(3)}
            \end{smallmatrix} \right]
            =
            P\left[\begin{smallmatrix}
                0 \\  Q_{3,\sigma(1)}
            \end{smallmatrix} \right]
            =
            P\left[\begin{smallmatrix}
                0 \\  Q_{3,\sigma(2)}
            \end{smallmatrix} \right]
            =
            0,
        \]
        which gives the wanted shape of $Q$. Now, if we denote by $x_1 = \left[\begin{smallmatrix}
                1\\0
            \end{smallmatrix} \right]$,
             by $x_2 = \left[\begin{smallmatrix}
                1\\1
            \end{smallmatrix} \right]$, and by $x_3 = \left[\begin{smallmatrix}
                0\\1
            \end{smallmatrix} \right]$, there exists scalars $a_1,a_2,a_3 \in \F_q^\times$ such that 
        \[
            \forall i \in \llbracket 1,3\rrbracket : P x_i = a_iQ_{i,\sigma(i)}^{-1} x_{\sigma(i)}.
        \]
        Since $x_1 - x_2 + x_3 = 0$, the scalars $a_1,a_2,a_3$ are solution of the linear system with two equations 
        \[
            a_1Q_{1,\sigma(1)}^{-1} x_{\sigma(1)} - a_2Q_{2,\sigma(2)}^{-1} x_{\sigma(2)} + a_3Q_{3,\sigma(3)}^{-1} x_{\sigma(3)} = 0, 
        \]
        which has exactly $q-1$ non-zero solutions, defining $P$ up to scalar multiplication. Therefore, we have 
        \[
            |\Stab(\CCC)| 
            = 6 (q-1) (q-1)^3
            = 6(q-1)^4.
        \]  

        \item \underline{$o_{16}$:} Denote by $A = \left[\begin{smallmatrix}
        1&0&0\\0&1&0
        \end{smallmatrix}\right]$, by $B =
        \left[\begin{smallmatrix}
        0&1&0\\0&0&1
        \end{smallmatrix}\right]$, by $C = 
        \left[\begin{smallmatrix}
        0&0&1\\0&0&0
        \end{smallmatrix}\right]$, and by $\CCC = \langle A,B,C\rangle$. With the same reasoning as above, we have 
        \[
        P\CCC Q = \CCC \iff P_{2,1} = Q_{2,1} = Q_{3,1} = Q_{3,2} = 0 \text{ and } \left\{\begin{array}{l}
             Q_{2,2} = P_{1,1}^{-1}P_{2,2}Q_{3,3}\\
             Q_{1,1} =P_{1,1}^{-1}P_{2,2}Q_{2,2}\\
             Q_{1,2} = P_{1,1}^{-1}P_{2,2}(Q_{2,3}- P_{1,2}P_{1,1}^{-1}Q_{3,3})
        \end{array} \right..
        \]
        Therefore, the matrix $Q$ is function of its third column and the matrix $P$, so we have
        \[
            |\Stab(\CCC)| 
            = (q-1)(q^2-q) (q-1)q^2 
            = q^3(q-1)^3
        \]  
\end{itemize}
\end{proof}

\end{document}